\newtheorem{theorem}{Theorem}[section]
\newtheorem{definition}[theorem]{Definition}
\newtheorem{lemma}[theorem]{Lemma}
\newtheorem{proposition}[theorem]{Proposition}
\newtheorem{corollary}[theorem]{Corollary}
\newtheorem{remark}[theorem]{Remark}
\newtheorem{examplecore}[theorem]{Example}}
\newenvironment{example}{\begin{examplecore}}{\hspace*{\fill}
$\square$\par\vspace{.1cm}\end{examplecore}}
\newcommand{\op}{\operatorname}
\begin{document}

\title{Oriented Schubert calculus in Chow--Witt rings of Grassmannians}  

\author{Matthias Wendt}

\date{July 2018}

\address{Matthias Wendt}
\email{m.wendt.c@gmail.com}

\subjclass[2010]{}
\keywords{Chow--Witt rings, characteristic classes, Grassmannians, oriented Schubert calculus}

\begin{abstract}
We apply the previous calculations of Chow--Witt rings of Grassmannians to develop an oriented analogue of the classical Schubert calculus. As a result, we get complete diagrammatic descriptions of the ring structure in Chow--Witt rings and twisted Witt groups. In the resulting arithmetic refinements of Schubert calculus, the multiplicity of a solution subspace is a quadratic form encoding additional orientation information. We also discuss a couple of applications, such as a Chow--Witt version of the signed count of balanced subspaces of Feh{\'e}r and Matszangosz.
\end{abstract}

\maketitle
\setcounter{tocdepth}{1}
\tableofcontents

\section{Introduction}

Schubert calculus is a collection of techniques to compute the intersection products in the Chow rings of Grassmannians. It can be used to answer questions about intersections of subspaces of an $F$-vector space $V$, like ``how many $k$-dimensional subspaces of $V$ satisfy specified intersection conditions with respect to a finite set of flags in $V$?'' For every flag, the $k$-dimensional subspaces satisfying the specified intersection conditions form a Schubert variety. Over an algebraically closed field, the number of subspaces satisfying the specified conditions (assuming that the flags are in general position) can be computed as degree of the intersection of the corresponding Schubert varieties (provided that intersection is zero-dimensional). 

If the field $F$ is not algebraically closed, then some of the solution subspaces may not be defined over $F$ and then the degree of the intersection of Schubert varieties is not necessarily equal to the number of solution subspaces. For example, over $F=\mathbb{R}$, the real points of the variety of generic flags for the Schubert problem (with the analytic topology) is not path-connected and the numbers of solution subspaces in different path-components are usually different. In particular, the number of solution subspaces defined over the base field $F$ is not invariant in general. In real enumerative geometry and real Schubert calculus, this problem has been solved by introducing signs depending on additional orientations. Counting the solution subspaces with signs produces an invariant number which can be related to intersection products in cohomology rings of real Grassmannians, cf. \cite{finashin:kharlamov:3planes,feher:matszangosz}. 

The point of the present paper is to push this idea further: we want to provide an arithmetic refinement of Schubert calculus which provides more information about the number of solution subspaces over arbitrary fields of characteristic $\neq 2$. Following the ideas of Kass and Wickelgren \cite{kass:wickelgren}, the number of solution subspaces should be replaced by a quadratic form depending on the field of definition and additional orientation information of the solution subspaces. The right setting to realize this idea is given by the Chow--Witt rings $\widetilde{\op{CH}}^\bullet(\op{Gr}(k,n),\mathscr{L})$ of Grassmannians. The structure of these rings was determined in \cite{real-grassmannian}. One of the goals of the present paper is to make the intersection product more transparent by providing diagrammatical rules in terms of even Young diagrams of \cite{balmer:calmes}, thus also settling the open question of multiplicative structure of twisted Witt groups of Grassmannians. The other main goal of the present paper is to explain how (canonical) choices of orientations on Schubert varieties yield cycles in the Chow--Witt ring and how the multiplicity of a solution subspace in the intersection of such Schubert cycles is a quadratic form encoding information on the field of definition and orientation of the solution subspace.

Now for a more detailed description of the results of the paper. While we are mostly interested in Chow--Witt rings of Grassmannians, many of the results can be reduced to statements about the cohomology rings $\bigoplus_{j,\mathscr{L}}\op{H}^j(\op{Gr}(k,n),\mathbf{I}^j(\mathscr{L}))$. The first thing to note is that the natural morphism $\widetilde{\op{CH}}^\bullet(\op{Gr}(k,n),\mathscr{L})\to \op{CH}^\bullet(\op{Gr}(k,n))$ is not generally surjective. The obstruction is the Steenrod square, and there is a precise formula for the Steenrod square in terms of Young diagrams. The following is only an informal statement of the  combined results of Proposition~\ref{prop:sq2lift}, Theorem~\ref{thm:steenrodlift}, Proposition~\ref{prop:compareBC} and Theorem~\ref{thm:lift}. The precise formulations are given there and require quite a bit more of notation to be set up.

\begin{theorem}
\label{thm:main1}
\begin{enumerate}
\item
A class $x\in\op{CH}^j(\op{Gr}(k,n))$ lifts to the Chow--Witt group $\widetilde{\op{CH}}^j(\op{Gr}(k,n),\mathscr{L})$ if and only if the Steenrod square $\op{Sq}^2_{\mathscr{L}}$ is trivial.
\item For a Schubert class in $\op{CH}^j(\op{Gr}(k,n))$, the Steenrod squares $\op{Sq}^2_{\mathscr{O}}$ and $\op{Sq}^2_{\det}$ of a Young diagram can be computed as follows: filling the boxes with a checkerboard pattern starting in the upper left corner, the respective Steenrod square is given as the sum of those Young diagrams with checkerboard pattern in $k\times(n-k)$-frame which can be obtained by adding a single white or black box, respectively. In particular, a class lifts if all the corners of the Young diagram have even length. 
\item For a Schubert class $[\sigma_\Lambda]$ with vanishing $\op{Sq}^2_{\mathscr{L}}$, there is a canonical orientation of the twisted relative canonical bundle $\omega_{p_\Lambda}\otimes p_\Lambda^\ast\mathscr{L}$, where $p_\Lambda:\op{Fl}(\Lambda)\to\op{Gr}(k,n)$ is a specific resolution of singularities. A canonical lift of the Schubert class to the Chow--Witt ring is given by the pushforward of the constant form 1 from the resolution (with the canonical choice of orientation). 
\end{enumerate}
\end{theorem}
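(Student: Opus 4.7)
The plan is to treat the three parts in sequence, using tools standard in the Chow--Witt literature.

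For part (1), I would use the description of $\widetilde{\op{CH}}^\bullet$ as a fibre product of $\op{CH}^\bullet$ and $\op{H}^\bullet(\mathbf{I}^\bullet)$ over mod-$2$ Chow cohomology. This yields a Bockstein-type long exact sequence
$$\op{H}^{j-1}(X,\mathbf{I}^{j+1}(\mathscr{L})) \to \widetilde{\op{CH}}^j(X,\mathscr{L}) \to \op{CH}^j(X) \xrightarrow{\delta} \op{H}^j(X,\mathbf{I}^{j+1}(\mathscr{L})),$$
in which the connecting map $\delta$ is identified, via the Brosnan/Totaro-style description of integrally lifted Steenrod operations, with the twisted second Steenrod square $\op{Sq}^2_{\mathscr{L}}$. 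Exactness at $\op{CH}^j$ then gives the lifting criterion immediately.

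For part (2), the formula is a bookkeeping combining the Wu formula for Steenrod squares on Chern classes with the Pieri rule. I would express $\sigma_\Lambda$ by the Giambelli/Jacobi--Trudi determinant in terms of special Schubert classes (Chern classes of the tautological quotient bundle), apply $\op{Sq}^2$ via the Cartan formula and the Wu formula, and then re-expand in the Schubert basis using Pieri. A careful accounting should show that each nonzero summand corresponds to adding a single box to $\Lambda$, and that the parity of the added box's coordinates---equivalently, its colour in the checkerboard pattern---controls the split between the $\mathscr{O}$- and $\det$-twist contributions. The ``even corner'' reformulation follows because the only boxes one may add to $\Lambda$ and still obtain a diagram inside the $k\times(n-k)$ frame are those at the convex corners, and $\op{Sq}^2_{\mathscr{L}}$ vanishes precisely when the relevant checkerboard colours never occur at such corners.

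For part (3), I would take $p_\Lambda:\op{Fl}(\Lambda)\to\op{Gr}(k,n)$ to be the partial flag variety associated to the Schubert conditions of $\Lambda$, which is a smooth tower of Grassmannian bundles whose image is the Schubert variety and whose restriction to the open Schubert cell is an isomorphism. Standard relative tangent computations then express $\omega_{p_\Lambda}$ as a tensor product of determinants of the successive tautological quotients on $\op{Fl}(\Lambda)$, giving an explicit formula for $c_1(\omega_{p_\Lambda}\otimes p_\Lambda^\ast\mathscr{L})$. The Sq$^2$ vanishing, reinterpreted as the even-corner condition, translates exactly into the statement that $\omega_{p_\Lambda}\otimes p_\Lambda^\ast\mathscr{L}$ admits a square root in $\op{Pic}(\op{Fl}(\Lambda))$, and any such square root determines the required orientation. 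The pushforward of the unit $\langle 1\rangle\in\widetilde{\op{CH}}^0(\op{Fl}(\Lambda))$ along $p_\Lambda$ then lands in $\widetilde{\op{CH}}^j(\op{Gr}(k,n),\mathscr{L})$, and its image in the Chow ring is the fundamental class of the Schubert variety by the projection formula applied to the birational map $p_\Lambda$.

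The main obstacle is part (3): matching the combinatorial Sq$^2$-vanishing with the geometric existence of a canonical square root of $\omega_{p_\Lambda}\otimes p_\Lambda^\ast \mathscr{L}$ requires a precise parity analysis of the tautological line bundles arising in the tower, and one must verify that the resulting orientation---and hence the lift---is truly canonical, i.e.\ independent of the auxiliary choices made in the construction of $\op{Fl}(\Lambda)$ and of the square root up to global squares. Part (1) is formal once $\delta=\op{Sq}^2_{\mathscr{L}}$ has been established, and part (2), while combinatorially intricate, presents no serious conceptual difficulty.
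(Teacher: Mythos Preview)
Your overall strategy matches the paper's closely: part (1) via the B\"ar/fibre-product sequence, part (2) via Wu formula plus multiplicative structure, part (3) via the Balmer--Calm\`es flag resolution and explicit computation of $\omega_{p_\Lambda}$ as a tensor product of determinants of tautological subquotients. Two points deserve sharpening.

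For part (1), the connecting map in the B\"ar sequence is not $\op{Sq}^2_{\mathscr{L}}$ itself: it is the Bockstein $\beta_{\mathscr{L}}:\op{Ch}^j\to\op{H}^{j+1}(\op{Gr}(k,n),\mathbf{I}^{j+1}(\mathscr{L}))$, which is related to $\op{Sq}^2_{\mathscr{L}}$ only after composing with the reduction $\rho_{\mathscr{L}}$ to $\op{Ch}^{j+1}$. The equivalence ``$\beta_{\mathscr{L}}(x)=0 \Leftrightarrow \op{Sq}^2_{\mathscr{L}}(x)=0$'' therefore requires that $\rho_{\mathscr{L}}$ be \emph{injective on the image of $\beta_{\mathscr{L}}$}. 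This is not a formal feature of the long exact sequence; it is a structural fact about the $\mathbf{I}^\bullet$-cohomology of Grassmannians (the image of $\beta$ is exactly the $\op{I}(F)$-torsion, and $\rho$ detects it), proved in the companion paper on Chow--Witt rings of Grassmannians. You should invoke this explicitly rather than folding it into the phrase ``identified with $\op{Sq}^2_{\mathscr{L}}$''.

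For part (3), the paper separates two cases you have run together. If $\Lambda$ is liftable but not an even Young diagram, the lift is $\op{I}(F)$-torsion and hence determined by its mod-2 reduction; in that case the pushforward $(p_\Lambda)_\ast(1)$ is independent of the choice of orientation, so canonicity is automatic. Only for even $\Lambda$ does the orientation genuinely matter, and there the paper produces a \emph{specific} canonical square root from the explicit formula $\omega_{\op{Fl}(\underline{d},\underline{e})}\cong\bigotimes_i\det\mathscr{P}_{d_i}^{\otimes m_i}$, using the identification $\bigotimes_i\det\mathscr{S}_i\cong\mathscr{O}_{\op{Fl}}$ when needed to make all exponents even. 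Your proposal asserts existence of a square root but leaves canonicity as an acknowledged obstacle; the paper's resolution is precisely this explicit parity bookkeeping on the exponents $m_i$, not an abstract argument.
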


In particular, we can determine additive generators for the $\mathbf{I}^\bullet$-cohomology ring. Moreover, the even Young diagrams provide an additive basis of its quotient mod torsion, the $\mathbf{W}$-cohomology ring, generalizing classical results of Pontryagin \cite{pontryagin}. To understand the multiplication of non-torsion classes one can use an isomorphism from a suitable Chow ring, based on doubled partitions. On the other hand, multiplication with torsion classes is much simpler and can be determined by reduction to the mod 2 Chow ring. We only state the simplest case, for $\op{Gr}(2a,2b)$. For the other cases, which require more notation to set up, cf. Proposition~\ref{prop:folklore}, Proposition~\ref{prop:addbasis1} and Theorem~\ref{thm:multw}.

\begin{theorem}
\label{thm:main2}
\begin{enumerate}
\item The even Young diagrams, cf. Definition~\ref{def:evenyoung} form a $\op{W}(F)$-basis for the $\mathbf{W}$-cohomology ring $\op{H}^\bullet(\op{Gr}(k,n),\mathbf{W}\oplus\mathbf{W}(\det))$. 
\item There is a natural ring isomorphism 
\[
\op{Ch}^\bullet(\op{Gr}(a,b))\to\op{H}^{4\bullet}(\op{Gr}(2a,2b),\mathbf{W}):\op{c}_i\mapsto \op{p}_i,
\]
This allows to completely describe the multiplication in $\mathbf{W}$-cohomology in terms of classical Schubert calculus for the Chow ring. 
\item The product of any class with an $\op{I}(F)$-torsion class is $\op{I}(F)$-torsion and completely determined by its image in the mod 2 Chow ring $\op{Ch}^\bullet(\op{Gr}(k,n))$. 
\end{enumerate}
\end{theorem}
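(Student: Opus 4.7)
The overall plan is to exploit the fundamental exact sequence $0\to\mathbf{I}^{j+1}\to\mathbf{I}^j\to\mathbf{K}^{\mathrm{M}}_j/2\to 0$ and its associated Bockstein long exact sequences, which tie together $\mathbf{I}^\bullet$-cohomology, $\mathbf{W}$-cohomology and mod-$2$ Chow cohomology. Combined with the additive presentation of $\widetilde{\op{CH}}^\bullet(\op{Gr}(k,n),\mathscr{L})$ from \cite{real-grassmannian} and the canonical Schubert lifts supplied by \prettyref{thm:main1}, the three parts of the theorem correspond respectively to identifying a basis in the $I(F)$-torsion-free quotient, understanding its ring structure, and describing the $I(F)$-torsion multiplicatively via mod-$2$ reduction.

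For (1), \prettyref{thm:main1} singles out the even Young diagrams as exactly those partitions whose Schubert classes admit canonical lifts to $\widetilde{\op{CH}}^\bullet$. Composing these lifts with the natural map to $\op{H}^\bullet(\op{Gr}(k,n),\mathbf{W}\oplus\mathbf{W}(\det))$ produces candidate basis elements. Linear independence over $\op{W}(F)$ should follow from the explicit additive description in \cite{real-grassmannian} together with the compatibility between Schubert cells and the even-partition indexing. For spanning I would compare ranks: use the Bockstein long exact sequence to remove the contribution of mod-$2$ Chow cohomology and match the resulting $\op{W}(F)$-rank with the number of even Young diagrams in the $k\times(n-k)$-frame (counted for both twists $\mathscr{O}$ and $\det$), generalizing Pontryagin's classical computation.

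For (2), the natural candidate map sends the Chern class $\op{c}_i$ to the Pontryagin class $\op{p}_i$ of the tautological subbundle on $\op{Gr}(2a,2b)$. Well-definedness amounts to verifying the Whitney-type identity $\op{p}(S)\op{p}(Q)=1$ in $\op{H}^{4\bullet}(\op{Gr}(2a,2b),\mathbf{W})$, which I would derive from the presentation of the Chow--Witt ring in \cite{real-grassmannian} together with the vanishing of odd Pontryagin classes in $\mathbf{W}$-cohomology that is itself an output of the torsion analysis. Bijectivity can then be checked on the basis from (1): even Young diagrams in a $2a\times(2b-2a)$ frame correspond, by halving all side lengths, to the ordinary Young diagrams in an $a\times(b-a)$ frame indexing the Schubert basis of $\op{Ch}^\bullet(\op{Gr}(a,b))$, and the Pontryagin-via-Chern map intertwines these indexings.

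For (3), the boundary $\partial\colon\op{H}^{j-1}(-,\mathbf{K}^{\mathrm{M}}_j/2)\to\op{H}^j(-,\mathbf{I}^{j+1})$ of the Bockstein sequence exhibits $I(F)$-torsion classes as those arising from mod-$2$ Chow cohomology; given $\alpha$ torsion and $\beta$ arbitrary, the $\mathbf{K}^{\mathrm{M}}_\bullet/2$-module structure of the Bockstein complex forces $\alpha\cdot\beta$ again to factor through the mod-$2$ reduction of $\beta$, hence to be determined by the image of $\beta$ in $\op{Ch}^\bullet(\op{Gr}(k,n))$. The most delicate step is the Whitney identity in part (2): since Pontryagin classes live in degree $4i$, ruling out corrections from the torsion ideal requires a careful tracking of the ring structure from \cite{real-grassmannian}, and this is where I expect most of the technical work to concentrate.
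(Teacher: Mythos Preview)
Your approach inverts the logical order the paper uses and thereby overcomplicates matters. The crucial input you underexploit is Proposition~\ref{prop:invertedeta} (quoted from \cite{real-grassmannian}): it already gives an explicit presentation of $\op{H}^\bullet(\op{Gr}(k,n),\mathbf{W}\oplus\mathbf{W}(\det))$ as a $\op{W}(F)$-algebra generated by Pontryagin and Euler classes, with the Whitney relation $\op{p}\cdot\op{p}^\perp=1$ already built in. Part (2) is then immediate by comparing presentations: for $\op{Gr}(2a,2b)$ the trivial-twist part reads $\op{W}(F)[\op{p}_2,\dots,\op{p}_{2a},\op{p}_2^\perp,\dots,\op{p}_{2(b-a)}^\perp]/(\op{p}\cdot\op{p}^\perp=1)$, which matches the Chow presentation of $\op{Gr}(a,b)$ under $\op{c}_i\leftrightarrow\op{p}_{2i}$ (Proposition~\ref{prop:folklore}). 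There is no Whitney identity left to derive and no torsion corrections to track; the relation is input, not output, so the step you flagged as ``where most of the technical work concentrates'' is in fact empty.

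Part (1) then follows \emph{from} (2) rather than the other way around: ordinary Young diagrams base the Chow ring, so their images under the isomorphism, supplemented by products with $\op{e}_k$, $\op{e}_{n-k}^\perp$ or $\op{R}$ for the remaining pieces, base $\mathbf{W}$-cohomology, and one checks these are exactly the even Young diagrams (Proposition~\ref{prop:addbasis1}). Your route via the canonical geometric lifts of Theorem~\ref{thm:main1} plus a Bockstein rank count would work in principle, but it is circuitous: the geometric lifts are constructed only in Section~\ref{sec:geometricreps}, and identifying them with the Pontryagin-class basis is itself a separate result (Proposition~\ref{prop:identification}) proved after the ring structure is already established. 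For (3) your argument is essentially the paper's, namely Proposition~\ref{prop:incoh}: the image of $\beta_{\mathscr{L}}$ is an ideal equal to the $\op{I}(F)$-torsion, and $\rho_{\mathscr{L}}$ is injective on it.
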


As a direct consequence, we can also describe the multiplicative structure of the twisted Witt groups of Grassmannians. Their additive structure was determined in \cite{balmer:calmes},  but the ring structure was left open in loc.cit. 

\begin{corollary}
There is a natural isomorphism 
\[
\alpha:\op{W}^i(\op{Gr}(k,n),\mathscr{L})\xrightarrow{\cong}  \bigoplus_{j\equiv i\bmod 4} \op{H}^j(\op{Gr}(k,n),\mathbf{W}(\mathscr{L})).
\]
In particular, the description of the multiplicative structure on $\mathbf{W}$-cohomology in Theorem~\ref{thm:main2} completely determines the multiplicative structure on the twisted Witt groups of Grassmannians. 
\end{corollary}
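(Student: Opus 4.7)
The proof combines two inputs: the additive computation of twisted Witt groups of Grassmannians from \cite{balmer:calmes} with the ring structure on $\mathbf{W}$-cohomology established in \prettyref{thm:main2}. The bridge is the Gersten--Witt (Balmer--Walter) spectral sequence
\[
E_2^{p,q} \Longrightarrow \op{W}^{p+q}(\op{Gr}(k,n),\mathscr{L}),
\]
which is multiplicative and whose $E_2$-term, after using that the skew-symmetric shifted Witt sheaves vanish over base fields of characteristic not $2$, consists of groups of the form $\op{H}^p(\op{Gr}(k,n), \mathbf{W}(\mathscr{L}'))$ for twists $\mathscr{L}'$ of $\mathscr{L}$ by powers of $\omega_{\op{Gr}(k,n)}\cong\det^{-n}$ determined by $q \bmod 4$.

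To produce the additive isomorphism $\alpha$, I would invoke \prettyref{thm:main2}(1) together with the explicit degree statements in \prettyref{thm:main2}(2): the even Young diagrams form an additive $\op{W}(F)$-basis of $\mathbf{W}$-cohomology whose cohomological degrees all lie in a single residue class modulo $4$, depending on the twist and the parities of $k$ and $n-k$. Along each diagonal $\{p+q=i\}$ of the spectral sequence, the nonzero entries are then confined to columns spaced by $4$, so no differential $d_r$ with $r\geq 2$ can be nonzero and the associated filtration splits canonically. The edge maps assemble into the stated isomorphism $\alpha$, matching the additive answer of \cite{balmer:calmes}.

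For the multiplicative statement, the multiplicativity of the Gersten--Witt spectral sequence, combined with degeneration at $E_2$ and the trivial extension problem, forces $\alpha$ to be a ring isomorphism. Transporting the ring structure on $\bigoplus_j \op{H}^j(\op{Gr}(k,n), \mathbf{W}(\mathscr{L}))$ described by \prettyref{thm:main2} then yields the multiplicative structure on $\op{W}^\bullet(\op{Gr}(k,n),\mathscr{L})$. The main obstacle is the bookkeeping of the twists $\mathscr{L}'$: since $\omega\cong\det^{-n}$, one must distinguish $n$ even from $n$ odd. For $n$ even all relevant twists collapse to $\mathscr{L}$, while for $n$ odd one must rule out a priori contributions from $\mathbf{W}(\mathscr{L}\otimes\det)$ by verifying, via the degree analysis of even Young diagrams, that those would fall outside the claimed residue class modulo $4$. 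Once this parity check is completed, the rest of the argument is formal.
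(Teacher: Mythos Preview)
Your spectral sequence approach is genuinely different from the paper's, which instead constructs the comparison map $\alpha$ directly (Section~\ref{sec:wittbasics}), verifies its compatibility with pushforwards (Proposition~\ref{prop:compatpush}), and then observes that the Balmer--Calm\`es generators $(p_\Lambda)_\ast(1)$ of $\op{W}^\bullet(\op{Gr}(k,n),\mathscr{L})$ are sent to the corresponding generators of $\mathbf{W}$-cohomology from Theorem~\ref{thm:lift} and Proposition~\ref{prop:addbasis1}. Both sides are free $\op{W}(F)$-modules with bases indexed by even Young diagrams, so $\alpha$ is an isomorphism; multiplicativity follows from the oriented Tor formula of Section~\ref{sec:torformula}.

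The gap in your argument is the claim that the cohomological degrees of even Young diagrams lie in a single residue class modulo $4$ for each twist. This fails when $k$ and $n-k$ are both odd: by Proposition~\ref{prop:invertedeta}(3), the trivial-twist $\mathbf{W}$-cohomology is then an exterior algebra on the class $\op{R}$ of degree $n-1$ over a ring generated in degrees divisible by $4$, and since $n$ is even here, $n-1$ is odd. In $\op{Gr}(3,6)$, for instance, the nonzero $\mathbf{W}$-cohomology degrees are $0,4,5,8,9$, and the potential differentials $d_5\colon H^0\to H^5$ and $d_5\colon H^4\to H^9$ cannot be excluded by degree parity alone; the unit kills the first, but multiplicativity does not obviously propagate this to the second. (Also, in the standard Gersten--Witt spectral sequence the twist $\mathscr{L}$ is fixed throughout and no powers of $\omega$ enter the $E_2$-page, so the $n$ even/odd dichotomy you set up is not where the actual difficulty lies.) A natural fix is to observe that the classes $(p_\Lambda)_\ast(1)$ are permanent cycles by construction and generate $E_2$---but that is essentially the paper's argument in spectral sequence clothing.
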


Now we can state how ``oriented'' Schubert calculus works: a classical Schubert problem in the Chow ring can be lifted to the Chow--Witt ring if the Steenrod square condition from Theorem~\ref{thm:main1} is satisfied, and in this case, we have a canonical lift provided by canonical orientations of the Schubert varieties. This provides an ``oriented Schubert problem'': a collection of Schubert varieties with additional orientations. On the one hand, we can then use the above description of the Chow--Witt ring structure in Theorem~\ref{thm:main2} to compute the degree of the intersection of these oriented Schubert cycles in the top Chow--Witt group. On the other hand, the local contribution of a solution subspace in the intersection of the Schubert cycles is given by a quadratic form depending on the field of definition of the solution subspace as well as the additional orientation information. The following is Theorem~\ref{thm:types} which describes the local contributions in the case of smooth oriented Schubert problems. There is a similar statement for the singular case, cf. Theorem~\ref{thm:typessing}, which again requires more notation for a precise statement. 

\begin{theorem}
\label{thm:main3} 
Let $F$ be a field of characteristic $\neq 2$. 
\begin{enumerate}
\item For any oriented Schubert problem, the degree of the intersection in the top Chow--Witt group always has the form $a \langle 1 \rangle + b \langle -1 \rangle$. If one of the Schubert classes is torsion, then $a=b$.
\item
Fix a smooth oriented Schubert problem in the Grassmannian $\op{Gr}(k,n)$. For any solution subspace $[W]\in\op{Gr}(k,n)$ defined over $E/F$, there is a natural identification $\xi_W$ of the tangent space of $\op{Gr}(k,n)$ at $[W]$ with the direct sum of the normal spaces to the smooth Schubert varieties intersecting at $[W]$, and the oriented multiplicity of $[W]$ in the intersection cycle is $\op{tr}_{E/F}\langle \det\xi_W\rangle$, i.e., it encodes the square class of the unit comparing the two natural orientations above.
\end{enumerate}
\end{theorem}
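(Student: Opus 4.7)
The plan is to realize each canonical Schubert lift (as in Theorem~\ref{thm:main1}(3)) as the pushforward of the unit form $\langle 1\rangle$ along a resolution $p_\Lambda\colon\op{Fl}(\Lambda)\to\op{Gr}(k,n)$ equipped with the canonical orientation of $\omega_{p_\Lambda}\otimes p_\Lambda^\ast\mathscr{L}$, and then to compute the intersection product via the projection formula.

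For part (1), I would first move the flags into general position so that the fiber product of the relevant resolutions is zero-dimensional. The intersection class in the top Chow--Witt group is then the pushforward of the unit form on this fiber product, oriented by the product of the canonical orientations. The key observation is that the resolutions and the orientation data are intrinsic to the combinatorial Schubert setup and are defined over $\mathbb{Z}$ (no auxiliary unit beyond $\pm 1$ enters), so the degree factors through the natural map $\op{GW}(\mathbb{Z})\to\op{GW}(F)$ whose image is $\mathbb{Z}\langle 1\rangle + \mathbb{Z}\langle -1\rangle$. For the torsion refinement, Theorem~\ref{thm:main2}(3) gives that a torsion Schubert class is $\op{I}(F)$-torsion, so its product with any class is also $\op{I}(F)$-torsion; under the degree map, $\op{I}(F)$-torsion elements of the top Chow--Witt group are annihilated by $\op{I}(F)\subset\op{GW}(F)$, hence are integer multiples of the hyperbolic form $h=\langle 1\rangle + \langle -1\rangle$. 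This forces $a=b$.

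For part (2), smoothness and transversality imply that the scheme-theoretic intersection is an étale union of closed points $[W]$ with residue fields $E/F$. At each such point, the intersection class is the pushforward along $\op{Spec}(E)\to\op{Spec}(F)$ of a rank-one form whose orientation is inherited from the product of canonical orientations on the Schubert resolutions. Since each $\sigma_{\Lambda_i}$ is smooth at $[W]$, the canonical orientation of $\omega_{p_{\Lambda_i}}\otimes p_{\Lambda_i}^\ast\mathscr{L}_i$ identifies with an orientation of the normal bundle $N_{[W],\sigma_{\Lambda_i}}$, and the transversality isomorphism $\xi_W\colon T_{[W]}\op{Gr}(k,n)\xrightarrow{\cong}\bigoplus_i N_{[W],\sigma_{\Lambda_i}}$ allows one to compare this combined orientation with the canonical orientation of $T_{[W]}\op{Gr}(k,n)$; the discrepancy is the square class of $\det\xi_W$. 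The Chow--Witt pushforward along a finite separable extension of points is the Scharlau trace $\op{tr}_{E/F}\colon\op{GW}(E)\to\op{GW}(F)$, so the local contribution is $\op{tr}_{E/F}\langle\det\xi_W\rangle$.

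The main obstacle is the orientation bookkeeping in part (2): one must carefully match the orientation of $\omega_{p_\Lambda}\otimes p_\Lambda^\ast\mathscr{L}$ on the resolution with an orientation of the normal bundle to the smooth Schubert variety in $\op{Gr}(k,n)$, and show that the combined orientation on $\bigoplus_i N_{[W],\sigma_{\Lambda_i}}$ compares with the canonical orientation of $T_{[W]}\op{Gr}(k,n)$ by exactly the unit $\det\xi_W$. Once this local identification is pinned down, (1) follows from the integral provenance of the construction together with the characterization of $\op{I}(F)$-torsion as hyperbolic multiples, and (2) is immediate from the base-change formula for Chow--Witt pushforwards at finite separable points.
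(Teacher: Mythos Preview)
Your treatment of part~(2) and of the torsion half of part~(1) matches the paper's arguments: the paper establishes part~(2) as Theorem~\ref{thm:types} via transversality plus the base-change formula of \cite{AsokFaselEuler} for the cartesian square with $\iota_1\times\cdots\times\iota_l$ on one side and the diagonal on the other, exactly as you sketch; and the torsion case of part~(1) is Theorem~\ref{thm:torsionhyp}, whose proof is precisely your observation that $\op{I}(F)$-torsion dies in the torsion-free top group $\op{W}(F)$, forcing the class to be hyperbolic.

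Your argument for the non-torsion half of part~(1), however, takes a different route from the paper and is not fully justified as stated. The paper (Theorem~\ref{thm:refcount}) does \emph{not} argue by integrality over $\op{Spec}\mathbb{Z}$; instead it invokes the explicit multiplicative structure of the $\mathbf{W}$-cohomology ring established in Theorem~\ref{thm:multw}, which in turn rests on the ring isomorphism $\op{CH}^\bullet(\op{Gr}(\lfloor k/2\rfloor,\lfloor n/2\rfloor))\otimes_{\mathbb{Z}}\op{W}(F)\cong\op{H}^{4\bullet}(\op{Gr}(k,n),\mathbf{W})$ of Proposition~\ref{prop:folklore}. That isomorphism forces the oriented Littlewood--Richardson coefficients to be integer multiples of $\langle 1\rangle$ in $\op{W}(F)$, and then the rank constraint from the Chow ring pins down $a$ and $b$. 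The paper does mention your integrality heuristic in the remark following Theorem~\ref{thm:multw}, but only as intuition, not as the proof. Your version would require setting up Chow--Witt theory over $\mathbb{Z}[1/2]$, checking that the canonical orientations descend, and proving a base-change statement for the degree map---none of which is available in the paper's framework. While the conclusion that the image of $\op{GW}(\mathbb{Z})\to\op{GW}(F)$ lands in $\mathbb{Z}\langle 1\rangle+\mathbb{Z}\langle -1\rangle$ is correct (since $\op{W}(\mathbb{Z})\cong\mathbb{Z}$ via signature), the factorization itself is the unestablished step. The paper's approach via Proposition~\ref{prop:folklore} buys you a clean argument entirely within the field-level theory, at the cost of needing the explicit presentation of $\mathbf{W}$-cohomology from \cite{real-grassmannian}; your approach would be conceptually appealing and more robust if the arithmetic foundations were in place.
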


It should be pointed out clearly that the above results describe two sides of the multiplicative structure of the $\mathbf{I}^\bullet$-cohomology which are both necessary to understand enumerative consequences: on the one hand, we have the picture of characteristic classes, and a presentation of the cohomology ring in terms of Pontryagin classes which allows to compute degrees of intersection products via the folklore isomorphism of Theorem~\ref{thm:main2}. On the other hand, we have very canonical cohomology classes of geometric origin, arising as pushforward of $\langle 1\rangle$ from resolutions of singularities for Schubert varieties corresponding to even Young diagrams; and oriented multiplicities for intersections of such geometric cycles can be determined very explicitly as in Theorem~\ref{thm:main3}. But to get enumerative consequences, we need that both pictures coincide, i.e., that the geometric cycles are actually representatives of the respective characteristic classes. Then we can compute the degree of the intersection (i.e., the refined number of solution subspaces) using Theorem~\ref{thm:main2} and its relatives \emph{and} identify the local contribution of each solution subspace in terms of orientation information encoded in the notion of type. This identification of geometric cycles and characteristic classes is provided in Proposition~\ref{prop:identification}. 

As a sample application, we have the following generalization of the signed count of balanced subspaces in \cite{feher:matszangosz}, cf. Theorem~\ref{thm:fmgw}. 

\begin{theorem}
Let $F$ be a field of characteristic $\neq 2$. Let $a=2i$ and $b=2j$ be even numbers with $a<b$. Then the following equality holds in $\op{GW}(F)\cong\widetilde{\op{CH}}^{4(ab-a^2)}(\op{Gr}(2a,2b))$:
\[
\sum_{\op{balanced} W}\op{tr}_{F(W)/F}\langle\det\xi_W\rangle=\frac{1}{2}\left(\binom{2j}{2i}+\binom{j}{i}\right)\langle 1\rangle + \frac{1}{2}\left(\binom{2j}{2i}-\binom{j}{i}\right)\langle-1\rangle.
\]
\end{theorem}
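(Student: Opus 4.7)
My plan is to realize the left-hand sum as the degree of an oriented Schubert intersection in $\widetilde{\op{CH}}^{4(ab-a^2)}(\op{Gr}(2a,2b))\cong\op{GW}(F)$, and then to evaluate that degree using the ring-theoretic description of the Chow--Witt ring provided by Theorems~\ref{thm:main2} and~\ref{thm:main3}.

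First, on the geometric side: following the setup of \cite{feher:matszangosz}, balanced $2a$-subspaces of a fixed $2b$-dimensional vector space, with respect to sufficiently generic auxiliary flags, arise as the zero-dimensional intersection of a specific collection of Schubert varieties in $\op{Gr}(2a,2b)$. The key combinatorial step is to verify that all of the Young diagrams involved satisfy the Steenrod square vanishing condition of Theorem~\ref{thm:main1}, so that each Schubert class lifts canonically to the Chow--Witt ring via the orientation recipe of Theorem~\ref{thm:main1}(3). This promotes the configuration to an oriented Schubert problem, and Theorem~\ref{thm:main3}(2) then identifies the local contribution of each balanced solution $W$ with $\op{tr}_{F(W)/F}\langle\det\xi_W\rangle$. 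The left-hand side is therefore the degree in $\widetilde{\op{CH}}^{4(ab-a^2)}(\op{Gr}(2a,2b))$ of the intersection cycle of these canonically lifted Schubert classes.

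The second step is to compute this degree. By Theorem~\ref{thm:main3}(1) it has the form $a\langle 1\rangle+b\langle-1\rangle$, so it is determined by the rank $a+b$ together with the class $(a-b)\langle 1\rangle$ obtained by reducing modulo the hyperbolic form. The rank is read off from the classical Chow intersection, which by a Pieri/Littlewood--Richardson computation in $\op{Gr}(2a,2b)$ yields the complex Schubert count $\binom{2j}{2i}$. The image in $\op{W}(F)$ is extracted by projecting to $\op{H}^{4(ab-a^2)}(\op{Gr}(2a,2b),\mathbf{W})$ and applying the folklore ring isomorphism $\op{Ch}^\bullet(\op{Gr}(a,b))\xrightarrow{\cong}\op{H}^{4\bullet}(\op{Gr}(2a,2b),\mathbf{W})$ of Theorem~\ref{thm:main2}(2); this transports the computation to an ordinary Schubert product in the smaller Grassmannian $\op{Gr}(a,b)=\op{Gr}(2i,2j)$, which a second Chow calculation evaluates to $\binom{j}{i}$. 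These two numbers determine $a$ and $b$ and deliver the stated right-hand side.

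The main obstacle is the first step: pinning down exactly which Schubert problem has balanced subspaces as its solutions, checking that all of the associated Young diagrams satisfy the Steenrod square condition (so the Chow--Witt lifts exist), and identifying the matching Schubert classes in $\op{Gr}(a,b)$ under the doubling isomorphism. Once that bookkeeping is in place, both the rank and the $\op{W}$-cohomology computations should reduce to routine applications of standard Schubert calculus in the respective Chow rings.
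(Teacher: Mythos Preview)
Your approach is correct and matches the paper's exactly: realize the sum as the degree of an oriented Schubert intersection, then determine the resulting class in $\op{GW}(F)$ from its rank (classical Chow degree) and its image in $\op{W}(F)$ (via the folklore isomorphism of Theorem~\ref{thm:main2}).

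The one place where your plan remains schematic is what you flag as the ``main obstacle,'' and the paper resolves it very concretely. The relevant Schubert problem is not a complicated configuration: for a fixed $b$-dimensional subspace $V\subset F^{2b}$, the locus of $2a$-planes $W$ with $\dim(W\cap V)\ge a$ is the Schubert variety for the $a\times(b-a)$ rectangular Young diagram in $\op{Gr}(2a,2b)$. Hence ``balanced with respect to four generic $b$-planes'' is precisely the $4$-fold self-intersection of this single class. Since $a=2i$ and $b-a=2(j-i)$ are both even, the rectangle is completely even (so the Steenrod obstruction vanishes and the canonical Chow--Witt lift exists), and the Schubert variety is smooth (so Theorem~\ref{thm:main3}(2) applies directly without the singular version). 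Under the doubling isomorphism, the $2i\times 2(j-i)$ rectangle corresponds to the $i\times(j-i)$ rectangle in $\op{Gr}(a,b)=\op{Gr}(2i,2j)$, so the $\mathbf{W}$-degree is again a $4$-fold self-intersection of a rectangular class. Both the classical count $\binom{2j}{2i}$ in $\op{Gr}(2a,2b)$ and the count $\binom{j}{i}$ in $\op{Gr}(2i,2j)$ are instances of Vakil's theorem (Proposition~\ref{prop:vakil}); you do not need to run a Pieri/Littlewood--Richardson computation by hand.
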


For $F$ an algebraically closed field, this recovers the classical results from Schubert calculus. For $F=\mathbb{R}$, this recovers results of \cite{feher:matszangosz}, and generally encodes lower bounds for the number of real solutions of a Schubert problem. For $F=\mathbb{F}_q$, this provides mod 2 congruences of solution subspaces over even-degree extensions with oriented splitting and solution subspaces over odd-degree extensions with non-oriented splitting, cf. Proposition~\ref{prop:fq}. 


\emph{Structure of the paper:} 
We begin in Section~\ref{sec:recall1} by recalling the descriptions of Chow--Witt rings of Grassmannians from \cite{real-grassmannian}. Then we describe the multiplication of torsion-free classes (in the $\mathbf{W}$-cohomology ring) in terms of even Young diagrams in Section~\ref{sec:cohomology}, and the lifting conditions and multiplication of torsion classes are discussed in Section~\ref{sec:torsion}. After a short recollection of pushforwards and products in Witt groups and $\mathbf{I}^\bullet$-cohomology in Section~\ref{sec:wittbasics}, we discuss the explicit geometric representatives of lifts of Schubert classes in Section~\ref{sec:geometricreps}. Then we discuss the explicit geometric identification of oriented intersection multiplicities in Section~\ref{sec:intersection}. Finally, all the results are combined in Section~\ref{sec:main}, including the comparison of $\mathbf{W}$-cohomology rings and twisted Witt groups, and a couple of sample enumerative applications are given in Section~\ref{sec:results}. 

\emph{Acknowledgements:} First of all, many thanks to Kirsten Wickelgren; without the discussions about refined enumerative geometry and her work  during the ``\'etale and motivic homotopy 2017''  conference in Heidelberg and the ``motives and refined enumerative geometry'' workshop in May 2018 in Essen, the present work would not exist. I would also like to thank to Thomas Hudson for discussions about resolutions of singularities for Schubert varieties. The package \verb!genyoungtabtikz! was used to generate Young diagrams, and example computations were done using the \verb!SchurRings! packages of \verb!Macaulay2!.

\section{Recollection on Chow--Witt rings of Grassmannians}
\label{sec:recall1}

In this section, we recall the computations of the Chow--Witt rings resp. $\mathbf{I}^\bullet$-cohomology rings of the Grassmannians from \cite{real-grassmannian}. 

Recall that the Grassmannian $\op{Gr}(k,n)$ carries an exact sequence of vector bundles 
\[
0\to\mathscr{S}_k\to\mathscr{O}_{\op{Gr}(k,n)}^{\oplus n}\to\mathscr{Q}_{n-k}\to 0
\] 
where the fiber of $\mathscr{S}_k$ at a point $[W]$ is the subspace $W\subseteq V$, and the fiber of $\mathscr{Q}_{n-k}$ is $V/W$. We speak of the tautological sub- and quotient bundle, respectively. After pullback along $\op{GL}_n/\op{GL}_k\times\op{GL}_{n-k}\to\op{Gr}(k,n)$, the bundles become the tautological $k$-plane bundle  $\mathscr{E}_k$ and the complementary $n-k$-plane bundle $\mathscr{E}_{n-k}^\perp$, with $\mathscr{E}_k\oplus\mathscr{E}_{n-k}^\perp\cong \mathscr{O}_{\op{GL}_n/\op{GL}_k\times\op{GL}_{n-k}}^{\oplus n}$. 
The short description of the cohomology of the Grassmannians is that the characteristic classes of these two bundles generate the cohomology rings, modulo the relations coming from the Whitney sum formula. The one exception to this rule is the case where $\dim\op{Gr}(k,n)=k(n-k)$ is odd, which has a new exterior class $\op{R}$. A recollection on definitions and properties of the relevant characteristic classes (Pontryagin classes $\op{p}_i$, Euler classes $\op{e}_n$, Chern classes $\op{c}_i$, Stiefel--Whitney classes $\overline{\op{c}}_i$ and their Bocksteins $\beta_{\mathscr{L}}(\overline{\op{c}}_{2i_1}\cdots\overline{\op{c}}_{2i_j})$) can be found in \cite{chow-witt,real-grassmannian}. 

As a matter of notation, the cohomology rings we consider here (Chow--Witt as well as cohomology with $\mathbf{I}^\bullet$ or $\mathbf{W}$-coefficients) appear with possibly nontrivial twists by line bundles. On the Grassmannians, we have $\op{Pic}(\op{Gr}(k,n))/2\cong\mathbb{Z}/2\mathbb{Z}$, generated by $\det\mathscr{E}_k^\vee$. Sometimes, the twist of a sheaf $\mathbf{A}$ by $\det\mathscr{E}_k^\vee$ will be denoted by $\mathbf{A}(\det)$.

The first relevant statement, is the following, cf.~\cite[Theorem 5.10]{real-grassmannian}.

\begin{proposition}
\label{prop:fiberproduct}
Let $F$ be a perfect field of characteristic $\neq 2$, and let $1\leq k<n$. Then there is a cartesian square of $\mathbb{Z}\oplus\mathbb{Z}/2\mathbb{Z}$-graded $\op{GW}(F)$-algebras: 
\[
\xymatrix{
\widetilde{\op{CH}}^\bullet(\op{Gr}(k,n),\mathscr{O}\oplus\det\mathscr{E}_k^\vee) \ar[r] \ar[d] & \ker \partial_{\mathscr{O}}\oplus \ker\partial_{\det\mathscr{E}_k^\vee} \ar[d]^{\bmod 2}\\
\op{H}^\bullet_{\op{Nis}}(\op{Gr}(k,n),\mathbf{I}^\bullet\oplus\mathbf{I}^\bullet(\det\mathscr{E}_k^\vee)) \ar[r]_>>>>>>\rho & \op{Ch}^\bullet(\op{Gr}(k,n))^{\oplus 2}
}
\]
Here $\det\mathscr{E}_k^\vee$ is the determinant of the dual of the tautological rank $k$ bundle on $\op{Gr}(k,n)$, 
\[
\partial_{\mathscr{L}}:\op{CH}^\bullet(\op{Gr}(k,n))\to \op{Ch}^\bullet(\op{Gr}(k,n)) \xrightarrow{\beta_{\mathscr{L}}} \op{H}^{\bullet+1}(\op{Gr}(k,n),\mathbf{I}^{\bullet+1}(\mathscr{L})) 
\]
is the (twisted) integral Bockstein map. 
\end{proposition}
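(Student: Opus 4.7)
The cartesian square I would try to extract from Morel's sheaf-level description of Milnor–Witt $K$-theory as a fibre product
\[
\mathbf{K}^{MW}_n(\mathscr{L}) \;\cong\; \mathbf{K}^M_n \times_{\overline{\mathbf{K}}^M_n} \mathbf{I}^n(\mathscr{L}),
\]
where the two projections are mod-$2$ reduction on $\mathbf{K}^M_n$ and the natural $\mathbf{I}^n/\mathbf{I}^{n+1}$-reduction $\rho$ on $\mathbf{I}^n(\mathscr{L})$. The basic task is to pass this pullback of Nisnevich sheaves through Nisnevich cohomology, and then rewrite the result in terms of $\ker\partial_\mathscr{L}$.

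\textbf{Step 1 (fibre product on cohomology).} I would encode the sheaf-level square as the short exact sequence
\[
0 \to \mathbf{K}^{MW}_n(\mathscr{L}) \to \mathbf{K}^M_n \oplus \mathbf{I}^n(\mathscr{L}) \to \overline{\mathbf{K}}^M_n \to 0
\]
(with last map $(x,y)\mapsto \bar x - \rho(y)$), and take the associated Nisnevich cohomology long exact sequence. The target four-term sequence compares $\widetilde{\op{CH}}^n(X,\mathscr{L})$ with $\op{CH}^n(X)\oplus \op{H}^n(X,\mathbf{I}^n(\mathscr{L}))$; invoking Fasel's vanishing of the connecting map from $\op{Ch}^{n-1}(X)$ (equivalently, the left exactness of the induced sequence on cohomology) would give the well-known pullback description
\[
\widetilde{\op{CH}}^n(X,\mathscr{L}) \;\cong\; \op{CH}^n(X)\times_{\op{Ch}^n(X)}\op{H}^n(X,\mathbf{I}^n(\mathscr{L})).
\]

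\textbf{Step 2 (identification with $\ker\partial_\mathscr{L}$).} Next I would apply the Bockstein long exact sequence associated with
\[
0 \to \mathbf{I}^{n+1}(\mathscr{L}) \to \mathbf{I}^n(\mathscr{L}) \to \overline{\mathbf{K}}^M_n \to 0,
\]
which identifies $\operatorname{Im}\rho = \ker\beta_\mathscr{L} \subseteq \op{Ch}^n(X)$. A pair $(x,y)$ in the pullback satisfies $\bar x = \rho(y)$, forcing $\beta_\mathscr{L}(\bar x)=\partial_\mathscr{L}(x)=0$; conversely every $x\in\ker\partial_\mathscr{L}$ lifts to such a pair. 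Hence the pullback rewrites as
\[
\widetilde{\op{CH}}^n(X,\mathscr{L}) \;\cong\; \ker\partial_\mathscr{L} \times_{\op{Ch}^n(X)} \op{H}^n(X,\mathbf{I}^n(\mathscr{L})),
\]
which is precisely the one-twist form of the claimed square. Summing over $\mathscr{L}\in\{\mathscr{O},\,\det\mathscr{E}_k^\vee\}$ (representatives for $\op{Pic}(\op{Gr}(k,n))/2\cong\mathbb{Z}/2$) yields the $\mathbb{Z}\oplus\mathbb{Z}/2$-graded version.

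\textbf{Step 3 (ring structure).} Finally I would verify that all four structure maps in the square are ring homomorphisms: mod-$2$ reduction on $\op{CH}^\bullet$, the reductions $\rho$ on $\mathbf{I}^\bullet$-cohomology, and both $\widetilde{\op{CH}}^\bullet\to\op{CH}^\bullet$ and $\widetilde{\op{CH}}^\bullet\to\op{H}^\bullet(\mathbf{I}^\bullet(\mathscr{L}))$ coming from the sheaf projections. The fibre product therefore carries an induced $\op{GW}(F)$-algebra structure agreeing by naturality with that on $\widetilde{\op{CH}}^\bullet$.

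\textbf{Main obstacle.} The delicate point is Step~1: descending Morel's sheaf-level cartesian square to Nisnevich cohomology is not formal, because in general the induced four-term sequence has a nonzero connecting term $\op{Ch}^{n-1}(X)\to\widetilde{\op{CH}}^n(X,\mathscr{L})$. One must either invoke Fasel's general vanishing result for this morphism, or give an independent argument using the Rost–Schmid resolutions (together with the fact that on $\op{Gr}(k,n)$ the Bockstein $\beta_\mathscr{L}$ is well-understood on Schubert generators). Everything else in the proof is a formal combination of Bockstein exact sequences and the pullback property.
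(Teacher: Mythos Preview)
The paper does not give its own proof of this proposition; it simply records the statement and cites \cite[Theorem~5.10]{real-grassmannian}. Your sketch is essentially the standard argument used there (and in \cite{chow-witt}): start from Morel's sheaf-level fibre product $\mathbf{K}^{MW}_n(\mathscr{L})\cong \mathbf{K}^M_n\times_{\mathbf{K}^M_n/2}\mathbf{I}^n(\mathscr{L})$, pass to the associated short exact sequence, and take Nisnevich cohomology. So the route is the same as the one the paper defers to.

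Two points deserve tightening. First, the source of the connecting map in Step~1 is $\op{H}^{n-1}_{\op{Nis}}(X,\mathbf{K}^M_n/2)$, not $\op{Ch}^{n-1}(X)=\op{H}^{n-1}_{\op{Nis}}(X,\mathbf{K}^M_{n-1}/2)$; these are different groups in general. Second, the ``Fasel vanishing'' you invoke is not a general fact but rather the specific input needed for $X=\op{Gr}(k,n)$: the surjection $\op{H}^{n-1}(X,\mathbf{K}^M_n)\to \op{H}^{n-1}(X,\mathbf{K}^M_n/2)$, which from the Bockstein for $0\to\mathbf{K}^M_n\xrightarrow{\cdot 2}\mathbf{K}^M_n\to\mathbf{K}^M_n/2\to 0$ is equivalent to $\op{CH}^n(X)$ having no $2$-torsion. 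For Grassmannians this holds because $\op{CH}^\bullet(\op{Gr}(k,n))$ is free abelian on Schubert classes. That is the precise hypothesis checked in \cite{real-grassmannian}, and once you state it your Step~1 goes through cleanly; Steps~2 and 3 are then correct as written.
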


As a direct consequence, computations in the Chow--Witt ring of $\op{Gr}(k,n)$ can be done by separate computations in the Chow ring (which usually are classical and well-known, cf. \cite{3264}) and the $\mathbf{I}^\bullet$-cohomology ring. For this reason, we will focus on computations in the $\mathbf{I}^\bullet$-cohomology from now on. 

We next formulate the relevant statements describing the $\mathbf{I}^\bullet$-cohomology of the Grassmannians, cf.~\cite[Theorems 5.6, 5.8, 6.7]{real-grassmannian}. First, we have a result which essentially allows to completely determine the torsion in the cohomology:

\begin{proposition}
\label{prop:incoh}
Let $F$ be a field of characteristic $\neq 2$, and let $1\leq k<n$. 
\begin{enumerate}
\item The image of the Bockstein morphism
\[
\beta_{\mathscr{L}}:\op{Ch}^\bullet(\op{Gr}(k,n))\to \op{H}^{\bullet+1}(\op{Gr}(k,n),\mathbf{I}^{\bullet+1}(\mathscr{L}))
\]
coincides exactly with the $\op{I}(F)$-torsion. The images of the Bockstein morphisms form an ideal in $\op{H}^{\bullet}(\op{Gr}(k,n),\mathbf{I}^{\bullet}\oplus\mathbf{I}^\bullet(\det\mathscr{E}_k))$. 
\item The reduction morphism 
$\rho_{\mathscr{L}}: \op{H}^{\bullet}(\op{Gr}(k,n),\mathbf{I}^{\bullet}(\mathscr{L})) \to \op{Ch}^\bullet(\op{Gr}(k,n))$ is injective on the image of $\beta_{\mathscr{L}}$, and $\rho_{\mathscr{L}}\circ\beta_{\mathscr{L}}(x)=\op{Sq}^2_{\mathscr{L}}(x)$. In particular, products of Bockstein classes $\beta_{\mathscr{L}}(x)$ with other classes can be determined by computing the products of the reductions in $\op{Ch}^\bullet(\op{Gr}(k,n))$.
\item Products in $\op{Ch}^\bullet(\op{Gr}(k,n))$ with elements $\op{Sq}^2(x)$ are computed using that the  Steenrod square $\op{Sq}^2_{\mathscr{O}}$ is a derivation, $\op{Sq}^2_{\det}(x)=\overline{\op{c}}_1x+\op{Sq}^2_{\mathscr{O}}(x)$ and the Wu formula: 
\[
\op{Sq}^2_{\mathscr{O}}: \overline{\op{c}}_j\mapsto \overline{\op{c}}_1\overline{\op{c}}_j+(j-1)\overline{\op{c}}_{j+1}; \qquad 
\op{Sq}^2_{\det}:\overline{\op{c}}_j\mapsto (j-1)\overline{\op{c}}_{j+1}.
\]
\end{enumerate}
\end{proposition}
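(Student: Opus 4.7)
The plan is to reduce each statement to the general structural machinery relating $\mathbf{I}^\bullet$-cohomology with mod~$2$ Chow via the B\"ar exact sequence of sheaves
\[
0 \to \mathbf{I}^{j+1}(\mathscr{L}) \to \mathbf{I}^j(\mathscr{L}) \to \mathbf{K}^{\mathrm{M}}_j/2 \to 0,
\]
and then feed in the explicit computation of $\op{H}^\bullet(\op{Gr}(k,n),\mathbf{I}^\bullet(\mathscr{L}))$ from \cite{real-grassmannian}. In the long exact sequence associated to this short exact sequence, the twisted Bockstein $\beta_\mathscr{L}$ is the connecting homomorphism, and the reduction $\rho_\mathscr{L}$ comes from the surjection onto $\mathbf{K}^{\mathrm{M}}_\bullet/2$. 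The composite $\rho_\mathscr{L}\circ \beta_\mathscr{L}$ is, by construction, the twisted motivic Steenrod square $\op{Sq}^2_\mathscr{L}$.

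For part~(1), the B\"ar sequence identifies $\op{im}\beta_\mathscr{L}$ with the kernel of the natural map $\op{H}^\bullet(\op{Gr}(k,n),\mathbf{I}^\bullet(\mathscr{L}))\to \op{H}^\bullet(\op{Gr}(k,n),\mathbf{W}(\mathscr{L}))$. One inclusion is automatic: since $\eta$ acts trivially on $\mathbf{K}^{\mathrm{M}}_\bullet/2$, every Bockstein class is $\eta$-torsion, hence $\op{I}(F)$-torsion. For the reverse inclusion, I would invoke the additive description of the $\mathbf{W}$-cohomology ring via even Young diagrams, which exhibits it as a \emph{free} $\op{W}(F)$-module; hence it has no $\op{I}(F)$-torsion, so any $\op{I}(F)$-torsion class must lie in the kernel of the reduction to $\mathbf{W}$-cohomology, i.e.\ in $\op{im}\beta_\mathscr{L}$. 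The ideal property follows once part~(2) is established, because products of Bockstein classes with anything are again Bockstein classes.

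For part~(2), injectivity of $\rho_\mathscr{L}$ restricted to $\op{im}\beta_\mathscr{L}$ comes from the same long exact sequence: the kernel of $\rho_\mathscr{L}$ is the image of multiplication by $\eta$, which annihilates any Bockstein class; so $\rho_\mathscr{L}$ separates points of $\op{im}\beta_\mathscr{L}$. The identification $\rho_\mathscr{L}\circ\beta_\mathscr{L}=\op{Sq}^2_\mathscr{L}$ is the very definition of the motivic Steenrod square in this setting. To reduce products $\beta_\mathscr{L}(x)\cdot y$ to mod~$2$ Chow, combine the derivation/projection identity $\beta_\mathscr{L}(x)\cdot y = \beta_\mathscr{L}(x\cdot \rho(y))$ with injectivity of $\rho_\mathscr{L}$ on $\op{im}\beta_\mathscr{L}$: the product is fully determined by $\op{Sq}^2_\mathscr{L}(x\cdot \rho(y))$, a computation taking place entirely in $\op{Ch}^\bullet(\op{Gr}(k,n))$.

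For part~(3), the derivation property of $\op{Sq}^2_\mathscr{O}$ is a standard property of the motivic Steenrod square; the passage from $\mathscr{O}$ to $\det$ twists the B\"ar sequence by a line bundle and, by the general formula relating Bocksteins under such twists, changes $\op{Sq}^2$ by multiplication with the first Stiefel--Whitney class of the twisting bundle, which on $\op{Gr}(k,n)$ equals $\overline{\op{c}}_1$. Finally, the Wu formula is obtained by applying $\op{Sq}^2$ to the Stiefel--Whitney classes of the tautological subbundle and comparing with the classical topological computation. I expect the main obstacle to be precisely this last point: verifying that the motivic twisted Steenrod squares on Grassmannians are computed by the classical Wu formula, since this requires aligning the motivic definition of $\op{Sq}^2_\mathscr{L}$ (via the B\"ar sequence) with an explicit action on the Stiefel--Whitney generators. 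Once that alignment is established in \cite{real-grassmannian}, the remaining content is combinatorial manipulation of Young diagrams.
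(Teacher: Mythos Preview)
The paper does not give a proof of this proposition: it is stated as a recollection, with an explicit citation to \cite[Theorems~5.6, 5.8, 6.7]{real-grassmannian}. So there is no proof in the present paper to compare against; the relevant question is whether your outline would reproduce the arguments in \cite{real-grassmannian}.

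Your overall strategy (B\"ar sequence plus the explicit structure of $\mathbf{I}^\bullet$-cohomology of Grassmannians) is the right one, and your formula $\beta_{\mathscr{L}}(x)\cdot y=\beta_{\mathscr{L}}(x\cdot\rho(y))$ is correct and is precisely what makes the image of $\beta$ an ideal. However, there is a genuine logical gap in your argument for injectivity in part~(2). You write: ``the kernel of $\rho_{\mathscr{L}}$ is the image of multiplication by $\eta$, which annihilates any Bockstein class; so $\rho_{\mathscr{L}}$ separates points of $\op{im}\beta_{\mathscr{L}}$.'' The premise ``$\eta$ annihilates Bockstein classes'' gives $\op{im}\beta_{\mathscr{L}}\subseteq\ker\eta$, but what you need is $\op{im}\beta_{\mathscr{L}}\cap\op{im}\eta=0$. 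These are not the same: already for $\mathbb{Z}/4\mathbb{Z}$ with $\eta=$ multiplication by~$2$, one has $\ker\eta=\op{im}\eta=2\mathbb{Z}/4\mathbb{Z}$, so a submodule can simultaneously be $\eta$-torsion and lie in the image of $\eta$. A closely related imprecision appears in part~(1): you assert that the B\"ar sequence directly identifies $\op{im}\beta_{\mathscr{L}}$ with the kernel of the map to $\mathbf{W}$-cohomology, but the B\"ar long exact sequence only gives $\op{im}\beta_{\mathscr{L}}=\ker\eta$ in a single degree; identifying this with the kernel of the map to $\mathbf{W}$-cohomology requires the additional fact that there is no \emph{higher} $\eta$-torsion.

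Both gaps are filled by the same input, namely the explicit additive splitting of $\op{H}^\bullet(\op{Gr}(k,n),\mathbf{I}^\bullet(\mathscr{L}))$ as a free $\op{W}(F)$-module (on even Young diagrams) direct sum the image of $\beta_{\mathscr{L}}$, with the latter a $\mathbb{Z}/2\mathbb{Z}$-vector space. This is exactly the content of \cite[Theorems~5.6, 5.8]{real-grassmannian}, and you should invoke it explicitly at these two points rather than presenting the conclusions as formal consequences of the B\"ar sequence. Once the splitting is in hand, $\op{im}\eta$ is contained in the free summand, hence meets $\op{im}\beta_{\mathscr{L}}$ trivially, and injectivity of $\rho_{\mathscr{L}}$ on the torsion follows.
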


Next, to study the  $\op{W}(F)$-torsionfree quotient of the cohomology, there is a natural exact sequence 
\[
\op{Ch}^j(X)\xrightarrow{\beta_{\mathscr{L}}} \op{H}^{j+1}(X,\mathbf{I}^{j+1}(\mathscr{L}))\to \op{H}^{j+1}(X,\mathbf{W})\to 0
\]
by means of which the cokernel of $\beta$ can be identified with the cohomology ring $\op{H}^\bullet(\op{Gr}(k,n),\mathbf{W})$ of the Grassmannians with coefficients in the sheaf of Witt groups $\mathbf{W}$. It has a description completely analogous to the integral cohomology mod torsion of the real Grassmannians. Some of these formulas already appeared in the computations of SL-oriented theories (in particular Witt groups) of Grassmannians in \cite{ananyevskiy}. The exact relation with the computations of twisted Witt groups of Grassmannians in \cite{balmer:calmes} will be discussed in Section~\ref{sec:comparisonwitt}.

\begin{proposition}
\label{prop:invertedeta}
Let $F$ be a perfect field of characteristic $\neq 2$ and let $1\leq k<n$. The cohomology $\op{H}^\bullet(\op{Gr}(k,n),\mathbf{W})$ has the following presentation, as commutative graded $\op{W}(F)$-algebra: 
\begin{enumerate}
\item For $k,n$ even, 
\[ \frac{\op{W}(F)[\op{p}_2,\dots,\op{p}_{k-2},\op{e}_k,\op{p}_2^\perp,\dots,\op{p}_{n-k-2}^\perp,\op{e}_{n-k}^\perp]}{(\op{p}\cdot\op{p}^\perp=1,\op{e}_k\cdot\op{e}_{n-k}^\perp=0)}
\]
\item If $n$ is odd, 
\[
 \left\{\begin{array}{ll} \frac{\op{W}(F)[\op{p}_2,\dots,\op{p}_{k-2},\op{e}_k,\op{p}_2^\perp,\dots,\op{p}_{n-k-1}^\perp]}{(\op{p}\cdot\op{p}^\perp=1)} & k\textrm{ even}\\
\frac{\op{W}(F)[\op{p}_2,\dots,\op{p}_{k-1},\op{p}_2^\perp,\dots,\op{p}_{n-k-2}^\perp,\op{e}_{n-k}^\perp]}{(\op{p}\cdot\op{p}^\perp=1)} & k\textrm{ odd}
\end{array}\right.
\]
\item  For $k,n-k$ odd, 
\[ \frac{\op{W}(F)[\op{p}_2,\dots,\op{p}_{k-1},\op{p}_2^\perp,\dots,\op{p}_{n-k-1}^\perp]}{(\op{p}\cdot\op{p}^\perp=1)}\otimes \bigwedge[R]
\]
\end{enumerate}
Here the bidegrees of the even Pontryagin classes $\op{p}_{2i}$ are $(4i,0)$, the bidegrees of the Euler classes $\op{e}_k$ and $\op{e}_{n-k}^\perp$ are $(k,1)$ and $(n-k,1)$, respectively, and the class $\op{R}$ in the last case has bidegree $(n-1,0)$. 
\end{proposition}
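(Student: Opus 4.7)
The plan is to construct the candidate presentation as a map of graded $\op{W}(F)$-algebras, verify that the claimed generators map to characteristic classes satisfying the stated relations, and then show inductively that no further relations are needed.

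First, I would send each $\op{p}_{2i}$ (respectively $\op{p}_{2i}^\perp$) to the $2i$-th Pontryagin class of the tautological sub-bundle $\mathscr{S}_k$ (respectively the quotient $\mathscr{Q}_{n-k}$), and each Euler class $\op{e}_k,\op{e}_{n-k}^\perp$ to the twisted Euler class of the corresponding bundle in $\op{H}^\bullet(\op{Gr}(k,n),\mathbf{W}(\det\mathscr{S}_k^\vee))$ or $\op{H}^\bullet(\op{Gr}(k,n),\mathbf{W}(\det\mathscr{Q}_{n-k}^\vee))$; since these twists agree modulo squares, the $\mathbb{Z}/2$-grading is unambiguous, and the bidegrees claimed in the statement are immediate. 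The relation $\op{p}\cdot\op{p}^\perp=1$ is the Whitney sum formula applied to the tautological sequence $\mathscr{S}_k\oplus\mathscr{Q}_{n-k}\cong\mathscr{O}^n$: the total Pontryagin class of a trivial bundle equals $1$. When both $k$ and $n-k$ are even, the product $\op{e}_k\cdot\op{e}_{n-k}^\perp$ computes the Euler class of the trivial rank $n$ bundle, which vanishes.

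The main step, and the principal obstacle, is proving that no further relations arise. My plan is to proceed by induction on $n$ via the projective bundle formula for $\mathbf{W}$-cohomology, using a chain of incidence fibrations with projective space fibers connecting $\op{Gr}(k,n)$ to smaller Grassmannians; the base case is the $\mathbf{W}$-cohomology of projective spaces computed in \cite{ananyevskiy}. Since the $\mathbf{W}$-cohomology projective bundle formula has different shape for even- versus odd-dimensional fibers (projective spaces of odd dimension split off an extra exterior class, while the even case does not), the three cases appearing in the statement emerge naturally from tracking parities through this induction. The resulting candidate presentation can then be checked against the $\op{W}(F)$-module rank of each graded piece via the right exact sequence
\[
\op{Ch}^\bullet\xrightarrow{\beta}\op{H}^\bullet(-,\mathbf{I}^\bullet)\to\op{H}^\bullet(-,\mathbf{W})\to 0
\]
recalled above, together with the $\mathbf{I}^\bullet$-cohomology description whose statement precedes the proposition.

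Finally, the exceptional generator $\op{R}$ in case (3) is present precisely because $\op{Gr}(k,n)$ has odd dimension $k(n-k)$. Poincar\'e duality for $\mathbf{W}$-cohomology then forces a class of bidegree $(n-1,0)$ that is not a polynomial in Pontryagin and Euler classes; the exterior relation $\op{R}^2=0$ follows from a degree count together with the one-dimensionality of the top-degree $\mathbf{W}$-cohomology. Compatibility with the classical description of the integral cohomology mod torsion of the real Grassmannian from \cite{pontryagin}, and with the twisted Witt group computation of \cite{balmer:calmes}, gives a useful external consistency check on each of the three cases.
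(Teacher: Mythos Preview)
The paper does not supply its own proof of this proposition: it is stated as a recollection of \cite[Theorems 5.6, 5.8, 6.7]{real-grassmannian}, with the remark that related formulas already appear in Ananyevskiy's computation of SL-oriented theories of Grassmannians \cite{ananyevskiy}. So there is no in-paper argument to compare against; the comparison has to be against the cited source.

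Your outline is broadly aligned with how such results are actually established in \cite{real-grassmannian} and \cite{ananyevskiy}: one defines the map on characteristic classes, checks the Whitney-sum and Euler-class relations on the tautological sequence, and proves that the presentation is complete by an induction using projective-bundle/localization techniques for $\mathbf{W}$-cohomology. Two points deserve tightening. First, your consistency check via the exact sequence $\op{Ch}^\bullet\xrightarrow{\beta}\op{H}^\bullet(-,\mathbf{I}^\bullet)\to\op{H}^\bullet(-,\mathbf{W})\to 0$ together with ``the $\mathbf{I}^\bullet$-cohomology description whose statement precedes the proposition'' is close to circular in this paper's logical order: Proposition~\ref{prop:incoh} only identifies the image of $\beta$ with the $\op{I}(F)$-torsion and controls it via $\op{Sq}^2$; the full $\mathbf{I}^\bullet$-cohomology is assembled \emph{from} Propositions~\ref{prop:incoh} and \ref{prop:invertedeta}, not the other way around. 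The rank check you want should come directly from the induction, not from a prior $\mathbf{I}^\bullet$-computation. Second, your account of the class $\op{R}$ is too soft: appealing to ``Poincar\'e duality for $\mathbf{W}$-cohomology forces a class'' does not by itself produce $\op{R}$ or locate it in degree $n-1$. In the actual argument the extra exterior generator appears explicitly when one step of the projective-bundle induction has odd-dimensional fiber, and its degree and square-zero relation are read off from that step rather than inferred from duality.
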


The above statements on torsion (Proposition~\ref{prop:incoh}) and the torsion-free quotient (Proposition~\ref{prop:invertedeta}) can be combined into a description of the $\mathbf{I}^\bullet$-cohomology: the cohomology is a direct sum of the torsion and the torsion-free quotient. With the above notation, we can also describe the reduction morphism relevant for computations with the fiber square of Proposition~\ref{prop:fiberproduct}. 

\begin{proposition}
\label{prop:reduction}
The reduction morphism 
\[
\rho_{\mathscr{L}}: \op{H}^{\bullet}(\op{Gr}(k,n),\mathbf{I}^{\bullet}(\mathscr{L})) \to \op{Ch}^\bullet(\op{Gr}(k,n))
\]
 is given by 
\begin{eqnarray*}
\op{p}_{2i}^{(\perp)}&\mapsto& (\overline{\op{c}}_{2i}^{\perp})^2,\\
\beta_{\mathscr{L}}(x)&\mapsto&\op{Sq}^2_{\mathscr{L}}(x),\\
\op{e}_k&\mapsto &\overline{\op{c}}_k,\\
\op{e}_{n-k}^\perp &\mapsto & \overline{\op{c}}_{n-k}^\perp,\\
\op{R}&\mapsto&\overline{\op{c}}_{k-1}\overline{\op{c}}_{n-k}^\perp=\overline{\op{c}}_k\overline{\op{c}}_{n-k-1}^\perp.
\end{eqnarray*}
\end{proposition}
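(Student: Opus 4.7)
The plan is to verify the reduction formula on each generator separately, using that $\rho_{\mathscr{L}}$ is a ring homomorphism. By Propositions~\ref{prop:incoh} and \ref{prop:invertedeta}, the $\mathbf{I}^\bullet$-cohomology of $\op{Gr}(k,n)$ is additively the direct sum of the Bockstein torsion image and the polynomial (possibly exterior) algebra generated by the Pontryagin classes $\op{p}_{2i}^{(\perp)}$, the Euler classes $\op{e}_k$ and $\op{e}_{n-k}^\perp$, and, in the case both $k$ and $n-k$ are odd, the exterior class $\op{R}$. The Bockstein row $\beta_{\mathscr{L}}(x)\mapsto \op{Sq}^2_{\mathscr{L}}(x)$ is already part of Proposition~\ref{prop:incoh}(2), so no further work is needed there.

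For the Pontryagin classes, one pulls back from the universal situation: the class $\op{p}_{2i}\in \op{H}^{4i}(-,\mathbf{I}^{4i})$ of a vector bundle reduces under $\rho$ to the square $\overline{\op{c}}_{2i}^2$ of the corresponding mod $2$ Chern (Stiefel--Whitney) class, a formal manipulation already available on $\op{BGL}_n$ and pulled back to $\op{Gr}(k,n)$ via the classifying maps of $\mathscr{S}_k$ and $\mathscr{Q}_{n-k}$. For the Euler classes, the standard identification of $\rho(\op{e}_n)$ with the top Chern class mod $2$ gives $\op{e}_k\mapsto \overline{\op{c}}_k$ and $\op{e}_{n-k}^\perp\mapsto \overline{\op{c}}_{n-k}^\perp$. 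Both of these verifications are essentially universal statements about characteristic classes that can be pushed through the classifying maps without reference to the specific combinatorics of Grassmannians.

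The main obstacle is the exterior class $\op{R}$, which exists only in the case where $k$ and $n-k$ are both odd — hence $\dim\op{Gr}(k,n)=k(n-k)$ is odd — and was introduced in \cite{real-grassmannian} precisely to account for the extra exterior generator forced by this parity. To compute $\rho(\op{R})$ I would return to its explicit construction in \cite{real-grassmannian}, identifying $\op{R}$ as a specific lift (through either the $\mathbf{W}$-to-$\mathbf{I}^\bullet$ surjection or the short exact sequence relating $\mathbf{I}^\bullet$ to $\op{Ch}^\bullet$) of a concrete mod $2$ Chow class, and read off the claimed reduction from that presentation. The fact that the two expressions $\overline{\op{c}}_{k-1}\overline{\op{c}}_{n-k}^\perp$ and $\overline{\op{c}}_k\overline{\op{c}}_{n-k-1}^\perp$ agree in $\op{Ch}^{n-1}(\op{Gr}(k,n))$ is the degree $n-1$ part of the Whitney sum relation $\overline{\op{c}}(\mathscr{S}_k)\cdot \overline{\op{c}}(\mathscr{Q}_{n-k})=1$: because $\overline{\op{c}}_i=0$ for $i>k$ and $\overline{\op{c}}_j^\perp=0$ for $j>n-k$, only the terms with $i\in\{k-1,k\}$ survive, and the vanishing of their sum over $\mathbb{F}_2$ gives the claimed equality. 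This shows the formula is well-defined; the remaining content is matching it with the construction of $\op{R}$, which is the step where the detailed bookkeeping from \cite{real-grassmannian} is essential.
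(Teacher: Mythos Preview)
The paper does not actually supply a proof of this proposition; it appears in Section~\ref{sec:recall1} as a recollection of results established in \cite{real-grassmannian}. Your outline is a correct sketch of how the verification goes and matches the approach taken there: the reductions of Pontryagin and Euler classes are universal statements about characteristic classes (established for the classifying spaces in \cite{chow-witt} and pulled back along the classifying maps of $\mathscr{S}_k$ and $\mathscr{Q}_{n-k}$), the Bockstein formula is already Proposition~\ref{prop:incoh}(2), and the class $\op{R}$ indeed requires returning to its explicit construction in \cite{real-grassmannian}. Your Whitney-sum verification that $\overline{\op{c}}_{k-1}\overline{\op{c}}_{n-k}^\perp=\overline{\op{c}}_k\overline{\op{c}}_{n-k-1}^\perp$ in degree $n-1$ is correct and is the standard argument.
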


\begin{remark}
The Grassmannian $\op{Gr}(k,n)$ has dimension $k(n-k)$. From the above presentation, we have 
\[
\op{H}^d(\op{Gr}(k,n),\mathbf{I}^d)\cong \left\{ \begin{array}{ll}
\op{W}(F) & n \textrm{ even} \\
\mathbb{Z}/2\mathbb{Z} & n \textrm{ odd}\end{array}\right.
\]
For twisted coefficients $\mathbf{I}^d(\det(\mathscr{E}_k))$, the situation is exactly reversed. This is a reflection of the classical fact that the real Grassmannian $\op{Gr}_k(\mathbb{R}^n)$ is orientable if and only if $n$ is even. 

It follows that the image of $\beta_{\mathscr{O}}$ in $\op{H}^d(\op{Gr}(k,n),\mathbf{I}^d)$ is trivial for $n$ even. This has direct consequences for products of elements which land in top degree (which is the relevant case for Schubert calculus): the factors of a product can be changed by elements from the image of $\beta$ without affecting the product, and a product with an $\op{I}(F)$-torsion class is always trivial.
\end{remark}

\section{Multiplicative structure via even Young diagrams}
\label{sec:cohomology}

Now we want to describe the multiplicative structure of the cohomology ring $\bigoplus_{j,\mathscr{L}}\op{H}^j(\op{Gr}(k,n),\mathbf{I}^j(\mathscr{L}))$ more explicitly. From Proposition~\ref{prop:incoh}, this cohomology ring splits additively as the direct sum of the torsion part, given by the image of 
\[
\beta_{\mathscr{L}}:\op{Ch}^\bullet(\op{Gr}(k,n))\to \op{H}^\bullet(\op{Gr}(k,n),\mathbf{I}^\bullet(\mathscr{L})),
\]
and the cokernel of $\beta$, which coincides with $\op{H}^\bullet(\op{Gr}(k,n),\mathbf{W}\oplus\mathbf{W}(\det))$. Moreover, the image of the Bockstein maps is in fact an ideal. Hence, to determine the multiplicative structure on $\mathbf{I}^\bullet$-cohomology, it suffices to determine the multiplicative structure on $\op{H}^\bullet(\op{Gr}(k,n),\mathbf{W}\oplus\mathbf{W}(\det))$ and the image of $\beta_{\mathscr{L}}$ separately. 

The torsion part will be discussed in the next section, for now we will focus on the multiplicative structure of $\mathbf{W}$-cohomology. Essentially, the multiplication in this part of the  $\mathbf{I}^\bullet$-cohomology ring can be understood in terms of diagrammatical calculations with the even Young diagrams of \cite{balmer:calmes}, similar to the computations with Young diagrams which describe computations in Chow rings of Grassmannians. This provides analogues of some relevant pieces of classical Schubert calculus: the Pieri and Giambelli formulas and the Littlewood--Richardson coefficients. 

\subsection{Young diagrams and multiplication in Chow rings}

We shortly recall how Young diagrams for partitions are used in the description of Chow rings of Grassmannians. This is a very well-known subject which is discussed in detail e.g. in \cite{3264}.\footnote{Most of the notation for Schubert calculus in Chow rings will be following that reference.} Similar to the discussion in Section~\ref{sec:recall1}, the Chow ring $\op{CH}^\bullet(\op{Gr}(k,n))$ is generated by the Chern classes $\op{c}_1,\dots,\op{c}_k$ of the tautological $k$-plane bundle $\mathscr{E}_k$ and the Chern classes $\op{c}_1^\perp,\dots,\op{c}_{n-k}^\perp$ of the complementary $(n-k)$-plane bundle $\mathscr{E}_{n-k}^\perp$, with the Whitney sum formula as defining relation:
\[
\op{CH}^\bullet(\op{Gr}(k,n))\cong \mathbb{Z}[\op{c}_1,\dots,\op{c}_k, \op{c}_1^\perp,\dots,\op{c}_{n-k}^\perp]/(\op{c}\cdot\op{c}^\perp=1).
\]

Fix a complete flag $\mathcal{F}: 0=V_0\subset V_1\subset \cdots\subset V_{n-1}\subset V_n=V$, $\dim V_i=i$ of subspaces of $V=F^n$. For a partition $\underline{a}=(a_1,\dots,a_k)$ with $n-k\geq a_1\geq a_2\geq \cdots\geq a_k\geq 0$, there is an associated \emph{Schubert variety}
\[
\Sigma_{\underline{a}}(\mathcal{F})=\left\{U\in \op{Gr}(k,V)\mid \forall i:\dim (V_{n-k+i-a_i}\cap U)\geq i\right\}\subseteq \op{Gr}(k,V)
\]
of dimension $\dim \Sigma_{\underline{a}}(\mathcal{F})=\sum a_i=:|\underline{a}|$. Partitions can be visualized by Young diagram, where the partition $\underline{a}=(a_1,\dots,a_k)$ corresponds to $k$ left-aligned rows of boxes, with $a_i$ boxes in the $i$-th row. The corresponding cycle class $\sigma_{\underline{a}}=[\Sigma_{\underline{a}}]\in\op{Ch}^{|a|}(\op{Gr}(k,n))$ is independent of the choice of flag $\mathcal{F}$. The partitions $(a_1,\dots,a_k)$ with $n-k\geq a_1\geq a_2\geq \cdots\geq a_k\geq 0$ correspond to Young diagrams fitting into a $k\times (n-k)$ rectangle, and they form an additive basis of $\op{CH}^\bullet(\op{Gr}(k,n))$. There are two types of \emph{special Schubert classes}, the classes $\sigma_{1^i}$ which correspond to a single column of $i$ boxes correspond to the Chern classes $\op{c}_i$, and the classes $\sigma_j$ with a single row of $j$ boxes correspond to the Chern classes $\op{c}_j^\perp$. 

With the above additive basis, the multiplicative structure of the Chow ring $\op{CH}^\bullet(\op{Gr}(k,n))$ can be described by the following classical formulas: 
\begin{enumerate}
\item The \emph{Pieri formula} describes multiplication with special Schubert classes: for a Schubert class $\sigma_{\underline{a}}$ and any integer $b$, we have 
\[
\sigma_b\cdot \sigma_{\underline{a}}=\sum_{\begin{array}{c}\scriptstyle|c|=|a|+b\\\scriptstyle a_i\leq c_i\leq a_{i-1}\forall i\end{array}}\sigma_c\qquad\textrm{ and }\qquad
\sigma_{1^b}\cdot \sigma_{\underline{a}}=\sum_{\begin{array}{c}\scriptstyle|c|=|a|+b\\\scriptstyle a_i\leq c_i\leq a_i+1\forall i\end{array}}\sigma_c 
\]
Diagrammatically, the product is given by the sum over all Young diagrams (fitting into a $k\times(n-k)$-rectangle) which are obtained from the Young diagram for $\underline{a}$ by adding $a$ boxes with at most one box added to each column (in case $\sigma_b$) resp. to each row (in case $\sigma_{1^b}$). 
\item Arbitrary Schubert classes for partitions $\underline{a}=(a_1,\dots,a_k)$ can be expressed in terms of special Schubert classes via the \emph{Giambelli formula}:
\[
\sigma_{\underline{a}}=\left|\begin{array}{ccccc}\sigma_{a_1} & \sigma_{a_1+1} & \sigma_{a_1+2} & \cdots & \sigma_{a_1+k-1}\\
\sigma_{a_2-1} & \sigma_{a_2} & \sigma_{a_2+1} & \cdots& \sigma_{a_2+k-2}\\
\sigma_{a_3-2} & \sigma_{a_3-1} & \sigma_{a_3} & \cdots& \sigma_{a_3+k-3}\\
\vdots&\vdots&\vdots&\ddots&\vdots\\
\sigma_{a_k-k+1} & \sigma_{a_k-k+2} & \sigma_{a_k-k+3} & \cdots& \sigma_{a_k} \end{array}\right|.
\]
\item In a slightly different formulation, if $\sigma_{\underline{a}}$ and $\sigma_{\underline{b}}$ are Schubert classes, then the product is given by 
\[
\sigma_{\underline{a}}\cdot\sigma_{\underline{b}}=\sum_{|c|=|a|+|b|}\gamma_{\underline{a},\underline{b};\underline{c}}\sigma_{\underline{c}}
\]
where $\gamma_{\underline{a},\underline{b};\underline{c}} =\deg(\sigma_{\underline{a}}\sigma_{\underline{b}}\sigma_{\underline{c}^\ast})$ are the \emph{Littlewood--Richardson coefficients}. 
\end{enumerate}

\subsection{Chow ring vs. $\mathbf{W}$-cohomology ring}

Now we want to set up a similar picture for the $\mathbf{W}$-cohomology ring, i.e., the torsion-free quotient of $\mathbf{I}^\bullet$-cohomology. To understand the multiplicative structure of $\mathbf{W}$-cohomology we will make use of a ring homomorphism from a suitable Chow ring. The following result is a version for $\mathbf{W}$-cohomology of what appears to be a folklore result concerning the rational cohomology of the real Grassmannians $\op{Gr}_k(\mathbb{R}^n)$. The result identifies the trivial duality part of $\mathbf{W}$-cohomology in terms of the Chow ring of a different Grassmannian via a morphism which multiplies degrees by 4. I learned about this from \cite[Section 3]{feher:matszangosz} as well as the MathOverflow question 119273 ``Real Schubert calculus'' by L{\'a}szl{\'o} Feh{\'e}r.

\begin{proposition}
\label{prop:folklore}
Let $1\leq k<n$ be natural numbers. 
\begin{enumerate}
\item
If $k(n-k)$ is even, then mapping $\op{c}_i\mapsto \op{p}_{2i}, \op{c}_j^\perp\mapsto \op{p}_{2j}^\perp$ induces a natural $\op{W}(F)$-algebra isomorphism
\[
\omega:\op{CH}^\bullet(\op{Gr}(\lfloor k/2\rfloor,\lfloor n/2\rfloor))\otimes_{\mathbb{Z}}\op{W}(F) \to \op{H}^{4\bullet}(\op{Gr}(k,n),\mathbf{W}).
\]
\item If $k(n-k)$ is odd, then mapping $\op{c}_i\mapsto \op{p}_{2i}, \op{c}_j^\perp\mapsto \op{p}_{2j}^\perp$ and $R\to\op{R}$ induces a natural $\op{W}(F)$-algebra isomorphism
\[
\omega:\op{CH}^\bullet(\op{Gr}(\lfloor k/2\rfloor,\lfloor n/2\rfloor))\otimes_{\mathbb{Z}}\op{W}(F)[R]/(R^2) \to \op{H}^{4\bullet}(\op{Gr}(k,n),\mathbf{W}).
\]
\end{enumerate}
\end{proposition}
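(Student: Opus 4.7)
The plan is to prove the isomorphism by comparing the two presentations directly: the source is presented classically via Chern classes and the Whitney-sum relation, and the target is explicitly presented in \prettyref{prop:invertedeta}. The strategy is to show that the map $\omega$ induces a bijection between the defining generators and relations on both sides, with care taken in interpreting the top Pontryagin classes as squares of Euler classes.

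First I would construct the homomorphism $\omega$. The standard presentation of $\op{CH}^\bullet(\op{Gr}(\lfloor k/2\rfloor,\lfloor n/2\rfloor))$ is as the quotient of the polynomial ring in Chern classes $\op{c}_1,\dots,\op{c}_{\lfloor k/2\rfloor}$, $\op{c}_1^\perp,\dots,\op{c}_{\lfloor n/2\rfloor-\lfloor k/2\rfloor}^\perp$ by the Whitney-sum relation $\op{c}\cdot\op{c}^\perp=1$. Extending the Pontryagin-class notation on the target by declaring $\op{p}_k:=\op{e}_k^2$ when $k$ is even (and analogously $\op{p}_{n-k}^\perp:=(\op{e}_{n-k}^\perp)^2$ when $n-k$ is even), the rule $\op{c}_i\mapsto \op{p}_{2i}$, $\op{c}_j^\perp\mapsto \op{p}_{2j}^\perp$ sends a source generator of degree $d$ to a target class of degree $4d$; in Case~(2), $R\mapsto \op{R}$ extends by matching the two exterior generators.

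Next I would verify well-definedness. The source relation $\op{c}\cdot\op{c}^\perp=1$ in degree $m$ reads $\sum_{i+j=m}\op{c}_i\op{c}_j^\perp=0$, and its image $\sum_{i+j=m}\op{p}_{2i}\op{p}_{2j}^\perp=0$ is the degree-$2m$ component of the Whitney-sum identity $\op{p}\cdot\op{p}^\perp=1$ for Pontryagin classes of the tautological bundles on $\op{Gr}(k,n)$; this is precisely the defining relation of the target presentation in \prettyref{prop:invertedeta}. Where the ``top'' Pontryagin class is interpreted as $\op{e}^2$, compatibility holds because that presentation retains $\op{e}_k$, $\op{e}_{n-k}^\perp$ as generators. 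The exterior relation $R^2=0$ on the source matches $\op{R}^2=0$ on the target in Case~(2).

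For bijectivity, I would argue via presentations. Surjectivity onto $\op{H}^{4\bullet}(\op{Gr}(k,n),\mathbf{W})$ is immediate: the generators of the target ring restricted to the untwisted degree-$4\bullet$ piece are exactly the even Pontryagin classes, the squares $\op{e}_k^2$, $\op{e}_{n-k}^{\perp 2}$, and (in Case~(2)) the exterior $\op{R}$, all of which are in the image of $\omega$. Injectivity can be established by a rank count: both sides are free $\op{W}(F)$-modules, and their ranks coincide because standard Young diagrams in a $\lfloor k/2\rfloor\times(\lfloor n/2\rfloor-\lfloor k/2\rfloor)$-rectangle biject with even Young diagrams (in the sense of Balmer--Calm\`{e}s) in the $k\times(n-k)$-frame via the doubling map sending each box to a $2\times 2$ block; this matches the degree-quadrupling on cohomological gradings and produces the additive basis comparison (compare also \prettyref{prop:addbasis1}).

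The hard part is the treatment of the top generators $\op{c}_{\lfloor k/2\rfloor}$ and $\op{c}_{\lfloor n/2\rfloor-\lfloor k/2\rfloor}^\perp$, whose images are not direct generators of the target presentation but rather squares of Euler classes, and which interact with the extra relation $\op{e}_k\op{e}_{n-k}^\perp=0$ present when $k,n$ are both even. Careful bookkeeping is needed to confirm that the top-degree instance of the source relation translates correctly: its image involves $\op{e}_k^2(\op{e}_{n-k}^\perp)^2=(\op{e}_k\op{e}_{n-k}^\perp)^2$, which vanishes by the extra target relation. Verifying that this is the only compatibility issue amounts to a case-by-case check through the three parity cases of \prettyref{prop:invertedeta}, but in each case the algebra matches once the substitution convention $\op{p}_{\text{top}}=\op{e}^2$ has been made.
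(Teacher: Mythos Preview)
Your approach is essentially the same as the paper's: both arguments amount to matching the presentation of $\op{CH}^\bullet(\op{Gr}(\lfloor k/2\rfloor,\lfloor n/2\rfloor))$ against the presentation of the untwisted $\mathbf{W}$-cohomology given in \prettyref{prop:invertedeta}. The paper's write-up is shorter because it simply observes that after the substitution $\op{c}_i\leftrightarrow\op{p}_{2i}$ (with the convention $\op{p}_k=\op{e}_k^2$ etc.\ where needed) the two presentations are literally identical, and hence the isomorphism is immediate---no separate surjectivity/injectivity argument is needed.

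One caution on your injectivity step: you argue by a rank count using the bijection between ordinary Young diagrams and even Young diagrams, and you cite \prettyref{prop:addbasis1} as a comparison. But \prettyref{prop:addbasis1} is proved \emph{using} \prettyref{prop:folklore}, so invoking it here would be circular. The circularity is easily avoided: once you have checked that the generating sets and the defining relations match (which you do), injectivity is automatic---the map is an isomorphism of presented algebras. Your treatment of the ``hard part'' (the convention $\op{p}_k:=\op{e}_k^2$, $\op{p}_{n-k}^\perp:=(\op{e}_{n-k}^\perp)^2$ and the observation that the extra target relation $\op{e}_k\op{e}_{n-k}^\perp=0$ is compatible because the top-degree component of $\op{p}\cdot\op{p}^\perp=1$ is precisely $\op{p}_k\op{p}_{n-k}^\perp=0$) is more explicit than the paper's and is correct.
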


\begin{proof}
In the case where $k(n-k)$ is even, parts (1) and (2) of Proposition~\ref{prop:invertedeta} imply that $\mathbf{W}$-cohomology (with trivial duality) has the presentation 
\[
\op{W}(F)[\op{p}_2,\dots,\op{p}_{2\lfloor k/2\rfloor},\op{p}_2^\perp,\dots,\op{p}_{2\lfloor(n-k)/2\rfloor}^\perp]/ (\op{p}\cdot\op{p}^\perp=1)
\]
If we replace $\op{p}_{2i}^{(\perp)}$ be $\op{c}_i^{(\perp)}$ and the coefficient ring $\op{W}(F)$ by $\mathbb{Z}$, we get exactly the well-known presentation of $\op{Ch}^\bullet(\op{Gr}(\lfloor k/2\rfloor,\lfloor n/2\rfloor))$. The existence of the natural ring isomorphism is then immediately clear. 

Similarly, in the second case, part (3) of Proposition~\ref{prop:invertedeta} implies that the $\mathbf{W}$-cohomology (with trivial duality) is an exterior algebra over the Chow ring of the appropriate Grassmannian, with coefficients extended to $\op{W}(F)$. 
\end{proof}

\begin{remark}
\label{rem:omegalift}
Actually, the assignment $\op{c}_i^{(\perp)}\mapsto \op{p}_{2i}^{(\perp)}$  lifts to a $\op{W}(F)$-linear map
\[
\tilde{\omega}: \op{CH}^\bullet(\op{Gr}(\lfloor k/2\rfloor, \lfloor n/2\rfloor)) \otimes_{\mathbb{Z}}\op{W}(F) \to \bigoplus_j\op{H}^{j}(\op{Gr}(k,n),\mathbf{I}^j).
\]
Note, however, that this morphism will not respect the ring structures. The relations defining the Chow ring are encoded in the Whitney sum formula $\op{c}\cdot\op{c}^\perp=1$. The image of this relation under the above assignment is 
\[
(1+\op{p}_2+\cdots+\op{p}_{2\lfloor k/2\rfloor})\cdot (1+\op{p}_2^\perp+\cdots+\op{p}_{2\lfloor (n-k)/2\rfloor}^\perp)=1
\]
This relation doesn't hold in the $\mathbf{I}^\bullet$-cohomology ring. Instead, we have the Whitney sum formula $\op{p}\cdot\op{p}^\perp=1$, which additionally contains all the odd Pontryagin classes (which are in the image of $\beta$). The above assignment fails to be a ring homomorphism because the image of the Whitney sum formula from the Chow ring will be a non-trivial $\op{I}(F)$-torsion element in $\mathbf{I}^\bullet$-cohomology.
\end{remark}

The next step is to identify the composition of $\tilde{\omega}$ with the reduction morphism $\op{H}^j(\op{Gr}(k,n),\mathbf{I}^j)\to\op{Ch}^j(\op{Gr}(k,n))$. It actually suffices to identify this composition modulo images of torsion classes. Using that, it will be possible to identify the composition of reduction and $\tilde{\omega}$ with the following morphism given by doubling partitions. 

\begin{definition}
We denote by 
\[
\gamma:\op{CH}^\bullet(\op{Gr}(\lfloor k/2\rfloor,\lfloor (n-k)/2\rfloor))\to \op{CH}^{4\bullet}(\op{Gr}(k,n))
\]
the morphism of abelian groups which maps a class corresponding to a partition $(a_1,a_2,\dots,a_r)$ to the class corresponding to the doubled partition 
\[
(2a_1,2a_1,2a_2,2a_2,\dots,2a_r,2a_r). 
\]
\end{definition}

\begin{remark}
In terms of Young diagrams, the doubling of partitions corresponds to replacing every box of the Young diagram by a $(2\times 2)$-square of boxes
\[
\Yboxdim{10pt}\Yvcentermath1
\yng(1)\mapsto \yng(2,2)
\]
\end{remark}

\begin{remark}
\label{rem:noring}
The map $\gamma:\op{CH}^\bullet(\op{Gr}(k,n))\to \op{CH}^{4\bullet}(\op{Gr}(2k,2n))$ is only a homomorphism of abelian groups. It is not compatible with the ring structures. The simplest example is as follows. In $\op{Gr}(2,4)$, we have $\Yboxdim{5pt}\Yvcentermath1\yng(1)^2=\yng(2)+\yng(1,1)$. However, in $\op{Gr}(4,8)$, we have 
\[
\Yboxdim{10pt}\Yvcentermath1
\yng(2,2)^2=\yng(4,4)+\yng(4,3,1)+\yng(4,2,2)+\yng(3,3,1,1)+\yng(3,2,2,1)+\yng(2,2,2,2)
\]
Only the first and last summand are doubled partitions, the presence of the other classes implies that partition doubling doesn't induce a ring homomorphism (even after reduction mod 2).
\end{remark}

\begin{proposition}
\label{prop:pontryaginlift}
Let $1\leq k<n$ be a natural numbers. Then the difference of the two compositions
\[
\op{CH}^j(\op{Gr}(\lfloor k/2\rfloor, \lfloor n/2\rfloor))  \xrightarrow{\tilde{\omega}} \op{H}^{4j}(\op{Gr}(k,n),\mathbf{I}^{4j}) \xrightarrow{\rho} \op{Ch}^{4j}(\op{Gr}(k,n))
\]
\[
\op{CH}^j(\op{Gr}(\lfloor k/2\rfloor, \lfloor n/2\rfloor)) \xrightarrow{\gamma} \op{CH}^{4j}(\op{Gr}(k,n))\xrightarrow{\bmod 2} \op{Ch}^{4j}(\op{Gr}(k,n))
\]
lands in the image of $\op{Sq}^2:\op{Ch}^\bullet(\op{Gr}(k,n))\to \op{Ch}^{\bullet+1}(\op{Gr}(k,n))$. 
\end{proposition}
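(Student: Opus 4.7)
The plan is to reduce to a check on multiplicative generators by showing that both compositions descend to \emph{ring} homomorphisms after quotienting the target by $\op{Im}(\op{Sq}^2)$. Write $\bar\omega := \rho\circ\tilde\omega$ and $\bar\gamma := (\bmod 2)\circ\gamma$, viewed as additive maps into $\op{Ch}^{4\bullet}(\op{Gr}(k,n))/\op{Im}(\op{Sq}^2)$. Since the Chern classes $\op{c}_i$ and $\op{c}_j^\perp$ multiplicatively generate $\op{CH}^\bullet(\op{Gr}(\lfloor k/2\rfloor,\lfloor n/2\rfloor))$, once both descended maps are ring homomorphisms, agreement on these generators forces $\bar\omega\equiv\bar\gamma$.

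The multiplicativity of $\bar\omega$ modulo $\op{Sq}^2$-image is structural. The composition of $\tilde\omega$ with projection to $\mathbf{W}$-cohomology recovers the ring homomorphism $\omega$ of Proposition~\ref{prop:folklore}, so the defect $\tilde\omega(xy)-\tilde\omega(x)\tilde\omega(y)$ vanishes in $\mathbf{W}$-cohomology and therefore lies in the $\op{I}(F)$-torsion of $\mathbf{I}^\bullet$-cohomology. By Proposition~\ref{prop:incoh}(1) this torsion equals the image of the Bockstein $\beta_{\mathscr{O}}$, and $\rho\circ\beta_{\mathscr{O}}=\op{Sq}^2_{\mathscr{O}}$ by Proposition~\ref{prop:incoh}(2); so $\rho$ carries the defect into $\op{Im}(\op{Sq}^2)$. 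Multiplicativity of $\bar\gamma$ modulo $\op{Sq}^2$-image, by contrast, is a purely combinatorial assertion: the Littlewood--Richardson expansion of $\sigma_{2\underline{a}}\cdot\sigma_{2\underline{b}}$ in $\op{Gr}(k,n)$ must equal, modulo $\op{Im}(\op{Sq}^2)$, the doubling of the Littlewood--Richardson expansion of $\sigma_{\underline{a}}\cdot\sigma_{\underline{b}}$ in the small Grassmannian. Those Schubert classes in the former expansion whose partitions fail to be doubled must be shown to lie in $\op{Im}(\op{Sq}^2)$ via iterated application of the Wu formula (Proposition~\ref{prop:incoh}(3)) to appropriate near-doubled partitions.

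For the check on the generator $\op{c}_i$: Proposition~\ref{prop:reduction} gives $\bar\omega(\op{c}_i)=\rho(\op{p}_{2i})=\overline{\op{c}}_{2i}^2$, while $\bar\gamma(\op{c}_i)=\bar\gamma(\sigma_{1^i})=\overline{\sigma_{(2^{2i})}}$. The Pieri formula yields the expansion
\[
\overline{\op{c}}_{2i}^2=\overline{\sigma_{1^{2i}}}\cdot\overline{\sigma_{1^{2i}}}=\sum_{s=0}^{2i}\overline{\sigma_{(2^s,1^{2(2i-s)})}},
\]
whose $s=2i$ summand is precisely $\bar\gamma(\op{c}_i)$. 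The remaining sum $\sum_{s=0}^{2i-1}\overline{\sigma_{(2^s,1^{2(2i-s)})}}$ must be placed inside $\op{Im}(\op{Sq}^2)$; the prototype is the $i=1$ identity $\op{Sq}^2(\overline{\sigma_{1^3}})=\overline{\sigma_{(2,1,1)}+\sigma_{1^4}}$ obtained from the Wu formula and the Pieri rule. For general $i$ one exhibits explicit preimages by applying $\op{Sq}^2$ to classes of the form $\overline{\sigma_{(2^{s-1},1^{2(2i-s)+1})}}$ and telescoping contributions using the derivation property of $\op{Sq}^2$ together with Wu. The analogous verification for $\op{c}_j^\perp$ is symmetric via the tautological sequence, with $\op{p}_{2j}^\perp$ replacing $\op{p}_{2i}$.

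The main obstacle is establishing multiplicativity of $\bar\gamma$ modulo $\op{Sq}^2$-image. Unlike the parallel statement for $\bar\omega$, which is essentially formal once one knows that torsion in $\mathbf{I}^\bullet$-cohomology is the Bockstein image and $\rho\circ\beta=\op{Sq}^2$, the assertion for $\bar\gamma$ is a genuinely combinatorial identity between Littlewood--Richardson coefficients of doubled versus general partitions. It seems accessible only through careful organization of the Wu formula and induction on partition shape, and will likely occupy most of the technical effort in the proof.
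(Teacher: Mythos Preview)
Your reduction to generators via multiplicativity modulo $\op{Im}(\op{Sq}^2)$ is the right framework, and your argument that $\bar\omega$ is multiplicative modulo $\op{Im}(\op{Sq}^2)$ is exactly the structural input one needs. However, the paper handles the generator check quite differently, and much more simply, than your Pieri-expansion-plus-telescoping sketch. Rather than expanding $\overline{\op{c}}_{2i}^2$ and hunting for explicit $\op{Sq}^2$-preimages of each surviving summand, the paper uses the $2\times 2$ Giambelli determinant
\[
\sigma_{2i,2i}=\sigma_{2i}^2-\sigma_{2i+1}\sigma_{2i-1},
\]
so that the difference $\rho(\op{p}^\perp_{2i})-\overline{\sigma_{2i,2i}}$ is simply $\overline{\sigma_{2i+1}\sigma_{2i-1}}$. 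The odd classes $\overline{\sigma_{2i\pm 1}}=\overline{\op{c}}^\perp_{2i\pm 1}$ are reductions of the twisted Bockstein classes $\beta_{\det}(\overline{\op{c}}^\perp_{2i})$ and $\beta_{\det}(\overline{\op{c}}^\perp_{2i-2})$ (this is the Wu formula for $\op{Sq}^2_{\det}$). Since $\op{Im}\beta$ is an ideal in $\mathbf{I}^\bullet$-cohomology, their product is again torsion, hence a $\beta_{\mathscr{O}}$-image, and its reduction lies in $\op{Im}(\op{Sq}^2_{\mathscr{O}})$. The structural fact that $\rho(\text{torsion})\subseteq\op{Im}(\op{Sq}^2)$ replaces all of your combinatorial telescoping.

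As for your ``main obstacle'': you are right that multiplicativity of $\bar\gamma$ modulo $\op{Im}(\op{Sq}^2)$ is genuine content not covered by the generator check, and the paper's proof does not address it either --- it verifies the claim on $\op{c}_i^{(\perp)}$ and then simply declares ``this proves the claim.'' So the concern you raise is in fact a gap in the paper's argument as well. Note, though, that once $\bar\omega$ is known to be multiplicative and to agree with $\bar\gamma$ on generators, the remaining content is the single identity $\overline{\sigma_{2\underline{a}}}\equiv\det\bigl(\overline{\sigma_{2(a_i+j-i),\,2(a_i+j-i)}}\bigr)$ in $\ker(\op{Sq}^2)/\op{Im}(\op{Sq}^2)$, a ``doubled Giambelli'' which is a more focused target than full Littlewood--Richardson multiplicativity. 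For the paper's applications (notably the identification of geometric cycles with characteristic classes in Proposition~\ref{prop:identification}), only the generator case is actually invoked, so this gap is not load-bearing for the sequel.
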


\begin{proof}
By the Giambelli formula, we have 
\[
\sigma_{2i,2i}= \left|\begin{array}{cc} \sigma_{2i}&\sigma_{2i+1}\\ \sigma_{2i-1}&\sigma_{2i} \end{array}\right| = \sigma_{2i}^2 - \sigma_{2i+1}\sigma_{2i-1}.
\]
The classes $\sigma_{2i+1}$ and $\sigma_{2i-1}$ correspond to odd Chern classes in the Chow ring, and consequently they have lifts $\beta(\overline{\op{c}}_{2i})$ and $\beta(\overline{\op{c}}_{2i-2})$ in $\op{H}^\bullet(\op{Gr}(k,n),\mathbf{I}^\bullet)$, cf. Proposition~\ref{prop:reduction}. Note that the odd Chern classes cannot lift to non-torsion classes since all non-torsion classes live in even degrees, by Proposition~\ref{prop:invertedeta}. The reduction homomorphism $\op{H}^{4i}(\op{Gr}(k,n),\mathbf{I}^{4i})\to\op{Ch}^{4i}(\op{Gr}(k,n))$ maps the Pontryagin class $\op{p}_{2i}$ to $\overline{\op{c}}_{2i}^2$. Combined with the previous observation, we find that the reduction of  $\op{p}_{2i}-\beta(\overline{\op{c}}_{2i-2})\beta(\overline{\op{c}}_{2i})$ is exactly the class $\sigma_{2i,2i}$ of the doubled partition and this proves the claim. 
\end{proof}

\begin{remark}
In particular (up to Bockstein classes) the Pontryagin class $\op{p}_{2i}^{(\perp)}$ is actually a lift of the doubled partition for the Chern class $\underline{\op{c}}_i^{(\perp)}$ in the mod 2 Chow ring. Combining Proposition~\ref{prop:folklore} and \ref{prop:pontryaginlift}, we get an additive basis of classes in $\op{H}^{\bullet}(\op{Gr}(k,n),\mathbf{W})$ whose ``underlying Schubert varieties'' corrrespond to doubled partitions. We will see later in Section~\ref{sec:main} that the Pontryagin classes can be interpreted as Schubert cycle equipped with an additional choice of orientation. 
\end{remark}

\subsection{Multiplication of even Young diagrams} 

Now we can discuss the multiplicative structure in $\mathbf{W}$-cohomology from a diagrammatical perspective. For this, we first recall definitions of even Young diagrams from \cite[Section 2]{balmer:calmes} and then explain how the previous degree-doubling morphism explains multiplication of classes in terms of even Young diagrams. 

\begin{definition}
\label{def:evenyoung}
Let $1\leq k<n$. 
\begin{enumerate}
\item A Young diagram in $k\times(n-k)$-frame is a $k$-tuple $\Lambda=(\Lambda_1,\Lambda_2,\dots,\Lambda_k)$ of integers such that $n-k\geq\Lambda_1\geq\Lambda_2\geq\cdots\geq\Lambda_k\geq 0$. 
\item Such a diagram is called \emph{even} if all segments of the boundary which are strictly inside the $k\times (n-k)$-frame have even length. More precisely, the $k$-tuple $\Lambda$ is of the form
\[
\underbrace{\Lambda_1=\cdots=\Lambda_{d_1}}_{d_1\textrm{ terms}}>\underbrace{\Lambda_{d_1+1}=\cdots=\Lambda_{d_2}}_{d_2-d_1\textrm{ terms}}>\cdots >
\underbrace{\Lambda_{d_{p-1}+1}=\cdots=\Lambda_{d_p}}_{d_p-d_{p-1}\textrm{ terms}}=\Lambda_k
\]
and consequently, we obtain a sequence of integers $(d_1,\dots,d_p)$. There is another sequence of integers $(e_1,\dots,e_p)$ given by $e_i=n-k-\Lambda_{d_i}$. Now a diagram $\Lambda$ is called even, if the following conditions are satisfied:
\begin{enumerate}
\item $d_{i+1}-d_i$ is even for all $i=1,\dots,p-2$,
\item $e_{i+1}-e_i$ is even for all $i=1,\dots,p-1$,
\item when $0<e_1<e$ further require that $d_1$ is even,
\item when $0<e_p<e$ further require that $d_p-d_{p-1}$ is even
\end{enumerate}
\item
We call a Young diagram \emph{completely even} if all the segments of the boundary have even length (even those which lie on the frame). In the above notation, this corresponds to requiring that $d_1$ and $d_p-d_{p-1}$ are even (irrespective of the parities of $e_1$ and $e_p$).
\end{enumerate}
\end{definition}

\begin{remark}
The definition of the tuples $(d_1,\dots,d_p)$ and $(e_1,\dots,e_p)$ is illustrated in Figure~\ref{fig:bc} which is taken from \cite[Figure 3]{balmer:calmes}.

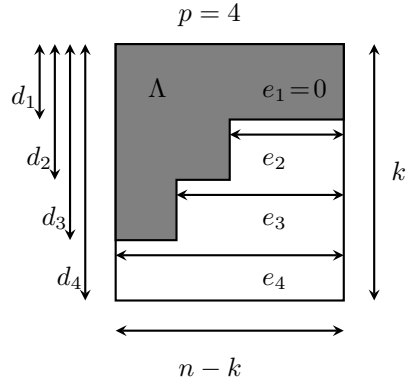
\begin{figure}[!ht]
\begin{center}
\begin{tikzpicture}[y=-1cm]
\draw[thick,black] (1.5,0.8) rectangle (4.5,4.2);
\draw[thick,arrows=stealth-stealth,black] (3,2) -- (4.5,2); 
\draw[thick,arrows=stealth-stealth,black] (2.3,2.8) -- (4.5,2.8); 
\draw[thick,arrows=stealth-stealth,black] (1.5,3.6) -- (4.5,3.6); 
\path[draw=black,thick,arrows=stealth-stealth] (4.9,4.2) -- (4.9,0.8); 
\path[draw=black,thick,arrows=stealth-stealth] (1.5,4.6) -- (4.5,4.6); 
\path[draw=black,thick,arrows=stealth-stealth] (0.5,0.8) -- (0.5,1.8); 
\path[draw=black,thick,arrows=stealth-stealth] (0.7,0.8) -- (0.7,2.6); 
\path[draw=black,thick,arrows=stealth-stealth] (0.9,0.8) -- (0.9,3.4); 
\path[draw=black,thick,arrows=stealth-stealth] (1.1,0.8) -- (1.1,4.2); 
\path[draw=black,thick,fill=white!50!black] (1.5,0.8) -- (4.5,0.8) -- (4.5,1.8) -- (3,1.8) -- (3,2.6) -- (2.3,2.6) -- (2.3,3.4) -- (1.5,3.4) -- cycle; 
\path (5,2.6) node[text=black,anchor=base west] {$k$};
\path (2.2,0.5) node[text=black,anchor=base west] {$p=4$};
\path (0.6,4) node[text=black,anchor=base west] {$d_4$};
\path (0.4,3.2) node[text=black,anchor=base west] {$d_3$};
\path (0.2,2.4) node[text=black,anchor=base west] {$d_2$};
\path (0,1.6) node[text=black,anchor=base west] {$d_1$};
\path (2.2,5.2) node[text=black,anchor=base west] {$n-k$};
\path (3.3,3.2) node[text=black,anchor=base west] {$e_3$};
\path (3.3,4) node[text=black,anchor=base west] {$e_4$};
\path (3.3,2.4) node[text=black,anchor=base west] {$e_2$};
\path (1.8,1.5) node[text=black,anchor=base west] {$\Lambda$};
\path (3.3,1.5) node[text=black,anchor=base west] {$e_1\!=\!0$};
\end{tikzpicture}
\end{center}
\caption{An example of a Young diagram in $k\times(n-k)$-frame with the relevant tuples $(d_1,\dots,d_4)$ and $(e_1,\dots,e_4)$.}
\label{fig:bc}
\end{figure}
\end{remark}

\begin{example}
\label{ex:twistclasses}
\begin{itemize}
\item 
If $k$ is even, then $\Lambda=\underbrace{(1,\dots,1)}_{k\textrm{ times}}$ is an even Young diagram in $k\times(n-k)$-frame. This is the Young diagram for the top Chern class $\op{c}_k$ of the tautological $k$-plane bundle on $\op{Gr}(k,n)$. 
\item If $n-k$ is even, then $\Lambda=(n-k,0,\dots,0)$ is an even Young diagram in $k\times (n-k)$-frame. This is the Young diagram for the top Chern class $\op{c}^\perp_{n-k}$ of the complementary $(n-k)$-plane bundle on $\op{Gr}(k,n)$. 
\item If both $k$ and $n-k$ are odd, then $\Lambda=(n-k,1,\dots,1)$ is an even Young diagram in $k\times (n-k)$-frame. This is the Young diagram for the product $\op{c}_{k-1}\op{c}_{n-k}^\perp=\op{c}_k\op{c}_{n-k-1}^\perp$. 
\end{itemize}
\end{example}

\begin{definition}
\label{def:evenmap}
Fix $1\leq k<n$. We define the following map from the set of even Young diagrams in $k\times(n-k)$-frame to $\op{H}^\bullet(\op{Gr}(k,n),\mathbf{I}^\bullet\oplus\mathbf{I}^\bullet(\det))$: 
\begin{itemize}
\item If the diagram is completely even, then it is the Young diagram for the double $\gamma(\underline{a})$ of some partition $\underline{a}$. Denoting by $\sigma_{\underline{a}}$ the Schubert class for the partition $\underline{a}$, the corresponding class in the $\mathbf{I}^n$-cohomology ring is given by $\tilde{\omega}(\sigma_{\underline{a}})$.
\item If the diagram is not completely even, then it is obtained from a completely even Young diagram by adding one of the diagrams from Example~\ref{ex:twistclasses}. The corresponding class is the product of the doubled class for the completely even part and the Euler class $\op{e}_k$ in case (1), the Euler class $\op{e}_{n-k}^\perp$ in case (2) and the class $\op{R}$ in case (3). 
\end{itemize}
\end{definition}

\begin{example}
The following Young diagram in $k\times 8$-frame with $k\geq 5$ is not completely even, since $d_1=1$ (and $d_4-d_3$ is odd for even $k$):
\[
\Yvcentermath1
\yng(8,4,4,2,2)
\]
It is the product of $\op{e}_8^\perp$, corresponding to the first line, with a completely even Young diagram corresponding to the class $\op{p}_4^\perp\op{p}_2^\perp-\op{p}_6^\perp$, cf. also Example~\ref{ex:giambelli}
\end{example}

\begin{remark}
We can exactly determine the appropriate Chow--Witt group that an even Young diagram $\Lambda=(\Lambda_1,\dots,\Lambda_k)$ is mapped to: the cohomological degree is given by the number $\sum_i\Lambda_i$ of boxes in the diagram (this is called the area in \cite{balmer:calmes}). This follows easily: for a partition $\underline{a}$, the corresponding Schubert class $\sigma_{\underline{a}}$ is a class in $\op{CH}^{|\underline{a}|}$, and its image under $\tilde{\omega}$ lands in cohomological degree $4|\underline{a}|$ which equals exactly the area of the doubled diagram $\gamma(\underline{a})$. 

The twist/half-perimeter $t(\Lambda)$ of an even Young diagram $\Lambda$, defined in \cite[Definition 4.3]{balmer:calmes} also determines the twist for the $\mathbf{I}^\bullet$-cohomology: the class corresponding to $\Lambda$ under the assignment in Definition~\ref{def:evenmap} has coefficients in $\mathbf{I}^\bullet$ if $t(\Lambda)=0$ and coefficients in $\mathbf{I}^\bullet(\det)$ otherwise. In the case distinction of Definition~\ref{def:evenmap}, the Young diagram has odd half-perimeter if and only if it corresponds to a product of a completely even diagram with an Euler class.
\end{remark}

\begin{proposition}
\label{prop:addbasis1}
The map of Definition~\ref{def:evenmap} provides a bijection between the even Young diagrams in $k\times(n-k)$-frame and a $\op{W}(F)$-basis of  $\op{H}^\bullet(\op{Gr}(k,n),\mathbf{W}\oplus\mathbf{W}(\det))$. 
\end{proposition}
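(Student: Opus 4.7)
The plan is to match the even Young diagrams with a $\op{W}(F)$-module presentation of $\op{H}^\bullet(\op{Gr}(k,n), \mathbf{W} \oplus \mathbf{W}(\det))$ obtained from Proposition~\ref{prop:invertedeta}, splitting the analysis according to the three bullets of Definition~\ref{def:evenmap}. In rough outline, completely even diagrams should index a basis of the ``Pontryagin subring'' via the doubling map, and the remaining even diagrams should arise from these by multiplication with one of the three frame-correction classes of Example~\ref{ex:twistclasses}.

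For completely even diagrams I would invoke Proposition~\ref{prop:folklore}: the ring isomorphism $\omega$ identifies $\op{CH}^\bullet(\op{Gr}(\lfloor k/2 \rfloor, \lfloor n/2 \rfloor)) \otimes_{\mathbb{Z}} \op{W}(F)$ with the trivial-duality Pontryagin subring of $\mathbf{W}$-cohomology (in Case (3) of that proposition, tensored additionally with an exterior algebra on $\op{R}$). The Schubert basis of the Chow ring is indexed by partitions in the $\lfloor k/2 \rfloor \times \lfloor (n-k)/2 \rfloor$ rectangle, and under the doubling map $\gamma$ these correspond bijectively to the completely even Young diagrams in the $k \times (n-k)$ frame; Definition~\ref{def:evenmap} sends the latter precisely to $\tilde\omega$ of the former. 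Hence the completely even diagrams give a $\op{W}(F)$-basis of the corresponding summand of $\mathbf{W}$-cohomology. For the non-completely-even even diagrams, one uses the second bullet of Definition~\ref{def:evenmap}: each such diagram decomposes uniquely as a completely even diagram plus a frame correction (column, row, or L-shape), and the corresponding element of $\mathbf{W}$-cohomology is the product of a Pontryagin monomial with one of $\op{e}_k$, $\op{e}_{n-k}^\perp$, or $\op{R}$ respectively. Running through the three cases of Proposition~\ref{prop:invertedeta}, the union of these images together with the completely even diagrams fills out exactly the generators of the presentation.

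The main obstacle is the combinatorial bookkeeping needed to verify the bijection rigorously. Specifically, one must confirm that the parity conditions (a)--(d) of Definition~\ref{def:evenyoung} exactly encode the fact that an even diagram decomposes as ``completely even plus frame correction from Example~\ref{ex:twistclasses},'' uniquely and exhaustively, and that in Case (1) of Proposition~\ref{prop:invertedeta} the relation $\op{e}_k \cdot \op{e}_{n-k}^\perp = 0$ is accounted for by the fact that a column and a row correction cannot simultaneously be added to a given completely even diagram without producing a shape falling outside the $k \times (n-k)$ frame. Linear independence of the resulting family will follow from Proposition~\ref{prop:folklore} combined with the injectivity of multiplication by each of $\op{e}_k$, $\op{e}_{n-k}^\perp$, $\op{R}$ on the Pontryagin subring (where such multiplication is nonzero), which is visible from the degree count encoded in Proposition~\ref{prop:invertedeta}.
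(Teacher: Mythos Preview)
Your proposal is correct and follows essentially the same approach as the paper's proof: both argue by combining Proposition~\ref{prop:folklore} for the completely even diagrams with the case analysis of Proposition~\ref{prop:invertedeta} for the remaining even diagrams, which are obtained by multiplication with one of $\op{e}_k$, $\op{e}_{n-k}^\perp$, or $\op{R}$. The paper is somewhat terser about the combinatorial bookkeeping you flag as the main obstacle, simply asserting that the non-completely-even diagrams with $t(\Lambda)=1$ are exactly those arising from a single Euler-class correction and that distinct such diagrams give distinct classes; your version makes this a bit more explicit but does not differ in substance.
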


\begin{proof}
If $k(n-k)$ is even, the part with nontwisted coefficients follows from Proposition~\ref{prop:folklore} and the fact that the ordinary Young diagrams provide an additive basis for the Chow ring. For the twisted coefficients, Proposition~\ref{prop:invertedeta} implies that any class in $\op{H}^\bullet(\op{Gr}(k,n),\mathbf{W})$ is the product of a nontwisted class with an Euler class $\op{e}_k$ or $\op{e}_{n-k}^\perp$. Since $\op{e}_k\cdot\op{e}_{n-k}^\perp=0$ only one Euler class factor can appear. These classes correspond to the even Young diagrams which are not completely even and have $t(\Lambda)=1$. In particular, everything is in the image of the map from Definition~\ref{def:evenmap}. It can be checked from the presentation of Proposition~\ref{prop:invertedeta} that different even Young diagrams with $t(\Lambda)=1$ also correspond to different classes in $\op{H}^\bullet(\op{Gr}(k,n),\mathbf{W})$.

For the case where $k(n-k)$ is odd, Proposition~\ref{prop:folklore} implies that an additive basis of $\op{H}^\bullet(\op{Gr}(k,n),\mathbf{W})$ is given by the images of an additive basis of the Chow ring, together with the images of products of such an additive basis with the class $\op{R}$. The former classes are again exactly the classes corresponding to completely even Young diagrams. The latter classes are even Young diagrams which are not completely even but have $t(\Lambda)=0$, i.e., they are obtained from an even Young diagram by adding a maximal hook. Nontwisted coefficients don't appear in this case, and there are no even Young diagrams which are not of the above form.
\end{proof}

\begin{remark}
Analogous statements on additive bases for integral cohomology of real Grassmannians mod torsion are well-known, cf. \cite{finashin:kharlamov:3planes}. It seems the classical topological result was first proved by Pontryagin \cite{pontryagin}. 
\end{remark}

Now we can formulate a complete description of the multiplication in the $\mathbf{W}$-cohomology rings of Grassmannians. To introduce appropriate notation, let $2\leq k<n$ and denote by $\mathfrak{S}_{\underline{a}}$ the image of the Schubert class $\sigma_{\underline{a}}$ under the homomorphism 
\[
\op{CH}^\bullet(\op{Gr}(\lfloor k/2\rfloor,\lfloor n/2\rfloor))\to\op{H}^{4\bullet}(\op{Gr}(k,n),\mathbf{W})
\] 
of Proposition~\ref{prop:folklore}. By Proposition~\ref{prop:addbasis1}, every class in $\op{H}^\bullet(\op{Gr}(k,n),\mathbf{W})$ is a $\op{W}(F)$-linear combination of classes of one of the four types $\mathfrak{S}_{\underline{a}}$, $\mathfrak{S}_{\underline{a}}\op{e}_k$, $\mathfrak{S}_{\underline{a}}\op{e}_{n-k}^\perp$ or $\mathfrak{S}_{\underline{a}}\op{R}$. Therefore, we only need to describe the products of these four types of classes. Essentially, everything will be easily reduced to products of the form $\mathfrak{S}_{\underline{a}}\mathfrak{S}_{\underline{b}}$ which can be computed using the folklore homomorphism of Proposition~\ref{prop:folklore}. 

\begin{theorem}
\label{thm:multw}
With the above notation, we have the following statements:
\begin{enumerate}
\item The following table explains how to compute products of the four types in terms of products of the form $\mathfrak{S}_{\underline{a}}\mathfrak{S}_{\underline{b}}$: 
\begin{center}
\begin{tabular}{|c||c|c|c|c|}
\hline
&$\mathfrak{S}_{\underline{b}}$ & $\mathfrak{S}_{\underline{b}}\op{e}_k$ & $\mathfrak{S}_{\underline{b}}\op{e}_{n-k}^\perp$ & $\mathfrak{S}_{\underline{b}}\op{R}$ \\\hline\hline
$\mathfrak{S}_{\underline{a}}$ & $\mathfrak{S}_{\underline{a}}\mathfrak{S}_{\underline{b}}$ & $\mathfrak{S}_{\underline{a}}\mathfrak{S}_{\underline{b}}\op{e}_k$ &  $\mathfrak{S}_{\underline{a}}\mathfrak{S}_{\underline{b}}\op{e}_{n-k}^\perp$ & $\mathfrak{S}_{\underline{a}}\mathfrak{S}_{\underline{b}}\op{R}$
\\\hline
$\mathfrak{S}_{\underline{a}}\op{e}_k$ & $\mathfrak{S}_{\underline{a}}\mathfrak{S}_{\underline{b}}\op{e}_k$ & $\mathfrak{S}_{\underline{a}}\mathfrak{S}_{\underline{b}}\mathfrak{S}_{1^k}$ & $0$ & ---
\\\hline
$\mathfrak{S}_{\underline{a}}\op{e}_{n-k}^\perp$ & $\mathfrak{S}_{\underline{a}}\mathfrak{S}_{\underline{b}}\op{e}_{n-k}^\perp$ & $0$ & $\mathfrak{S}_{\underline{a}}\mathfrak{S}_{\underline{b}}\mathfrak{S}_{n-k}$ & ---
\\\hline
$\mathfrak{S}_{\underline{a}}\op{R}$ & $\mathfrak{S}_{\underline{a}}\mathfrak{S}_{\underline{b}}\op{R}$ & --- & --- & $0$ \\
\hline 
\end{tabular}
\end{center}
Note that the $\mathbf{W}$-cohomology rings in question are commutative so that the order of factors doesn't play a role in the above table. The combinations in the table marked by --- are impossible: the Euler classes $\op{e}_k$ and $\op{e}_{n-k}^\perp$ require $k$ and $n-k$ to be even, respectively, but the class $\op{R}$ requires $k(n-k)$ to be odd.
\item There is an oriented version of the Pieri formula: for any Schubert class $\sigma_a$ and any integer $b$, we have 
\[
(\mathfrak{S}_{b}\cdot\mathfrak{S}_{\underline{a}})=\sum_{\begin{array}{c}\scriptstyle|\underline{c}|=|\underline{a}|+b\\\scriptstyle a_i\leq c_i\leq a_{i-1}\forall i\end{array}}\mathfrak{S}_{\underline{c}}. 
\]
\item
There is an oriented version of the Giambelli formula: 
\[
\mathfrak{S}_{(a_1,a_2,\dots,a_k)}=\left|\begin{array}{ccccc}\mathfrak{S}_{a_1} & \mathfrak{S}_{a_1+1} & \mathfrak{S}_{a_1+2} & \cdots & \mathfrak{S}_{a_1+k-1}\\
\mathfrak{S}_{a_2-1} & \mathfrak{S}_{a_2} & \mathfrak{S}_{a_2+1} & \cdots& \mathfrak{S}_{a_2+k-2}\\
\mathfrak{S}_{a_3-2} & \mathfrak{S}_{a_3-1} & \mathfrak{S}_{a_3} & \cdots& \mathfrak{S}_{a_3+k-3}\\
\vdots&\vdots&\vdots&\ddots&\vdots\\
\mathfrak{S}_{a_k-k+1} & \mathfrak{S}_{a_k-k+2} & \mathfrak{S}_{a_k-k+3} & \cdots& \mathfrak{S}_{a_k} \end{array}\right|.
\]
\item The following describes the oriented Littlewood--Richardson coefficients for multiplication of classes $\mathfrak{S}_{\underline{a}}$ and $\mathfrak{S}_{\underline{b}}$: for any three Schubert classes $\sigma_{\underline{a}}$, $\sigma_{\underline{b}}$ and $\sigma_{\underline{c}}$, we have the following equality, where $\gamma_{\underline{a},\underline{b};\underline{c}}=\det(\sigma_{\underline{a}}\sigma_{\underline{b}}\sigma_{\underline{c}}^\ast)$ are the classical Littlewood--Richardson coefficients from the Chow ring:
\[
\mathfrak{S}_{\underline{a}}\cdot\mathfrak{S}_{\underline{b}}=\sum_{|\underline{c}|=|\underline{a}|+|\underline{b}|}\gamma_{\underline{a},\underline{b};\underline{c}}\langle 1\rangle\cdot\mathfrak{S}_{\underline{c}}.
\]
\end{enumerate}
\end{theorem}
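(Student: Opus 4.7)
The proof revolves around the fact that Proposition \ref{prop:folklore} furnishes a ring isomorphism $\omega$ from a Chow ring of a smaller Grassmannian (possibly tensored with an exterior algebra on $R$ in the odd case) onto the appropriate $\mathbf{W}$-cohomology ring, and that by construction $\mathfrak{S}_{\underline{a}}=\omega(\sigma_{\underline{a}})$. Consequently any multiplicative identity among Schubert classes $\sigma_{\underline{a}}$ in $\op{CH}^\bullet(\op{Gr}(\lfloor k/2\rfloor,\lfloor n/2\rfloor))$ transports verbatim to the same identity among the classes $\mathfrak{S}_{\underline{a}}$. With this in hand, parts (2), (3) and (4) are essentially immediate: they are obtained by applying $\omega$ to the classical Pieri formula, the Giambelli determinantal formula and the Littlewood--Richardson expansion recalled at the start of the section.

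Part (1) amounts to verifying the sixteen entries of the table. The first row and column are immediate from the commutativity of $\mathbf{W}$-cohomology together with $\omega$ being a ring homomorphism. The remaining entries reduce, by associativity and commutativity, to computing the four products $\op{e}_k^2$, $\op{e}_k\cdot\op{e}_{n-k}^\perp$, $(\op{e}_{n-k}^\perp)^2$ and $\op{R}^2$. The relation $\op{e}_k\cdot\op{e}_{n-k}^\perp=0$ is explicit in the presentation of Proposition \ref{prop:invertedeta}(1), and the relation $\op{R}^2=0$ is built into the exterior-algebra factor in Proposition \ref{prop:invertedeta}(3). For the two remaining squares, one uses the presentation to identify $\op{e}_k^2$ with the top Pontryagin class $\op{p}_k$ (which is absent from the list of polynomial generators precisely because it coincides with the square of $\op{e}_k$); under $\omega$ this corresponds to the top Chern class of the smaller Grassmannian, producing the entry $\mathfrak{S}_{1^k}$ of the table, and similarly $(\op{e}_{n-k}^\perp)^2$ produces $\mathfrak{S}_{n-k}$. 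The cells marked --- are excluded by the parity constraints dictating which of $\op{e}_k$, $\op{e}_{n-k}^\perp$ and $\op{R}$ may coexist in a given case of Proposition \ref{prop:invertedeta}.

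The main obstacle I anticipate is not conceptual but notational: one must verify that the subscript conventions in the table indeed point to the images under $\omega$ of the appropriate top Chern classes, so that the desired Pontryagin classes $\op{p}_k$ and $\op{p}_{n-k}^\perp$ are recovered, and one must separately argue that the reduction morphism of Proposition \ref{prop:reduction} detects the identifications $\op{e}_k^2=\op{p}_k$ and $(\op{e}_{n-k}^\perp)^2=\op{p}_{n-k}^\perp$ modulo torsion (which is harmless for the computation in $\mathbf{W}$-cohomology where torsion has been killed). Once this bookkeeping is done, the entire theorem follows as a formal consequence of Proposition \ref{prop:folklore}, the presentations in Proposition \ref{prop:invertedeta}, and the classical Schubert calculus of Chow rings.
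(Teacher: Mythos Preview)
Your proposal is correct and follows essentially the same approach as the paper: parts (2)--(4) are obtained by transporting the classical Pieri, Giambelli and Littlewood--Richardson identities through the ring isomorphism $\omega$ of Proposition~\ref{prop:folklore}, and part (1) is deduced from the relations $\op{e}_k\op{e}_{n-k}^\perp=0$, $\op{e}_k^2=\op{p}_k$, $(\op{e}_{n-k}^\perp)^2=\op{p}_{n-k}^\perp$, $\op{R}^2=0$ coming from the presentation in Proposition~\ref{prop:invertedeta}. Your anticipated obstacle is a non-issue: the identity $\op{e}_k^2=\op{p}_k$ holds on the nose in $\mathbf{W}$-cohomology (it is part of the presentation, not merely true modulo torsion), so no appeal to Proposition~\ref{prop:reduction} is needed.
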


\begin{proof}
(1) The fact that all classes are $\op{W}(F)$-linear combinations of classes appearing in the table follows from Proposition~\ref{prop:invertedeta}: the relations $\op{e}_k\op{e}_{n-k}^\perp=0$, $\op{e}_k^2=\op{p}_k$, $(\op{e}_{n-k}^\perp)^2=\op{p}_{n-k}^\perp$ and $\op{R}^2=0$ imply that we always can reduce to classes appearing in the table. All the entries of the table describing the multiplication also follow from these relations. 

The items (2), (3) and (4) are simply obtained by taking the classical Pieri and Giambelli formulas resp. Littlewood--Richardson coefficients and using the ring isomorphism of Proposition~\ref{prop:folklore}, and translate the result into even Young diagrams via Definition~\ref{def:evenmap} and Proposition~\ref{prop:addbasis1}. 
\end{proof}

\begin{remark}
In particular, while the coefficients describing the intersection products in $\op{H}^\bullet(\op{Gr}(k,n),\mathbf{W})$ are quadratic forms, they are always of the simple form $m\langle 1\rangle$ with $m\in\mathbb{N}$. Essentially, everything is defined over $\op{Spec}\mathbb{Z}$ and there are not many unramified quadratic forms there. 
\end{remark}

\subsection{Examples} 

We discuss a couple of examples to illustrate the computational rules in Theorem~\ref{thm:multw}.

\begin{example}
\label{ex:gr55}
We discuss the ring structure of $\op{H}^{\bullet}(\op{Gr}(5,10),\mathbf{W})$.\footnote{We will see later that this also describes the ring structure on the Witt group $\op{W}^\bullet(\op{Gr}(5,10))$.} Note that, since $\dim\op{Gr}(5,10)=25$ is odd, there are no classes in cohomology with twisted coefficients. The additive generators have been described in \cite[Figure 16, p. 641]{balmer:calmes}. The multiplicative structure is given as follows: the first row in Fig. 16, loc.cit., are the diagrams obtained by doubling Young diagrams for Schubert classes in $\op{Gr}(2,4)$. Their products can be obtained via doubling of the corresponding computations in $\op{Gr}(2,4)$, e.g.
\[
\Yvcentermath1
\yng(2,2)^4=2\langle 1\rangle\cdot\yng(4,4,4,4)
\]
The second row in \cite[Figure 16]{balmer:calmes} contains the classes which are products of the degree 9 class $\op{R}$ and a class obtained by doubling the Young diagram for a Schubert class in $\op{Gr}(2,4)$. Since $\op{R}^2=0$, products of classes from the second row are all zero. Products of a class from the first row with a class from the second row can be obtained as product of $\op{R}$ with a product of the corresponding classes from the first row. For example, 
\[
\Yvcentermath1
\yng(5,3,3,1,1)\cdot\yng(2,2)^2=2\langle 1\rangle\cdot\yng(5,5,5,3,3)
\]
\end{example} 

\begin{example}
The class $\op{p}_2\op{e}_8^\perp$ in $\op{Gr}(6,14)$ corresponds to the following even Young diagram in $(6\times 8)$-frame:
\[
\Yvcentermath1
\yng(8,2,2)
\]
The 4-fold self-intersection of this class is $\op{p}_2^4\op{p}_8^2$. It turns out that this equals the full $(6\times 8)$-rectangle with multiplicity 1: The class $\op{p}_8^2$ in $\op{Gr}(6,14)$ is a $(4\times 8)$-rectangle and by the Pieri formula only the components of $\op{p}_2^4$ which fit into a $(2\times 8)$-rectangle are relevant for the product $\op{p}_2^4\op{p}_8^2$. However, the only component of $\op{p}_2^4$ which fits into such rectangle is $\op{p}_8^2$ with multiplicity 1. 
\end{example}

\begin{example}
\label{ex:giambelli}
An example of the oriented Giambelli formula is the following one: in the notation of Theorem~\ref{thm:multw}, the doubled partition $(2a,2a,2b,2b)$ is the class $\mathfrak{S}_{a,b}$ and consequently, 
\[
\mathfrak{S}_{a,b}=\left|\begin{array}{cc}\mathfrak{S}_a&\mathfrak{S}_{a+1}\\\mathfrak{S}_{b-1}&\mathfrak{S}_b\end{array}\right|=\mathfrak{S}_a\mathfrak{S}_b-\mathfrak{S}_{a+1}\mathfrak{S}_{b-1}.
\]
The corresponding diagrammatic formula for the case $\mathfrak{S}_{2,1}$ is the following, which is the obvious double of the usual Giambelli formula for the Schubert class $\sigma_{2,1}$:
\[
\Yvcentermath1
\yng(4,4,2,2)=\yng(4,4)\cdot\yng(2,2)-\yng(6,6)
\]
In the specific case of the class $\mathfrak{S}_{2,1}\op{R}$ in  $\op{Gr}(5,10)$, the formula would simplify because the second summand doesn't fit the $(5\times 5)$-frame:
\[
\Yvcentermath1
\yng(5,5,5,3,3)=\yng(5,5,5,1,1)\cdot\yng(5,3,3,1,1)
\]
\end{example}

\section{Computations with torsion classes}
\label{sec:torsion}

In this section, we will extend the results on $\mathbf{W}$-cohomology rings from Section~\ref{sec:cohomology} to a discussion of the structure of the cohomology rings $\bigoplus_{j,\mathscr{L}}\op{H}^j(\op{Gr}(k,n),\mathbf{I}^j(\mathscr{L}))$. First, the natural reduction morphism 
\[
\bigoplus_{j,\mathscr{L}}\op{H}^j(\op{Gr}(k,n),\mathbf{I}^j(\mathscr{L}))\to \op{Ch}^j(\op{Gr}(k,n))
\]
is not generally surjective and the obstruction is given by the Steenrod square $\op{Sq}^2$. So we first discuss a description of Steenrod squares in terms of Young diagrams, essentially a version of results of Lenart \cite{lenart}. This allows to explicitly determine which classes in $\op{Ch}^j(\op{Gr}(k,n))$ lift to $\mathbf{I}^\bullet$-cohomology. The multiplicative structure is then determined by reduction to the mod 2 Chow rings, and this completes the description of the ring structure of $\mathbf{I}^\bullet$-cohomology. Then we can discuss some examples of liftable and non-liftable classes, and examples of multiplication of torsion classes in terms of classical Schubert calculus. 

\subsection{Lifting of torsion classes and non-refinable problems}

Recall that the natural projection morphisms 
\[
\widetilde{\op{CH}}^j(\op{Gr}(k,n),\mathscr{L})\to \op{CH}^j(\op{Gr}(k,n))
\,\textrm{ resp. }\, \op{H}^j(\op{Gr}(k,n),\mathbf{I}^j(\mathscr{L}))\to \op{Ch}^j(\op{Gr}(k,n))
\]
are not generally surjective. For a class $x\in\op{Ch}^\bullet(\op{Gr}(k,n))$, the obstruction to lifting to $\mathscr{L}$-twisted coefficients is given by the Steenrod square $\op{Sq}^2_{\mathscr{L}}(x)$:

\begin{proposition}
\label{prop:sq2lift}
A cycle $\sigma\in\op{CH}^i(\op{Gr}(k,n))$ lifts along the natural projection map $\widetilde{\op{CH}}^i(\op{Gr}(k,n),\mathscr{L})\to \op{CH}^i(\op{Gr}(k,n))$ if and only if $\overline{\sigma}\in\op{Ch}^i(\op{Gr}(k,n))$ is in the kernel of $\op{Sq}^2_{\mathscr{L}}$. 
\end{proposition}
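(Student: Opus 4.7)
The plan is to unfold the cartesian square of Proposition~\ref{prop:fiberproduct} (applied to a single twist $\mathscr{L}$, since the square evidently decomposes by line bundle) and verify that both compatibility conditions needed for a lift collapse to the single condition $\op{Sq}^2_{\mathscr{L}}(\overline{\sigma})=0$.

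First I would observe that, by the cartesian square, producing a lift of $\sigma\in\op{CH}^i(\op{Gr}(k,n))$ to $\widetilde{\op{CH}}^i(\op{Gr}(k,n),\mathscr{L})$ amounts to supplying two compatible pieces of data:
\begin{enumerate}
\item[(a)] that $\sigma$ itself lies in $\ker\partial_{\mathscr{L}}$, and
\item[(b)] a class $\tilde{\sigma}\in\op{H}^i(\op{Gr}(k,n),\mathbf{I}^i(\mathscr{L}))$ whose reduction $\rho(\tilde{\sigma})$ equals $\overline{\sigma}$.
\end{enumerate}
Since $\partial_{\mathscr{L}}$ factors as mod-2 reduction followed by the twisted Bockstein $\beta_{\mathscr{L}}$, condition (a) reads $\beta_{\mathscr{L}}(\overline{\sigma})=0$. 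By Proposition~\ref{prop:incoh}(2), the reduction $\rho_{\mathscr{L}}$ is injective on the image of $\beta_{\mathscr{L}}$ and satisfies $\rho_{\mathscr{L}}\circ\beta_{\mathscr{L}}=\op{Sq}^2_{\mathscr{L}}$, so (a) is equivalent to $\op{Sq}^2_{\mathscr{L}}(\overline{\sigma})=0$.

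For condition (b), the plan is to invoke the long exact cohomology sequence associated with the short exact sequence of sheaves $0\to\mathbf{I}^{j+1}(\mathscr{L})\to\mathbf{I}^j(\mathscr{L})\to\mathbf{K}^M_j/2\to 0$, whose relevant segment is
\[
\op{H}^i(\op{Gr}(k,n),\mathbf{I}^i(\mathscr{L}))\xrightarrow{\rho_{\mathscr{L}}}\op{Ch}^i(\op{Gr}(k,n))\xrightarrow{\beta_{\mathscr{L}}}\op{H}^{i+1}(\op{Gr}(k,n),\mathbf{I}^{i+1}(\mathscr{L})).
\]
Exactness at the middle term says that $\overline{\sigma}$ lifts along $\rho_{\mathscr{L}}$ precisely when $\beta_{\mathscr{L}}(\overline{\sigma})=0$, which is again $\op{Sq}^2_{\mathscr{L}}(\overline{\sigma})=0$. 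Hence (a) and (b) are simultaneously satisfied if and only if $\op{Sq}^2_{\mathscr{L}}(\overline{\sigma})=0$, completing the argument.

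I do not anticipate a genuine obstacle: the result is essentially bookkeeping on the cartesian square together with the Bär-type exact sequence, and the content is packaged in Proposition~\ref{prop:incoh}(2). The only point deserving care is that both the integral Bockstein $\partial_{\mathscr{L}}$ and the obstruction to surjectivity of $\rho_{\mathscr{L}}$ end up being controlled by the same Bockstein class $\beta_{\mathscr{L}}(\overline{\sigma})$, so that a single Steenrod square condition suffices rather than two independent ones.
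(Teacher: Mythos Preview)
Your proposal is correct and follows essentially the same route as the paper's proof: both reduce the lifting question to the vanishing of $\beta_{\mathscr{L}}(\overline{\sigma})$ via the fiber product/B\"ar sequence, and then convert this to the vanishing of $\op{Sq}^2_{\mathscr{L}}(\overline{\sigma})$ using Proposition~\ref{prop:incoh}(2). The paper's argument is more compressed---it simply asserts that lifting is equivalent to membership in $\ker\partial_{\mathscr{L}}$ without explicitly separating your conditions (a) and (b)---whereas you spell out that both conditions from the cartesian square reduce to the same Bockstein vanishing, which is a helpful clarification.
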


\begin{proof}
By the B\"ar sequence, cf. Proposition~\ref{prop:fiberproduct}, a class lifts if and only if the class is in the kernel of the composition
\[
\op{CH}^i(\op{Gr}(k,n))\xrightarrow{\bmod 2} \op{Ch}^i(\op{Gr}(k,n))\xrightarrow{\beta_{\mathscr{L}}} \op{H}^{i+1}(\op{Gr}(k,n),\mathbf{I}^{i+1}(\mathscr{L})). 
\]
By Proposition~\ref{prop:incoh} (2), the reduction $\rho_{\mathscr{L}}$ is injective on the image of $\beta_{\mathscr{L}}$. Therefore, the class is in the kernel of the above composition if and only if its mod 2 reduction is in the kernel of $\op{Sq}^2_{\mathscr{L}}$. 
\end{proof}

The above result can be used in combination with Proposition~\ref{prop:incoh} (3) to decide if classes lift: the Steenrod square $\op{Sq}^2_{\mathscr{L}}$ can be computed using the Wu formula
\[
\op{Sq}^2(\overline{\op{c}}_j)=\overline{\op{c}}_1\overline{\op{c}}_j+(j-1)\overline{\op{c}}_{j+1},\quad 
\op{Sq}^2_{\det}(\overline{\op{c}}_j)=(j-1)\overline{\op{c}}_{j+1}.
\]
Via the Giambelli formula, this can be used to compute arbitrary Steenrod squares in $\op{Ch}^\bullet(\op{Gr}(k,n))$. The following provides an easy diagrammatic description of the Steenrod squares:

\begin{theorem}
\label{thm:steenrodlift}
Fix $1\leq k<n$. For a Young diagram $\Lambda$, fill the boxes with a checkerboard pattern, starting with a black box in the upper left corner. 
\begin{enumerate}
\item The ordinary Steenrod square $\op{Sq}_{\mathscr{O}}^2(\Lambda)$ in $\op{Ch}^\bullet(\op{Gr}(k,n))$ is given as the sum of those Young diagrams with checkerboard pattern in $k\times (n-k)$-frame which can be obtained from $\Lambda$ by adding a single white box. 
\item The twisted Steenrod square $\op{Sq}^2_{\det}(\Lambda)$ in $\op{Ch}^\bullet(\op{Gr}(k,n))$ is given as the sum of those Young diagrams with checkerboard pattern in $k\times (n-k)$-frame which can be obtained from $\Lambda$ by adding a single black box. 
\end{enumerate}
\end{theorem}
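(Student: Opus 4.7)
The plan is to bootstrap from the Wu formula on single-column (or single-row) Schubert classes to arbitrary Young diagrams, then match the resulting combinatorics against the checkerboard statement.

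For the base case $\lambda = (1^j)$, the Wu formula from Proposition~\ref{prop:incoh}(3), combined with the Pieri expansion $\sigma_1 \cdot \sigma_{1^j} = \sigma_{2, 1^{j-1}} + \sigma_{1^{j+1}}$, yields
\[
\op{Sq}^2_{\mathscr{O}}(\sigma_{1^j}) = \sigma_{2, 1^{j-1}} + j\,\sigma_{1^{j+1}}, \qquad \op{Sq}^2_{\det}(\sigma_{1^j}) = (j-1)\,\sigma_{1^{j+1}}.
\]
Reducing mod $2$ and comparing with the checkerboard rule: the new box at $(1,2)$ is always white, while the box at $(j+1,1)$ is white precisely when $j$ is odd. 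Both observations agree with the respective coefficients, and the completely symmetric computation goes through for single rows $\sigma_j$.

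For a general Schubert class $\sigma_\lambda$, I would expand $\sigma_\lambda$ via the Giambelli formula as a determinant of single-row (or single-column) special Schubert classes and apply $\op{Sq}^2_{\mathscr{O}}$ as a derivation. The twisted case $\op{Sq}^2_{\det}$ is then handled by the identity $\op{Sq}^2_{\det}(x) = \overline{\op{c}}_1 x + \op{Sq}^2_{\mathscr{O}}(x)$ together with the Pieri rule for multiplication by $\overline{\op{c}}_1 = \sigma_1$, which itself corresponds diagrammatically to summing over all outer-corner box additions.

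The main obstacle is combinatorial bookkeeping: in the Giambelli expansion, one must verify that mod $2$ the contributions away from outer corners of $\lambda$ all cancel, so that the surviving terms correspond exactly to the Schubert classes obtained by adjoining a single box at an outer corner, with the color condition selecting between the two Steenrod squares. Should this direct argument prove too unwieldy, a cleaner alternative is to invoke Lenart's combinatorial formula \cite{lenart} for $\op{Sq}^2$ on Grassmannian Schubert classes and verify by a short case analysis on the position and parity of the added box that Lenart's rule specializes to the checkerboard description asserted here.
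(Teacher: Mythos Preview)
Your proposal is an outline rather than a proof: the base case is correct and shared with the paper, but the decisive step---the ``combinatorial bookkeeping'' you flag---is not carried out, and your fallback to Lenart outsources exactly the content of the theorem.

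More importantly, the paper's proof reverses the logic of your approach, and this is worth noting. You want to compute $\op{Sq}^2(\sigma_\lambda)$ by expanding $\sigma_\lambda$ via Giambelli, applying the derivation rule term by term, and then collapsing the resulting mess back to the checkerboard description. The paper instead treats the checkerboard rule as defining an operation $D$ on Young diagrams and proves that $D$ itself satisfies the same Leibniz rule as $\op{Sq}^2$ with respect to Pieri multiplication by $\overline{\op{c}}_i$. Since $D$ and $\op{Sq}^2$ agree on the generators $\overline{\op{c}}_i$ (your base case) and both are derivations on the algebra generated by these, they coincide everywhere.

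The combinatorial core then becomes a local check: for an arbitrary diagram $\Lambda$, verify that ``add $i$ Pieri boxes to $\Lambda$, then add one colored box'' agrees mod~2 with ``add one colored box to $\Lambda$, then add $i$ Pieri boxes'' plus the correction $\Lambda \cdot \op{Sq}^2(\overline{\op{c}}_i)$. This is a box-by-box matching argument via the Pieri rule, far more tractable than chasing cancellations through a $k\times k$ Giambelli determinant. The paper further simplifies by observing that the $\op{Sq}^2_{\mathscr{O}}$ and $\op{Sq}^2_{\det}$ cases are equivalent (both sides differ by $\overline{\op{c}}_1\cdot\Lambda\cdot\overline{\op{c}}_i$), so only the twisted case with its simpler Wu formula needs to be checked.
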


\begin{proof}
Since the Chow ring $\op{Ch}^\bullet(\op{Gr}(k,n))$ is generated by the Stiefel--Whitney classes $\overline{\op{c}}_1,\dots,\overline{\op{c}}_k$, the Steenrod squares $\op{Sq}^2_{\mathscr{O}}$ and $\op{Sq}^2_{\det}$ are determined by their values on $\overline{\op{c}}_i$ and the fact that $\op{Sq}^2_{\mathscr{O}}$ is a derivation (resp. that $\op{Sq}^2_{\det}$ has a derivation-like property). To prove the theorem, it suffices to show that the checkerboard rules agree with the Wu formula on the $\overline{\op{c}}_i$, and to show that the checkboard rules have the appropriate multiplicative behaviour. 

We first consider the Stiefel--Whitney classes $\overline{\op{c}}_i$, which correspond to the Young diagram with $i$ boxes in a single column. The checkerboard pattern ends in a white box at the bottom if $i$ is even, and a black box otherwise. To get a Young diagram, we can only add a box in the first row or at the bottom of the column. For the twisted Steenrod square, the box in the first row cannot be black, so the only possibility when we can add a black box (at the bottom of the column) would be when $i$ is even or $i=k$. This corresponds exactly to the Wu formula $\op{Sq}^2_{\op{det}}(\overline{\op{c}}_{i})=(i-1)\overline{\op{c}}_{i+1}$. For the non-twisted Steenrod square, we can always add a white box in the first row, and a white box at the bottom of the column can be added if and only if $i$ is odd. The sum of these two possibilities corresponds exactly to $\overline{\op{c}}_1\overline{\op{c}}_i$ by the Pieri formula. Consequently, the checkerboard description agrees exactly with the Wu formula $\op{Sq}^2_{\mathscr{O}}(\overline{\op{c}}_{i})=\overline{\op{c}}_1\overline{\op{c}}_i+(i-1)\overline{\op{c}}_{i+1}$. This finishes the discussion of the Stiefel--Whitney classes $\overline{\op{c}}_i$, the Stiefel--Whitney classes $\overline{\op{c}}_i^\perp$ are handled similarly. 

Now we show that the checkerboard rules have the right multiplicative behaviour: $\op{Sq}^2_{\mathscr{O}}$ is a derivation and $\op{Sq}^2_{\det}(a\cdot b)=a\cdot \op{Sq}^2_{\det}(b)+b\cdot\op{Sq}^2_{\mathscr{O}}(a)$. Actually, using the Giambelli formula, we only need to show that the derivation property is satisfied for products of an arbitrary Young diagram with one for the $\overline{\op{c}}_i$. The two multiplication statements for $\op{Sq}^2_{\mathscr{O}}(\Lambda\cdot\overline{\op{c}}_i)$ and $\op{Sq}^2_{\det}(\Lambda\cdot\overline{\op{c}}_i)$ are equivalent, because both sides of the equation differ by $\overline{\op{c}}_1\cdot\Lambda\cdot\overline{\op{c}}_i$. Therefore, it suffices to prove the formula for $\op{Sq}^2_{\det}$; this is easier, because the Wu formula for $\op{Sq}^2_{\det}(\overline{\op{c}}_i)$ is the easier one. We only sketch the combinatorial argument: on the left-hand side, the diagrammatic description of $\op{Sq}^2_{\det}(\Lambda\cdot\overline{\op{c}}_i)$ is obtained as sum of all possibilities of adding $i$ boxes according to the Pieri formula, and then adding a black box. Note that the boxes added via Pieri formula cannot be stacked vertically, but the last black box can be added below one of the boxes added before. On the right-hand side of the diagrammatic description, one of the contributions is
\[
\Lambda\cdot\op{Sq}^2_{\det}(\overline{\op{c}}_i)=\left\{\begin{array}{ll} 0 & i \textrm{ odd}\\
\Lambda\cdot \overline{\op{c}}_{i+1} & i \textrm{ even}\end{array}\right.
\] 
and $\Lambda\cdot\overline{\op{c}}_{i+1}$ is obtained by adding a column of $i+1$ boxes to $\Lambda$ such that no two boxes are in the same row. The other contribution on the right-hand side is $\op{Sq}^2_{\mathscr{O}}(\Lambda)\cdot\overline{\op{c}}_i$ which is obtained by first adding a white square to $\Lambda$ and then adding $i$ boxes according to the Pieri formula. 

If on the left-hand side ($i$ boxes according to Pieri formula plus one black box), the black box is in a different row from all the $i$ boxes added before, then this contribution is in the $\Lambda\cdot\overline{\op{c}}_{i+1}$-part of the right-hand side. If it is to the right of a box added before, the previous box must be white, and this contributes to the $\op{Sq}^2_{\mathscr{O}}(\Lambda)\cdot\overline{\op{c}}_i$-part of the right-hand side. So we have the same contributions on either side, proving the required equality. 
\end{proof}

\begin{remark}
Here is an alternative argument: we first consider the real Grassmannians $\op{Gr}_k(\mathbb{R}^n)$. By definition of the Bockstein map, the Bockstein $\beta(x)$ of a class $x\in \op{H}^j(\op{Gr}_k(\mathbb{R}^n),\mathbb{Z}/2\mathbb{Z})$ equals the boundary of a representative of $x$ in the chain complex for $\op{Gr}_k(\mathbb{R}^n)$. The criterion in Theorem~\ref{thm:steenrodlift} then follows directly from the description of the boundary map in \cite{casian:kodama}. The result follows since the real cycle class map $\op{H}^j(\op{Gr}(k,n),\overline{\mathbf{I}}^j)\cong\op{Ch}^j(\op{Gr}(k,n))\to \op{H}^j(\op{Gr}_k(\mathbb{R}^n),\mathbb{Z}/2\mathbb{Z})$ takes the Steenrod square $\op{Sq}^2$ to the Bockstein operation $\beta$, a fact that will be established in forthcoming joint work with Jens Hornbostel, Heng Xie and Marcus Zibrowius. 

On the other hand, the description of $\op{Sq}^1=\beta$ on $\op{H}^j(\op{Gr}_k(\mathbb{R}^n),\mathbb{Z}/2\mathbb{Z})$ agrees with the description of $\op{Sq}^2$ on $\op{H}^j(\op{Gr}_k(\mathbb{C}^n),\mathbb{Z}/2\mathbb{Z})$: essentially, $\op{Sq}^2$ in the complex setting is nontrivial if the attaching map is the Hopf map $\eta$, and in the real case $\op{Sq}^1$ is nontrivial if the attaching map is the natural projection $\op{S}^n\to\mathbb{RP}^n$. Therefore, the diagrammatic description above also describes $\op{Sq}^2$ for $\op{Gr}_k(\mathbb{C}^n)$. Then the cycle class map $\op{Ch}^j(\op{Gr}(k,n))\to\op{H}^j(\op{Gr}_k(\mathbb{C}^n),\mathbb{Z}/2\mathbb{Z})$ is compatible with $\op{Sq}^2$ (essentially because Voevodsky's description of the Steenrod operations is a version of the topological definition in the $\mathbb{A}^1$-homotopy category). This is another way to see that the description of $\op{Sq}^2$ on $\op{Ch}^\bullet(\op{Gr}(k,n))$ agrees with the description of attaching maps in the cell complex for $\op{Gr}_k(\mathbb{R}^n)$. Note that this agrees with the description in \cite{lenart}.
\end{remark}

\begin{example}
To illustrate the conditions in Theorem~\ref{thm:steenrodlift}, consider the following computation of Steenrod squares with checkerboard rule:
\[
\Yboxdim{10pt}
\Yvcentermath1
\newcommand\ylb{\Yfillcolour{black}}
\newcommand\ylw{\Yfillcolour{white}}
\op{Sq}^2_{\mathscr{O}}\left(\gyoung(!\ylb;!\ylw;!\ylb;!\ylw;!\ylb;,!\ylw;!\ylb;)\right)=
\gyoung(!\ylb;!\ylw;!\ylb;!\ylw;!\ylb;!\ylw;,;!\ylb;)+
\gyoung(!\ylb;!\ylw;!\ylb;!\ylw;!\ylb;,!\ylw;!\ylb;!\ylw;)
\]
\[
\Yboxdim{10pt}
\Yvcentermath1
\newcommand\ylb{\Yfillcolour{black}}
\newcommand\ylw{\Yfillcolour{white}}
\op{Sq}_{\op{det}}^2\left(\gyoung(!\ylb;!\ylw;!\ylb;!\ylw;!\ylb;,!\ylw;!\ylb;)\right)=
\gyoung(!\ylb;!\ylw;!\ylb;!\ylw;!\ylb;,!\ylw;!\ylb;,;)
\]
\end{example}

\begin{example}
We work out the Steenrod square of the class $\sigma_{a,b}$. By the Giambelli formula, we have 
\[
\sigma_{a,b}=\det\left(\begin{array}{cc}\sigma_a&\sigma_{a+1}\\ \sigma_{b-1}&\sigma_b\end{array}\right)=\sigma_a\sigma_b-\sigma_{a+1}\sigma_{b-1}.
\]
Now we have a case distinction
\[
\op{Sq}^2_{\mathscr{O}}\sigma_{a,b}=\left\{\begin{array}{ll} -\sigma_{a+1}\sigma_b-\sigma_{b-1}\sigma_{a+2} = \sigma_{a+1,b} & a, b \textrm{ odd}\\
\sigma_a\sigma_{b+1}-\sigma_{b-1}\sigma_{a+2} = \sigma_{a,b+1}+\sigma_{a+2,b} & a,b-1 \textrm{ odd}\\
0 & a-1,b\textrm{ odd}\\
\sigma_a\sigma_{b+1}+\sigma_{a+1}\sigma_b & a,b\textrm{ even}
\end{array}\right.
\]
The simplifications are obtained by expanding the product $\sigma_i\sigma_j$ as $\sigma_{i+j}+\sigma_{i+j-1,1}+\dots+\sigma_{i+j-\min(i,j),\min(i,j)}$ and cancelling corresponding terms. Note that in case $a,b$ even, the result can be simplified to $\sigma_{a,b+1}$ if $a>b$ and $0$ if $a=b$. This fits exactly with the geometric description: in the first row a white square can be added if and only if $a$ is odd, and in the second row a white square can be added if and only if $b$ is even. In the third, we can never add a white square. 
\end{example}

Actually, the triviality of the Steenrod squares can be identified by means of an elementary size condition on the Young diagrams. The condition is exactly the one discussed in \cite[Section 4]{balmer:calmes}; in particular, the vanishing of the Steenrod square $\op{Sq}^2_{\mathscr{L}}$ for a Schubert variety is equivalent to the relative canonical bundle of a resolution of singularities being isomorphic to the pullback of the corresponding line bundle $\mathscr{L}$ from the Grassmannian.

\begin{proposition}
\label{prop:compareBC}
Fix $1\leq k<n$ and let $\Lambda$ be a Young diagram. In the notation from Definition~\ref{def:evenyoung} and Figure~\ref{fig:bc}, we have the following: 
\begin{enumerate}
\item
Then $\op{Sq}^2_{\mathscr{O}}(\Lambda)=0$ if and only if
\begin{enumerate}
\item  for all $1\leq i<p$ we have $d_{i+1}-d_i+e_{i+2}-e_{i+1}$ is even, and 
\item either ($n-k-e_1$ and $d_1+e_2-e_1$ are even) or ($e_1=0$ and both $n-k$ and $d_1+e_2-e_1$ are odd). 
\end{enumerate}
\item
Then $\op{Sq}^2_{\det}(\Lambda)=0$ if and only if 
\begin{enumerate}
\item  for all $1\leq i<p$ we have $d_{i+1}-d_i+e_{i+2}-e_{i+1}$ is even, and 
\item either ($n-k-e_1$ is odd and $d_1+e_2-e_1$ is even) or ($e_1=0$, $n-k$ is even and $d_1+e_2-e_1$ is odd). 
\end{enumerate}
\end{enumerate}
\end{proposition}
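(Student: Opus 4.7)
The plan is to translate the diagrammatic description of Steenrod squares in Theorem~\ref{thm:steenrodlift} directly into the size conditions of Definition~\ref{def:evenyoung}. Since the Schubert basis is linearly independent in $\op{Ch}^\bullet(\op{Gr}(k,n))$ and the Young diagrams appearing in the sum formula for $\op{Sq}^2_{\mathscr{L}}(\Lambda)$ are pairwise distinct (each is determined by the position of the added box), we have $\op{Sq}^2_{\mathscr{L}}(\Lambda)=0$ if and only if no box of the forbidden color (white for $\mathscr{L}=\mathscr{O}$, black for $\mathscr{L}=\det$) can be added to $\Lambda$ within the $k\times(n-k)$-frame.

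The first step is to enumerate the addable corners of $\Lambda$. A box can be added at position $(r,c)$ to yield another Young diagram in the frame precisely when $c=\Lambda_r+1$, $\Lambda_{r-1}>\Lambda_r$ (with the convention $\Lambda_0=n-k$), $r\leq k$ and $c\leq n-k$. In the indexing of Definition~\ref{def:evenyoung}, these positions are the top corner $P_0=(1,\,n-k-e_1+1)$, valid iff $e_1>0$, together with the interior corners $P_j=(d_j+1,\,n-k-e_{j+1}+1)$ for $j=1,\dots,p-1$; the latter are automatically valid because $e_{j+1}>e_j\geq 0$ forces $e_{j+1}\geq 1$, and $d_j+1\leq d_p=k$.

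The next step is to compute the checkerboard colors. With the upper-left corner black, the square $(r,c)$ is black iff $r+c$ is even, so $P_0$ is black iff $n-k-e_1$ is even, and $P_j$ is black iff $d_j+(n-k)-e_{j+1}$ is even. By the reduction above, $\op{Sq}^2_{\mathscr{O}}(\Lambda)=0$ iff every addable $P_j$ is black, while $\op{Sq}^2_{\det}(\Lambda)=0$ iff every addable $P_j$ is white. Taking pairwise differences of $d_j+(n-k)-e_{j+1}$ for consecutive $j$ gives the parity of $d_{j+1}-d_j+e_{j+2}-e_{j+1}$, so condition (a) is exactly the requirement that all interior corners $P_1,\dots,P_{p-1}$ share the same color (the effective range of the index is $1\leq i\leq p-2$, since no $P_p$ exists). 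Condition (b) then fixes the common color and incorporates the top corner: when $e_1>0$ both $P_0$ and $P_1$ must have the required parity, and using $(d_1+(n-k)-e_2)-(n-k-e_1)=d_1+e_1-e_2$ this yields the first branches of (b) in each of the two cases; when $e_1=0$ only the interior corners are addable, and the constraint on $P_1$ alone becomes the second branch of (b), with the parity of $d_1+e_2-e_1=d_1+e_2$ forced by the parity of $n-k$ according to which Steenrod square is being treated.

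The main obstacle is the bookkeeping in this final step: one must verify carefully that the disjunction in (b), combined with the chain of parity relations imposed by (a), reproduces precisely the set of tuples $(d_j,e_j)$ for which every addable box avoids the forbidden color. The $e_1=0$ case in particular requires the case split on the parity of $n-k$ in order to express the sole remaining constraint on $P_1$ in the form stated, and one must check that in the $\op{Sq}^2_{\mathscr{O}}$ case the condition $n-k$ odd forces $d_1+e_2-e_1$ odd, while in the $\op{Sq}^2_{\det}$ case it is the parity $n-k$ even which forces $d_1+e_2-e_1$ odd.
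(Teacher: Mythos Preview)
Your proof is correct and follows essentially the same approach as the paper: both arguments invoke the checkerboard description of Theorem~\ref{thm:steenrodlift}, enumerate the addable corners $P_0=(1,n-k-e_1+1)$ and $P_j=(d_j+1,n-k-e_{j+1}+1)$, compute their parities, and then translate the condition that all addable boxes avoid the forbidden color into the stated parity constraints on the $d_i,e_i$. Your version is in fact somewhat more explicit than the paper's, particularly in observing that the effective index range in (a) is $1\leq i\leq p-2$ and in spelling out how the $e_1=0$ case splits across the two branches of (b).
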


\begin{proof}
We prove the first case, the second is similar. The Steenrod square $\op{Sq}^2_{\mathscr{O}}(\Lambda)$ vanishes if, after filling the Young diagram $\Lambda$ with a checkerboard pattern, there is no way to add a white box and still get a Young diagram in $k\times(n-k)$-frame which has a checkerboard pattern. The places where can possibly add a white box to get a Young diagram would be at the end of the first row, or in the corners of hooks; in the notation of Figure~\ref{fig:bc}, such a box would be at the coordinates $(d_i+1,n-k-e_{i+1}+1)$. We cannot add a box in the first row if and only if the length is either even (thus ending in a white square) or has length $n-k$ (so that adding a square would leave the $k\times(n-k)$-frame). We cannot add boxes in the hook corners if and only if the boxes at the coordinates $(d_i,n-k-e_{i+1}+1)$ resp. $(d_i+1,n-k-e_{i+1})$ are white. Now we note that the boxes $(d_i+1,n-k-e_{i+1})$ and $(d_{i+1},n-k-e_{i+2}+1)$ have the same color if and only if $d_{i+1}-d_i+e_{i+2}-e_{i+1}$ is even. Moreover, the box at the end of the first row has the same color as the one in the rightmost hook if and only if $d_1+e_2-e_1$ is even. Therefore, if the first row has even length, then it suffices to require $d_{i+1}-d_i+e_{i+2}-e_{i+1}$ and $d_1+e_2-e_1$ to be even. However, if the first row has odd length $n-k$ (equivalently $e_1=0$), then we need to require $d_1+e_2-e_1$ to be odd.
\end{proof}

\begin{proposition}
\label{prop:ehresmann}
Fix $1\leq k<n$. The Steenrod squares $\op{Sq}^2_{\mathscr{L}}(\sigma_\Lambda)$ of Schubert classes $\sigma_\Lambda\in\op{Ch}^\bullet(\op{Gr}(k,n))$ form a set of additive generators for the $\op{I}(F)$-torsion in  $\bigoplus_{j,\mathscr{L}}\op{H}^j(\op{Gr}(k,n),\mathbf{I}^j(\mathscr{L}))$. 
\end{proposition}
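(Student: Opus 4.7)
The plan is to deduce the proposition directly from Proposition~\ref{prop:incoh} together with the fact that the Schubert classes $\sigma_\Lambda$ (indexed by Young diagrams $\Lambda$ in $k\times(n-k)$-frame) form an additive basis for the Chow ring $\op{CH}^\bullet(\op{Gr}(k,n))$, and hence their mod~2 reductions form an $\mathbb{F}_2$-basis for $\op{Ch}^\bullet(\op{Gr}(k,n))$.

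First I would apply Proposition~\ref{prop:incoh}(1): the $\op{I}(F)$-torsion in $\bigoplus_{j,\mathscr{L}}\op{H}^j(\op{Gr}(k,n),\mathbf{I}^j(\mathscr{L}))$ coincides with the union of the images of the Bockstein maps
\[
\beta_{\mathscr{L}}:\op{Ch}^\bullet(\op{Gr}(k,n))\to \op{H}^{\bullet+1}(\op{Gr}(k,n),\mathbf{I}^{\bullet+1}(\mathscr{L}))
\]
as $\mathscr{L}$ ranges over the two relevant twists. Since $\beta_{\mathscr{L}}$ is additive and $\{\sigma_\Lambda\}$ is an $\mathbb{F}_2$-basis of $\op{Ch}^\bullet(\op{Gr}(k,n))$, the family $\{\beta_{\mathscr{L}}(\sigma_\Lambda)\}_{\Lambda,\mathscr{L}}$ additively generates the image of $\beta_{\mathscr{L}}$, and hence the full $\op{I}(F)$-torsion.

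To pass from Bocksteins to Steenrod squares, invoke Proposition~\ref{prop:incoh}(2): the reduction $\rho_{\mathscr{L}}:\op{H}^\bullet(\op{Gr}(k,n),\mathbf{I}^\bullet(\mathscr{L}))\to \op{Ch}^\bullet(\op{Gr}(k,n))$ is injective on the image of $\beta_{\mathscr{L}}$, and satisfies $\rho_{\mathscr{L}}\circ\beta_{\mathscr{L}} = \op{Sq}^2_{\mathscr{L}}$. Consequently, the assignment $\beta_{\mathscr{L}}(\sigma_\Lambda)\mapsto \op{Sq}^2_{\mathscr{L}}(\sigma_\Lambda)$ is a bijection between the generating families, and in particular $\beta_{\mathscr{L}}(\sigma_\Lambda)=0$ precisely when $\op{Sq}^2_{\mathscr{L}}(\sigma_\Lambda)=0$. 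Identifying the image of $\beta_{\mathscr{L}}$ with its image under $\rho_{\mathscr{L}}$ in $\op{Ch}^\bullet(\op{Gr}(k,n))$, the non-zero Steenrod squares $\op{Sq}^2_{\mathscr{L}}(\sigma_\Lambda)$ provide the desired additive generating set.

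There is essentially no obstacle here beyond bookkeeping: the entire argument is a translation through the injective reduction $\rho_{\mathscr{L}}$ restricted to $\op{im}(\beta_{\mathscr{L}})$. The only point worth emphasizing is that many of the $\op{Sq}^2_{\mathscr{L}}(\sigma_\Lambda)$ will vanish (by Proposition~\ref{prop:compareBC}, exactly those whose boundaries satisfy the evenness conditions), so that the generating family is highly redundant; however, the statement only claims a generating set, not a basis.
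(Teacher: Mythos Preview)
Your proof is correct and follows exactly the approach the paper takes: the paper's own proof is the single sentence ``This follows directly from Proposition~\ref{prop:incoh},'' and you have simply unpacked that reference by combining part~(1) (torsion equals image of $\beta_{\mathscr{L}}$), the fact that Schubert classes span $\op{Ch}^\bullet$, and part~(2) (the injective reduction $\rho_{\mathscr{L}}$ identifies $\beta_{\mathscr{L}}(\sigma_\Lambda)$ with $\op{Sq}^2_{\mathscr{L}}(\sigma_\Lambda)$). Your closing remark about redundancy is also apt and anticipates the example the paper gives immediately after this proposition.
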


\begin{proof}
This follows directly from Proposition~\ref{prop:incoh}. 
\end{proof}

At this point, getting an additive generating set as above is the best statement we can make (but the previous discussion of the Steenrod squares allows to make the generating set fairly explicit). It seems more difficult to provide a $\mathbb{Z}/2\mathbb{Z}$-basis for the torsion part of the $\mathbf{I}^\bullet$-cohomology. The following example shows that we cannot expect to get an additive basis simply by taking lifts of Schubert classes. 

\begin{example}
One of the easiest such examples appears in $\op{H}^2(\op{Gr}(2,5),\mathbf{I}^2)$. The Chow group $\op{Ch}^2(\op{Gr}(2,5))$ is generated by the two classes $\overline{\op{c}}_2$ and $\overline{\op{c}}_2^\perp$. Both have nontrivial $\op{Sq}^2_{\mathscr{O}}$ and hence do not lift to $\mathbf{I}^2$-cohomology. However, their non-twisted Steenrod squares are both equal to the hook $\sigma_{2,1}$ and consequently $\overline{\op{c}}_2+\overline{\op{c}}_2^\perp=\overline{\op{c}}_1^2$ lifts, namely to the element $\beta_{\mathscr{O}}(\overline{\op{c}}_1)$. 
\end{example}

\subsection{Multiplication of torsion classes}

We finally discuss the multiplicative structure of the $\mathbf{I}^\bullet$-cohomology ring. By Proposition~\ref{prop:addbasis1}, the even Young diagrams provide a $\op{W}(F)$-basis for the free part of $\bigoplus_{j,\mathscr{L}}\op{H}^j(\op{Gr}(k,n),\mathbf{I}^j(\mathscr{L})$. The remaining part is $\op{I}(F)$-torsion, given exactly as the image of $\beta_{\mathscr{L}}$, cf. Proposition~\ref{prop:incoh}. In particular, any class $x$ in $\mathbf{I}^\bullet$-cohomology has a unique decomposition $x=x_{\op{tor}}+x_{\op{ev}}$ as a sum of an element $x_{\op{ev}}$ in the $\op{W}(F)$-subalgebra generated by even Young diagrams and an element $x_{\op{tor}}$ which is $\op{I}(F)$-torsion.

Now if we have two classes $x=x_{\op{tor}}+x_{\op{ev}}$ and $y=y_{\op{tor}}+y_{\op{ev}}$ in $\mathbf{I}^\bullet$-cohomology, we can determine their product $x\cdot y$ as follows. The product $x\cdot y$ is given by $x_{\op{ev}}\cdot y_{\op{ev}}+x_{\op{tor}}\cdot y_{\op{tor}}+x_{\op{tor}}\cdot y_{\op{ev}}+x_{\op{ev}}\cdot y_{\op{tor}}$, and all summands except possibly the first one are torsion. Non-torsion contributions can only arise from $x_{\op{ev}}\cdot y_{\op{ev}}$ but as the example in \ref{rem:noring} shows, $x_{\op{ev}}\cdot y_{\op{ev}}$ can also contain torsion contributions. The non-torsion contributions in $x_{\op{ev}}\cdot y_{\op{ev}}$ can be determined using Theorem~\ref{thm:multw}. The overall class of the product can then be determined by reduction: 

\begin{proposition}
Fix $1\leq k<n$, and let $x,y$ be two classes in $\mathbf{I}^\bullet$-cohomology. Then the reduction of $x\cdot y$ is given by the product of the reductions of $x$ and $y$ in $\op{Ch}^\bullet(\op{Gr}(k,n))$. 
\end{proposition}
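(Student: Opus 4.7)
The plan is to reduce this to the abstract fact that $\rho_{\mathscr{L}}$ is induced by a morphism of sheaves of (bi)graded rings, and then spell out what this means concretely in terms of the generators that have already been described in Section~\ref{sec:recall1} and Section~\ref{sec:cohomology}.

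First I would recall that the reduction homomorphism fits into a short exact sequence of Nisnevich sheaves
\[
0\to \mathbf{I}^{\bullet+1}(\mathscr{L})\to \mathbf{I}^\bullet(\mathscr{L})\to \overline{\mathbf{I}}^\bullet\to 0,
\]
where by the Milnor conjecture $\overline{\mathbf{I}}^\bullet\cong \mathbf{K}^{\op{M}}_\bullet/2$ is independent of the line bundle twist (since squares of units are trivial modulo $2$). Assembling over the two possible twists on $\op{Gr}(k,n)$, the total reduction map is a morphism of $\mathbb{Z}\oplus\mathbb{Z}/2$-graded sheaves of rings from $\mathbf{I}^\bullet\oplus\mathbf{I}^\bullet(\mathscr{L})$ to $\overline{\mathbf{I}}^\bullet$, where the multiplicative structure on the domain combines the cup product with the tensor product of twists. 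Applying cohomology and using the Bloch--Rost identification $\op{H}^j_{\op{Nis}}(X,\mathbf{K}^{\op{M}}_j/2)\cong \op{Ch}^j(X)$ on smooth $X$, the induced map $\rho_{\mathscr{L}}$ is a ring homomorphism by the functoriality of cup product in sheaf cohomology with respect to morphisms of sheaves of rings.

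To keep the argument self-contained within the framework of the paper, I would then verify multiplicativity directly on the decomposition $x=x_{\op{tor}}+x_{\op{ev}}$ provided by the splitting into the image of $\beta_{\mathscr{L}}$ and the $\op{W}(F)$-subalgebra generated by even Young diagrams. For the product $x_{\op{ev}}\cdot y_{\op{ev}}$, multiplicativity of $\rho_{\mathscr{L}}$ on the generators $\op{p}_{2i}^{(\perp)}$, $\op{e}_k$, $\op{e}_{n-k}^\perp$ and $\op{R}$ is the content of Proposition~\ref{prop:reduction} combined with the relations $\op{e}_k^2=\op{p}_k$, $(\op{e}_{n-k}^\perp)^2=\op{p}_{n-k}^\perp$ and the Whitney sum formula $\op{p}\cdot\op{p}^\perp=1$ from Proposition~\ref{prop:invertedeta}; their reductions $(\overline{\op{c}}_k)^2$, $(\overline{\op{c}}_{n-k}^\perp)^2$ and $\overline{\op{c}}\cdot\overline{\op{c}}^\perp=1$ are exactly the corresponding relations in $\op{Ch}^\bullet(\op{Gr}(k,n))$. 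For the cross-terms and for $x_{\op{tor}}\cdot y_{\op{tor}}$, multiplicativity is precisely the content of Proposition~\ref{prop:incoh}(2): products of Bockstein classes $\beta_{\mathscr{L}}(u)$ with arbitrary classes $v$ satisfy $\rho_{\mathscr{L}}(\beta_{\mathscr{L}}(u)\cdot v)=\op{Sq}^2_{\mathscr{L}}(u)\cdot \rho_{\mathscr{L}}(v)=\rho_{\mathscr{L}}(\beta_{\mathscr{L}}(u))\cdot \rho_{\mathscr{L}}(v)$.

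The potential obstacle is purely bookkeeping: one needs to check that the various twists line up correctly in the bigraded ring structure (tensoring with $\det\mathscr{E}_k^\vee$ twice returns to the untwisted summand modulo squares), and that Proposition~\ref{prop:incoh}(2) genuinely covers all cross-terms arising from the decomposition. Once these compatibilities are in place, assembling the four pieces $x_{\op{ev}}\cdot y_{\op{ev}}$, $x_{\op{ev}}\cdot y_{\op{tor}}$, $x_{\op{tor}}\cdot y_{\op{ev}}$ and $x_{\op{tor}}\cdot y_{\op{tor}}$ and comparing with the corresponding decomposition of $\rho_{\mathscr{L}}(x)\cdot \rho_{\mathscr{L}}(y)$ in the mod $2$ Chow ring yields the claim.
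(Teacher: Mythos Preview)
Your proposal is correct, and your first paragraph is exactly the point: the reduction $\rho_{\mathscr{L}}$ is induced by the quotient map of sheaves of graded rings $\mathbf{I}^\bullet(\mathscr{L})\to\mathbf{I}^\bullet(\mathscr{L})/\mathbf{I}^{\bullet+1}(\mathscr{L})\cong\mathbf{K}^{\op{M}}_\bullet/2$, so multiplicativity is automatic. The paper's proof is a single line, ``This follows directly from Proposition~\ref{prop:incoh}'', which is really pointing back to the general setup in \cite{real-grassmannian} where $\rho_{\mathscr{L}}$ is constructed as a ring homomorphism; your first paragraph spells out why.

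Your second paragraph, the case-by-case verification on the decomposition $x=x_{\op{tor}}+x_{\op{ev}}$, is unnecessary and in fact mildly circular. The clause in Proposition~\ref{prop:incoh}(2) that ``products of Bockstein classes $\beta_{\mathscr{L}}(x)$ with other classes can be determined by computing the products of the reductions'' already presupposes that $\rho$ is multiplicative; it is a consequence of the proposition you are proving, not an independent input to it. Likewise, checking that the relations among the generators $\op{p}_{2i}^{(\perp)}$, $\op{e}_k$, $\op{e}_{n-k}^\perp$, $\op{R}$ reduce correctly only tells you that $\rho$ is well-defined on the presentation, not that it respects products of arbitrary elements. So you should drop the second argument and keep only the first.
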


\begin{proof}
This follows directly from Proposition~\ref{prop:incoh}. 
\end{proof}

In particular, products with torsion classes are determined completely on the reduction mod 2, and these can be evaluated by classical Young diagram calculations. As a relevant consequence for Schubert calculus (where the intersection product lands in top degree), we have the following: 

\begin{corollary}
Let $F$ be a field of characteristic $\neq 2$, fix $1\leq k<n$ and assume $n$ is even. Any product $x_1\dots x_l\in \op{H}^{k(n-k)}(\op{Gr}(k,n),\mathbf{I}^{k(n-k)})$ involving an $\op{I}(F)$-torsion class is trivial. 
\end{corollary}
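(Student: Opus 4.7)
The plan is to apply Proposition~\ref{prop:incoh} to reduce the claim to the vanishing of the image of the Bockstein $\beta_{\mathscr{O}}$ in the top-degree $\mathbf{I}^\bullet$-cohomology group, and then to read off the structure of this top degree from Proposition~\ref{prop:invertedeta}.

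First, by Proposition~\ref{prop:incoh}(1), the images of the Bockstein maps form an ideal in $\bigoplus_{j,\mathscr{L}} \op{H}^j(\op{Gr}(k,n), \mathbf{I}^j(\mathscr{L}))$, and this ideal coincides exactly with the $\op{I}(F)$-torsion. Consequently, as soon as one of the factors $x_i$ is $\op{I}(F)$-torsion, the whole product $x_1 \cdots x_l$ itself lies in the image of $\beta_{\mathscr{O}} : \op{Ch}^{k(n-k) - 1}(\op{Gr}(k,n)) \to \op{H}^{k(n-k)}(\op{Gr}(k,n), \mathbf{I}^{k(n-k)})$. It therefore suffices to show that this image is trivial in top degree whenever $n$ is even.

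For this last step I would appeal to the remark following Proposition~\ref{prop:reduction}, which explicitly records that the image of $\beta_{\mathscr{O}}$ in the top $\mathbf{I}^d$-cohomology vanishes for $n$ even. To justify the remark, combine the additive splitting of $\mathbf{I}^\bullet$-cohomology into image of $\beta$ plus $\mathbf{W}$-cohomology (from Proposition~\ref{prop:incoh}) with the computation that both $\op{H}^{k(n-k)}(\op{Gr}(k,n), \mathbf{I}^{k(n-k)})$ and $\op{H}^{k(n-k)}(\op{Gr}(k,n), \mathbf{W})$ are isomorphic to $\op{W}(F)$ for $n$ even; the former isomorphism is the content of the remark itself, the latter follows from Proposition~\ref{prop:invertedeta}. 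Any $\op{W}(F)$-linear surjection between rank-$1$ free $\op{W}(F)$-modules is an isomorphism, so the kernel (the Bockstein image) vanishes.

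The main, and essentially only, obstacle is the identification of the top $\mathbf{W}$-cohomology as $\op{W}(F)$, which requires a short case analysis. For $n$ even, Proposition~\ref{prop:invertedeta} splits into two subcases: $k$ and $n-k$ both even (case~(1)), where the untwisted top class is a pure Pontryagin monomial sitting in $\op{H}^{4\bullet}$, with $k(n-k)$ automatically divisible by~$4$; and $k$ and $n-k$ both odd (case~(3)), where the untwisted top class is the product of $\op{R}$ with a Pontryagin monomial of degree $k(n-k) - (n-1)$, which is again divisible by~$4$. In each case a direct inspection using the folklore isomorphism of Proposition~\ref{prop:folklore} shows that the untwisted top cohomological degree is of $\op{W}(F)$-rank~$1$, and the previous paragraph concludes.
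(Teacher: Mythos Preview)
Your proof is correct and follows essentially the same approach as the paper's: both argue that a product involving an $\op{I}(F)$-torsion class is again $\op{I}(F)$-torsion (via the ideal property from Proposition~\ref{prop:incoh}), and that the top $\mathbf{I}^\bullet$-cohomology group for $n$ even has no such torsion. The paper's proof is a two-line sketch invoking the isomorphism $\op{H}^{k(n-k)}(\op{Gr}(k,n),\mathbf{I}^{k(n-k)})\cong\op{W}(F)$ directly, whereas you fill in the justification by comparing with the top $\mathbf{W}$-cohomology via the additive splitting and the folklore isomorphism; your surjection-of-rank-one-free-modules argument and case analysis are a welcome elaboration of what the paper leaves implicit.
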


\begin{proof}
Under the assumption that $n$ is even, we have $\op{H}^{k(n-k)}(\op{Gr}(k,n),\mathbf{I}^{k(n-k)})\cong\op{W}(F)$, i.e., the top $\mathbf{I}^\bullet$-cohomology group is $\op{W}(F)$-torsion-free. Any product involving $\op{I}(F)$-torsion is again $\op{I}(F)$-torsion and consequently trivial.  
\end{proof}

As an example how to compute products in $\mathbf{I}^\bullet$-cohomology, we resume the discussion of the example in Remark~\ref{rem:noring}. 

\begin{example}
We consider the square of the class $\sigma_{2,2}$ in $\op{Ch}^8(\op{Gr}(4,8))$. We will see in Section~\ref{sec:geometricreps} that the class $\sigma_{2,2}$ has a canonical lift to $\op{H}^4(\op{Gr}(4,8),\mathbf{I}^4)$ given by a canonical orientation on the normal bundle of the smooth Schubert variety $\Sigma_{2,2}$. The corresponding class $\sigma_{2,2}^2$ in $\op{H}^8(\op{Gr}(4,8),\mathbf{I}^8)$ has two non-torsion contributions (computed using Theorem~\ref{thm:multw}): the classes $\sigma_{4,4}$ and $\sigma_{2,2,2,2}$ corresponding to even Young diagrams with trivial twist. The torsion contribution to $\sigma_{2,2}^2$ can be computed in the mod 2 Chow ring, cf. Remark~\ref{rem:noring}; it is given by subtracting from $\sigma_{2,2}^2$ in the mod 2 Chow ring the above even Young diagrams representing the non-torsion part. The class that remains is $\sigma_{4,3,1}+\sigma_{4,2,2}+\sigma_{3,3,1,1}+\sigma_{3,2,2,1}$, and this is a torsion class with non-twisted coefficients. Note that none of the individual Schubert classes in this sum lifts because they all have non-trivial Steenrod square. However, the full sum lifts because
\begin{eqnarray*}
&&\op{Sq}^2_{\mathscr{O}}(\sigma_{4,3,1})+\op{Sq}^2_{\mathscr{O}}(\sigma_{4,2,2})+\op{Sq}^2_{\mathscr{O}}(\sigma_{3,3,1,1})+\op{Sq}^2_{\mathscr{O}}(\sigma_{3,2,2,1})\\ &=&
\sigma_{4,3,2}+\sigma_{4,3,1,1}+\sigma_{4,3,2}+\sigma_{4,2,2,1}+\sigma_{4,3,1,1}+\sigma_{3,3,2,1}+\sigma_{4,2,2,1}+\sigma_{3,3,2,1}\\&=&0
\end{eqnarray*}
The torsion lift is uniquely determined by its reduction mod 2, the lift is given as $\beta_{\mathscr{O}}(\sigma_{4,2,1}+\sigma_{3,2,1,1})$
\end{example}

\section{Recollection on Witt groups and cohomology}
\label{sec:wittbasics}

In this section, we provide a short recollection on Witt groups as well as $\mathbf{I}^n$-cohomology. Most important for our later discussions will be the definition of push-forwards (and the dependence on choices of orientations) and the intersection products. We discuss the natural morphism from Witt groups to $\mathbf{I}^\bullet$-cohomology and its compatibility with push-forwards and intersection products. The compatibility with the pushforwards implies that the results from Section~\ref{sec:cohomology} also describe the multiplication on the twisted Witt groups, and the compatibility with intersection products is an oriented version of Serre's Tor formula for intersection multiplicities. 

\subsection{Witt groups and comparison morphism}

We shortly recall some relevant definitions from the theory of Witt groups of schemes, cf. in particular \cite{balmer}. For a category $\mathcal{C}$ with duality (given by an involutive endofunctor $(-)^\vee:\mathcal{C}^{\op{op}}\to\mathcal{C}$ with an explicit identification $\varpi:\op{id}_{\mathcal{C}}\xrightarrow{\cong} (-)^{\vee\vee}$), a symmetric space in $\mathcal{C}$ is a pair $(X,\phi)$ with $X$ an object from $\mathcal{C}$ and an isomorphism $\phi:X\to X^\vee$ such that 
\[
\xymatrix{
X\ar[r]^\phi \ar[d]^\cong_{\varpi_X} & X^\vee\ar[d]^=\\
X^{\vee\vee} \ar[r]_{\phi^\vee} &  X^\vee
}
\]
The Witt group of a category with duality is given as the quotient of the isometry classes of symmetric spaces modulo metabolic spaces. There is a similar notion of triangulated categories with duality, together with the corresponding notion of Witt group, called triangular Witt groups, cf. \cite[Section 1.4]{balmer}. 

For the present work, we are primarily interested in the following situation. For a smooth scheme $X$ over a field $F$ of characteristic $\neq 2$ and a line bundle $\mathscr{L}$ over $X$, one can consider the category of vector bundles on $X$ equipped with the duality given by $\mathscr{E}\mapsto\op{Hom}(\mathscr{E},\mathscr{O}_X)\otimes_{\mathscr{O}_X}\mathscr{L}$. This exact category with duality produces the classical \emph{Witt groups of schemes}. One can also consider the \emph{derived Witt groups}, which arise from the triangulated category of perfect complexes with the derived version of the above duality. It is also possible to consider the category of bounded complexes of quasi-coherent sheaves with coherent cohomology, with the same duality as above; this gives rise to the \emph{coherent Witt groups} of $X$. In the case of smooth schemes, all these constructions produce isomorphic Witt groups because coherent sheaves have finite resolutions by vector bundles. 

A morphism of categories with duality is a functor $\mathcal{C}\to\mathcal{D}$ with an explicit isomorphism $F\circ (-)^{\vee_{\mathcal{C}}}\xrightarrow{\cong} (-)^{\vee_{\mathcal{D}}}\circ F$. The image of a symmetric space $\phi:X\to X^\vee$ is then given by $F(X)\xrightarrow{F(\phi)} F(X^\vee)\cong F(X)^\vee$, where the latter isomorphism is the one from the definition of morphism of categories with duality. Under apppropriate assumptions, a morphism of schemes $f:X\to Y$ gives rise to functors ${\op{L}}f^\ast$ and ${\op{R}}f_\ast$ of categories of quasi-coherent sheaves or perfect complexes, which preserves the duality. This gives rise to pullback and push-forward morphisms for (ordinary, derived, coherent) Witt groups of schemes, cf. \cite{calmes:hornbostel}.

There is also a base-change formula, cf. \cite[Theorem 5.4]{calmes:hornbostel}: for a pullback diagram of schemes 
\[
\xymatrix{
W \ar[r]^{g'} \ar[d]_{f'} & X \ar[d]^f \\
Y\ar[r]_g &Z,
}
\]
there is a natural morphism of functors $\epsilon: {\op{L}}f^\ast\circ {\op{R}}g_\ast \to {\op{R}}g'_\ast\circ {\op{L}}(f')^\ast$ which is an isomorphism if the diagram is homologically transversal, cf. the discussion in Section~\ref{sec:intersection}. This will be relevant for computation of intersection products.

For coherent and derived Witt groups, the derived tensor product of complexes gives rise to duality-preserving functors and consequently to pairings in triangular Witt groups, cf. \cite{gille:nenashev}. This is usually called the $\star$-product. For complexes concentrated in a single degree, the product on Witt groups reduces to the ordinary tensor product of symmetric forms as defined by Knebusch, cf. the discussion in \cite[Section 2]{balmer:gille}.

We recall the natural morphisms from Witt-groups to $\mathbf{I}^\bullet$-cohomology from \cite[Section 7]{fasel:chowwitt}. 
Let $Z\subset X$ be a closed subset of pure codimension $i$. Denoting by $\op{GW}_Z^i(X)$ the $i$-th Grothendieck--Witt group of perfect complexes on $X$ with support in $Z$, there is a homomorphism
\[
\op{GW}_Z^i(X)\to\op{GW}^i(\op{Der}^{\op{b}}(X)^{(i)})\to\op{GW}^i(\op{Der}^{\op{b}}_i(X)) 
\]
which is obtained as composition of the inclusion of $\op{Der}_Z^{\op{b}}(X)$ in the category $\op{Der}^{\op{b}}(X)^{(i)}$ of complexes with support in codimension $\leq i$ followed by the projection to $\op{Der}^{\op{b}}_i(X)=\op{Der}^{\op{b}}(X)^{(i)}/\op{Der}^{\op{b}}(X)^{(i+1)}$. The composition 
\[
\op{GW}^i(\op{Der}^{\op{b}}(X)^{(i)})\to \op{GW}^i(\op{Der}^{\op{b}}_i(X))\to \op{W}^{i+1}(\op{Der}^{\op{b}}(X)^{(i+1)})
\]
is zero, and consequently, there is a homomorphism $\op{GW}^i_Z(X)\to\widetilde{\op{CH}}^i(X)$. The same argument also furnishes a homomorphism 
\[
\alpha:\op{W}^i_Z(X,\mathscr{L})\to \op{H}^i(X,\mathbf{I}^i(\mathscr{L})).
\]
To accomodate the additional line bundle twist by $\mathscr{L}$, simply use the derived categories above with the duality given by $\op{Hom}_{\mathscr{O}_X}(-,\mathscr{L})$. 

\subsection{Pushforwards and orientations}
\label{sec:pushforward}

We shortly discuss push-forwards in cohomology with coefficients in  $\mathbf{I}^\bullet$ and  $\mathbf{W}$. 

For a proper morphism $f:X\to Y$ between smooth schemes over $F$, there are push-forward maps, cf. \cite[Corollaire 10.4.5]{fasel:chowwitt} or \cite[Section 3]{calmes:fasel}: 
\[
\op{H}^n(X,\mathbf{I}^n(\omega_{X/Y}\otimes f^\ast\mathscr{L}))\to \op{H}^{n-d}(Y,\mathbf{I}^{n-d}(\mathscr{L}))
\]
where $d=\dim X-\dim Y$ and $\omega_{X/Y}=\omega_{X/F}\otimes f^\ast \omega_{Y/F}^\vee$ is the relative canonical bundle. The push-forward is defined in \cite[Section 8]{fasel:memoir} on the level of Gersten--Rost--Schmid complexes $\op{C}^n(X,\mathbf{I}^n(\omega_{X/F}))\to \op{C}^{n-d}(Y,\mathbf{I}^{n-d}(\omega_{Y/F}))$: for a point $x\in X$ with image $y=f(x)$ such that the residue field extension $F(y)\subseteq F(x)$ is finite, the corresponding morphism on the Witt group coefficients is given by the transfers $\op{tr}_x^y:\op{W}(F(x),\omega_{F(x)/F})\to\op{W}(F(y),\omega_{F(y)/F})$. 

The line bundle twists in the above are really necessary. Moreover, for isomorphic line bundles $\mathscr{L}_1$ and $\mathscr{L}_2$, the identification 
$\op{H}^{n}(X,\mathbf{I}^{n}(\mathscr{L}_1))\cong \op{H}^{n}(X,\mathbf{I}^{n}(\mathscr{L}_2))$ depends on the choice of isomorphism $\mathscr{L}_1\cong\mathscr{L}_2$. In particular, to get a pushforward $\op{H}^n(X,\mathbf{I}^n)\to \op{H}^{n-d}(Y,\mathbf{I}^{n-d}(\mathscr{L}))$ for a proper map $f:X\to Y$, one needs to choose an isomorphism $\omega_{X/Y}\otimes f^\ast\mathscr{L} \cong \mathscr{O}_X$. Actually, since everything is well-defined up to squares, it suffices to choose an identification $\omega_{X/Y}\otimes f^\ast \mathscr{L}\cong\mathscr{N}^2$ for some line bundle $\mathscr{N}$, i.e., the pushforward maps for $\mathbf{I}^\bullet$-cohomology depend on the choice of orientations, cf. \cite[Definition 16]{kass:wickelgren}:

\begin{definition}
Let $X$ be a smooth scheme and let $\mathscr{L}$ be a line bundle over $X$. An \emph{orientation} of $\mathscr{L}$ is a pair $(\mathscr{N},\phi)$ where $\mathscr{N}$ is a line bundle over $X$ and $\phi:\mathscr{N}\otimes\mathscr{N}\cong \mathscr{L}$ is an isomorphism. Two orientations $(\mathscr{N}_1,\phi_1)$ and $(\mathscr{N}_2,\phi_2)$ are equivalent if there exists an isomorphism $\alpha:\mathscr{N}_1\to\mathscr{N}_2$ such that $\phi'\circ(\alpha\otimes\alpha) =\phi$. 
\end{definition}

\begin{example}
If the line bundle $\mathscr{L}$ has a global invertible section $\sigma:\mathscr{O}_X\to \mathscr{L}$, then we get an orientation $\mathscr{O}_X\otimes\mathscr{O}_X\cong \mathscr{O}_X\xrightarrow{\sigma}\mathscr{L}$. Two such orientations are equivalent if their quotient is the square of a global invertible section. For a finite-dimensional vector space $V$ over $F$, we recover the classical concept of orientation as a choice of generator of $\bigwedge^{\dim V}V$. Two orientations are equivalent if the corresponding generators differ by the square of a unit in $F^\times$. 
\end{example}

A choice of orientation $\mathscr{N}\otimes\mathscr{N}\xrightarrow{\cong}\mathscr{L}$ provides an isomorphism $\op{W}(X,\mathscr{O})\cong\op{W}(X,\mathscr{L})$ which maps a form $P_\bullet\otimes P_\bullet\to\mathscr{O}$ (alternatively written as $P_\bullet\xrightarrow{\simeq} \op{Hom}(P_\bullet,\mathscr{O})$) to the form $(P_\bullet\otimes\mathscr{N})\otimes (P_\bullet\otimes \mathscr{N})\to\mathscr{L}$ (alternatively written as $P_\bullet\otimes\mathscr{N}\xrightarrow{\simeq} \op{Hom}(P_\bullet\otimes\mathscr{N},\mathscr{O})\otimes\mathscr{L}$). The identifications for $\mathbf{I}^\bullet$-cohomology and $\mathbf{W}$-cohomology can be made similarly precise.

Finally, we discuss the compatibility of the natural comparison homomorphism $\alpha:\op{W}^j_Z(X,\mathscr{L})\to\op{H}^j(X,\mathbf{I}^j(\mathscr{L}))$ with basic maps. The compatibility of the comparison homomorphism with pullbacks is established in \cite[Remark 7.5]{fasel:chowwitt}. To check compatibility with pushforwards, we trace through the definitions: 

\begin{proposition}
\label{prop:compatpush}
Let $F$ be a field of characteristic $\neq 2$, let $f:X\to Y$ be a proper morphisms of smooth schemes, and set $d=\dim X-\dim Y$. Then there is a commutative diagram
\[
\xymatrix{
\op{GW}^i(X,\omega_{X/Y}\otimes f^\ast \mathscr{L}) \ar[r]^>>>>{f_\ast} \ar[d]_{\alpha_X}  & \op{GW}^{i-d}(Y,\mathscr{L}) \ar[d]^{\alpha_Y}\\
\widetilde{\op{CH}}^i(X,\omega_{X/Y}\otimes f^\ast\mathscr{L}) \ar[r]_>>>>{f_\ast} & \widetilde{\op{CH}}^{i-d}(Y,\mathscr{L}).
}
\]
\end{proposition}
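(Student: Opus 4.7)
The plan is to trace both pushforward constructions through their common underlying description on the Gersten--Rost--Schmid complex. Recall that the comparison morphism $\alpha$ is obtained by sending a Grothendieck--Witt class supported in codimension $\geq i$ to its image in the codimension-$i$ graded quotient $\op{GW}^i(\op{Der}^{\op{b}}(X)^{(i)})/\op{GW}^{i+1}(\op{Der}^{\op{b}}(X)^{(i+1)})$; by dévissage via Cousin resolutions, this graded quotient is identified with the direct sum $\bigoplus_{x\in X^{(i)}}\op{W}(F(x),\omega_{F(x)/F}\otimes\mathscr{L}(x))$, which is precisely the $i$-th term of the Gersten complex computing $\widetilde{\op{CH}}^i(X,\mathscr{L})$. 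Consequently, to prove the square commutes, it suffices to show that both pushforwards induce the same map on these residue-field Witt groups.

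First I would verify that the Gersten--Witt filtration is functorial with respect to proper pushforward: a coherent complex supported in codimension $\geq i$ on $X$ pushes forward to one supported in codimension $\geq i-d$ on $Y$, so $\op{R}f_\ast$ sends $\op{Der}^{\op{b}}(X)^{(i)}$ to $\op{Der}^{\op{b}}(Y)^{(i-d)}$ and hence descends to a map on associated graded pieces. This is the map to be compared with the Gersten-level pushforward of \cite[Section 8]{fasel:memoir}.

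Next I would identify the induced residue-field map with the Scharlau transfer $\op{tr}_x^y$ used to define the Chow--Witt pushforward. For a point $x\in X^{(i)}$ whose image $y=f(x)\in Y^{(i-d)}$ has finite residue field extension $F(y)\subseteq F(x)$, a Cousin representative of a class supported at $\overline{\{x\}}$ is essentially a twisted symmetric form on $F(x)$, and its derived pushforward is a twisted form on $F(y)$ obtained via Grothendieck duality by composing with the relative trace $\op{Tr}_{F(x)/F(y)}$; this reproduces the definition of the Scharlau transfer. For points $x\in X^{(i)}$ whose image $f(x)$ has codimension strictly less than $i-d$, the derived pushforward lands in lower cohomological degree of the Cousin complex and therefore contributes nothing to the codimension-$(i-d)$ graded piece on $Y$, in perfect agreement with the convention that the Chow--Witt pushforward vanishes on such contributions.

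The main obstacle is the careful bookkeeping of line bundle twists: one must check that the canonical identification $\omega_{X/Y}\otimes f^\ast\mathscr{L}$ appearing on the left of the square interacts correctly with the residue-field twists $\omega_{F(x)/F}$ and $\omega_{F(y)/F}$ that appear in the Gersten complex. This hinges on the chain of isomorphisms $\omega_{F(x)/F}\cong \omega_{F(y)/F}\otimes \omega_{F(x)/F(y)}$ and $\omega_{F(x)/F(y)}\cong (\omega_{X/Y})(x)$ coming from Grothendieck--Serre duality, and on the compatibility of the finite transfer in Witt groups with this twist structure; once these identifications are set up compatibly on both sides, commutativity of the square is immediate from the calculation of the induced map on the associated graded.
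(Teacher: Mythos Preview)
Your proposal is correct and takes essentially the same approach as the paper: both arguments rest on the fact that the Chow--Witt pushforward is \emph{defined} via the Witt-theoretic transfers $\op{tr}_x^y$ on the Gersten--Rost--Schmid complex, so compatibility with $\alpha$ is essentially tautological once one checks the codimension filtration is respected. The paper phrases this as a reduction to finite morphisms and then cites \cite[Corollaries 5.3.5--5.3.7, 6.3.10, Section 8]{fasel:memoir} for the fact that the finite pushforward on Chow--Witt groups is induced from the Witt pushforward; you instead unpack the same content directly at the level of residue-field contributions, which amounts to spelling out what those references contain.
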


\begin{proof}
For a finite morphism $f:X\to Y$ (between smooth schemes, say), the push-forward on Chow--Witt-rings and $\mathbf{I}^\bullet$-cohomology is induced from the push-forward on Witt groups, cf. \cite[Corollaries 5.3.5--5.3.7]{fasel:memoir} resp. \cite[Corollary 6.3.10]{fasel:memoir} for the case of smooth schemes which is more relevant for us. In particular, it follows from the definition of push-forwards on Chow--Witt groups in \cite{fasel:memoir} that the comparison morphism  between Grothendieck--Witt groups and Chow--Witt rings are compatible with push-forwards along finite morphisms. Since the push-forward on Chow--Witt groups for general proper morphisms $f:X\to Y$ is also defined by reduction to the case of finite morphisms, cf. \cite[Section 8]{fasel:memoir}, we get the claim. 
\end{proof}

\subsection{Intersection products and oriented Tor formula}
\label{sec:torformula}

We shortly recall the definition of the intersection products in Chow--Witt rings and $\mathbf{I}^\bullet$-cohomology from \cite{fasel:chowwitt}: first, there is an exterior product 
\[
\boxtimes:\widetilde{\op{CH}}^i(X,\mathscr{L})\times \widetilde{\op{CH}}^j(X,\mathscr{M})\to \widetilde{\op{CH}}^{i+j}(X\times X,\op{pr}_1^\ast(\mathscr{L})\otimes\op{pr}_2^\ast(\mathscr{M})).
\]
On the other hand, we have a pullback morphism 
\[
\Delta^!:\widetilde{\op{CH}}^{i+j}(X\times X,\op{pr}_1^\ast(\mathscr{L})\otimes\op{pr}_2^\ast(\mathscr{M}))\to 
\widetilde{\op{CH}}^{i+j}(X,\mathscr{L}\otimes\mathscr{M})
\]
for the embedding $\Delta:X\hookrightarrow X\times X$ (which is a closed immersion between smooth schemes). The intersection product is then given as  the composition of these two operations: it maps a pair $x_i\in \widetilde{\op{CH}}^{n_i}(X,\mathscr{L}_i)$, $i=1,2$, to $\Delta^!(x_1\boxtimes x_2)$. 

A different way to view the construction of the pullback is given in \cite[Section 2]{AsokFaselEuler}, in particular the discussion leading up to Theorem 2.3.4. There is also a base-change formula for Chow--Witt rings  $\mathbf{I}^\bullet$-cohomology, cf. \cite[Theorem 2.4.1]{AsokFaselEuler}, which can be used to compute some intersection products, cf. the proof of Theorem~\ref{thm:types}. 

The relation between the $\star$-product on (Grothendieck--)Witt groups and the intersection product on Chow--Witt-rings or $\mathbf{I}^\bullet$-cohomology is given by the following commutative diagram which can be interpreted as an oriented analogue of Serre's Tor-formula, cf. \cite[Theorem 7.6]{fasel:chowwitt}:
\[
\xymatrix{
\op{GW}^n_V(X)\times \op{GW}^m_W(X)\ar[rr]^\star \ar[d]_{\alpha_V\times\alpha_W} && \op{GW}^{m+n}_{V\cap W}(X) \ar[d]^{\alpha_{V\cap W}} \\
\widetilde{\op{CH}}^n(X)\times \widetilde{\op{CH}}^m(X) \ar[rr]_\times && \widetilde{\op{CH}}^{n+m}(X).
}
\]

Recall that the classical Tor formula in intersection theory is the following: if $Y$ and $Z$ are closed subschemes of a smooth scheme $X$ and $C$ is a component of the intersection $Y\cap Z$ then the multiplicity of $C$ in the intersection of $Y$ and $Z$ is given by 
\[
i(Y,Z;C)=\sum_{j\geq 0}(-1)^j\op{len}_C\left(\op{Tor}_j^X(\mathscr{O}_Y,\mathscr{O}_Z)\right).
\]
In particular, multiplicities in intersection products can be computed by means of the product structure on K-groups which is given in terms of derived tensor products of complexes of coherent sheaves. In this setting, it is clear why homological transversality is useful: the Tor-formula simply reduces to the term in degree $0$ which is the length of the intersection subscheme. 

The oriented Tor-formula, as expressed in the commutative diagram above, now allows to compute some intersection products in the Chow--Witt ring: the lower composition is the intersection product of two cycles which are in the image of the natural comparison map $\alpha:\op{GW}^i_Z(X,\mathscr{L})\to\widetilde{\op{CH}}^i_Z(X,\mathscr{L})$, and the upper composition is the cycle associated to the $\star$-product, which is itself given in terms of the derived tensor products of forms on perfect complexes over $X$, with $\mathscr{L}$-twisted duality. Again, in the presence of homological transversality, the oriented Tor-formula simplifies: the corresponding derived tensor products are ordinary tensor products, and in the simplest case, when we only consider structure sheaves of closed subvarieties, the $\star$-product is then simply a form on the structure sheaf of the intersection subscheme which encodes a ``refined length'' containing additional orientation information.

\section{Oriented Schubert classes} 
\label{sec:geometricreps}

In this section, we can now write down geometric representatives for the classes in the $\mathbf{I}^\bullet$-cohomology ring. As in \cite{balmer:calmes}, the representatives are obtained by push-forward of 1 from resolutions of singularities of Schubert varieties. While the push-forward usually depends on choices of relative orientations, we will actually show that the relative canonical bundles of these resolutions have canonical (relative) orientations. This implies that Schubert classes for even Young diagrams have canonical lifts to the Chow--Witt ring. 

\subsection{Resolutions of Schubert varieties}

We first discuss the relevant resolutions of singularities for Schubert varieties in Grassmannians, including some geometric facts concerning the descriptions of relative canonical bundles. Most of this is a recollection of statements from \cite{balmer:calmes}. These statements will be required for the discussion of canonical orientations in the next subsection. 

Let $F$ be a field of characteristic $\neq 2$. Fix $1\leq k<n$, and let $V$ be an $n$-dimensional $F$-vector space. Let $\Lambda$ be a Young diagram in $k\times(n-k)$-frame, then we have associated tuples $(d_1,\dots,d_p=k)$ and $(e_1,\dots,e_p)$ as in Definition~\ref{def:evenyoung} and Figure~\ref{fig:bc}. For a given full flag \[
\mathcal{F}: 0\leq V_1\leq \cdots\leq V_{n-1}\leq V
\]
with $\dim_F V_i=i$, the Schubert variety 
\[
\Sigma_\Lambda(\mathcal{F})=\left\{ W\in \op{Gr}(k,V)\mid \dim_F(V_{n-k+i-a_i}\cap W)\geq i \right\}
\]
corresponding to $\Lambda$ is a representative of the Schubert class corresponding to the Young diagram $\Lambda$. 

\begin{definition}
\label{def:flagresolution}
For the partial flag $0\leq V_{d_1+e_1}\leq \cdots\leq V_{d_p+e_p}\leq V$, we can consider the following Schubert variety in the flag variety $\op{Fl}(d_1,\dots,d_p=k;n)$:
\[
\op{Fl}(\underline{d},\underline{e},\mathcal{F}):=\left\{ P_{d_1}\leq P_{d_2}\leq\cdots \leq P_{d_p}\in \op{Fl}(\underline{d};n)\mid
P_{d_i}\leq V_{d_i+e_i}\right\}
\]
Alternatively, we use the notation $\op{Fl}(\Lambda)$ for this closed subvariety of $\op{Fl}(\underline{d};n)$. There is a natural projection morphism 
\[
p_\Lambda:\op{Fl}(\Lambda)\to \op{Gr}(k,n): (P_{d_1},\dots,P_{d_p})\mapsto P_{d_p}. 
\]
The variety $\op{Fl}(\Lambda)$ is smooth and its image is the Schubert variety $\Sigma_\Lambda(\mathcal{F})$. The morphism $p_\Lambda$ is proper and birational, and hence is a resolution of singularities of $\Sigma_\Lambda(\mathcal{F})$. 
\end{definition}

On the flag variety $\op{Fl}(d_1,\dots,d_p;n)$, we have the tautological bundles $\mathscr{P}_{d_i}$. The fiber of the bundle $\mathscr{P}_{d_i}$ at the point corresponding to the flag $P_{d_1}\leq\cdots\leq P_{d_p}$ is $P_{d_i}$. These bundles form a filtration $\mathscr{P}_{d_1}\leq\cdots\leq\mathscr{P}_{d_p}$ of the trivial bundle $\mathscr{O}_{\op{Fl}}^{\oplus n}$. We also have the corresponding tautological subquotient  bundles $\mathscr{S}_i=\mathscr{P}_{d_i}/\mathscr{P}_{d_{i-1}}$. 

Next, we need to describe the tangent bundles of flag varieties as well as the relative canonical bundles for the resolutions $p_\Lambda:\op{Fl}(\Lambda)\to\op{Gr}(k,n)$. In the special case of the Grassmannian, we have an exact sequence  $0\to\mathscr{S}\to\mathscr{O}_{\op{Gr}}^{\oplus n}\to\mathscr{Q}\to 0$ which decomposes the trivial rank $n$ bundle as extension of the tautological rank $k$ subbundle $\mathscr{S}$ by the tautological rank $n-k$ quotient bundle $\mathscr{Q}$. It is a standard fact that the tangent space of the Grassmannian is canonically identified with $\op{Hom}(\mathscr{S},\mathscr{Q})$, cf. \cite[Section 4.1]{feher:matszangosz} or \cite[Theorem 3.5]{3264}. 

To describe the tangent bundles of the flag varieties $\op{Fl}(\underline{d},\underline{e})$, we can use the fact that these flag varieties have a fibration by Grassmannians, cf. \cite[Lemma 1.11]{balmer:calmes}. For two tuples $(d_1,\dots,d_p)$ and $(e_1,\dots,e_p)$ (as the ones arising from even Young diagrams, cf. Figure~\ref{fig:bc}), $\op{Fl}(\underline{d},\underline{e})$ can be identified as Grassmannian of $d_p-d_{p-1}$-dimensional subspaces in the tautological quotient bundle $\mathscr{O}_{\op{Fl}}^{\oplus d_p+e_p}/\mathscr{P}_{d_{p-1}}$ of rank $d_p-d_{p-1}+e_p$ on $\op{Fl}((d_1,\dots,d_{p-1}),(e_1,\dots,e_{p-1}))$. Hence, we have a fibration
\[
\op{Gr}(d_p-d_{p-1},d_p-d_{p-1}+e_p)\to \op{Fl}(\underline{d},\underline{e})\to \op{Fl}((d_1,\dots,d_{p-1}),(e_1,\dots,e_{p-1}))
\]
With these identifications, the flag of subbundles on $\op{Fl}(\underline{d},\underline{e})$ is given as follows: the first $k-1$ subbundles are the pullbacks of the subbundles $\mathscr{P}_i$ for $1\leq i\leq k-1$, and the last two subquotients $\mathscr{P}_{d_p}/\mathscr{P}_{d_{p-1}}$ and $\mathscr{O}_{\op{Fl}(\underline{d},\underline{e})}^{\oplus d_p+e_p}/\mathscr{P}_{d_p}$  identified with the tautological sub- and quotient bundle of the relative Grassmannian $\op{Fl}(\underline{d},\underline{e})\to\op{Fl}((d_1,\dots,d_{p-1}),(e_1,\dots,e_{p-1}))$, respectively. In particular, the relative tangent bundle of $\op{Fl}(\underline{d},\underline{e})/\op{Fl}((d_1,\dots,d_{p-1}),(e_1,\dots,e_{p-1}))$ is canonically identified with $\op{Hom}(\mathscr{P}_{d_p}/\mathscr{P}_{d_{p-1}}, \mathscr{O}_{\op{Fl}(\underline{d},\underline{e})}^{\oplus d_p+e_p}/\mathscr{P}_{d_p})$. The tangent bundle for $\op{Fl}(\underline{d},\underline{e})$ is then an extension of the relative tangent bundle by the pullback of the tangent bundle of $\op{Fl}((d_1,\dots,d_{p-1}),(e_1,\dots,e_{p-1}))$. Inductively, we obtain the following description: 

\begin{proposition}
\label{prop:tangentflag}
The tangent bundle of $\op{Fl}(\underline{d},\underline{e})$ is canonically identified as an iterated extension of pullbacks of the bundles 
\[
\op{Hom}_{\op{Fl}((d_1,\dots,d_j),(e_1,\dots,e_j))}(\mathscr{P}_{d_j}/\mathscr{P}_{d_{j-1}}, \mathscr{O}^{\oplus d_j+e_j}/\mathscr{P}_{d_j})
\]
with $j=1,\dots,p$. In particular, the rank of the tangent bundle is $\sum_{i=1}^p(d_i-d_{i-1})e_i$. 
\end{proposition}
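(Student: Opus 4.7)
The plan is to prove the proposition by induction on $p$, the number of blocks in the tuples $(d_1,\dots,d_p)$ and $(e_1,\dots,e_p)$, using the iterated Grassmannian fibration structure that has just been recalled in the preceding discussion.

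For the base case $p=1$, the flag variety $\op{Fl}((d_1),(e_1))$ is simply the Grassmannian $\op{Gr}(d_1, d_1+e_1)$ of $d_1$-dimensional subspaces in a vector bundle of rank $d_1+e_1$ (over a point), whose tangent bundle is canonically identified with $\op{Hom}(\mathscr{P}_{d_1}, \mathscr{O}^{\oplus d_1+e_1}/\mathscr{P}_{d_1})$ by the standard description of tangent bundles of Grassmannians recalled from \cite[Theorem~3.5]{3264}. The rank is $d_1 e_1$, agreeing with the formula.

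For the inductive step, I would apply the fibration
\[
\pi: \op{Fl}(\underline{d},\underline{e}) \longrightarrow \op{Fl}((d_1,\dots,d_{p-1}),(e_1,\dots,e_{p-1}))
\]
realized as the relative Grassmannian of $(d_p-d_{p-1})$-dimensional subspaces in the rank $(d_p-d_{p-1}+e_p)$ bundle $\mathscr{O}^{\oplus d_p+e_p}/\mathscr{P}_{d_{p-1}}$, as already spelled out in the text. Since $\pi$ is smooth, there is a short exact sequence of vector bundles
\[
0 \longrightarrow T_\pi \longrightarrow T_{\op{Fl}(\underline{d},\underline{e})} \longrightarrow \pi^\ast T_{\op{Fl}((d_1,\dots,d_{p-1}),(e_1,\dots,e_{p-1}))} \longrightarrow 0,
\]
and the relative Grassmannian description gives a canonical identification
\[
T_\pi \;\cong\; \op{Hom}\bigl(\mathscr{P}_{d_p}/\mathscr{P}_{d_{p-1}},\; \mathscr{O}^{\oplus d_p+e_p}/\mathscr{P}_{d_p}\bigr),
\]
which is the $j=p$ term in the statement. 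By the inductive hypothesis the pullback $\pi^\ast T_{\op{Fl}((d_1,\dots,d_{p-1}),(e_1,\dots,e_{p-1}))}$ is an iterated extension of the claimed pullbacks of $\op{Hom}$ bundles for $j=1,\dots,p-1$, so splicing these extensions with the above short exact sequence gives the desired iterated extension. The rank formula then follows by additivity on short exact sequences and induction: each relative stage $j$ contributes $(d_j-d_{j-1})e_j$ to the rank, summing to $\sum_{j=1}^p (d_j-d_{j-1})e_j$.

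There is no real obstacle here; the only thing to be careful about is bookkeeping of the bundles under pullback, in particular checking that the tautological bundles $\mathscr{P}_{d_j}$ on $\op{Fl}(\underline{d},\underline{e})$ indeed restrict to (or are pulled back from) the tautological bundles on the intermediate flag varieties, which is immediate from the definition of the fibration. Since the $\op{Hom}$ bundles in the statement are written on the intermediate flag varieties $\op{Fl}((d_1,\dots,d_j),(e_1,\dots,e_j))$ and then pulled back, one should verify that this matches the way the inductive extension is assembled; this is just naturality of the relative Grassmannian construction.
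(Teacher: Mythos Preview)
Your proposal is correct and follows essentially the same approach as the paper: the paper's proof is the discussion immediately preceding the proposition, which sets up the Grassmannian fibration $\op{Fl}(\underline{d},\underline{e})\to\op{Fl}((d_1,\dots,d_{p-1}),(e_1,\dots,e_{p-1}))$, identifies the relative tangent bundle as the $j=p$ Hom-bundle, and then appeals to induction. Your write-up simply makes this inductive argument explicit, including the base case and the rank count.
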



\subsection{Relative orientations of relative normal bundles}

Now we discuss how to provide canonical relative orientations on the relative canonical bundles for the flag resolutions of Schubert varieties. 

\begin{proposition}
\label{prop:uniqueorient}
Fix $1\leq k<n$ and consider a Young diagram $\Lambda$ in $k\times(n-k)$-frame. If $\Lambda$ satisfies the conditions of Proposition~\ref{prop:compareBC} for the line bundle $\mathscr{L}$ but is not even, then the class $(p_\Lambda)_\ast(1)\in \op{H}^{|\Lambda|}(\op{Gr}(k,n),\mathbf{I}^{|\Lambda|}(\mathscr{L}))$ is independent of the choice of relative orientation of $\omega_{p_\Lambda}\otimes p_\Lambda^\ast(\mathscr{L})$. 
\end{proposition}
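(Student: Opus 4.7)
My plan is to reduce the statement to an $I(F)$-torsion assertion on $(p_\Lambda)_\ast(1)$, and then conclude using the torsion description of $\mathbf{I}^\bullet$-cohomology from Section~\ref{sec:torsion}.

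First I would analyse the effect of a change of orientation. Because $\op{Fl}(\Lambda)$ is smooth projective and geometrically integral over $F$, presented as an iterated Grassmannian bundle over $\op{Spec}(F)$ (cf.~Proposition~\ref{prop:tangentflag}), its Picard group is torsion-free and $\op{H}^0(\op{Fl}(\Lambda),\mathscr{O}^\times)=F^\times$. Hence, given two orientations $(\mathscr{N}_i,\phi_i)$ of $\omega_{p_\Lambda}\otimes p_\Lambda^\ast\mathscr{L}$, the line bundle $\mathscr{N}_1\otimes\mathscr{N}_2^{-1}$ is $2$-torsion in $\op{Pic}(\op{Fl}(\Lambda))$ and hence trivial; fixing an identification $\mathscr{N}_1\cong\mathscr{N}_2$, the two trivialisations $\phi_1,\phi_2$ differ by multiplication by a global unit $u\in F^\times$. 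Under the orientation-induced identification $\op{H}^0(\op{Fl}(\Lambda),\mathbf{I}^0)\cong \op{H}^0(\op{Fl}(\Lambda),\mathbf{I}^0(\omega_{p_\Lambda}\otimes p_\Lambda^\ast\mathscr{L}))$, the constant form $\langle 1\rangle$ with respect to $\phi_1$ corresponds to $\langle u\rangle$ with respect to $\phi_2$. By $W(F)$-linearity of the pushforward established in Section~\ref{sec:pushforward}, the two resulting classes $(p_\Lambda)_\ast^{\phi_i}(1)$ differ by multiplication by $\langle u\rangle\in W(F)$.

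Next I would reduce to torsion. It suffices to prove $(\langle u\rangle-\langle 1\rangle)\cdot(p_\Lambda)_\ast(1)=0$ for every $u\in F^\times$. Since such elements generate the fundamental ideal $I(F)\subseteq W(F)$, this amounts to the assertion that $(p_\Lambda)_\ast(1)$ is $I(F)$-torsion. By Proposition~\ref{prop:incoh}~(1), the $I(F)$-torsion in $\mathbf{I}^\bullet$-cohomology coincides with the image of the Bockstein $\beta_{\mathscr{L}}$; equivalently, the image of $(p_\Lambda)_\ast(1)$ in $\op{H}^{|\Lambda|}(\op{Gr}(k,n),\mathbf{W}(\mathscr{L}))$ must vanish. (As a sanity check, once $(p_\Lambda)_\ast(1)=\beta_{\mathscr{L}}(z)$ lies in the image of the Bockstein, the product $(\langle u\rangle-\langle 1\rangle)\beta_{\mathscr{L}}(z)$ is again in $\op{Im}\beta_{\mathscr{L}}$ by Proposition~\ref{prop:incoh}~(1), and its reduction $\rho_{\mathscr{L}}$ vanishes because $\rho_{\mathscr{L}}(\langle u\rangle-\langle 1\rangle)=0$ in $\mathbb{Z}/2\mathbb{Z}$; injectivity of $\rho_{\mathscr{L}}$ on the Bockstein image then gives the desired vanishing.)

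The remaining step, which I expect to be the main obstacle, is showing that $(p_\Lambda)_\ast(1)$ vanishes in $\mathbf{W}$-cohomology for non-even $\Lambda$. Since pushforward is compatible with the surjection of sheaves $\mathbf{I}^\bullet\twoheadrightarrow\mathbf{W}$, the image in question equals the $\mathbf{W}$-cohomology pushforward $(p_\Lambda)_\ast^{\mathbf{W}}(1)$. By Proposition~\ref{prop:addbasis1}, the $\mathbf{W}$-cohomology of $\op{Gr}(k,n)$ admits a $W(F)$-basis indexed by even Young diagrams, and via the comparison between $\mathbf{W}$-cohomology and twisted Witt groups (to be established in Section~\ref{sec:comparisonwitt}) these basis elements are exactly the pushforwards of the unit form from $\op{Fl}(\Lambda')$ for even $\Lambda'$ considered by Balmer and Calm\`es. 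I would appeal to their analysis of the Bruhat stratification, which identifies the even cells as the only ones carrying nonzero twisted Witt cohomology, to conclude that the pushforward from $\op{Fl}(\Lambda)$ for non-even $\Lambda$ vanishes in $\mathbf{W}$-cohomology irrespective of the chosen orientation. The delicate point is to verify that their vanishing argument is orientation-insensitive, i.e.\ that it applies not only to the canonical orientation available in the even case but also to the arbitrary orientation one can choose here thanks to the Steenrod-square condition of Proposition~\ref{prop:compareBC}; equivalently, one needs to know that the entire $\mathbf{W}$-cohomology contribution of the non-even cell is zero, so that any orientation choice lands the class in the torsion part of $\mathbf{I}^\bullet$-cohomology.
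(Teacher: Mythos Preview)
Your reduction to showing that $(p_\Lambda)_\ast(1)$ is $\op{I}(F)$-torsion is correct, and this is indeed the crux of the matter. However, the paper takes a shorter route than your three-step plan.

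The paper's argument dispenses entirely with your Step~1 (the explicit orientation-change analysis). Instead, it observes directly that the reduction morphism $\rho_{\mathscr{L}}\colon \op{H}^\bullet(X,\mathbf{I}^\bullet(\mathscr{L}))\to\op{Ch}^\bullet(X)$ is compatible with pushforwards, and that the pushforward on $\op{Ch}^\bullet$ is orientation-insensitive. Hence for \emph{any} choice of relative orientation, $\rho_{\mathscr{L}}((p_\Lambda)_\ast(1))=\sigma_\Lambda$. Once one knows $(p_\Lambda)_\ast(1)$ is $\op{I}(F)$-torsion, Proposition~\ref{prop:incoh}(2) (injectivity of $\rho_{\mathscr{L}}$ on the image of $\beta_{\mathscr{L}}$) gives uniqueness immediately: there is only one torsion class reducing to $\sigma_\Lambda$, so all orientations yield the same pushforward. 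Your Step~1, while correct, is not needed.

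For the torsion step itself, the paper argues internally: since $\Lambda$ is not even, the lift is necessarily $\op{I}(F)$-torsion, invoking the structure of the $\mathbf{I}^\bullet$-cohomology already established (the torsion-free quotient is $\mathbf{W}$-cohomology, whose $\op{W}(F)$-basis is indexed by even diagrams, cf.\ Proposition~\ref{prop:addbasis1}). Your proposed route via Balmer--Calm\`es and the comparison of Section~\ref{sec:comparisonwitt} is a forward reference in the paper's logical structure and is better avoided.

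Finally, your stated concern that the Balmer--Calm\`es vanishing might be orientation-sensitive is self-defeating: your own Step~1 shows that two orientations yield $\mathbf{W}$-pushforwards differing by a unit $\langle u\rangle\in\op{W}(F)$, so vanishing for one orientation immediately gives vanishing for all. You already have the tool that resolves the ``delicate point'' you worry about.
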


\begin{proof}
If $\Lambda$ satisfies one of the conditions of Proposition~\ref{prop:compareBC} for the line bundle $\mathscr{L}$, then the corresponding class  $\sigma_\Lambda\in\op{Ch}^{|\Lambda|}(\op{Gr}(k,n))$ lifts to $\op{H}^{|\Lambda|}(\op{Gr}(k,n),\mathbf{I}^{|\Lambda|}(\mathscr{L}))$. If $\Lambda$ is not even, then the corresponding lift is necessarily $\op{I}(F)$-torsion. In particular, by Proposition~\ref{prop:incoh}, it is uniquely determined by requiring that its reduction is the class $\sigma_\Lambda$. Since the reduction morphism $\op{H}^\bullet(X,\mathbf{I}^\bullet(\mathscr{L}))\to\op{Ch}^\bullet(X)$ is compatible with pushforwards, the class $(p_\Lambda)_\ast(1)$ has reduction $\sigma_\Lambda$ (equal to the pushforward of $1$ along $p_\Lambda$ on the level of mod 2 Chow rings), by definition. The claim now follows from the uniqueness statement above.
\end{proof}

In particular, for the torsion Schubert classes in $\op{H}^\bullet(\op{Gr}(k,n),\mathbf{I}^\bullet(\mathscr{L}))$, we don't need to worry about choices of orientations; we can always consider $(p_\Lambda)_\ast(1)$ to be a geometric representative of the corresponding class, independent of the orientation for the relative canonical bundle. Therefore, in the subsequent discussion, we can now focus on even Young diagrams. These correspond to non-torsion classes, and the class $(p_\Lambda)_\ast(1)$ really depends on the choice of orientations. 

Before we can discuss the canonical orientations for relative canonical bundles, we need a couple of preliminary statements on determinants of vector bundles. First, the following provides a canonical identification of determinants for tensor products, cf. \cite[Lemma 13]{okonek:teleman}. 

\begin{lemma}
\label{lem:otdet}
Let $V,W$ be two finite-dimensional $F$-vector spaces. Then there is a canonical identification:
\[
\det(V\otimes_FW)\cong \det(V)^{\otimes \dim_FW}\otimes_F\det(W)^{\otimes\dim_FV}.
\]
\end{lemma}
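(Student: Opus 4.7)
The plan is to reduce to a computation in bases and then check invariance under change of basis. Both sides of the claimed isomorphism are one-dimensional $F$-vector spaces: letting $m=\dim_F V$ and $n=\dim_F W$, the right-hand side is a tensor product of one-dimensional spaces, while the left-hand side is $\bigwedge^{mn}(V\otimes_F W)$ with $\dim_F(V\otimes_F W)=mn$. So constructing a canonical isomorphism amounts to specifying, for every choice of bases, compatible generators on each side and then verifying that the resulting identification is independent of the bases.

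First I would fix ordered bases $\{e_1,\ldots,e_m\}$ of $V$ and $\{f_1,\ldots,f_n\}$ of $W$. Together they yield a basis $\{e_i\otimes f_j\}_{(i,j)}$ of $V\otimes_F W$, which, once one fixes a total order on the index set (say lexicographic), gives a generator $\bigwedge_{(i,j)}(e_i\otimes f_j)$ of $\det(V\otimes_F W)$. Simultaneously $(e_1\wedge\cdots\wedge e_m)^{\otimes n}\otimes (f_1\wedge\cdots\wedge f_n)^{\otimes m}$ generates $\det(V)^{\otimes n}\otimes\det(W)^{\otimes m}$. I would define $\Phi$ to send the second generator to the first.

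Next I would verify that $\Phi$ is independent of the choice of bases. Under a change of basis given by $A\in\op{GL}(V)$ and $B\in\op{GL}(W)$, the generator on the right is multiplied by $(\det A)^n(\det B)^m$. On the left it is multiplied by $\det(A\otimes B)$, where $A\otimes B$ denotes the induced automorphism of $V\otimes_F W$. Using the factorization $A\otimes B=(A\otimes\op{id}_W)\circ(\op{id}_V\otimes B)$ and computing each factor in a suitably ordered basis (where the two operators are block-diagonal, with $n$ copies of $A$ and $m$ copies of $B$ respectively), one obtains the Kronecker determinant identity
\[
\det(A\otimes B)=(\det A)^n(\det B)^m,
\]
so the two scaling factors agree and $\Phi$ descends to a well-defined isomorphism independent of the bases. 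Naturality in $V$ and $W$ is then automatic from the construction.

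The only substantive point is the Kronecker identity above, and that is the step I expect to occupy the bulk of the verification; everything else is a matter of bookkeeping with ordered bases and fixing the sign conventions once and for all in the definition of $\Phi$. (Alternatively, one can bypass the explicit identity by defining a multilinear pairing $V^m\times W^n\to\det(V\otimes_F W)$, checking alternation separately in the $V$- and $W$-arguments, and factoring through $\det(V)\otimes\det(W)$ with the appropriate tensor multiplicities; but the basis argument is the shortest route.)
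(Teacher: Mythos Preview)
The paper does not supply its own proof of this lemma; it simply records the statement with a reference to \cite[Lemma 13]{okonek:teleman}. Your argument is correct and is the standard way to establish the identification: fix bases, match the induced generators on each side, and then invoke the Kronecker determinant identity $\det(A\otimes B)=(\det A)^{\dim W}(\det B)^{\dim V}$ to see that the map is independent of the bases chosen.

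One small point worth making explicit, since you flag it yourself: the lexicographic order on the index pairs $(i,j)$ is part of the \emph{definition} of $\Phi$, not something whose invariance you need to check. A different total order would rescale $\Phi$ by a fixed sign and produce a different (equally canonical) isomorphism. What makes your argument go through is that a change of basis by $A\otimes B$ acts linearly on the vectors $e_i\otimes f_j$ without permuting the index labels $(i,j)$, so the same lexicographic order is used before and after, and the sole scaling factor is $\det(A\otimes B)$.
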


As a direct consequence, we get a variation of \cite[Lemma 4.4]{feher:matszangosz}: if $\dim_FV$ and $\dim_FW$ are both even, the tensor product $V\otimes_F W$ has a canonical orientation given by $\mathscr{N}\cong \det(V)^{\otimes (\dim_FW/2)}\otimes_F\det(W)^{\otimes(\dim_FV/2)}$ and the above identification; this can also be applied to $\op{Hom}_F(V,W)\cong V^\vee\otimes_F W$. 

\begin{proposition}
\label{prop:orientflag}
For integers $1\leq d_1<d_2<\cdots<d_p=k<n$, consider the flag variety $\op{Fl}(d_1,\dots,d_p;n)$. If  
\[
d_1\equiv d_2-d_{1}\equiv d_3-d_2\equiv\cdots\equiv d_p-d_{p-1}\equiv n-k\mod 2
\]
then the flag variety $\op{Fl}(d_1,\dots,d_p;n)$ has a canonical orientation.
\end{proposition}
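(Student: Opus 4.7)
The plan is to compute $\det T_{\op{Fl}(\underline{d};n)}$ explicitly in terms of the determinants of the tautological subquotients and then show that, under the stated parity condition, the resulting line bundle is canonically a square. Specializing Proposition~\ref{prop:tangentflag} to the unconstrained case $e_j = n - d_j$ (so that $\op{Fl}(\underline{d},\underline{e}) = \op{Fl}(\underline{d};n)$), the tangent bundle appears as an iterated extension of the pulled-back bundles $\op{Hom}(\mathscr{S}_j,\mathscr{O}^{\oplus n}/\mathscr{P}_{d_j})$ for $j=1,\dots,p$, where I write $\mathscr{S}_j := \mathscr{P}_{d_j}/\mathscr{P}_{d_{j-1}}$ for $j\le p$ and set $\mathscr{S}_{p+1} := \mathscr{O}^{\oplus n}/\mathscr{P}_{d_p}$, so that the ranks $r_j$ satisfy $r_j = d_j - d_{j-1}$ for $j\le p$ and $r_{p+1} = n-k$. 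The extension structure yields a canonical identification of $\det T_{\op{Fl}}$ with the tensor product of determinants of successive quotients.

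Applying Lemma~\ref{lem:otdet} factor-by-factor gives
\[
\det\op{Hom}(\mathscr{S}_j,\mathscr{O}^{\oplus n}/\mathscr{P}_{d_j})\;\cong\;\det(\mathscr{S}_j)^{-(n-d_j)}\otimes\det(\mathscr{O}^{\oplus n}/\mathscr{P}_{d_j})^{\,r_j},
\]
and the filtration of $\mathscr{O}^{\oplus n}/\mathscr{P}_{d_j}$ by successive quotients $\mathscr{S}_{j+1},\dots,\mathscr{S}_{p+1}$ identifies $\det(\mathscr{O}^{\oplus n}/\mathscr{P}_{d_j}) \cong \bigotimes_{i>j}\det\mathscr{S}_i$ canonically; together with the canonical trivialization of $\det\mathscr{O}^{\oplus n}$ (which gives $\bigotimes_{i=1}^{p+1}\det\mathscr{S}_i \cong \mathscr{O}$) a straightforward index rearrangement produces a canonical identification
\[
\det T_{\op{Fl}}\;\cong\;\bigotimes_{m=1}^{p+1}\det(\mathscr{S}_m)^{\epsilon_m},\qquad \epsilon_m = d_{m-1}+d_m-n\ \ (m\le p),\quad \epsilon_{p+1} = k.
\]

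Next I would analyze the parities of the $\epsilon_m$. If all $r_j$ are even then all $d_m$ and $n$ are even, so every $\epsilon_m$ is even; in this case each factor $\det(\mathscr{S}_m)^{\epsilon_m}$ is visibly the square of $\det(\mathscr{S}_m)^{\epsilon_m/2}$, and one takes $\mathscr{N} := \bigotimes_m \det(\mathscr{S}_m)^{\epsilon_m/2}$. If all $r_j$ are odd, then $d_m \equiv m$ and $n \equiv p+1 \pmod 2$, so $\epsilon_m \equiv p \pmod 2$ for $m\le p$ while $\epsilon_{p+1} = k \equiv p$; thus all $\epsilon_m$ share the common parity $p\bmod 2$. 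For $p$ even one is again in the previous case, while for $p$ odd all $\epsilon_m$ are odd, in which case writing $\epsilon_m = 1 + 2\kappa_m$ and absorbing the single factor $\bigotimes_m \det\mathscr{S}_m$ via its canonical trivialization exhibits $\det T_{\op{Fl}}$ as the square of $\bigotimes_m \det(\mathscr{S}_m)^{\kappa_m}$. Dualizing, one obtains a canonical orientation of $\omega_{\op{Fl}}$.

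The main obstacle I anticipate is the bookkeeping: verifying the exponent formula is correct, checking that all the intermediate isomorphisms (the decomposition of $\det(\mathscr{O}^{\oplus n}/\mathscr{P}_{d_j})$, the application of Lemma~\ref{lem:otdet}, and the trivialization $\bigotimes_m\det\mathscr{S}_m \cong \mathscr{O}$) are genuinely canonical and not merely up to choices, and tracking carefully how the "absorb one copy of $\bigotimes_m\det\mathscr{S}_m$" trick in the odd case preserves canonicity. As a sanity check, the case $p=1$ recovers the classical observation that $\op{Gr}(k,n)$ is canonically orientable precisely when $n$ is even.
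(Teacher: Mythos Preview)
Your proposal is correct and follows essentially the same approach as the paper: compute $\det T_{\op{Fl}}$ as $\bigotimes_m \det(\mathscr{S}_m)^{\epsilon_m}$ with $\epsilon_m = d_{m-1}+d_m-n$, analyze the parities of the exponents under the hypothesis, and in the all-odd case absorb one copy of $\bigotimes_m \det\mathscr{S}_m$ via its canonical trivialization. The only cosmetic difference is that you reach the exponent formula by specializing Proposition~\ref{prop:tangentflag} and then splitting $\det(\mathscr{O}^{\oplus n}/\mathscr{P}_{d_j})$, whereas the paper starts from the finer filtration with subquotients $\op{Hom}(\mathscr{S}_i,\mathscr{S}_j)$ for $i<j$; both routes give the same exponents, and your parity bookkeeping (the absorb trick is needed when $p$ is odd, since then there are $p+1$ subquotients each appearing to an odd power) is correct.
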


\begin{proof}
The tangent bundle of $\op{Fl}(d_1,\dots,d_p;n)$ has a filtration whose subquotients are Hom-bundles $\op{Hom}(\mathscr{S}_i,\mathscr{S}_j)$ where $i<j$ and $\mathscr{S}_i$ is the tautological subquotient bundle of rank $d_i-d_{i-1}$. Using this together with  Lemma~\ref{lem:otdet}, we obtain canonical isomorphisms
\begin{eqnarray*}
\omega_{\op{Fl}(d_1,\dots,d_p;n)}^\vee &\cong& \bigotimes_{i<j} \det\op{Hom}(\mathscr{S}_i,\mathscr{S}_j)\\
&\cong& \bigotimes_{i<j}\left( \det\mathscr{S}_i^{\otimes(-{\op{rk}}\mathscr{S}_j)}\otimes\det\mathscr{S}_j^{\otimes({\op{rk}}\mathscr{S}_i)}\right).
\end{eqnarray*}
This can be rearranged to a tensor product of powers $\det\mathscr{S}_i^{\otimes m_i}$ of the subquotient bundles, with the multiplicity given by
\[
m_i=\sum_{j<i}\op{rk}\mathscr{S}_j-\sum_{j>i}\op{rk}\mathscr{S}_j=\sum_{j<i}(d_j-d_{j-1})-\sum_{j>i}(d_j-d_{j-1})=d_{i-1}+d_i-n.
\]
Therefore, we have a canonical identification $\omega_{\op{Fl}}^\vee\cong\det\mathscr{S}_i^{\otimes(d_{i-1}+d_i-n)}$. 

By the above computation, if all the $d_j-d_{j-1}$ are even, then so are all the $m_i$. In this case, the canonical bundle $\omega_{\op{Fl}}$ has a canonical orientation. If all $d_j-d_{j-1}$ are odd, there are two further cases: if $p$ is odd, then we always add an even number of odd summands, therefore $m_i$ is even and the bundle $\omega_{\op{Fl}}$ has a canonical orientation. Finally, if $p$ is even, then all $m_i$ are odd. Since the subquotient bundles arise from a filtration of the trivial bundle, we have a canonical identification $\bigotimes_i\det\mathscr{S}_i\cong\mathscr{O}_{\op{Fl}}$. In particular, with this identification, $\omega_{\op{Fl}}$ is again a tensor product of even tensor powers of $\mathscr{S}_i$ and hence canonically oriented.
\end{proof}

\begin{remark}
In the special case $\op{Gr}(k,n)$, we have an extension 
\[
0\to\mathscr{S}\to\mathscr{O}_{\op{Gr}}^{\oplus n}\to\mathscr{Q}\to 0
\]
where $\mathscr{S}$ is the tautological rank $k$ subbundle of the trivial bundle and $\mathscr{Q}$ is the tautological rank $n-k$ quotient bundle. The tangent bundle is isomorphic to $\op{Hom}(\mathscr{S},\mathscr{Q})$ and consequently  Lemma~\ref{lem:otdet} provides a canonical identification
\[
\omega_{\op{Gr}(k,n)}^\vee\cong (\det\mathscr{S}^\vee)^{\otimes(n-k)}\otimes(\det\mathscr{Q})^{\otimes k}.
\]
From the above extension we also have a canonical isomorphism $\det\mathscr{S}\otimes\det\mathscr{Q}\cong\mathscr{O}_{\op{Gr}}$. We can use this canonical isomorphism $\det\mathscr{S}^\vee\cong\det\mathscr{Q}$ to write $\omega_{\op{Gr}(k,n)}^\vee\cong \det\mathscr{Q}^{\otimes n}$. In particular, if $n$ is even, the Grassmannian $\op{Gr}(k,n)$ has a canonical orientation. 
\end{remark}

We now can describe the canonical orientations for the relative canonical bundles of the flag resolutions. 

\begin{proposition}
\label{prop:orientschubert}
For an even Young diagram $\Lambda$, consider the associated tuples $\underline{d}=(d_1,\dots,d_p=k)$ and $\underline{e}=(e_1,\dots,e_p)$ with $0<d_1<\cdots<d_p<n$ and $e_1+d_1\leq \cdots\leq e_p+d_p$, cf. Figure~\ref{fig:bc} and Definition~\ref{def:evenyoung}. 
\begin{enumerate}
\item If the Young diagram $\Lambda$ has twist $0$, then the relative canonical bundle $\omega_{p_\Lambda}$ for the resolution $p_\Lambda:\op{Fl}(\underline{d},\underline{e})\to \op{Gr}(k,V)$ is canonically orientable.
\item If the Young diagram $\Lambda$ has twist $1$, then the twisted relative canonical bundle $\omega_{p_\Lambda}\otimes p_\Lambda^\ast\det\mathscr{E}_k^\vee$ is canonically orientable. 
\end{enumerate}
\end{proposition}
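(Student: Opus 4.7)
My plan is to compute the relative canonical bundle $\omega_{p_\Lambda}$ explicitly as a tensor product of powers of the tautological line bundles $\det\mathscr{S}_i$ on $\op{Fl}(\Lambda)$ (where $\mathscr{S}_i = \mathscr{P}_{d_i}/\mathscr{P}_{d_{i-1}}$ are the subquotients of the tautological flag, with $\mathscr{P}_{d_0}=0$), and then to verify that the parity conditions defining even Young diagrams are exactly what is required to make this line bundle---or, in the twisted case, its product with $p_\Lambda^*\det\mathscr{E}_k^\vee$---into a tensor square.

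The first step is to realise $\op{Fl}(\Lambda) = \op{Fl}(\underline{d},\underline{e})$ as an iterated Grassmannian bundle via the tower $\op{Fl}_p \to \op{Fl}_{p-1} \to \cdots \to \op{Fl}_1$, where $\op{Fl}_j = \op{Fl}((d_1,\ldots,d_j),(e_1,\ldots,e_j))$; each projection is a $\op{Gr}(d_j-d_{j-1},\, d_j-d_{j-1}+e_j)$-bundle whose fiber over a partial flag parametrises subspaces $\mathscr{P}_{d_j} \leq V_{d_j+e_j}/\mathscr{P}_{d_{j-1}}$ of the appropriate dimension. Proposition~\ref{prop:tangentflag} then presents the tangent bundle of $\op{Fl}(\underline{d},\underline{e})$ as an iterated extension of the bundles $\op{Hom}(\mathscr{S}_j, \mathscr{O}^{\oplus d_j+e_j}/\mathscr{P}_{d_j})$. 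Applying Lemma~\ref{lem:otdet} termwise, using the identifications $\det(\mathscr{O}^{\oplus d_j+e_j}/\mathscr{P}_{d_j}) \cong (\det\mathscr{P}_{d_j})^{-1}$ (up to a canonically trivial factor arising from the fixed flag $\mathcal{F}$) and $\det\mathscr{P}_{d_j} = \bigotimes_{i\leq j}\det\mathscr{S}_i$, and finally multiplying by $p_\Lambda^*\omega_{\op{Gr}(k,n)}^\vee \cong \bigotimes_i(\det\mathscr{S}_i^\vee)^n$ (coming from the remark following Proposition~\ref{prop:orientflag}), the computation should produce
\[
\omega_{p_\Lambda} \;\cong\; \bigotimes_{i=1}^p (\det\mathscr{S}_i)^{\,e_i + k - d_{i-1} - n}.
\]

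The last step is a parity check: for twist $0$ every exponent $e_i + k - d_{i-1} - n$ must be even, while for twist $1$ every exponent must be odd, so that tensoring with $p_\Lambda^*\det\mathscr{E}_k^\vee = \bigotimes_i\det\mathscr{S}_i^\vee$ makes all exponents even. Conditions (a) and (b) of Definition~\ref{def:evenyoung} force $d_1,\ldots,d_{p-1}$ to share one parity and $e_1,\ldots,e_p$ to share another, so the exponents realise only two parity classes (one for $i=1$, the other for $i \geq 2$, since $d_0 = 0$ while $d_1,\ldots,d_{p-1}$ are congruent). The boundary conditions (c) and (d) supply precisely the extra parity constraints needed in the edge cases $e_1 = 0$, $e_1 = n-k$, or $e_p = n-k$, where the relation between the $e_i$ and $d_{i-1}$ parities becomes delicate. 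Once the parities are confirmed, the canonical orientation is obtained at once by taking $\mathscr{N}$ to be a tensor product of the corresponding half-powers of $\det\mathscr{S}_i$, with the canonical isomorphism $\mathscr{N}^{\otimes 2} \cong \omega_{p_\Lambda}$ (resp.\ its twist) induced directly by the tensor decomposition. I expect the main obstacle to be this last combinatorial step, and in particular the precise matching of the two parity classes above with the twist $t(\Lambda)$ of \cite[Def.~4.3]{balmer:calmes}; the verification is essentially a reindexed form of the parity analysis performed there.
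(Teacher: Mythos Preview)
Your approach coincides with the paper's: compute $\omega_{p_\Lambda}$ via the Grassmannian-bundle tower and Lemma~\ref{lem:otdet}, then check parities. The formula $\omega_{p_\Lambda}\cong\bigotimes_i(\det\mathscr{S}_i)^{\,e_i+k-d_{i-1}-n}$ is correct and is the paper's $\det\mathscr{P}_{d_i}$-expression rewritten in the subquotient basis.

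There is one concrete slip in the parity sketch. You write that conditions (c) and (d) supply constraints ``in the edge cases $e_1=0$, \dots''; in fact (c) and (d) bite precisely in the \emph{non}-edge range $0<e_1<n-k$ (resp.\ $0<e_p<n-k$), and it is there that they force $d_1$ even, aligning the parity of the $i=1$ exponent with the rest. In the true boundary case $e_1=0$ your exponents need \emph{not} share a parity: e.g.\ for the even diagram $\Lambda=(n-k,0,\dots,0)$ with $n-k$ even one has $d_1=1$, and since $e_2-e_1$ is even by~(b) the $i=1$ and $i=2$ exponents differ by the odd number $d_1$. What rescues the argument here is not a parity constraint but a canonical trivialization: $e_1=0$ forces $\mathscr{P}_{d_1}=V_{d_1}\otimes\mathscr{O}$, so $\det\mathscr{S}_1=\det\mathscr{P}_{d_1}$ is canonically trivial and its exponent is irrelevant for orientability. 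With that observation added, your outline goes through; the paper's own proof is comparably terse on this point and likewise leans on \cite{balmer:calmes}.
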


\begin{proof}
We compute the canonical bundle of $\op{Fl}(\underline{d},\underline{e})$, this is essentially the same argument as in \cite[Proposition 1.13]{balmer:calmes}. By Proposition~\ref{prop:tangentflag}, the tangent bundle is canonically identified as iterated extension of Hom-bundles. This translates into a canonical identification of $\omega_{\op{Fl}(\underline{d},\underline{e})}$ as tensor product of exterior powers of Hom-bundles, and the latter can be canonically identified via Lemma~\ref{lem:otdet}. Inductively, we obtain a canonical identification of the canonical bundle:  
\[
\omega_{\op{Fl}(\underline{d},\underline{e})}\cong \bigotimes_{i=1}^{p-1}\det\mathscr{P}_{d_i}^{\otimes(d_i-d_{i-1}+e_i-e_{i+1})}\otimes \det \mathscr{P}_{d_p}^{\otimes(d_p-d_{p-1}+e_p)}.
\]
The relative canonical bundle $\omega_{p_\Lambda}$ is obtained from this by tensoring with the dual of the pullback of the canonical bundle of the Grassmannian, which  by Proposition~\ref{prop:orientflag} is $\det\mathscr{P}_{d_p}^{d_p+n}$, cf. also \cite[Corollary 1.14]{balmer:calmes}. 

(1) As in the proof of Proposition~\ref{prop:orientflag}, if the twist of $\Lambda$ is $0$, then all the determinants of subquotient bundles in the relative canonical bundle identified above appear with powers which all have the same parity. We get a canonical orientation if all the powers are even. If all powers are odd, we can again use the identification $\otimes_i\det\mathscr{S}_i\cong\mathscr{O}_{\op{Fl}}$ to reduce to the case where all powers are even, hence we also have a canonical orientation. 

(2) If the twist of $\Lambda$ is $1$, we can remove even powers of line bundles as in (1). What remains is a single copy of $\det \mathscr{P}_{d_p}$ which is the pullback of a representative of the generator of $\op{Pic}(\op{Gr}(k,n))/2$. In particular, we get a canonical orientation on $\omega_{p_\Lambda}\otimes p_\Lambda^\ast\mathscr{L}$. 
\end{proof}

\begin{remark}
Note that the orientability of $\omega_{p_\Lambda}$ resp. $\omega_{p_\Lambda}\otimes p_\Lambda^\ast\mathscr{L}$ is only possible if the class of the relative canonical bundle $\omega_{p_\Lambda}$ belongs to the image of the restriction morphism $p_\Lambda^\ast:\op{Pic}(\op{Gr}(k,V))/2\to \op{Pic}(\mathcal{F}l(\Lambda))/2$. By \cite{balmer:calmes}, this is the case  if and only if the following conditions are satisfied: 
\begin{enumerate}
\item $d_i-d_{i-1}+e_{i+1}-e_i$ is even for every $i=2,\dots,k-1$ (for $k\geq 3$)
\item when $0<e_1<e$ and $k\geq 2$, $d_1+e_2-e_1$ is also even
\end{enumerate}
In turn, these conditions are satisfied if and only if the corresponding Schubert class lifts to the Chow--Witt ring, i.e., one of the conditions in Proposition~\ref{prop:compareBC} must be satisfied. 
\end{remark}

\begin{remark}
Schubert varieties are generally singular, but they are normal and Cohen--Macaulay. In particular, the canonical bundle $\omega_{X_w}$ of a Schubert variety $X_w$ can be obtained as push-forward of the canonical bundle from the smooth locus. This implies that restricting a choice of orientation for the canonical bundle of a singular Schubert variety to the smooth locus is the same thing as an orientation of the canonical bundle of the smooth locus. 
\end{remark}

Now we can discuss our natural choices for fundamental classes of Schubert varieties in $\mathbf{I}^\bullet$-cohomology. We consider an appropriate resolution, a (canonical) choice of orientation, and then the fundamental class of the Schubert variety in the $\mathbf{I}^\bullet$-cohomology ring is given by pushforward of 1 along the resolution. 

\begin{theorem}
\label{thm:lift}
Fix $1\leq k<n$, let $\Lambda$ be an even Young diagram in $k\times(n-k)$-frame and denote by $p_\Lambda:\op{Fl}(\Lambda)\to\op{Gr}(k,n)$ the resolution of singularities for the Schubert variety $\Sigma_\Lambda$. Then $(p_\Lambda)_\ast(1)\in \op{H}^{|\Lambda|}(\op{Gr}(k,n),\mathbf{I}^{|\Lambda|}(\mathscr{L}))$ is a lift of the Schubert class $\sigma_\Lambda\in \op{Ch}^{|\Lambda|}(\op{Gr}(k,n))$. A canonical choice for such lift is given by the canonical orientations of Proposition~\ref{prop:orientschubert}. 
\end{theorem}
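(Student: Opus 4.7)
The plan is to reduce Theorem~\ref{thm:lift} to a compatibility check between pushforward and reduction. Since $\Lambda$ is an even Young diagram, Proposition~\ref{prop:orientschubert} supplies a canonical orientation on the relative canonical bundle $\omega_{p_\Lambda}$ when $t(\Lambda)=0$, or on the twisted bundle $\omega_{p_\Lambda}\otimes p_\Lambda^\ast\det\mathscr{E}_k^\vee$ when $t(\Lambda)=1$. With this orientation in hand, the machinery recalled in Section~\ref{sec:pushforward} produces a well-defined pushforward
\[
(p_\Lambda)_\ast\colon \op{H}^0(\op{Fl}(\Lambda),\mathbf{I}^0)\longrightarrow \op{H}^{|\Lambda|}(\op{Gr}(k,n),\mathbf{I}^{|\Lambda|}(\mathscr{L})),
\]
where $\mathscr{L}$ is $\mathscr{O}$ or $\det\mathscr{E}_k^\vee$ according to the twist of $\Lambda$. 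Applying it to the unit $1\in \op{H}^0(\op{Fl}(\Lambda),\mathbf{I}^0)$ yields the candidate lift $(p_\Lambda)_\ast(1)$.

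The next step is to identify the reduction of this class in $\op{Ch}^{|\Lambda|}(\op{Gr}(k,n))$. By Proposition~\ref{prop:compatpush} and the Cartesian square of Proposition~\ref{prop:fiberproduct}, the reduction morphism $\rho_{\mathscr{L}}$ commutes with pushforward along proper maps between smooth schemes, so
\[
\rho_{\mathscr{L}}\bigl((p_\Lambda)_\ast(1)\bigr)=(p_\Lambda)_\ast\bigl(\rho_{\mathscr{O}}(1)\bigr)=(p_\Lambda)_\ast(1)\in\op{Ch}^{|\Lambda|}(\op{Gr}(k,n)).
\]
Since $p_\Lambda\colon\op{Fl}(\Lambda)\to\Sigma_\Lambda\subset\op{Gr}(k,n)$ is proper and birational onto its image, with $\op{Fl}(\Lambda)$ smooth and irreducible of the appropriate dimension, the Chow-theoretic pushforward of the fundamental class $1=[\op{Fl}(\Lambda)]$ is the fundamental class $[\Sigma_\Lambda]=\sigma_\Lambda$. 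Consequently $(p_\Lambda)_\ast(1)$ reduces to $\sigma_\Lambda$ in the mod $2$ Chow ring and is therefore a lift of $\sigma_\Lambda$ to $\mathbf{I}^\bullet$-cohomology.

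For the canonicity claim, I will note that any two orientations of the (possibly twisted) relative canonical bundle differ by the square class of a global invertible function on $\op{Fl}(\Lambda)$; since $\op{Fl}(\Lambda)$ is proper and geometrically connected over $F$ this amounts to a class $\langle u\rangle\in\op{GW}(F)$, which multiplies the pushforward accordingly. The orientation supplied in the proof of Proposition~\ref{prop:orientschubert} is constructed purely from tensor manipulations of the tautological subquotient line bundles $\det\mathscr{P}_{d_i}$ on $\op{Fl}(\Lambda)$, together with the canonical trivialization $\bigotimes_i\det\mathscr{S}_i\cong\mathscr{O}_{\op{Fl}(\Lambda)}$ coming from the filtration of the trivial ambient bundle; no scalar choices enter, and so this produces a preferred lift $(p_\Lambda)_\ast(1)$.

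I expect the only real obstacle to be unwinding the definitions carefully enough to see that the unit $1\in\op{H}^0(\op{Fl}(\Lambda),\mathbf{I}^0)=\op{W}(F(\op{Fl}(\Lambda)))$ truly corresponds under reduction to the cycle class $[\op{Fl}(\Lambda)]\in\op{Ch}^0(\op{Fl}(\Lambda))$, so that the birational pushforward identity at the Chow level delivers $\sigma_\Lambda$. Once this identification is in place, the rest of the argument is essentially a diagram chase through the compatibility of Proposition~\ref{prop:compatpush}.
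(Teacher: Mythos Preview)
Your argument is correct and follows essentially the same line as the paper's own proof: set up the pushforward using the canonical orientation from Proposition~\ref{prop:orientschubert}, invoke compatibility of pushforward with the reduction $\rho_{\mathscr{L}}$, and identify the mod~2 Chow-level pushforward of the fundamental class as $\sigma_\Lambda$ via birationality of $p_\Lambda$. One small inaccuracy: Proposition~\ref{prop:compatpush} concerns the comparison $\alpha\colon\op{GW}\to\widetilde{\op{CH}}$, not the reduction $\rho_{\mathscr{L}}\colon\mathbf{I}^\bullet\to\op{Ch}^\bullet$, and the fiber square of Proposition~\ref{prop:fiberproduct} does not by itself produce pushforward compatibility on the corners; the paper simply asserts the $\rho$-pushforward compatibility (it follows from the construction of pushforwards on Gersten--Rost--Schmid complexes, since reduction is induced by a morphism of coefficient sheaves compatible with transfers).
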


\begin{proof}
The morphism $p_\Lambda:\op{Fl}(\Lambda)\to \op{Gr}(k,n)$ is a proper morphism between smooth $F$-schemes, hence we have  a push-forward map, cf. Section~\ref{sec:pushforward}:
\[
\op{H}^0(\op{Fl}(\Lambda),\mathbf{I}^0(\omega_{p_\Lambda}\otimes p^\ast_\Lambda \mathscr{L})) \xrightarrow{(p_\Lambda)_\ast} \op{H}^d(\op{Gr}(k,n),\mathbf{I}^d(\mathscr{L})).
\]
Here $d=\dim \op{Gr}(k,n)-\op{Fl}(\Lambda)=|\Lambda|$, cf. \cite[Propositions 1.3, 1.13]{balmer:calmes}. For an even Young diagram, the appropriate choice of twisting line bundle $\mathscr{L}$ is a generator of $\op{Pic}(\op{Gr}(k,n))/2$ if one of the two possible boundary components of $\Lambda$ is odd; the general twist is determined by the half-perimeter \cite[Definition 4.3]{balmer:calmes}. 

Since the push-forward is compatible with the reduction morphism $\op{H}^j(X,\mathbf{I}^j)\to\op{Ch}^j(X)$, the image of $(p_\Lambda)_\ast(1)$ in $\op{Ch}^{|\Lambda|}(\op{Gr}(k,n))$ is the class of the subscheme given by the image of $p_\Lambda$, which is exactly the Schubert class $\sigma_\Lambda$. 

It remains to note that the push-forward morphism depends on the choice of a relative orientation, i.e., a line bundle $\mathscr{M}$ and an isomorphism $\mathscr{M}^{\otimes 2}\cong \omega_{p_\Lambda}\otimes p_\Lambda^\ast\mathscr{L}$. But we can use the canonical orientations from Proposition~\ref{prop:orientschubert} to get canonical lifts of Schubert classes.
\end{proof}

\begin{remark}
\label{rem:generator}
The canonical lifts in the above theorem are in fact generators of the corresponding $\op{W}(F)$-summand in $\mathbf{I}^{|\Lambda|}(\mathscr{L})$-cohomology. In \cite{balmer:calmes},  inductive arguments involving localization sequences and explicit computations of images of generators under boundary morphisms were used to show that the classes $(p_\Lambda)_\ast(1)$ are generators of $\op{W}^{\op{tot}}(\op{Gr}(k,n))$. The same arguments can now also be used to show that the classes $(p_\Lambda)_\ast(1)$ are generators for $\bigoplus_{j,\mathscr{L}}\op{H}^j(\op{Gr}(k,n),\mathbf{I}^j(\mathscr{L}))$, using the compatibility of the comparison morphism $\alpha:\op{W}^j(X,\mathscr{L})\to \op{H}^j(X,\mathbf{I}^j(\mathscr{L}))$ with pullbacks, pushforwards and localization sequences (possibly up to sign). 
\end{remark}

\section{Geometric interpretation of oriented intersection multiplicities}
\label{sec:intersection}

In this section, we can now describe the oriented intersection multiplicity of a solution subspace in an intersection of Schubert varieties. Essentially, we can use the classical homological transversality results to make the Schubert varieties transverse and then the oriented intersection multiplicity is given by the length of the reduced subscheme plus additional relative orientation information. 

\subsection{Transversality}

Before we can provide explicit geometric identifications of the oriented intersection multiplicities, we need to discuss how to make the Schubert varieties in the Schubert problem transversal. Classically, one uses (homological) transversality results to make the Schubert cycles transverse, and we will discuss how to take care of the additional orientations that are floating around. 

We first recall the very classical Kleiman--Bertini transversality theorem. cf. \cite{kleiman}:

\begin{theorem}
\label{thm:kleiman}
Let $F$ be a field of characteristic $0$, let $G$ be an algebraic group and let $G\looparrowright X$ be a smooth variety over $F$ with $G$-action, with integral closed subschemes $Y,Z\subseteq X$. Then there exists a dense open subset $U$ of $G$ such that for each point $g\in U(F)$, the intersection $gY\cap Z$ is proper, each component appears with multiplicity one, i.e., there exists a dense open in $gY\cap Z$ which is regular and has pure dimension $\dim Y+\dim Z-\dim X$. 
\end{theorem}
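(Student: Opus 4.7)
The plan is to apply generic smoothness (available in characteristic $0$) to the incidence variety cut out by the group action. One assumes, as is classical for this statement, that $G$ acts transitively on $X$; then the action morphism
\[
\alpha\colon G\times Y\to X,\qquad (g,y)\mapsto g\cdot y,
\]
is faithfully flat, and its restriction over the smooth locus of $X$ is smooth of relative dimension $\dim G+\dim Y-\dim X$.

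Next, I would form the incidence variety $W:=\alpha^{-1}(Z)\subseteq G\times Y$. Flat base change along $\alpha$ shows that $W$ has a dense smooth open subset and is pure of dimension $\dim G+\dim Y+\dim Z-\dim X$. The second projection $\pi\colon W\to G$ has the property that its scheme-theoretic fiber $\pi^{-1}(g)$ is isomorphic, via $y\mapsto g\cdot y$, to the scheme-theoretic intersection $gY\cap Z\subseteq X$.

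I would then invoke generic smoothness for the morphism $\pi$: since $W$ is generically smooth over $F$ and we are in characteristic $0$, there exists a dense open $U\subseteq G$ over which $\pi$ is smooth. For every $g\in U(F)$, the fiber $gY\cap Z$ inherits a dense regular open subset from the smooth locus of $\pi$, is pure of the expected dimension $\dim Y+\dim Z-\dim X$, and is generically reduced. Reducedness on a dense open together with pure dimensionality forces each irreducible component to appear with intersection multiplicity one.

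The main obstacle is the use of generic smoothness, which genuinely requires characteristic $0$: in positive characteristic the morphism $\pi\colon W\to G$ can be everywhere inseparable and its fibers non-reduced, so generic reducedness of $gY\cap Z$ can fail. Transitivity of the $G$-action is the other essential hypothesis, making $\alpha$ smooth rather than merely dominant; this is supplied in the Schubert calculus applications by the transitive action of $\op{GL}_n$ on complete flags and on Grassmannians.
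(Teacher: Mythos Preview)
The paper does not supply its own proof of this theorem; it simply recalls the classical Kleiman--Bertini transversality result with a citation to \cite{kleiman}. So there is no ``paper's proof'' to compare against, and your sketch is essentially the standard argument from Kleiman's original paper: form the incidence variety $W=\alpha^{-1}(Z)\subseteq G\times Y$, use transitivity to see that $\alpha$ is flat (and smooth over the smooth locus of $Y$), then apply generic smoothness in characteristic $0$ to the projection $\pi\colon W\to G$.

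Two small remarks. First, you correctly supply the hypothesis of transitivity of the $G$-action, which the paper's statement omits but which is essential (and satisfied in the intended application to $\op{GL}_n$ acting on $\op{Gr}(k,n)$); without it the conclusion is false in general. Second, your phrase ``its restriction over the smooth locus of $X$'' should presumably read ``over the smooth locus of $Y$'', since $X$ is already smooth by hypothesis; it is the possible singularities of $Y$ (and of $Z$) that force one to pass to a dense open of $W$ before invoking generic smoothness. With that correction, the argument is the classical one and is correct.
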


In particular, at a general point of the intersection $Y\cap Z$, the tangent space to $X$ will decompose as direct sum of the normal spaces to $Y$ and $Z$ and the tangent space to $Y\cap Z$. Moreover, if we have $Y, Z\subseteq X$ of complementary dimensions, the intersection $Y\cap Z$ is a zero-dimensional reduced scheme which is contained in the smooth loci of $Y$ and $Z$, respectively. In positive characteristic, this strong version of transversality fails, but there is some homological transversality results that can be used (in conjunction with Serre's Tor formula) to compute intersection products in Chow rings. 

\begin{definition}
Let $F$ be a field and let $X$ be a variety over $F$. Two quasi-coherent sheaves $\mathscr{E}$ and $\mathscr{F}$ on $X$ are called \emph{homologically transverse} if the higher Tor-sheaves vanish:
\[
\mathscr{T}\!\!or^X_j(\mathscr{E},\mathscr{F})=0 \textrm{ for all } j\geq 1.
\]
If $\mathscr{E}=\mathscr{O}_Y$ for a closed subvariety $Y\subseteq X$, we say that $\mathscr{F}$ is homologically transverse to $Y$. 
\end{definition}

We consider a scheme $F$ defined over a field $X$, fix an algebraic closure $\overline{F}$ of $F$ and denote $\overline{X}=X\times\op{Spec} \overline{F}$. For a coherent sheaf $\mathscr{F}$ on $X$, we also denote by $\overline{\mathscr{F}}$ the pullback of $\mathscr{F}$ to $\overline{X}$. The following criterion for homological transverse being generic is proved in \cite[Theorem 1.2]{sierra}:

\begin{theorem}
\label{thm:sierra}
Let $F$ be a field with algebraic closure $\overline{F}$. Let $X$ be an $F$-scheme with a left action of a smooth algebraic group $G$, and let $\mathscr{F}$ be a coherent sheaf on $X$. If for all closed points $x\in \overline{X}$ the sheaf $\overline{\mathscr{F}}$ is homologically transverse to the closure of the $G(\overline{F})$-orbit of $x$, then for all coherent sheaves $\mathscr{E}$ on $X$ there is a Zariski open and dense subset $U\subseteq G$ such that for all $g\in U(F)$ the sheaf $g\mathscr{F}$ is homologically transverse to $\mathscr{E}$. 
\end{theorem}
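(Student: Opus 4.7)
The plan is to realize the desired transversality as a fiberwise statement on a ``universal translate'' sheaf over $G\times X$, and then locate the bad locus inside $G$ using the orbit hypothesis. Let $a\colon G\times X\to X$ denote the action map $(g,x)\mapsto g\cdot x$, let $\pi\colon G\times X\to X$ be the projection to $X$, and $q\colon G\times X\to G$ the projection to $G$. First I would observe that $a$ is smooth: the automorphism $(g,x)\mapsto(g,g\cdot x)$ of $G\times X$ intertwines $a$ with $\pi$, and $\pi$ is smooth because $G/F$ is smooth. Hence $a^\ast\mathscr{F}$ agrees with its derived pullback, and for any $g\in G(F)$ the inclusion $i_g\colon\{g\}\times X\hookrightarrow G\times X$ is a regular closed embedding (again by smoothness of $G$). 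Flat base change along $i_g$ then produces an isomorphism
\[
Li_g^\ast\bigl(a^\ast\mathscr{F}\otimes^{L}_{\mathcal{O}_{G\times X}}\pi^\ast\mathscr{E}\bigr)\;\cong\;g\mathscr{F}\otimes^{L}_{\mathcal{O}_X}\mathscr{E},
\]
so that the fiber at $g$ of the universal Tor sheaf $\mathscr{T}\!\!or_j^{G\times X}(a^\ast\mathscr{F},\pi^\ast\mathscr{E})$ is precisely $\mathscr{T}\!\!or_j^X(g\mathscr{F},\mathscr{E})$.

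Set $Z:=\bigcup_{j\geq 1}\op{supp}\mathscr{T}\!\!or_j^{G\times X}(a^\ast\mathscr{F},\pi^\ast\mathscr{E})\subseteq G\times X$. A rational point $g\in G(F)$ satisfies the desired transversality exactly when $q^{-1}(g)\cap Z=\emptyset$, and since $q$ is of finite type, $q(Z)$ is constructible by Chevalley's theorem. The candidate open is then $U:=G\setminus\overline{q(Z)}$: by construction it has the required property at every $g\in U(F)$, and it is Zariski dense in $G$ as soon as $q(Z)\neq G$. The theorem therefore reduces to exhibiting a single geometric point $g\in G(\overline F)$ at which all $\mathscr{T}\!\!or^X_j(\overline{g\mathscr{F}},\overline{\mathscr{E}})$ with $j\geq 1$ vanish after base change to $\overline F$.

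To produce such a $g$, I would work over $\overline F$ and stratify $\overline X$ by the locally closed $G(\overline F)$-orbits $\mathcal{O}_\alpha$, with $G$-invariant closures $\overline{\mathcal{O}}_\alpha$. Since each $\overline{\mathcal{O}}_\alpha$ is preserved by $g$-translation, the orbit transversality hypothesis applied at a point of $\mathcal{O}_\alpha$ at once yields that $\overline{g\mathscr{F}}$ is homologically transverse to $\mathcal{O}_{\overline{\mathcal{O}}_\alpha}$ for \emph{every} $g\in G(\overline F)$. Next, filter $\overline{\mathscr{E}}$ by coherent subsheaves whose successive quotients $\mathcal{G}_\alpha$ are scheme-theoretically supported on orbit closures; an inductive argument using the long exact Tor sequences of the filtration reduces transversality of $\overline{g\mathscr{F}}$ with $\overline{\mathscr{E}}$ to transversality with each $\mathcal{G}_\alpha$. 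The per-orbit transversality established above allows one to replace $\mathcal{G}_\alpha$ by its restriction along $\overline{\mathcal{O}}_\alpha\hookrightarrow \overline X$, after which the classical Kleiman--Bertini theorem on the smooth orbit $\mathcal{O}_\alpha$ produces a dense open of $g$'s making $\overline{g\mathscr{F}}|_{\overline{\mathcal{O}}_\alpha}$ transverse to $\mathcal{G}_\alpha$ inside $\overline{\mathcal{O}}_\alpha$. The required $g$ lies in the intersection of these finitely many opens. The main obstacle is precisely this final d\'evissage: the orbit hypothesis only controls Tors against structure sheaves of orbit closures, so propagating vanishing through a filtration by arbitrary orbit-supported quotients requires careful bookkeeping with the long exact Tor sequences, and one must exploit the $G$-invariance of orbit closures at each step to keep importing the hypothesis.
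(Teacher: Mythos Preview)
The paper does not prove this theorem at all: it is quoted verbatim from Sierra's paper \cite[Theorem~1.2]{sierra} and used as a black box. So there is no ``paper's own proof'' to compare against; what you have written is an attempt to reconstruct Sierra's argument.

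Your architecture is correct and does match Sierra's: pass to the universal translate on $G\times X$, study the supports of the higher Tor sheaves there, and show they project to a non-dense subset of $G$. But two steps in your write-up do not go through as stated. First, the displayed derived isomorphism $Li_g^\ast(a^\ast\mathscr{F}\otimes^L\pi^\ast\mathscr{E})\cong g\mathscr{F}\otimes^L\mathscr{E}$ is fine, but it does \emph{not} yield your next sentence, that the ordinary fiber $i_g^\ast\mathscr{T}\!\!or_j^{G\times X}(a^\ast\mathscr{F},\pi^\ast\mathscr{E})$ equals $\mathscr{T}\!\!or_j^X(g\mathscr{F},\mathscr{E})$. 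The derived identity only gives a spectral sequence with $E_2^{p,q}=L_pi_g^\ast\mathscr{T}\!\!or_q$, and passing to individual cohomology sheaves requires the $\mathscr{T}\!\!or_q$ to be flat over $G$ near $g$. Sierra handles this with generic flatness over $G$, which is the missing ingredient here; without it your set $U$ need not have the property you claim for its $F$-points.

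Second, the d\'evissage paragraph has a structural problem. You stratify $\overline{X}$ by $G$-orbits and filter $\overline{\mathscr{E}}$ with subquotients supported on orbit closures, but nothing in the hypotheses guarantees finitely many orbits, so neither the stratification nor the filtration is available in general. Moreover, your appeal to ``the classical Kleiman--Bertini theorem on the smooth orbit $\mathcal{O}_\alpha$'' is exactly what Sierra's theorem is meant to replace in positive characteristic; invoking it here is either circular or restricts you to characteristic zero. Sierra's actual route avoids orbit stratification entirely: one restricts the universal Tor sheaves to slices $G\times\{x\}$ over closed points $x\in\overline{X}$, uses flatness of $a$ together with the orbit-closure hypothesis to show these restrictions vanish generically on $G$, and then combines this with generic flatness over $G$ to conclude.
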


\begin{remark}
We can apply this to $X=\op{Gr}(k,n)$ as variety with $\op{GL}_n$-action over $F$ and $\mathscr{E}=\mathscr{O}_Y$, $\mathscr{F}=\mathscr{O}_Z$ for two Schubert varieties $Y,Z\subseteq X$. Then the conditions of Sierra's theorem are satisfied, and the two Schubert varieties $Y$ and $Z$ can be made homologically transversal by taking a generic translate $gY$ for $g\in \op{GL}_n(F)$. Note that this requires $F$ to be infinite; for a finite field $F=\mathbb{F}_q$, the dense open subset $U\subseteq G$ need not contain $\mathbb{F}_q$-rational points. Note also that this result can be used to make $Y$ transversal to finitely many other Schubert varieties: the intersection of the relevant open dense subsets $U_1,\dots,U_l$ will be open and dense.
\end{remark}

\begin{remark}
We could also apply other criteria to deduce homological transversality for intersections of Schubert varieties in Grassmannians. For example,  \cite[Lemma 1]{brion} states that any proper intersection of equi-dimensional Cohen--Macaulay subschemes $Y$ and $Z$ in a smooth variety $X$ is homologically transverse. 
\end{remark}

Changing a Schubert variety $\Sigma_\Lambda(\mathscr{F})$ to a $\op{GL}_n$-translate $g\Sigma_\Lambda(\mathscr{F})$ doesn't change the corresponding class in the Chow ring. The same is true for the Chow--Witt ring: 

\begin{proposition}
\label{prop:ratequiv}
Let $F$ be a field of characteristic $\neq 2$. Let $Y=\Sigma_\Lambda(\mathscr{F})\subseteq \op{Gr}(k,n)$ be a Schubert variety corresponding to a Young diagram satisfying one of the conditions of Proposition~\ref{prop:compareBC} for the line bundle $\mathscr{L}$. For any $g\in \op{SL}_n(F)$, denote by $p_Y:\op{Fl}(\Lambda,\mathscr{F})\to\op{Gr}(k,n)$ and $p_{gY}:\op{Fl}(\Lambda,g\mathscr{F})\to\op{Gr}(k,n)$ the corresponding resolutions of singularities of $Y$ and $gY$, respectively. Then we have 
\[
(p_Y)_\ast(1)=(p_{gY})_\ast(1)\in \op{H}^{|\Lambda|}(\op{Gr}(k,n),\mathbf{I}^{|\Lambda|}(\mathscr{L}))
\]
\end{proposition}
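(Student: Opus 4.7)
The plan is to combine the $\op{SL}_n$-equivariance of the resolution diagram with the $\mathbb{A}^1$-invariance of $\mathbf{I}^\bullet$-cohomology. The action of $g\in\op{SL}_n(F)$ on the ambient vector space induces isomorphisms $\mu_g\colon\op{Gr}(k,n)\to\op{Gr}(k,n)$ and $\tilde\mu_g\colon\op{Fl}(\Lambda,\mathscr{F})\to\op{Fl}(\Lambda,g\mathscr{F})$ satisfying $p_{gY}\circ\tilde\mu_g=\mu_g\circ p_Y$. Applying pushforward and functoriality yields $(p_{gY})_\ast\bigl((\tilde\mu_g)_\ast(1)\bigr)=(\mu_g)_\ast(p_Y)_\ast(1)$, so the proof reduces to two claims: (a) the canonical orientation of $\omega_{p_\Lambda}\otimes p_\Lambda^\ast\mathscr{L}$ is preserved by $\tilde\mu_g$, so that $(\tilde\mu_g)_\ast(1)=1$; and (b) the action $(\mu_g)_\ast$ is the identity on $\op{H}^{|\Lambda|}(\op{Gr}(k,n),\mathbf{I}^{|\Lambda|}(\mathscr{L}))$ for every $g\in\op{SL}_n(F)$.

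For (a), recall that Proposition~\ref{prop:orientschubert} constructs the canonical orientation as an explicit identification of $\omega_{p_\Lambda}\otimes p_\Lambda^\ast\mathscr{L}$ with an even tensor power (possibly twisted by a pullback of the generator of $\op{Pic}(\op{Gr}(k,n))/2$) of line bundles built from the determinants of the tautological subquotient bundles $\mathscr{P}_{d_i}/\mathscr{P}_{d_{i-1}}$ via Lemma~\ref{lem:otdet}. These tautological bundles are $\op{SL}_n$-equivariant by construction, and the cases where some $\det\mathscr{S}_i$ would appear with an odd exponent are reduced to the even case using the trivialization $\bigotimes_i\det(\mathscr{P}_{d_i}/\mathscr{P}_{d_{i-1}})\cong\mathscr{O}$, which is itself $\op{SL}_n$-equivariant precisely because $g$ has determinant one on $F^n$. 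Every step of the construction is therefore $\op{SL}_n$-equivariant, giving (a).

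For (b), consider the action morphism $a\colon\op{SL}_n\times\op{Gr}(k,n)\to\op{Gr}(k,n)$; then $\mu_g^\ast$ is the composition of $a^\ast$ with restriction to the fiber $\{g\}\times\op{Gr}(k,n)$. The group $\op{SL}_n$ is generated as an abstract group by its additive root subgroups $\mathbb{G}_a\hookrightarrow\op{SL}_n$, $t\mapsto I+tE_{ij}$, so any two $F$-points of $\op{SL}_n$ are joined by a chain of morphisms $\mathbb{A}^1\to\op{SL}_n$. Since $\mathbf{I}^\bullet$-cohomology is $\mathbb{A}^1$-invariant, pullback along the two endpoints of any such $\mathbb{A}^1$-family yields the same map; iterating along the chain gives $\mu_g^\ast=\mu_e^\ast=\op{id}$. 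Because $\mu_g$ is an isomorphism, $(\mu_g)_\ast=(\mu_{g^{-1}})^\ast=\op{id}$ as well, and combined with (a) this gives the proposition.

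The main obstacle is the verification in (a): although every individual ingredient of the canonical orientation is manifestly $\op{SL}_n$-equivariant, tracing through the inductive construction in Proposition~\ref{prop:orientschubert} to confirm that the composite orientation isomorphism commutes with $\tilde\mu_g$ on the nose requires careful bookkeeping, particularly in the twisted case where one must also check that the chosen isomorphism $\omega_{p_\Lambda}\otimes p_\Lambda^\ast\mathscr{L}\cong\mathscr{N}^{\otimes 2}$ agrees with its $\tilde\mu_g$-translate up to an \emph{equivalence of orientations}, not just up to isomorphism of line bundles. The connectedness of $\op{SL}_n$ ensures the $\op{Pic}$-class of $\mathscr{L}$ is preserved, so the twisted cohomology is $\op{SL}_n$-equivariant in a coherent way, and all pieces fit together.
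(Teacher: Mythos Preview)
Your approach is correct and takes a genuinely different organizational route from the paper's, though both rest on the same two ingredients: the $\mathbb{A}^1$-connectedness of $\op{SL}_n$ and the compatibility of the canonical orientations under translation.

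The paper works directly in families: an elementary factorization of $g$ yields a chain of $\mathbb{A}^1$-families of resolutions $q\colon\op{Fl}(\Lambda)\times\mathbb{A}^1\to\op{Gr}(k,n)$, the canonical orientation is observed to extend over each family, and then the fiberwise pushforwards agree by $\mathbb{A}^1$-invariance. You instead factor through the group action, separating the problem into (b) $(\mu_g)_\ast=\op{id}$ on cohomology (via the same elementary-factorization chain, but applied on the Grassmannian rather than on the resolution) and (a) $\op{SL}_n$-equivariance of the canonical orientation. The paper's framing makes the orientation step marginally more transparent, because over $\op{Fl}(\Lambda)\times\mathbb{A}^1$ every line bundle is pulled back from $\op{Fl}(\Lambda)$ by $\mathbb{A}^1$-invariance of $\op{Pic}$, so the orientation data is literally constant along each $\mathbb{A}^1$-step and there is nothing to compare; your framing is more conceptual and isolates exactly what would be needed for an arbitrary $\mathbb{A}^1$-connected group action. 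The ``main obstacle'' you flag in (a) is precisely the assertion the paper passes over as ``the corresponding induced orientations on $\omega_q\otimes q^\ast\mathscr{L}$ agree,'' so neither argument fully escapes that bookkeeping; the paper's family language just packages it more economically.
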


\begin{proof}
Since $g\in \op{SL}_n(F)$, it has an elementary factorization. Therefore, we have a chain of $\mathbb{A}^1$-families of translates of $Y$ connecting $Y$ and $gY$. It suffices to prove the claim for a single $\mathbb{A}^1$-family $Y\times\mathbb{A}^1\to \op{Gr}(k,n)$. This $\mathbb{A}^1$-family gives rise to a family of resolutions $q:\op{Fl}(\Lambda)\times\mathbb{A}^1\to\op{Gr}(k,n)$, and we want to compare the pushforwards of 1 from the two fibers, for $Y$ and $gY$. The form 1 obviously extends to the whole family $q:\op{Fl}(\Lambda)\times \mathbb{A}^1\to\op{Gr}(k,n)$. Since $\op{Fl}(\Lambda)$ is smooth, the  bundle $\omega_q\otimes q^\ast\mathscr{L}$ is isomorphic to the pullbacks of $\omega_{p_Y}\otimes p_Y^\ast\mathscr{L}$ and $\omega_{p_{gY}}\otimes p_{gY}^\ast\mathscr{L}$ along the respective $\mathbb{A}^1$-projections. Moreover, the corresponding induced orientations on $\omega_q\otimes q^\ast\mathscr{L}$ agree. In particular, we have a pushforward $(p_q)_\ast(1)$ which agrees with the classes $(p_Y)_\ast(1)$ and $(p_{gY})_\ast(1)$, thus proving the claim.
\end{proof}

\begin{remark}
More generally, one would expect for a smooth variety $X$ with an action of an $\mathbb{A}^1$-connected group $G$, that translation of $\mathbf{I}^\bullet$-cohomology cycles by $g\in G(F)$ doesn't change the cohomology class. In the corresponding Borel--Moore homology picture, cf. \cite{deglise:fasel}, the above argument shows that this is true because of $\mathbb{A}^1$-invariance for singular varieties. However, in the $\mathbf{I}^\bullet$-cohomology picture, this is not so clear, but the above special case is enough for our purposes.
\end{remark}

\begin{remark}
Note also that the above result can be applied to arbitrary Schubert varieties. The homological transversality result of Sierra, applied to the $\op{SL}_n$-action on $\op{Gr}(k,n)$, implies that we can make Schubert varieties homologically transversal by elements from $\op{SL}_n$. In particular, the canonical fundamental classes of Schubert varieties in $\mathbf{I}^\bullet$-cohomology discussed in Section~\ref{sec:geometricreps} are independent of the choice of flag defining the Schubert variety. Hence, to compute oriented intersection products of such fundamental classes, we can always assume that the corresponding Schubert varieties are homologically transversal, provided the base field is infinite.
\end{remark}

Finally, we want to mention Vakil's generic smoothness theorem \cite[Theorem 1.5]{vakil} which provides all the relevant consequences of the Kleiman--Bertini theorem in positive characteristic, for zero-dimensional intersections.\footnote{A comment after the statement of Theorem 1.5 in loc.cit. suggests that we might hope for a Kleiman--Bertini type generic smoothness theorem in arbitrary characteristic if we assume generic transversality with respect to Schubert varieties. Such a result would solve all the transversality issues left in the present discussion, but this seems open at the moment.} As a particular consequence of Vakil's generic smoothness result, for a given Schubert problem over a finite field, there is a positive density of flags (possibly defined over finite field extensions $E/\mathbb{F}_q$) where the intersection subscheme is just a union of reduced $E$-rational points. A fortiori, there will be a positive density of flags where the corresponding intersection of Schubert varieties is a reduced subscheme. In particular, while it's not quite clear if Schubert varieties can be made transversal over finite fields, this will always be possible after a finite field extension. 

\subsection{The type of subspaces} 

Finally, we need to interpret the local contribution of each solution subspace to the degree of the oriented zero-cycle computed previously. This will be done via comparison of the canonical orientations of the Schubert varieties involved, generalizing the notion of type of subspaces in \cite{feher:matszangosz} and similar to the type of lines in \cite{kass:wickelgren}. 

We first discuss the geometric interpretation of intersection multiplicities in the case where the relevant Schubert varieties are smooth (or intersect in points where the resolutions of singularities from Section~\ref{sec:geometricreps} are isomorphisms) and there are no twists involved. Let $F$ be a field of characteristic $\neq 2$, let $V$ be an $n$-dimensional $F$-vector space, and let $\mathfrak{S}_1,\dots,\mathfrak{S}_l$ be smooth Schubert varieties in $\op{Gr}(k,V)$ with $\sum_i\dim\mathfrak{S}_i=k(n-k)$, intersecting properly in a closed point $[W]\in \op{Gr}(k,n)$. Then $[W]$ is defined over a finite extension field $E/F$, and corresponds to a $k$-dimensional $E$-vector space $W\subseteq V\otimes_FE$. Since the Schubert varieties are smooth, we have normal spaces $\op{N}_{[W]}\mathfrak{S}_i$ of dimension $k(n-k)-\dim\mathfrak{S}_i$, and the normal space can be explicitly identified as $\op{Hom}(W\cap U,V/\langle U,W\rangle)$ with $U\leq V$ a subspace of suitable dimension.\footnote{The Schubert variety being smooth is equivalent to the Young diagram being a rectangle. In particular, the Schubert variety is determined by intersection conditions w.r.t. a flag $0\leq U\leq V$.} Moreover, we have a canonical quotient map $\op{T}_{[W]}\op{Gr}(k,n)\to\op{N}_{[W]}\mathfrak{S}_i$ which can be explicitly identified as 
\begin{eqnarray*}
\op{T}_{[W]}\op{Gr}(k,n)\cong\op{Hom}(W,V/W)&\to& \op{Hom}(W\cap U,V/\langle U,W\rangle)\cong \op{N}_{[W]}\mathfrak{S}_i\\
\psi:W\to V/W&\mapsto & \left(W\cap U\hookrightarrow W\xrightarrow{\psi}V/W\twoheadrightarrow V/\langle U,W\rangle\right)
\end{eqnarray*}
cf. e.g. the discussion in \cite[Section 4.1]{feher:matszangosz}. Since the Schubert varieties intersect properly at $[W]$, the natural map
\[
\xi_W:\op{T}_{[W]}\op{Gr}(k,n)\to \bigoplus_{i=1}^l\op{N}_{[W]}\mathfrak{S}_i
\]
obtained from the above projections to the normal spaces is an isomorphism, called the \emph{splitting map}. The splitting map induces an isomorphism of determinants, which we can rewrite as a canonical orientation  
\[
\mathscr{O}_{\op{Gr}(k,n),[W]}\cong \omega_{\op{Gr}(k,n)}|_{[W]}\otimes \bigotimes_{i=1}^l\omega_{\mathfrak{S}_i/\op{Gr}(k,n)}|_{[W]}.
\]
On the other hand, the canonical orientations of the tangent bundle of $\op{Gr}(k,n)$ and the normal bundles of $\mathfrak{S}_i$, cf. Propositions~\ref{prop:orientflag} and \ref{prop:orientschubert}, provide another such orientation of the right-hand side tensor product. Note that the right-hand side is a one-dimensional $E$-vector space, and each of the two orientations is a generator. Since an orientation is only well-defined up to multiplication by squares, the two orientations differ by a unit $u\in E^\times$ whose square residue in $E^\times/(E^\times)^2$ only depends on the orientations and not on the specific generators representing them. It makes sense to denote this unit $u$ by $\det\xi_W$: if we choose bases in $\op{T}_{[W]}\op{Gr}(k,n)$ and $\bigoplus_{i=1}^l\op{N}_{[W]}\mathfrak{S}_i$ which induce the orientations, then the unit $u$ is given as the square class of the determinant of the matrix representing the splitting map in the chosen bases. 

We can now define the type of the solution subspace $W$ which will turn out to coincide with the oriented intersection multiplicity of point $[W]$ in the intersection product $[\mathfrak{S}_1]\cdots[\mathfrak{S}_l]$ in the Chow--Witt ring. 

\begin{definition}
\label{def:type}
In the above notation, if $W$ is a solution subspace in the intersection of the Schubert varieties $\mathfrak{S}_1,\dots,\mathfrak{S}_l$, defined over $E/F$, then the \emph{type} of $W$ is defined to be 
\[
\op{tr}_{E/F}\langle \det\xi_W\rangle\in\op{GW}(F),
\]
i.e., it is the Scharlau trace of the one-dimensional quadratic form given by the determinant of the splitting map. 
\end{definition}

\begin{remark}
The same procedure works if we consider non-orientable Schubert varieties. In this case, the normal bundles to some of the $\mathfrak{S}_i$ may not be orientable. However, the twisting by $\mathscr{L}_i\cong\det\mathscr{E}_k^\vee|_{\mathfrak{S}_i}$ makes the bundles orientable. Since we want to consider intersection multiplicities, we have to look at Schubert problems in Grassmannians with $n$ even, and then we must necessarily have an even number of non-orientable Schubert varieties involved in the intersection problem. The type of a subspaces in the intersection is then as in Definition~\ref{def:type} above: the type is given by the square class of units which compares the canonical orientations on $\omega_{\op{Gr}(k,n)}|_{[W]}\otimes\bigotimes_{i=1}^l\omega_{\mathfrak{S}_i/\op{Gr}(k,n)}|_{[W]}(\mathscr{L}_i)$, one given by the natural morphisms and the other one given by the canonical orientations.
\end{remark}

We also need to discuss a notion of types of solution subspaces for the case of intersections of singular Schubert varieties. While this definition is really applicable to all intersection problems, it will usually be difficult to actually evaluate in practice. 

Let $F$ be a field of characteristic $\neq 2$, let $V$ be an $n$-dimensional $F$-vector space, and let $\mathfrak{S}_1,\dots,\mathfrak{S}_l$ be Schubert varieties in $\op{Gr}(k,V)$ with $\sum_i\dim\mathfrak{S}_i=k(n-k)$, intersecting properly in a closed point $[W]\in\op{Gr}(k,n)$. Consider the resolutions of singularities $p_i:\op{Fl}(\Lambda_i)\to\op{Gr}(k,V)$ of Section~\ref{sec:geometricreps}, where $\Lambda_i$ is the Young diagram for the Schubert variety $\mathfrak{S}_i$. For every $i$, we have a line bundle $\mathscr{L}_i$ (either equal to $\mathscr{O}_{\op{Gr}(k,n)}$ or $\det\mathscr{E}_k^\vee$) such that $\omega_{p_i}\otimes p_i^\ast\mathscr{L}_i$ has a canonical orientation, cf. Proposition~\ref{prop:orientschubert}, i.e., there is a line bundle $\mathscr{M}_i$ such that $\mathscr{M}_i\otimes\mathscr{M}_i\xrightarrow{\cong} \omega_{p_i}\otimes p_i^\ast\mathscr{L}_i$. We can now consider the fiber $p_i^{-1}([W])$ of the resolution $p_i:\op{Fl}(\Lambda_i)\to\op{Gr}(k,V)$ over the point $[W]$. The push-forward of the orientation form $\mathscr{M}_i\otimes\mathscr{M}_i\xrightarrow{\cong} \omega_{p_i}\otimes p_i^\ast\mathscr{L}_i$ is a form $\op{R}(p_i)_\ast(\mathscr{M}_i)\otimes \op{R}(p_i)_\ast(\mathscr{M}_i)\to\mathscr{L}_i$, cf. \cite{calmes:hornbostel} for the exact definition of the push-forward of forms. Restricted to the point $[W]$, we get an intersection form on coherent cohomology
\[
\cup:\op{H}^r(p_i^{-1}([W]),\mathscr{M}_i)\otimes \op{H}^s(p_i^{-1}([W]),\mathscr{M}_i)\to \mathscr{O}_{[W]}
\]

\begin{definition}
\label{def:typesing}
In the above notation, if $W$ is a solution subspace in the intersection of the Schubert varieties $\mathfrak{S}_1,\dots,\mathfrak{S}_l$, then the \emph{type} of $W$ is the image of the product in $\op{GW}_{[W]}(\op{Gr}(k,V))$ of the intersection forms
\[
\left(\bigoplus_{j=0}^{\dim p_i^{-1}([W])}\op{H}^j(p_i^{-1}([W]),\mathscr{M}_i)[-j],\cup\right)
\]
for all $i=1,\dots,l$ under the identification $\op{GW}_{[W]}(\op{Gr}(k,V))\cong \op{GW}(F)$ given by the canonical orientation of the tangent space $\op{T}_{[W]}\op{Gr}(k,V)$.
\end{definition}

\begin{remark}
This bears some similarity with the Levine--Serre formula for the motivic Euler classes of smooth varieties in terms of trace forms on $\bigoplus\op{H}^i(X,\Omega^j)$, cf. \cite{levine:schur}. 
\end{remark}

\begin{remark}
Note that Definition~\ref{def:typesing} reduces to the simpler Definition~\ref{def:type} when the resolutions $p:\op{Fl}(\Lambda)\to\op{Gr}(k,V)$ are isomorphisms at the intersection point. In this case, both forms are simply given by the canonical orientations of the normal bundle of the Schubert variety at the (smooth) intersection point.
\end{remark}

\subsection{Oriented intersection multiplicities}

\begin{remark}
In the following, we want to identify the multiplicity of a closed point $[W]\in\op{Gr}(k,V)$ in the intersection product (in the Chow--Witt ring resp. the $\mathbf{I}^\bullet$-cohomology) of oriented Schubert classes $\mathfrak{S}_1,\dots,\mathfrak{S}_l$. This will be done using the oriented Tor formula of Section~\ref{sec:torformula} and computations of $\star$-products in Witt rings. Of course, since $\op{W}^{\op{tot}}(\op{Gr}(k,V))$ is a free $\op{W}(F)$-module generated by the even Young diagrams, this procedure doesn't apply to intersections involving torsion classes. However, the product with torsion classes is a torsion class and can be identified with the corresponding intersection product in the mod 2 Chow ring. In these cases, the only information contained in the intersection multiplicity is the length of the subscheme mod 2, and this can be computed with classical methods. We can therefore restrict to intersections involving only even Young diagrams, where we have canonical lifts to $\op{W}(\op{Gr}(k,V))$, cf. Section~\ref{sec:comparisonwitt}.
\end{remark}

\begin{remark}
In the below results, we have restrictions on the twists of the Schubert classes. These arise because we don't have a pushforward from the top Chow--Witt group to the Grothendieck--Witt ring $\op{GW}(F)$ of the base field if the Grassmannian $\op{Gr}(k,n)$ is not oriented, i.e., if $n$ is odd. Of course, one can ask about the corresponding intersection problems in the orientation cover $\widetilde{\op{Gr}}(k,n)$, and basically the same Schubert calculus applies to such computations. 
\end{remark}

The first result is a special case in which we can apply the simpler notion of types of Definition~\ref{def:type}.

\begin{theorem}
\label{thm:types}
Let $F$ be a field of characteristic $\neq 2$. Let $V=F^n$, and fix an orientation on $V=F^n$. Let $[\mathfrak{S}_1],\dots,[\mathfrak{S}_l]$ be oriented Schubert classes in $\op{Gr}(k,V)$ such that $\sum_i\dim\Sigma_i=k(n-k)$. Assume that $F$ is of characteristic $0$, or that the Schubert varieties are smooth. Let $x$ be a point in the intersection of general translates of $\mathfrak{S}_1,\dots,\mathfrak{S}_l$, corresponding to a subspace $W\subset V\otimes_FE$ over a finite extension field $E/F$. Then the multiplicity of $x$ in the intersection product 
\[
[\mathfrak{S}_1]\cdots[\mathfrak{S}_l]\in \widetilde{\op{CH}}^{k(n-k)}(\op{Gr}(k,n))
\]
is given by $\op{type}(W)=\op{tr}_{E/F}\langle\det\xi_W\rangle\in \op{GW}(F)$. 
\end{theorem}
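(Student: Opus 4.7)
The plan is to combine the oriented Tor formula of Section~\ref{sec:torformula} with transversality arguments to localize the intersection product at each $[W]$, then identify the local contribution via the splitting isomorphism $\xi_W$.

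First, by Proposition~\ref{prop:ratequiv}, the oriented Schubert classes $[\mathfrak{S}_i]=(p_i)_\ast(1)$ are unchanged by $\op{SL}_n$-translation of the defining flags, so I may assume the Schubert varieties are in general position. In characteristic $0$, Theorem~\ref{thm:kleiman} then guarantees transversal intersection at reduced points lying in the smooth locus of each $\mathfrak{S}_i$; in positive characteristic, the smoothness hypothesis on $\mathfrak{S}_i$ combined with Sierra's Theorem~\ref{thm:sierra} applied to the $\op{SL}_n$-action on $\op{Gr}(k,V)$ yields homological transversality of the structure sheaves, which for a proper intersection of smooth subvarieties is equivalent to set-theoretic transversality. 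In either case, each intersection point $[W]$ is a reduced closed point with residue field $E/F$ at which every $\mathfrak{S}_i$ is smooth.

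Second, under these hypotheses the resolutions $p_i\colon \op{Fl}(\Lambda_i)\to\op{Gr}(k,V)$ are isomorphisms over a neighbourhood of $[W]$, so near $[W]$ the class $[\mathfrak{S}_i]$ admits a local description as the Witt-group pushforward of the symmetric form $\mathscr{O}_{\mathfrak{S}_i}\otimes\mathscr{O}_{\mathfrak{S}_i}\to \det N_{\mathfrak{S}_i/\op{Gr}(k,V)}$ corresponding to the canonical orientation of the normal bundle from Proposition~\ref{prop:orientschubert}, lifted back to $\op{Gr}(k,V)$ via the Gersten--Witt column. Invoking the oriented Tor formula, the intersection product is identified with the image under the comparison map $\alpha$ of the $\star$-product of these supported Witt classes, and Proposition~\ref{prop:compatpush} ensures compatibility with pushforwards. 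Transversality forces all higher $\op{Tor}$ sheaves to vanish, so at $[W]$ the $\star$-product reduces to the ordinary tensor product of the canonical orientation forms, giving a rank-one symmetric form with values in $\bigotimes_i\det N_{[W]}\mathfrak{S}_i$. Using the splitting isomorphism $\xi_W\colon T_{[W]}\op{Gr}(k,V)\xrightarrow{\cong}\bigoplus_i N_{[W]}\mathfrak{S}_i$ together with the canonical orientation of $\det T_{[W]}\op{Gr}(k,V)$ coming from the tautological sequence $0\to\mathscr{S}\to\mathscr{O}^{\oplus n}\to\mathscr{Q}\to 0$ (and the fixed orientation of $V$), this rank-one form is identified with $\langle\det\xi_W\rangle\in\op{W}(E)$ by the very definition of $\det\xi_W$.

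Third, to land in $\op{GW}(F)$ I push forward along $\op{Gr}(k,V)\to\op{Spec} F$, which factors through the closed immersion $\op{Spec} E\hookrightarrow\op{Gr}(k,V)$: the first step takes the local form to $\langle\det\xi_W\rangle\in\op{W}(E)$, and the composition with $\op{Spec} E\to\op{Spec} F$ applies the Scharlau trace $\op{tr}_{E/F}$, by the definition of pushforwards on Gersten--Rost--Schmid complexes recalled in Section~\ref{sec:pushforward}. Summing over the intersection points yields the claimed formula. The main obstacle is the coherent bookkeeping of orientations: the canonical orientations on each $\omega_{p_i}$ from Proposition~\ref{prop:orientschubert}, the canonical orientation of $\omega_{\op{Gr}(k,V)}$ arising from the tautological sequence, and the trivialisation of $\det V$ must all be compared through the splitting $\xi_W$; since the definition of $\det\xi_W$ is precisely calibrated to absorb exactly this comparison, the verification reduces to checking on fibres at $[W]$ that the canonical isomorphisms of line bundles used in Proposition~\ref{prop:orientschubert} restrict to the ones implicit in the definition of type.
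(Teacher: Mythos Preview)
Your argument is correct and parallels the paper's closely, but routes the computation through a slightly different technical device. The paper works directly in Chow--Witt groups: it rewrites the intersection product as $\Delta^\ast\bigl((\iota_1\times\cdots\times\iota_l)_\ast(1)\bigr)$ using compatibility of exterior products with pushforward, and then applies the Chow--Witt base-change formula of \cite[Theorem~2.4.1]{AsokFaselEuler} to the cartesian square with $\mathfrak{S}_1\cap\cdots\cap\mathfrak{S}_l$ in the upper left corner, yielding $\iota_\ast(1)$ at each point. You instead invoke the oriented Tor formula of Section~\ref{sec:torformula}, passing through supported Grothendieck--Witt groups and the $\star$-product; this is precisely the strategy the paper reserves for the singular case in Theorem~\ref{thm:typessing}, specialised here to the situation where the resolutions are isomorphisms near the intersection points. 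Both approaches amount to the same base-change manoeuvre, and both land on the comparison of the two orientations on $\det T_{[W]}\op{Gr}(k,V)$ versus $\bigotimes_i\det N_{[W]}\mathfrak{S}_i$ that defines $\det\xi_W$.

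One small point to tighten in your third step: the paper does not literally push forward along $\op{Gr}(k,V)\to\op{Spec} F$ to extract the multiplicity. Rather, it fixes the generator of $\widetilde{\op{CH}}^{k(n-k)}(\op{Gr}(k,n))$ given by $(\iota_x)_\ast(1)$ for an $F$-rational point with the canonical tangent-space orientation, and reads off the coefficient of $(\iota_{[W]})_\ast(1)$ (with the normal-space orientation) relative to this generator. Your formulation via a structure-map pushforward is equivalent when $n$ is even so that $\op{Gr}(k,V)$ is orientable, but phrasing it as a comparison of generators avoids needing that hypothesis and matches the paper's framing more precisely. Also, the local form $\langle\det\xi_W\rangle$ should live in $\op{GW}(E)$ rather than $\op{W}(E)$, since the target is $\op{GW}(F)$.
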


\begin{proof}
If $\op{char} F=0$, we can use Kleiman's transversality, cf. Theorem~\ref{thm:kleiman}, to see that general translates of the Schubert varieties $\mathfrak{S}_1,\dots,\mathfrak{S}_l$ are transverse. If $\op{char} F=p>0$, we can use Sierra's homological transversality, cf. Theorem~\ref{thm:sierra}, to see that general translates of the Schubert varieties are homologically transverse, and by the smoothness assumption, the intersection points in the Schubert varieties are smooth. Summing up, for the arguments below we can assume that the intersection of Schubert varieties is a reduced zero-dimensional subscheme of $\op{Gr}(k,n)$ and the direct sum of the normal spaces of the Schubert varieties is isomorphic to the tangent space.

The relevant classes $[\sigma_j]$ for the intersection problem are given as $(\iota_j)_\ast(1)$ where $\iota_j:\mathfrak{S}_j\to \op{Gr}(k,n)$ is the inclusion of the relevant Schubert variety. Note that the pushforward map $(\iota_j)_\ast$ depends on the choice of orientation of $V$, but we take the canonical relative orientation of the Schubert varieties from Proposition~\ref{prop:orientschubert}. 

The intersection product is now computed by restricting the exterior product  
\[
(\iota_1)_\ast(1)\boxtimes\cdots\boxtimes(\iota_l)_\ast(1)\in \widetilde{\op{CH}}^{k(n-k)}(\op{Gr}(k,n)^{\times l})
\]
along the diagonal inclusion $\Delta:\op{Gr}(k,n)\hookrightarrow\op{Gr}(k,n)^{\times l}$. By \cite{calmes:fasel}, the proper pushforward commutes with the exterior product; in the case at hand there are no signs because the pushforwards start at $\widetilde{\op{CH}}^0$. In particular, we have 
\[
(\iota_1)_\ast(1)\boxtimes\cdots\boxtimes(\iota_l)_\ast(1)=(\iota_1\times\cdots\times\iota_l)_\ast(1). 
\]
Note that the data determining the pushforward $(\iota_1\times\cdots\times\iota_l)_\ast$ is exactly given by the orientations of the exterior product of the normal bundles of the Schubert varieties $\mathfrak{S}_j$. 

Now we consider the following pullback diagram
\[
\xymatrix{
\mathfrak{S}_1\cap\cdots\cap\mathfrak{S}_l\ar[rr]^{\Delta'} \ar[d]_\iota && \mathfrak{S}_1\times\cdots\times\mathfrak{S}_l \ar[d]^{\iota_1\times\cdots\times\iota_l} \\
\op{Gr}(k,n)\ar[rr]_\Delta && \op{Gr}(k,n)^{\times l}
}
\]
In this case, the map $\iota_1\times\cdots\times\iota_l$ is a regular immersion of smooth schemes, and the diagram is transversal in the sense of the base-change result \cite[Theorem 2.4.1]{AsokFaselEuler}. Consequently, we have 
\[
\Delta^\ast(\iota_1\times\cdots\times\iota_l)_\ast(1)=\iota_\ast\Delta'^\ast(1)=\iota_\ast(1). 
\]
In particular, for any point $x\in \mathfrak{S}_1\cap\cdots\cap\mathfrak{S}_l$ the corresponding form is $(\iota_x)_\ast(1)\in\widetilde{\op{CH}}^{k(n-k)}(\op{Gr}(k,n))$. Note that as usual the pushforward $(\iota_x)_\ast$ depends on choices, and in this case the relevant choices are given by the orientations of the normal spaces of the Schubert varieties intersecting at $x$. The relevant multiplicity of the point $x$ in the intersection product depends on the choice of generator of $\widetilde{\op{CH}}^{k(n-k)}(\op{Gr}(k,n))$. The natural choice is given by the canonical orientation of the tangent space of $\op{Gr}(k,n)$, cf. Proposition~\ref{prop:orientflag}; the corresponding generator of the top Chow--Witt group is the pushforward of $1$ along $\iota_x$, with the tangent-space orientation.\footnote{This is exactly the class in the Chow--Witt ring corresponding to the full Young diagram in $k\times(n-k)$-frame.} With this choice of isomorphism $\widetilde{\op{CH}}^{k(n-k)}(\op{Gr}(k,n)) \cong \op{GW}(F)$ the class of $(\iota_x)_\ast(1)$ (here with the orientation coming from the normal spaces of the Schubert varieties) is mapped exactly to  $\op{type}(W)$ as claimed.
\end{proof}

\begin{remark}
This result can also be used to show that the classes $(p_\Lambda)_\ast(1)$ are generators, cf. Remark~\ref{rem:generator}. Take an even Young diagram $\Lambda$ and its complementary Young diagram $\overline{\Lambda}$ (obtained as complement of $\Lambda$ in the $k\times (n-k)$-frame, rotated by 180 degrees). By classical Schubert calculus, the degree of their intersection in the top Chow group is 1, i.e., generic translates of the corresponding Schubert varieties intersect in a single rational point. By the above, the oriented intersection multiplicity of this rational point is the square class of the unit comparing the two natural orientations (from the tangent space of the Grassmannian and the direct sum of the two normal spaces for the Schubert varieties). This is a unit in the Grothendieck--Witt ring and therefore the canonical orientations on the two Schubert varieties for $\Lambda$ and $\overline{\Lambda}$ must be generators of the corresponding $\op{GW}(F)$-summands in the Chow--Witt ring.
\end{remark}

\begin{theorem}
\label{thm:typessing}
Let $F$ be a field of characteristic $\neq 2$, let $n$ be an even number and set $V=F^n$. Consider oriented Schubert classes $\mathfrak{S}_i\in \widetilde{\op{CH}}^\bullet(\op{Gr}(k,V),\mathscr{L}_i)$, $i=1,\dots,l$ such that the underlying Schubert varieties are in general position, $\sum_i \dim \mathfrak{S}_i=k(n-k)$ and  $\bigotimes_i\mathscr{L}_i=0$ in $\op{Pic}(\op{Gr}(k,V))/2$. For a subspace $[W]\in\mathfrak{S}_1\cap\cdots\cap\mathfrak{S}_l$, the oriented multiplicity of $[W]$ in the intersection product $[\mathfrak{S}_1]\cdots[\mathfrak{S}_l]$ is given by the type of $W$ in the sense of Definition~\ref{def:typesing}. 
\end{theorem}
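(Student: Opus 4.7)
The plan follows the proof strategy of \prettyref{thm:types}, adapted to singular Schubert varieties by working with the resolutions of singularities. By \prettyref{thm:lift}, each oriented Schubert class $[\mathfrak{S}_i]$ is represented by the pushforward $(p_i)_\ast(1)$ along the resolution $p_i:\op{Fl}(\Lambda_i)\to\op{Gr}(k,V)$, with the pushforward defined using the canonical orientation $\mathscr{M}_i\otimes\mathscr{M}_i\xrightarrow{\cong}\omega_{p_i}\otimes p_i^\ast\mathscr{L}_i$ of \prettyref{prop:orientschubert}. Since pushforwards commute with exterior products (with no sign corrections, as all factors start in degree zero), the exterior product of these representatives equals the single pushforward $(p_1\times\cdots\times p_l)_\ast(1)$ from the smooth scheme $\op{Fl}(\Lambda_1)\times\cdots\times\op{Fl}(\Lambda_l)$, carrying the product orientation $\bigotimes_i\mathscr{M}_i$.

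Next, I apply the diagonal pullback $\Delta^!$ by base change along the cartesian square
\[
\xymatrix{
Z \ar[r]^>>>>{\tilde\Delta} \ar[d]_{q} & \op{Fl}(\Lambda_1)\times\cdots\times\op{Fl}(\Lambda_l)\ar[d]^{p_1\times\cdots\times p_l}\\
\op{Gr}(k,V)\ar[r]_>>>>{\Delta} & \op{Gr}(k,V)^{\times l}
}
\]
where $Z=\op{Fl}(\Lambda_1)\times_{\op{Gr}(k,V)}\cdots\times_{\op{Gr}(k,V)}\op{Fl}(\Lambda_l)$. For generic translates of the Schubert varieties, Sierra's homological transversality theorem \prettyref{thm:sierra} (combined with Proposition~\ref{prop:ratequiv} to see that $\op{SL}_n$-translation does not alter the oriented class) guarantees that the base-change formula of \cite[Theorem 2.4.1]{AsokFaselEuler} applies, giving $\Delta^!(p_1\times\cdots\times p_l)_\ast(1)=q_\ast(1)$ in $\widetilde{\op{CH}}^{k(n-k)}(\op{Gr}(k,V),\bigotimes_i\mathscr{L}_i)$, where $q$ inherits the product orientation.

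To localize at the point $[W]$, note that the fiber $q^{-1}([W])$ is precisely the product $\prod_i p_i^{-1}([W])$. By the oriented Tor formula of \prettyref{sec:torformula}, combined with the definition of pushforwards in coherent Witt theory via Grothendieck--Serre duality, the local contribution of $[W]$ to $q_\ast(1)$ is computed as the coherent $\star$-product of the pushforward forms $(p_i|_{p_i^{-1}([W])})_\ast(1)$. The K\"unneth formula identifies this $\star$-product with the product of the intersection forms $\bigl(\bigoplus_j\op{H}^j(p_i^{-1}([W]),\mathscr{M}_i)[-j],\cup\bigr)$ from \prettyref{def:typesing}. Since $n$ is even, $\op{Gr}(k,V)$ has a canonical orientation (the remark after \prettyref{prop:orientflag}), and the hypothesis $\bigotimes_i\mathscr{L}_i=0$ in $\op{Pic}(\op{Gr}(k,V))/2$ ensures that this canonical orientation is compatible with the inherited twist, yielding the identification $\op{GW}_{[W]}(\op{Gr}(k,V))\cong\op{GW}(F)$ used in \prettyref{def:typesing}. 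Under this identification, the local form is precisely $\op{type}(W)$.

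The main obstacle will be the base-change step: $Z$ is generally singular, so the formal application of \cite[Theorem 2.4.1]{AsokFaselEuler} must be done in a derived/coherent Witt framework where pushforwards are still defined, and one must check that the canonical orientation of $\omega_q$ really does split via K\"unneth as the tensor product of the orientations $\mathscr{M}_i^{\boxtimes l}$ restricted to $Z$. Once this compatibility is established, tracing through the definitions reduces the identification of the local form to the statement that the pushforward of the coherent form $\mathscr{M}_i$ from $p_i^{-1}([W])$ under $p_i|_{p_i^{-1}([W])}\to\op{Spec} F(W)$ is, by Grothendieck duality, precisely the intersection form on the hypercohomology appearing in \prettyref{def:typesing}.
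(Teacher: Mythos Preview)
Your proposal is essentially the same argument as the paper's, and you correctly identify the one delicate point. The paper resolves your acknowledged obstacle---that the fiber product $Z$ is singular so \cite[Theorem 2.4.1]{AsokFaselEuler} does not directly apply---by working from the outset in the Witt-group setting and invoking the base-change formula of \cite[Theorem~5.4]{calmes:hornbostel}, which requires only homological transversality of the cartesian square rather than smoothness of all corners. So rather than first computing in Chow--Witt and then passing to coherent Witt theory, the paper applies the oriented Tor formula diagram of Section~\ref{sec:torformula} at the start, lifts everything to $\op{W}^\bullet_{\mathfrak{S}_i}(\op{Gr}(k,V),\mathscr{L}_i)$, performs the base change there, and only then reads off the Chow--Witt multiplicity via $\alpha$. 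Once you make that switch, the rest of your outline---explicit description of $(p_\Lambda)_\ast(1)$ as the pushed-forward orientation form, factoring $q$ through the structure map of $\prod_i p_i^{-1}([W])$, and identifying the $\star$-product of the fiberwise pushforwards with Definition~\ref{def:typesing}---matches the paper exactly.
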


\begin{proof}
The assumptions imply that the underlying Schubert varieties of the cycles $\mathfrak{S}_i$ are homologically transverse. 

Now we want to apply the oriented Tor formula, cf. Section~\ref{sec:torformula} which provides a commutative diagram
\[
\xymatrix{
\op{GW}^{n_1}_{\mathfrak{S}_1}(\op{Gr}(k,V),\mathscr{L}_1)\times\cdots\times \op{GW}^{n_l}_{\mathfrak{S}_l}(\op{Gr}(k,V),\mathscr{L}_l)\ar[r]^>>>>>>\star \ar[d]_{\alpha_{\mathfrak{S}_1}\times\cdots\times\alpha_{\mathfrak{S}_l}} & \op{GW}^{k(n-k)}_{\mathfrak{S}_1\cap\cdots\cap\mathfrak{S}_l}(\op{Gr}(k,V)) \ar[d]^{\alpha_{(\mathfrak{S}_1\cap\cdots\cap \mathfrak{S}_l)}} \\
\widetilde{\op{CH}}^{n_1}(\op{Gr}(k,V),\mathscr{L}_1)\times\cdots\times \widetilde{\op{CH}}^{n_l}(\op{Gr}(k,V),\mathscr{L}_l) \ar[r]_>>>>>>>>\times & \widetilde{\op{CH}}^{k(n-k)}(\op{Gr}(k,V)).
}
\]
Actually, we want to work with the corresponding diagram involving Witt groups (instead of Grothendieck--Witt groups) and $\mathbf{I}^\bullet$-cohomology (instead of Chow--Witt groups). The relevant class in $\op{W}^{n_1}_{\mathfrak{S}_1}(\op{Gr}(k,V),\mathscr{L}_1)\times\cdots\times \op{W}^{n_l}_{\mathfrak{S}_l}(\op{Gr}(k,V),\mathscr{L}_l)$ is the exterior product of the classes $p_\ast(1)\in \op{W}^{n_i}_{\mathfrak{S}_i}(\op{Gr}(k,V),\mathscr{L}_i)$ where $p:\op{Fl}(\Lambda)\to\op{Gr}(k,V)$ is the resolution of singularities of the Schubert variety $\mathfrak{S}_i$ (corresponding to the Young diagram $\Lambda$). 

To determine an explicit representative of the form 
\[
(p_\Lambda)_\ast(1)\in \op{W}^{n_i}_{\mathfrak{S}_i}(\op{Gr}(k,V),\mathscr{L}_i),
\]
we first note that the form $1$ in $\op{W}^0(\op{Fl}(\Lambda),\mathscr{O})$ is given by $\op{id}:\mathscr{O}_{\op{Fl}(\Lambda)}\xrightarrow{\simeq}\mathscr{O}_{\op{Fl}(\Lambda)}$. Next, we use the identification $\op{W}^0(\op{Fl}(\Lambda),\mathscr{O})\cong \op{W}^0(\op{Fl}(\Lambda),\omega_{p_\Lambda}\otimes p_\Lambda^\ast(\mathscr{L}_i))$ which is induced by the explicit choice of (canonical) isomorphism $\mathscr{N}^{\otimes 2}\xrightarrow{\cong} \omega_{p_\Lambda}\otimes p_\Lambda^\ast(\mathscr{L}_i)$: it maps a form $Q\xrightarrow{\simeq}\op{Hom}(Q,\mathscr{O})$ corresponding to $Q\otimes Q\to\mathscr{O}$ to $(Q\otimes\mathscr{N})\otimes(Q\otimes\mathscr{N})\to \omega_{p_\Lambda}\otimes p_\Lambda^\ast\mathscr{L}_i$ by tensoring with the given orientation. In the next step, the relevant push-forward is 
\[
(p_\Lambda)_\ast:\op{W}^0(\op{Fl}(\Lambda),\omega_{p_\Lambda}\otimes p_\Lambda^\ast(\mathscr{L}_i))\to\op{W}^{|\Lambda|}(\op{Gr}(k,V),\mathscr{L}),
\]
and the form we want to push forward is simply the orientation $\mathscr{N}\otimes\mathscr{N}\to \omega_{p_\Lambda}\otimes p_\Lambda^\ast(\mathscr{L}_i)$, cf. Section~\ref{sec:geometricreps}.
From the definitions  in, cf. \cite[Theorem 4.4]{calmes:hornbostel}, the push-forward on Witt groups is induced from the derived pushforward ${\op{R}}p_\ast$ on the level of triangulated categories of perfect complexes and the morphism $\zeta_K:{\op{R}}p_\ast\circ \sharp'_{\omega_{\op{Fl}(\Lambda)}}\to \sharp'_{\omega_{\op{Gr}(k,V)}}\circ{\op{R}}p_\ast$ exchanging push-forward and dualities. In our case, the push-forward is given by 
\begin{eqnarray*}
\op{R}(p_\Lambda)_\ast\mathscr{N}&\to&  \op{R}(p_\Lambda)_\ast\left(\op{Hom}(\mathscr{N},\mathscr{O}_{\op{Fl}(\Lambda)})\otimes\omega_{p_\Lambda}\otimes p_\Lambda^\ast(\mathscr{L}_i)\right)\\&\to&
\op{Hom}\left(\op{R}(p_\Lambda)_\ast\mathscr{N},\op{R}(p_\Lambda)_\ast(\omega_{p_\Lambda}\otimes p_\Lambda^\ast(\mathscr{L}_i))\right)\\&\to&
\op{Hom}\left(\op{R}(p_\Lambda)_\ast\mathscr{N},\mathscr{O}\right)\otimes \mathscr{L}_i,
\end{eqnarray*}
i.e., it is a form on $\op{R}(p_\Lambda)_\ast\mathscr{N}$ with values in the twisting bundle $\mathscr{L}_i$ (which we can assume is either $\mathscr{O}_{\op{Gr}(k,V)}$ or the determinant of the tautological sub- or quotient bundle). 

Now we can apply a base-change argument similar to the proof of Theorem~\ref{thm:types}. We want to apply the base-change formula for Witt groups, cf. \cite[Theorem 5.4]{calmes:hornbostel} and the discussion in Section~\ref{sec:wittbasics}, to the following diagram
\[
\xymatrix{
\prod_{i=1}^l p_i^{-1}(\mathfrak{S}_1\cap\cdots\cap\mathfrak{S}_l) \ar[r] \ar[d]_q & \op{Fl}(\Lambda_1)\times\cdots\times\op{Fl}(\Lambda_l) \ar[d]^{\prod p_i} \\
\op{Gr}(k,V)\ar[r]_\Delta & \op{Gr}(k,V)\times\cdots\times\op{Gr}(k,V)
}
\]
This diagram is cartesian and the upper left entry is the product of all the fibers of the resolutions $p_i:\op{Fl}(\Lambda_i)\to \op{Gr}(k,V)$ of the respective Schubert varieties $\mathfrak{S}_i$ over the intersection subscheme $\mathfrak{S}_1\cap\cdots\cap\mathfrak{S}_l$. Note that by assumption, this diagram is homologically transverse so that the base-change formula for Witt groups can be applied. Consequently, the restriction of the push-forward $\op{R}(p_1)_\ast(1)\boxtimes\cdots\boxtimes \op{R}(p_l)_\ast(1)$ along the diagonal $\Delta:\op{Gr}(k,V)\hookrightarrow\op{Gr}(k,V)^{\times l}$ is isomorphic to the push-forward of 1 along the morphism $q:\prod_{i=1}^l p_i^{-1}(\mathfrak{S}_1\cap\cdots\cap\mathfrak{S}_l)\to \op{Gr}(k,V)$. Note also that $q$ factors as structural morphism of the product of fibers over the intersection points, followed by the closed immersion of the intersection points into $\op{Gr}(k,V)$. By the compatibility of exterior product with push-forwards, the pushforward of 1 along the product morphism can alternatively be computed as $\star$-product of the pushforwards of 1 along the individual structure maps $p_i: p_i^{-1}(\mathfrak{S}_i)\to \op{Gr}(k,V)$. Note again that the pushforward depends on the choice of orientation, and we use the canonical ones from Proposition~\ref{prop:orientschubert}. For a point $[W]\in \mathfrak{S}_1\cap\cdots\cap\mathfrak{S}_l$, the $\star$-product of the pushforwards of 1 along the corresponding structure maps is then, by Definition~\ref{def:typesing}, the type of the corresponding solution subspace. This finishes the argument. 
\end{proof}

\begin{remark}
Note that the transversality result of Sierra, cf. Theorem~\ref{thm:sierra}, together with Proposition~\ref{prop:ratequiv} implies that over infinite fields the situation in Theorem~\ref{thm:typessing} above can always be achieved by taking generic translates.
\end{remark}

\section{Main results and consequences}
\label{sec:main}

Now we finally come to formulate the main results in oriented Schubert calculus. Fix a field $F$ of characteristic $\neq 2$, and let $V$ be an $n$-dimensional $F$-vector space. An \emph{oriented Schubert problem} for the Grassmannian $\op{Gr}(k,V)$ is a collection of oriented Schubert classes $[\mathfrak{S}_i]\in\widetilde{\op{CH}}^\bullet(\op{Gr}(k,n),\mathscr{L}_i)$, $i=1,\dots,l$ with $\sum_i\dim\mathfrak{S}_i=k(n-k)$, i.e., Schubert varieties equipped with the canonical orientations given by Theorem~\ref{thm:lift}. The transversality results imply that, up to translation by $\op{GL}_n$, we can assume that the intersection of the Schubert varieties $\mathfrak{S}_i$ is proper and homologically transversal, in particular, the intersection $\mathfrak{S}_1\cap\cdots\cap\mathfrak{S}_l$ is a zero-dimensional subscheme. A \emph{solution subspace} is a closed point $[W]\in\op{Gr}(k,V)$ in the intersection. The answer to an oriented Schubert problem is a quadratic form in $\op{GW}(F)$ which not only encodes the number of solution subspaces (and the degrees of the corresponding closed points, i.e., the degrees of the field extensions over which the solution is defined) but also additional information on the compatibility of the canonical orientations of the intersecting Schubert varieties, encoded in the types of Definitions~\ref{def:type} and \ref{def:typesing}. 

\subsection{Geometric cycles vs characteristic classes}

 We still need to identify the geometric generators in terms of the characteristic classes used to describe the multiplication in Section~\ref{sec:cohomology}. 

First, we note that there is some stabilization for the Grassmannians: fix a field $F$ and an $F$-vector space $V$ of dimension $n$. Mapping a $k$-dimensional subspace $\iota\colon W\subset V$ to $W\subseteq V\oplus F^m$ via $\iota\oplus 0^m$ induces a morphism $\op{Gr}(k,n)\to \op{Gr}(k,n+m)$. By construction, the tautological subbundle of $\op{Gr}(k,n+m)$ restricts to the tautological subbundle of $\op{Gr}(k,n)$ along this inclusion, and consequently the restriction maps the Pontryagin classes $\op{p}_i$ of $\op{Gr}(k,n+m)$ to the ones for $\op{Gr}(k,n)$. Moreover, the tautological quotient bundle of $\op{Gr}(k,n+m)$ restricts to the Whitney sum of the tautological quotient bundle for $\op{Gr}(k,n)$ and the trivial bundle of rank $m$ and the Pontryagin classes $\op{p}_j^\perp$ restrict accordingly. Similarly, mapping a $k$-dimensional subspace $\iota\colon W\subset V$ to $W\oplus F^m\subseteq V\oplus F^m$ via $\iota\oplus \op{id}^m$ induces a morphism $\op{Gr}(k,n)\to \op{Gr}(k+m,n+m)$. In this case, the tautological quotient bundle of $\op{Gr}(k+m,n+m)$ restricts to the tautological quotient bundle for $\op{Gr}(k,n)$ and the restriction maps the Pontryagin classes $\op{p}_j^\perp$ of $\op{Gr}(k+m,n+m)$ to the ones for $\op{Gr}(k,n)$. Note that smooth Schubert varieties are smaller Grassmannians embedded as above.

The statements concerning Pontryagin classes above imply that the restriction morphism on $\mathbf{W}^\bullet$-cohomology is the canonical quotient projection modulo the  ideal generated by the Pontryagin classes of the bigger Grassmannian which vanish on the smaller Grassmannian. A direct consequence of the above, we have the following: 

\begin{lemma}
\label{lem:stabilization}
Let $F$ be a field of characteristic $\neq 2$ and fix $1\leq k<n$. The Pontryagin class $\op{p}_j$ in $\op{H}^{2j}(\op{Gr}(k,n),\mathbf{W})$ is uniquely determined by its restriction to $\op{Gr}(j,n)$. The Pontryagin class $\op{p}_j^\perp\in \op{H}^{2j}(\op{Gr}(k,n),\mathbf{W})$ is uniquely determined by its restriction to $\op{Gr}(k,j)$.  
\end{lemma}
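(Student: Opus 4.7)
The plan is to combine the explicit polynomial presentations of $\mathbf{W}$-cohomology from Proposition~\ref{prop:invertedeta} with the restriction behavior of Pontryagin classes under the stabilization morphisms discussed in the paragraph preceding the lemma. First I would exhibit a smooth Schubert subvariety $\iota\colon Y \hookrightarrow \op{Gr}(k,n)$ which, as a smaller Grassmannian in the sense recalled just before the lemma, can be identified with $\op{Gr}(j,n)$ and arranged so that the pullback $\iota^*\mathscr{S}_k$ decomposes as $\mathscr{S}_j\oplus\mathscr{O}^{\oplus(k-j)}$. The Whitney sum formula for Pontryagin classes in $\mathbf{W}$-cohomology then gives $\iota^*(\op{p}_j)=\op{p}_j(\mathscr{S}_j)$, a distinguished non-zero generator on $\op{Gr}(j,n)$.

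For uniqueness, I would check that the induced map $\iota^*$ is injective on $\op{H}^{2j}$. By Proposition~\ref{prop:invertedeta}, in this degree both $\op{H}^{2j}(\op{Gr}(k,n),\mathbf{W})$ and $\op{H}^{2j}(\op{Gr}(j,n),\mathbf{W})$ are spanned by low-degree monomials in Pontryagin and Euler classes, and $\iota^*$ factors through the canonical quotient projection described in the text preceding the lemma (killing only those Pontryagin classes that lie above the rank of $\mathscr{S}_j$). A bidegree count then shows that every additive generator on the source either maps injectively onto a generator on the target or is killed by a relation already present, so that the restriction determines $\op{p}_j$ uniquely.

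The second claim for $\op{p}_j^\perp$ is dual: replace $Y$ by a smooth Schubert subvariety of the form $\op{Gr}(k,j)\hookrightarrow\op{Gr}(k,n)$ along which the tautological quotient bundle $\mathscr{Q}_{n-k}$ restricts as a direct sum of the smaller tautological quotient and a trivial bundle, and repeat the previous argument with $\mathscr{Q}$ in place of $\mathscr{S}$.

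The hard part will be pinning down the sub-Grassmannian embedding and verifying the splitting of the relevant tautological bundle in a way compatible with the notation of the lemma; once that is set up, the injectivity of restriction in degree $2j$ is a direct inspection of the polynomial presentations and causes no further difficulty. Since the statement concerns $\mathbf{W}$-cohomology, which by Proposition~\ref{prop:invertedeta} is a free $\op{W}(F)$-module with explicit generators, the potential $\op{I}(F)$-torsion and twists present in the full $\mathbf{I}^\bullet$-cohomology do not enter the argument.
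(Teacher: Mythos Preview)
Your proposal is correct and takes essentially the same approach as the paper, which states the lemma without separate proof as a ``direct consequence'' of the stabilization discussion in the preceding paragraph (the restriction map on $\mathbf{W}$-cohomology is the quotient by the ideal generated by the Pontryagin classes vanishing on the smaller Grassmannian, and these all lie in degree $>2j$). Your plan---construct the embedding, verify the bundle splitting via the Whitney sum formula, then check injectivity of restriction in degree $2j$ by inspecting the polynomial presentation of Proposition~\ref{prop:invertedeta}---unpacks exactly that one-line justification.
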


In particular, to compare Pontryagin classes and the explicit geometric cycles, it suffices to consider the top Pontryagin class $\op{p}_j$ or $\op{p}_j^\perp$ for a small Grassmannian. If we can show that the Pontryagin class $\op{p}_j$ on $\op{Gr}(j,n)$ is given by pushforward of $\langle 1\rangle$ from  the corresponding smooth Schubert variety (for the canonical orientation), then this will be true for $\op{Gr}(k,n)$ for all $k>j$; similarly for $\op{p}_j^\perp$: this follows from the base-change in the cartesian diagram
\[
\xymatrix{
\Sigma_j \ar[r]^= \ar[d] & \Sigma_j \ar[d] \\
\op{Gr}(j,n) \ar[r] & \op{Gr}(k,n).
}
\]

\begin{lemma}
\label{lem:compeuler}
Let $F$ be a field of characteristic $\neq 2$ and fix $1\leq k<n$. 
\begin{enumerate}
\item Assume $k$ is even. Denoting by  $\iota:\Sigma_{1^k} \hookrightarrow \op{Gr}(k,n)$ the closed immersion of the Schubert variety into the Grassmannian, we have $\iota_\ast(1)=\op{e}_k$ in $\op{H}^k(\op{Gr}(k,n),\mathbf{I}^k(\det\mathscr{E}_k^\vee))$. 
\item Assume $n-k$ is even. Denoting by  $\iota:\Sigma_{n-k} \hookrightarrow \op{Gr}(k,n)$ the closed immersion of the Schubert variety into the Grassmannian, we have $\iota_\ast(1)=\op{e}_{n-k}^\perp$ in $\op{H}^{n-k}(\op{Gr}(k,n),\mathbf{I}^{n-k}(\det\mathscr{E}_{n-k}^\vee))$. 
\end{enumerate}
\end{lemma}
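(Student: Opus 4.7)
The strategy is to realize each smooth Schubert variety as the transversal vanishing locus of a natural section of a tautological bundle and thereby identify $\iota_\ast(1)$ with the corresponding Euler class. For part (1), a direct inspection of Definition~\ref{def:flagresolution} shows that the resolution $p_\Lambda\colon \op{Fl}(\Lambda)\to\op{Gr}(k,n)$ for $\Lambda=(1^k)$ degenerates to the inclusion $\iota\colon \Sigma_{1^k}=\op{Gr}(k,V_{n-1})\hookrightarrow\op{Gr}(k,V)$, so that $(p_\Lambda)_\ast(1)=\iota_\ast(1)$ with the canonical orientation from Proposition~\ref{prop:orientschubert}. A nonzero $\phi\in V^\vee$ with $\ker\phi=V_{n-1}$ defines a section $s\colon [W]\mapsto \phi|_W$ of $\mathscr{E}_k^\vee$. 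A short differential computation at $[W]\in\Sigma_{1^k}$ verifies transversality and yields a canonical identification of the normal bundle $N_{\Sigma_{1^k}/\op{Gr}(k,V)}\cong \mathscr{E}_k^\vee|_{\Sigma_{1^k}}\otimes (V/V_{n-1})$.

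Next, I would invoke $\mathbb{A}^1$-invariance of $\mathbf{I}^\bullet$-cohomology applied to the family of rescaled sections $\{ts\}_{t\in\mathbb{A}^1}$ in the total space $\op{Tot}(\mathscr{E}_k^\vee)$: since all such $ts$ are sections of the bundle projection (which is an $\mathbb{A}^1$-weak equivalence on smooth schemes), they induce the same pullback on cohomology. Consequently, the Euler class, defined as the pullback of the Thom class along the zero section, coincides with $\iota_\ast(1)$, where the pushforward uses the orientation on the normal bundle induced from the canonical orientation of $\mathscr{E}_k^\vee$. Comparison with $\op{e}_k$ as defined in Proposition~\ref{prop:invertedeta} then reduces to the standard identification of the Euler classes of $\mathscr{E}_k$ and $\mathscr{E}_k^\vee$, which is immediate for $k$ even. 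Part (2) proceeds symmetrically: for a generic $v\in V$, the section $[W]\mapsto v \bmod W$ of the tautological quotient bundle vanishes transversally on $\Sigma_{n-k}\cong \op{Gr}(k-1,V/V_1)$, and the same argument gives $\iota_\ast(1)=\op{e}_{n-k}^\perp$.

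The main obstacle is matching the two orientations on the relative canonical bundle $\omega_\iota\otimes\iota^\ast\mathscr{L}$: on the one hand the canonical orientation from Proposition~\ref{prop:orientschubert}, built combinatorially from the filtration data of $\op{Fl}(\Lambda)$; on the other hand the orientation induced from the canonical orientation of $\mathscr{E}_k^\vee$ (respectively $\mathscr{Q}_{n-k}$) via the transversality identification of the normal bundle. Both line bundles factor as $\det\mathscr{E}_k^\vee|_{\Sigma_{1^k}}$ tensored with $(V/V_{n-1})^{\otimes k}$ (and symmetrically for part (2)), where the second factor is a canonical square because $V/V_{n-1}$ is a trivial one-dimensional bundle and $k$ is even. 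An explicit unraveling of the definitions in Proposition~\ref{prop:orientschubert} should then show that the two orientations agree modulo squares, completing the identification.
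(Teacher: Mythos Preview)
Your approach is essentially the same as the paper's: both realize $\Sigma_{1^k}$ as the transversal zero locus of a section of a tautological bundle and then identify $\iota_\ast(1)$ with the Euler class. The paper's mechanism for the identification is more direct than your $\mathbb{A}^1$-homotopy argument: it observes that both $\op{e}_k$ and $\iota_\ast(1)$ are represented by the \emph{same} Koszul complex of the section (citing \cite{fasel:memoir} for the Euler class side and \cite{calmes:hornbostel} for the pushforward side), so the equality is immediate at the level of representatives. The paper also handles the orientation comparison in one line, noting that the canonical orientation here is simply the tautological isomorphism $\mathscr{O}_{\op{Gr}(k,n)}\cong\det\mathscr{E}_k\otimes\det\mathscr{E}_k^\vee$, so your ``main obstacle'' dissolves without the explicit unwinding you anticipate. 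One small point: you correctly work with a section of $\mathscr{E}_k^\vee$ (a linear form restricted to $W$), whereas the paper's phrasing mentions $\mathscr{E}_k$ itself; since the tautological subbundle has no nonzero global sections this is presumably a slip, and your remark that $\op{e}(\mathscr{E}_k)=\op{e}(\mathscr{E}_k^\vee)$ for even $k$ is exactly what is needed to reconcile the two.
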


\begin{proof}
We prove (1), (2) is proved similarly. First, we note that the vanishing locus of a generic section $s$ of $\mathscr{E}_k$ (or the tautological subspace bundle $\mathscr{S}$) is the Schubert variety $\Sigma_{1^k}$. The Euler class of $\mathscr{S}$ can be concretely represented as Koszul complex of the generic section $s$ of $\mathscr{S}$, by the computations in \cite[Sections 13, 14]{fasel:memoir}. On the other hand, the pushforward of 1 along the inclusion (for the canonical orientation which in this case is given by $\mathscr{O}_{\op{Gr}(k,n)}\cong \det\mathscr{E}_k\otimes \det\mathscr{E}_k^\vee$) is also identified with the Koszul complex for the section, cf. the discussion in \cite[Section 7.2]{calmes:hornbostel}. In particular, both $\iota_\ast(1)$ and $\op{e}_k$ are both represented by the Koszul complex for a generic section of $\mathscr{S}$, which proves the claim.
\end{proof}

\begin{remark}
\label{rem:complement}
We discuss an analogue of the complementary intersections in \cite[Section 4.2.2]{3264}. Let $F$ be a field of characteristic $\neq 2$, and fix $1\leq k<n$. Fix two flags $\mathcal{V}$ and $\mathcal{W}$ in $V$ which are transversal in the sense of \cite[Definition 4.4]{3264}. In particular, we can choose a basis $e_1,\dots,e_n$ of $V$ such that $V_i=\langle e_1,\dots,e_i\rangle$ and $W_j=\langle e_{n+1-j},\dots,e_n\rangle$. For two complementary (even) Young diagrams $\Lambda=(a_1,\dots,a_k)$ and $\Lambda'=(b_1,\dots,b_k)$ with $|\Lambda|+|\Lambda'|=k(n-k)$, the intersection of the corresponding Schubert varieties $\Sigma_\Lambda$ and $\Sigma_{\Lambda'}$ intersect transversely in a single $F$-rational point which is the subspace $U\subset V$ generated by the basis vectors $e_{n-k+i-a_i}$ for $i=1,\dots,k$.

The tangent space of the Grassmannian at the point $[U]$ is canonically identified with $\op{Hom}(U,V/U)$. Taking the basis $f_i:=e_{n-k+i-a_i}$ for $U$ and the $U$-cosets of the other basis vectors $e_j$, denoted as $g_j$, as basis for $V/U$, the corresponding maps $v\mapsto f_i^\vee(v)\cdot g_j$ form a basis for $\op{Hom}(U,V/U)$ which induces the canonical orientation. 

The tangent space of the Schubert variety $\Sigma_\Lambda$ at the point $[U]$ is canonically identified with the subspace of $\op{Hom}(U,V/U)$ which maps $V_{n-k+i-a_i}\cap U\subset U$ into $(V_{n-k+i-a_i}+U)/U\subset V/U$, cf. \cite[Proposition 4.1]{3264}. In particular, a basis of the tangent space is given by the maps $v\mapsto f_i^\vee(v)\cdot g_j$ such that $g_j\in V_{n-k+l-a_l}$ whenever $f_i\in V_{n-k+l-a_l}$. The other maps form a basis for the normal space to $\Sigma_\Lambda$ at $[U]$, and this basis induces the canonical orientation of the normal space. For the Schubert variety $\Sigma_{\Lambda'}$, the situation is exactly reversed, i.e., the basis for the tangent space to $\Sigma_\Lambda$ above is a basis for the normal space to $\Sigma_{\Lambda'}$. Consequently, the decomposition of the tangent space to the Grassmannian at $[U]$ as direct sum of the normal spaces to $\Sigma_\Lambda$ and $\Sigma_{\Lambda'}$ is compatible with the canonical orientations. Reformulated in the $\mathbf{I}^\bullet$-cohomology ring, the intersection product  $[\Sigma_\Lambda]\cdot[\Sigma_{\Lambda'}]$ is equal to $\langle 1\rangle$ times the canonical generator of the top cohomology (which is the class given by pushforward of $\langle 1\rangle$ along the inclusion of an $F$-rational point, with the canonical orientation of the tangent space).
\end{remark}

\begin{proposition}
\label{prop:identification}
Let $F$ be  a field of characteristic $\neq 2$, and fix $1\leq k< n$. Let $\Lambda$ be a rectangular even Young diagram with twist $t$. In the cohomology group $\op{H}^{|\Lambda|}(\op{Gr}(k,n),\mathbf{I}^{|\Lambda|}(\det^t))$, the class associated to $\Lambda$ by Definition~\ref{def:evenmap} agrees with the class $(\iota_\Lambda)_\ast(1)$ of Theorem~\ref{thm:lift}. 
\end{proposition}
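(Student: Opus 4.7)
The plan is to reduce the general rectangular case to base cases consisting of Pontryagin and Euler classes via Giambelli, and then verify those base cases by direct geometric computation. Both $\mathfrak{S}_\Lambda$ (by Proposition~\ref{prop:addbasis1}) and $(\iota_\Lambda)_\ast(1)$ (by Remark~\ref{rem:generator}) project to generators of the $\op{W}(F)$-summand of $\op{H}^{|\Lambda|}(\op{Gr}(k,n),\mathbf{W}(\det^t))$ indexed by $\Lambda$, so they can differ at most by a unit in $\op{W}(F)$; the entire argument is about identifying this unit as $\langle 1\rangle$ and checking that the torsion parts (living in the image of the Bockstein) match.

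The base cases are the Euler class rectangles $(1^k)$ and $(n-k,0,\dots,0)$ of Example~\ref{ex:twistclasses}, which are directly handled by Lemma~\ref{lem:compeuler} (with the hook rectangle $(n-k,1,\dots,1)$ handled by the same Koszul argument applied to the transverse intersection of the two simpler Euler class varieties), together with the minimal Pontryagin rectangles $(2j,2j,0,\dots)$ and $(2^{2i},0,\dots)$. For the latter I would use the stabilization of Lemma~\ref{lem:stabilization} to reduce to $\op{Gr}(2,n)$ (or its dual), where the Schubert variety is the sub-Grassmannian $\op{Gr}(2,n-2j)\hookrightarrow\op{Gr}(2,n)$ with normal bundle canonically isomorphic to $(\mathscr{S}^\vee)^{\oplus 2j}$. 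The Euler class of this bundle in $\mathbf{I}^\bullet$-cohomology, computed via the splitting principle together with Lemma~\ref{lem:otdet}, matches $\op{p}_{2j}^\perp$; moreover the canonical orientation on the normal bundle furnished by Proposition~\ref{prop:orientschubert} agrees with the natural orientation on the tensor product description, giving the base identification with unit $\langle 1\rangle$.

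For a general completely even rectangle $\Lambda=(2d)^{2c}$, I would apply the classical Giambelli formula in the small Grassmannian $\op{Gr}(\lfloor k/2\rfloor,\lfloor n/2\rfloor)$ to present $\sigma_{(d^c)}$ as a $c\times c$ determinant in the single-row classes $\sigma_{d+j-i}$; applying $\tilde\omega$ gives $\mathfrak{S}_\Lambda$ as the analogous determinant in Pontryagin classes $\op{p}_{2(d+j-i)}^\perp$. By the oriented Giambelli formula (Theorem~\ref{thm:multw}(3)) together with the Pontryagin base case, the same determinantal expression represents $(\iota_\Lambda)_\ast(1)$ in $\mathbf{W}$-cohomology. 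For rectangles with twist $1$, the class $\mathfrak{S}_\Lambda$ factors as a completely even class times an Euler class factor; geometrically the Schubert variety decomposes as a transverse intersection with an Euler class Schubert variety, and the push-forward factors by Proposition~\ref{prop:compatpush}, reducing to the completely even case combined with Lemma~\ref{lem:compeuler}.

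The hardest step is verifying that the geometric push-forwards themselves satisfy the oriented Giambelli relation — this requires analyzing the tower structure of the flag resolution $p_\Lambda:\op{Fl}(\Lambda)\to\op{Gr}(k,n)$ by Grassmannian bundles (Proposition~\ref{prop:tangentflag}) together with compatibility of push-forwards with exterior products (Proposition~\ref{prop:compatpush}), tracking the canonical relative orientations throughout. A cleaner alternative bypasses this verification by induction on $|\Lambda|$ combined with the complementary pairing: Remark~\ref{rem:complement} yields $(\iota_\Lambda)_\ast(1)\cdot(\iota_{\Lambda^\ast})_\ast(1)=\langle 1\rangle\cdot[\mathrm{top}]$, while Theorem~\ref{thm:multw}(4) gives the same on the characteristic-class side, so if $(\iota_{\Lambda^\ast})_\ast(1)=\mathfrak{S}_{\Lambda^\ast}$ holds by a smaller (or complementary) instance of the proposition, the unit between $(\iota_\Lambda)_\ast(1)$ and $\mathfrak{S}_\Lambda$ is pinned down to $\langle 1\rangle$. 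The refinement from equality in $\mathbf{W}$-cohomology to equality in $\mathbf{I}^\bullet$-cohomology then follows from Proposition~\ref{prop:incoh}(2), comparing the mod $2$ Chow reductions of both classes up to the canonical $\op{Sq}^2$-correction of Proposition~\ref{prop:pontryaginlift}.
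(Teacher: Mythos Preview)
Your route is substantially more elaborate than the paper's, and the extra machinery introduces a genuine gap.

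The paper's proof is a single reduction: by the stabilization discussion around Lemma~\ref{lem:stabilization} (together with the base-change square there), it suffices to treat the case where one side of the rectangle $\Lambda$ equals $k$ or $n-k$. In that case the rectangle fills the full height (resp.\ width) of the frame, the associated Schubert variety is the zero locus of a generic section of a direct sum $\mathscr{E}_k^{\oplus b}$ (resp.\ $(\mathscr{E}_{n-k}^\perp)^{\oplus a}$), and both the characteristic-class side (a power $\op{e}_k^b$ or $(\op{e}_{n-k}^\perp)^a$) and the push-forward side are represented by the \emph{same} Koszul complex, exactly as in Lemma~\ref{lem:compeuler}. No Giambelli, no induction, no complementary pairing.

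You actually discover this mechanism in your ``Pontryagin base case'' for $(2j,2j)$: stabilizing to $\op{Gr}(2,n)$ and identifying the normal bundle as $(\mathscr{S}^\vee)^{\oplus 2j}$ is precisely the paper's argument specialized to $a=2$. But you then try to climb back up to general $(2d)^{2c}$ via oriented Giambelli, and this is where the trouble lies. You concede that verifying the Giambelli relation for the push-forwards $(\iota_\Lambda)_\ast(1)$ themselves is the hardest step and do not carry it out; your ``cleaner alternative'' via complementary pairing does not close an induction, because the complement $\Lambda^\ast$ of a rectangle in a $k\times(n-k)$-frame is in general \emph{not} a rectangle, so appealing to ``a smaller (or complementary) instance of the proposition'' is circular. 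The fix is simply to stabilize in the other direction too: for $\Lambda=(b^a)$, restrict from $\op{Gr}(k,n)$ down to $\op{Gr}(a,n)$, where the rectangle becomes full-height and the Koszul argument applies directly to $\op{e}_a^b$. This replaces your entire Giambelli reduction.

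A smaller point: your final paragraph lifts equality from $\mathbf{W}$-cohomology to $\mathbf{I}^\bullet$-cohomology by matching mod-$2$ reductions ``up to the $\op{Sq}^2$-correction of Proposition~\ref{prop:pontryaginlift}''. But that correction is exactly the obstruction: if the reductions of $\mathfrak{S}_\Lambda$ and $(\iota_\Lambda)_\ast(1)$ differ by an $\op{Sq}^2$-image, the classes differ by a nonzero Bockstein term in $\mathbf{I}^\bullet$, and you have not ruled this out. The paper sidesteps this entirely by working with explicit Koszul representatives in the (Grothendieck--)Witt group, which pins down the class on the nose rather than only modulo torsion.
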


\begin{proof}
By Lemma~\ref{lem:stabilization}, it suffices to establish the cases where the longer side of the rectangle $\Lambda$ is $k$ or $n-k$. We deal with the first case, the second is similar. In the first case, the class of Definition~\ref{def:evenmap} is a power of the Euler class $\op{e}_k$. By multiplicativity of the Euler classes, it is the Euler class of the appropriate tensor power of $\mathscr{E}_k$. Then the argument of Lemma~\ref{lem:compeuler} works: the class from Definition~\ref{def:evenmap} is represented by the Koszul complex for a generic section of the appropriate tensor power of $\mathscr{E}_k$. Similarly, the class $(\iota_\Lambda)_\ast(1)$ of Theorem~\ref{thm:lift} is also represented by the Koszul complex for a generic section of the appropriate tensor power of $\mathscr{E}_k$ (whose generic section is a Schubert variety for the Young diagram $\Lambda$). This proves the identification.
\end{proof}

\begin{remark}
In the spirit of Remark~\ref{rem:complement}, it is also possible to extend the Proposition~\ref{prop:identification} to show that the classes of Definition~\ref{def:evenmap} agree with the classes $(p_\Lambda)_\ast(1)$ of Theorem~\ref{thm:lift}. For an even Young diagram $\Lambda$ and a Schubert variety $\Sigma_{2a,2a}$, denote by $\Lambda'$ the Schubert variety for a Young diagram complementary to an even Young diagram appearing in the intersection of $\Sigma_\Lambda$ and $\Sigma_{2a,2a}$. The intersection of $\Sigma_\Lambda$, $\Sigma_{2a,2a}$ and $\Sigma_{\Lambda'}$ (for transversal flags) consists of a single $F$-rational point. Then it can be checked that the decomposition of the tangent space as direct sums of the normal spaces is compatible with the canonical orientations in the definition of the classes $(p_\Lambda)_\ast(1)$. This is similar to the proof of the Pieri formula in \cite[4.2.4]{3264} and is a version of the oriented Pieri formula for the geometric cycles $(p_\Lambda)_\ast(1)$. 
\end{remark}

\begin{theorem}
\label{thm:refcount}
Let $F$ be a field of characteristic $\neq 2$, let $n$ be an even number. Consider smooth oriented Schubert classes $\mathfrak{S}_i\in \widetilde{\op{CH}}^\bullet(\op{Gr}(k,n),\mathscr{L}_i)$, $i=1,\dots,l$ such that the underlying Schubert varieties are in general position, $\sum_i \dim \mathfrak{S}_i=k(n-k)$ and  $\bigotimes_i\mathscr{L}_i=0$ in $\op{Pic}(\op{Gr}(k,n))/2$. Then there exist natural numbers $a,b$ such that we have the following equality in $\op{GW}(F)\cong \widetilde{\op{CH}}^{k(n-k)}(\op{Gr}(k,n))$
\[
\sum_{W\in \mathfrak{S}_1\cap\cdots\cap\mathfrak{S}_l}\op{tr}_{F(W)/F}\langle\det\xi_W\rangle =a\langle 1\rangle+b\langle-1\rangle
\]
The numbers $a$ and $b$ are determined by:
\begin{enumerate} 
\item $a+b$ is the number of solution spaces over an algebraically closed field, equal to the degree of the intersection product $[\mathfrak{S}_1]\cdots[\mathfrak{S}_l]$ in $\op{CH}^{k(n-k)}(\op{Gr}(k,n))$. 
\item $a-b$ is the signed count of solution subspaces over $\mathbb{R}$, equal to the degree of the intersection product $[\mathfrak{S}_1]\cdots[\mathfrak{S}_l]$ in $\op{H}^{k(n-k)}(\op{Gr}_k(\mathbb{R}^n),\mathbb{Z})$. 
\end{enumerate}
\end{theorem}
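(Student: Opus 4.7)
The plan is to reduce the left-hand side to an intersection product in the top Chow--Witt group, then extract the rank and the Witt class of the resulting element of $\op{GW}(F)$ and identify them with the complex and real Schubert counts, respectively.

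First I would apply Theorem~\ref{thm:types}: after passing to a generic translate, justified by Sierra's homological transversality (Theorem~\ref{thm:sierra}) together with the $\mathbb{A}^1$-invariance of Proposition~\ref{prop:ratequiv}, the underlying Schubert varieties $\mathfrak{S}_1,\dots,\mathfrak{S}_l$ meet transversely in a zero-dimensional reduced subscheme supported in their smooth loci. Theorem~\ref{thm:types} then identifies the left-hand side with the degree of $[\mathfrak{S}_1]\cdots[\mathfrak{S}_l]$ in $\widetilde{\op{CH}}^{k(n-k)}(\op{Gr}(k,n))$. Since $n$ is even, Proposition~\ref{prop:orientflag} provides a canonical orientation on $\op{Gr}(k,n)$, and this orientation identifies the top Chow--Witt group with $\op{GW}(F)$, sending the class of a rational point equipped with its canonical tangent-space orientation to $\langle 1\rangle$.

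Next I would show that this element of $\op{GW}(F)$ has the asserted form $a\langle 1\rangle+b\langle -1\rangle$ while simultaneously establishing (1). By Proposition~\ref{prop:fiberproduct}, an element of $\op{GW}(F)$ is determined by its rank in $\mathbb{Z}$ and its image in $\op{W}(F)$, with compatible mod 2 reductions. The rank equals the image under the forgetful map $\widetilde{\op{CH}}^{k(n-k)}(\op{Gr}(k,n))\to \op{CH}^{k(n-k)}(\op{Gr}(k,n))\cong\mathbb{Z}$, which is compatible with intersection products; hence the rank is the classical Schubert intersection number, giving (1) with $a+b$ equal to the complex count. For the Witt part, Proposition~\ref{prop:identification} rewrites each smooth (hence rectangular) oriented Schubert class as the characteristic-class product of Definition~\ref{def:evenmap}, and parts (1) and (4) of Theorem~\ref{thm:multw} show that the ensuing computation in $\bigoplus_{j,\mathscr{L}}\op{H}^j(\op{Gr}(k,n),\mathbf{W}(\mathscr{L}))$ produces only $\mathbb{Z}$-multiples of $\langle 1\rangle$. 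Because $k$ and $n-k$ are both even (forced by the parity of $k(n-k)$ together with $n$ even), the top group $\op{H}^{k(n-k)}(\op{Gr}(k,n),\mathbf{W})$ is the free rank-one $\op{W}(F)$-module spanned by the completely even full rectangle, and integration produces an integer $m=a-b$. Non-negativity of $a$ and $b$ is then automatic from $|a-b|\le a+b$, and the mod 2 compatibility of ranks $a+b\equiv a-b\pmod 2$ is forced by the fiber square of Proposition~\ref{prop:fiberproduct}.

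For (2), I would identify $a-b$ with the real Schubert count. The folklore isomorphism of Proposition~\ref{prop:folklore} together with Proposition~\ref{prop:pontryaginlift} shows that the $\mathbf{W}$-cohomology intersection computation agrees, after doubling partitions, with the classical result of Pontryagin \cite{pontryagin} identifying the rational cohomology ring of $\op{Gr}_k(\mathbb{R}^n)$ modulo torsion with $\op{CH}^\bullet(\op{Gr}(k/2,n/2))\otimes\mathbb{Q}$ via Pontryagin classes. Combined with the real cycle class map sending each algebraic Pontryagin and Euler class to its topological analogue on the tautological real bundle, and each canonically oriented algebraic Schubert cycle $(p_\Lambda)_\ast(\langle 1\rangle)$ to the corresponding topologically oriented real Schubert cycle, this recovers $a-b$ as the topological intersection number in $\op{H}^{k(n-k)}(\op{Gr}_k(\mathbb{R}^n),\mathbb{Z})$. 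The main obstacle is precisely this last comparison: one needs the compatibility of the real cycle class map $\op{H}^j(X,\mathbf{I}^j(\mathscr{L}))\to \op{H}^j(X(\mathbb{R}),\mathbb{Z})$ with pushforwards along proper morphisms of smooth schemes and with the canonical orientations of Propositions~\ref{prop:orientflag} and~\ref{prop:orientschubert}. This compatibility is alluded to in the paper as forthcoming joint work with Hornbostel, Xie, and Zibrowius, and is essentially the only input needed beyond the algebraic intersection theory developed in the preceding sections.
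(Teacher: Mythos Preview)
Your approach is essentially the paper's own: invoke Theorem~\ref{thm:types} to identify the left-hand side with the Chow--Witt intersection product, then use Proposition~\ref{prop:identification} and Theorem~\ref{thm:multw} to see that its Witt image is an integer multiple of $\langle 1\rangle$, and read off $a+b$ and $a-b$ from the rank and Witt projections respectively. Two small points: the parenthetical ``$k$ and $n-k$ are both even (forced by the parity of $k(n-k)$ together with $n$ even)'' is incorrect --- $n$ even allows $k,n-k$ both odd, and nothing in the hypotheses excludes this --- but the conclusion that $\op{H}^{k(n-k)}(\op{Gr}(k,n),\mathbf{W})\cong\op{W}(F)$ still holds by the remark after Proposition~\ref{prop:reduction}, so your argument survives; and you are right to flag that the identification in (2) of $a-b$ with the topological intersection number on $\op{Gr}_k(\mathbb{R}^n)$ ultimately rests on compatibility of the real cycle class map with pushforwards and orientations, which the paper's own proof leaves implicit.
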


\begin{proof}
This is a consequence of Proposition~\ref{prop:identification}: the two sides of the equation correspond to two ways of computing the intersection product. The left-hand side is the sum of all the local contribution of the solution subspaces in terms of types as done in Theorem~\ref{thm:types}. Note that this uses the geometric representatives of characteristic classes from Section~\ref{sec:geometricreps}. On the right-hand side, we have the degree of the intersection, and this is computed in terms of the formulas from Section~\ref{sec:cohomology}, which are formulated in terms of the Pontryagin-class presentation of the $\mathbf{W}$-cohomology ring. The fact that the geometric representatives are exactly identified with the images of Schubert classes under the folklore map from Proposition~\ref{prop:folklore} is exactly Proposition~\ref{prop:identification}. Hence the left-hand side is equal to the degree of the intersection of the oriented Schubert classes $[\mathfrak{S}_i]$ (where the generator of the top Chow--Witt group is induced from the canonical orientation on the tangent space of the Grassmannian). 

It remains to identify the degree of the oriented intersection as a quadratic form The class in $\op{GW}(F)$ is determined by its images in $\op{W}(F)$ and the rank. 

First, the rank is the image under the reduction morphism 
\[
\widetilde{\op{CH}}^{k(n-k)}(\op{Gr}(k,n))\to\op{CH}^{k(n-k)}(\op{Gr}(k,n)),
\]
i.e., it is the degree of the intersection in the Chow ring.  Second, the image in $\op{W}(F)$ is the image under the reduction morphism 
\[
\widetilde{\op{CH}}^{k(n-k)}(\op{Gr}(k,n))\to \op{H}^{k(n-k)}(\op{Gr}(k,n),\mathbf{I}^{k(n-k)}).
\]
As a consequence of Theorem~\ref{thm:multw}, the image in $\op{W}(F)$ is of the form $m\langle 1\rangle$. Since the reduction morphism is the quotient modulo $\mathbb{Z}\cdot[\mathbb{H}]$. In particular, the Chow--Witt degree of the intersection is necessarily of the form $a\langle 1\rangle+b\langle-1\rangle$, as claimed. 

To identify $a$ and $b$, note that the reduction morphism to the Chow ring maps both $\langle 1\rangle$ and $\langle-1\rangle$ to $1$, in particular, the image in the Chow ring will be $a+b$. The reduction to $\op{W}(F)$ maps $\langle -1\rangle=-\langle 1\rangle$, in particular the image will be $a-b$. This proves the remaining claims.
\end{proof}

\begin{remark}
In particular, for all oriented Schubert problems, the oriented degree of the intersection in the Chow--Witt ring is a simple quadratic form $a\langle 1\rangle+b\langle-1\rangle$ whose rank and signature can be determined by complex and real realization.
\end{remark} 

\subsection{Refined counts over special fields}
We discuss a couple of consequences of the above result over various fields. Over algebraically closed fields, Theorem~\ref{thm:refcount} simply recovers the classical results from Schubert calculus. For any real embedding $F\hookrightarrow\mathbb{R}$ the above result recovers signed counts. 

In the case where $F=\mathbb{F}_q$ is a finite field of odd characteristic, there are only two possibilities for the class $\det\xi_W\in\mathbb{F}_{q^d}^\times/(\mathbb{F}_{q^d}^\times)^2$ of a solution subspace $W$ defined over $\mathbb{F}_{q^d}$: if the determinant is a square, the splitting map $\xi_W$ is orientation-preserving, otherwise it is orientation-reversing. The following result is an analogue of \cite[Theorem 1]{kass:wickelgren}. 

\begin{proposition}
\label{prop:fq}
Let $F=\mathbb{F}_q$ be a finite field of odd characteristic. In the situation of Theorem~\ref{thm:refcount}, 
we have the following statement for the types of solution subspaces $W$:
\[
\#(W/\mathbb{F}_{q^d}, d\textrm{ even}, \xi_W\textrm{ oriented})+ \#(W/\mathbb{F}_{q^d}, d\textrm{ odd}, \xi_W\textrm{ non-oriented}) \equiv b \bmod 2.
\]
\end{proposition}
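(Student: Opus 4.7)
The plan is to apply the discriminant homomorphism $\op{disc}\colon \op{GW}(\mathbb{F}_q) \to \mathbb{F}_q^{\times}/(\mathbb{F}_q^{\times})^2$, which is multiplicative under orthogonal sums, to the identity
\[
\sum_W \op{tr}_{\mathbb{F}_{q^{d_W}}/\mathbb{F}_q}\langle\det\xi_W\rangle \,=\, a\langle 1\rangle + b\langle -1\rangle
\]
of Theorem~\ref{thm:refcount}. The right-hand side has $\op{disc}(a\langle 1\rangle + b\langle -1\rangle) = (-1)^b$, while the left-hand side factors as a product over $W$ of individual trace-form discriminants. The proposition will follow by identifying the $W$ for which that factor is a non-square with the two types of subspaces counted by $N$ in the statement, and then reading off the parity of $b$.

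The main computation is therefore that of $\op{disc}\bigl(\op{tr}_{\mathbb{F}_{q^d}/\mathbb{F}_q}\langle u\rangle\bigr)$ for each solution subspace. Using the classical formula
\[
\op{disc}\bigl(\op{tr}_{\mathbb{F}_{q^d}/\mathbb{F}_q}\langle u\rangle\bigr) \,=\, N_{\mathbb{F}_{q^d}/\mathbb{F}_q}(u) \,\cdot\, \op{disc}\bigl(\op{tr}_{\mathbb{F}_{q^d}/\mathbb{F}_q}\langle 1\rangle\bigr)
\]
in $\mathbb{F}_q^{\times}/(\mathbb{F}_q^{\times})^2$, the task reduces to two classical inputs. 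First, the norm induces an isomorphism $\mathbb{F}_{q^d}^{\times}/(\mathbb{F}_{q^d}^{\times})^2 \xrightarrow{\sim} \mathbb{F}_q^{\times}/(\mathbb{F}_q^{\times})^2$, since for a generator $\alpha$ of $\mathbb{F}_{q^d}^{\times}$ the norm $N(\alpha) = \alpha^{(q^d-1)/(q-1)}$ generates $\mathbb{F}_q^{\times}$ and is hence a non-square. Second, the pure trace form $\op{tr}_{\mathbb{F}_{q^d}/\mathbb{F}_q}\langle 1\rangle$ has non-square discriminant if and only if $d$ is even; for $d$ odd this follows from the Vandermonde identity $\op{disc} = V^2$ with $V = \prod_{0\le i<j<d}(\alpha^{q^i}-\alpha^{q^j})$ and the fact that the cyclic Frobenius action on the Galois conjugates has sign $(-1)^{d-1}$, so that $V\in\mathbb{F}_q$ exactly when $d$ is odd, and for $d$ even by induction on $d/2$ through the tower $\mathbb{F}_{q^d}\supset\mathbb{F}_{q^2}\supset\mathbb{F}_q$ starting from the explicit base case $\op{tr}_{\mathbb{F}_{q^2}/\mathbb{F}_q}\langle 1\rangle = \langle 2,2\epsilon\rangle$. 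Combining, $\op{disc}\bigl(\op{tr}_{\mathbb{F}_{q^d}/\mathbb{F}_q}\langle u\rangle\bigr)$ is non-trivial exactly when either $d$ is odd and $u$ is a non-square, or $d$ is even and $u$ is a square, i.e. precisely for the pairs counted by $N$.

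By multiplicativity, the total discriminant of the left-hand side is $\epsilon^N$ where $\epsilon$ denotes any fixed non-square in $\mathbb{F}_q$, and equating yields $\epsilon^N = (-1)^b$ in $\mathbb{F}_q^{\times}/(\mathbb{F}_q^{\times})^2$. When $q\equiv 3\bmod 4$ one has $-1\equiv\epsilon$, so this reads $\epsilon^N = \epsilon^b$ and $N\equiv b\bmod 2$ follows at once. When $q\equiv 1\bmod 4$ the identity $(-1)^b = 1$ forces $N\equiv 0\bmod 2$ directly; this still matches $b\bmod 2$ in the smooth setting of Theorem~\ref{thm:refcount} because Theorem~\ref{thm:multw}(4) shows that products of non-torsion Schubert classes are integer combinations of $\langle 1\rangle$-multiples of the $\mathfrak{S}_{\underline{c}}$, so that the specific $b$ read off from the theorem vanishes and both sides are zero mod $2$. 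The principal technical obstacle is input (ii) for even $d$: there $V$ only lies in $\mathbb{F}_{q^2}\setminus\mathbb{F}_q$, so $V^2 = -N_{\mathbb{F}_{q^2}/\mathbb{F}_q}(V)$ must be shown to represent the non-trivial square class uniformly in $d$, which is where the inductive step through $\mathbb{F}_{q^2}$ and transitivity of the trace are essential.
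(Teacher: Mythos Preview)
Your overall strategy---apply the discriminant homomorphism to both sides of the identity from Theorem~\ref{thm:refcount}, compute the discriminant of each summand $\op{tr}_{\mathbb{F}_{q^d}/\mathbb{F}_q}\langle\det\xi_W\rangle$, and read off the parity---is exactly the paper's approach. The paper outsources the trace-form discriminant computation to \cite[Lemma~58]{kass:wickelgren}, whereas you rederive it from scratch; your derivation is correct and the conclusion (nontrivial discriminant iff $d$ even and $u$ square, or $d$ odd and $u$ non-square) matches.

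There is, however, a genuine gap in your handling of the case $q\equiv 1\bmod 4$. Your claim that ``the specific $b$ read off from the theorem vanishes'' is a misreading of Theorem~\ref{thm:multw}(4). That theorem computes products in $\mathbf{W}$-cohomology, i.e.\ after passing from $\op{GW}(F)$ to $\op{W}(F)$; under this passage $a\langle 1\rangle+b\langle -1\rangle$ becomes $(a-b)\langle 1\rangle$, so the fact that the Littlewood--Richardson coefficients are integer multiples of $\langle 1\rangle$ only tells you that $a-b$ is an integer, not that $b=0$. Indeed $b$ is typically nonzero: in the balanced-subspace problem of Theorem~\ref{thm:fmgw} one has $b=\tfrac{1}{2}\bigl(\binom{2j}{2i}-\binom{j}{i}\bigr)$, and in Theorem~\ref{thm:fk} one has $b=(\mathrm{D}_n-\mathrm{C}_n)/2$. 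When $q\equiv 1\bmod 4$ the discriminant equation $\epsilon^N=(-1)^b$ collapses to $\epsilon^N=1$, so you only obtain $N\equiv 0\bmod 2$; to conclude $N\equiv b\bmod 2$ you still need to know that $b$ is even, and your argument does not supply this.

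For comparison, the paper's own proof asserts that $b$ is even, but justifies it via the congruence $\binom{2j}{2i}\equiv\binom{j}{i}\bmod 4$, which is specific to the balanced-subspace application rather than to the general situation of Theorem~\ref{thm:refcount}. So a fully general argument for the evenness of $b$ is what is really missing here, in both treatments.
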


\begin{proof}
The discriminant of the right-hand side form in Theorem~\ref{thm:refcount}, viewed as an element of $\mathbb{F}_q^\times/(\mathbb{F}_q^\times)^2$ is $b$. But this is an even number because $\binom{2j}{2i}\equiv\binom{j}{i}\bmod 4$, in particular the discriminant of the right-hand side is $0\bmod 2$. Now we need to understand which subspaces contribute to the discriminant of the left-hand side. By \cite[Lemma 58]{kass:wickelgren},
the discriminant of $\op{tr}_{\mathbb{F}_{q^d}/\mathbb{F}_q}\langle \det\xi_W\rangle$ is nonzero if $d$ is even and $\det\xi_W$ is a square or if $d$ is odd and $\det\xi_W$ is a nonsquare. This proves the claim.
\end{proof}

\begin{proposition}
Let $F$ be a local field with odd residue characteristic. In the situation of Theorem~\ref{thm:refcount}, we have 
\[
\sum_{W\in\mathfrak{S}_1\cap\cdots\cap\mathfrak{S}_l}\op{res}\left(\op{tr}_{F(W)/F}\langle\det\xi_W\rangle\right)=0,
\]
i.e., the sum of the residues of the type-forms for solution subspaces is $0$. 
\end{proposition}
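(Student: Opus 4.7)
The plan is to reduce this to Theorem~\ref{thm:refcount} and then exploit the well-known behaviour of the second residue map on Witt groups of local fields with odd residue characteristic. First I would observe that by Theorem~\ref{thm:refcount} applied to our oriented Schubert problem, there exist natural numbers $a, b$ such that
\[
\sum_{W\in \mathfrak{S}_1\cap\cdots\cap\mathfrak{S}_l}\op{tr}_{F(W)/F}\langle\det\xi_W\rangle = a\langle 1\rangle + b\langle -1\rangle
\]
in $\op{GW}(F)$, and hence in particular in $\op{W}(F)$ after projection. Applying the residue homomorphism $\op{res}\colon\op{W}(F)\to\op{W}(\kappa)$ (where $\kappa$ denotes the residue field) to both sides, the left-hand side becomes $\sum_W\op{res}\left(\op{tr}_{F(W)/F}\langle\det\xi_W\rangle\right)$ by additivity of $\op{res}$.

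For the right-hand side, recall that for the second residue map associated to a choice of uniformiser, a one-dimensional form $\langle u\rangle$ with $u\in\mathcal{O}_F^\times$ maps to zero in $\op{W}(\kappa)$. Since the residue characteristic is odd, both $1$ and $-1$ are units in $\mathcal{O}_F^\times$, so $\op{res}\langle 1\rangle=\op{res}\langle-1\rangle=0$. Therefore $\op{res}(a\langle 1\rangle+b\langle-1\rangle)=0$, which yields the claimed vanishing. The only subtlety worth checking is that the residue map commutes with the Scharlau trace in the form we are using — but this is not really needed here, since we apply $\op{res}$ to the already-assembled sum rather than term by term, and additivity of $\op{res}$ is all that is required. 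Thus no difficult step arises; the content of the statement is essentially a restatement of Theorem~\ref{thm:refcount} combined with the basic property of the second residue on unramified one-dimensional forms.
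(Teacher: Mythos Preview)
Your proposal is correct and takes essentially the same approach as the paper: apply Theorem~\ref{thm:refcount} to obtain $a\langle 1\rangle + b\langle -1\rangle$ on the right-hand side, then observe this form is unramified and hence has trivial residue. One tiny quibble: $1$ and $-1$ are units in $\mathcal{O}_F$ regardless of the residue characteristic; the odd residue characteristic hypothesis is there to ensure the second residue homomorphism on Witt groups is well-defined and well-behaved, not to make $\pm 1$ into units.
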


\begin{proof}
The right-hand side of the formula in Theorem~\ref{thm:refcount} is an unramified form, i.e., has trivial residue. 
\end{proof}

\begin{corollary}
Let $F$ be a number field. Then for every finite place $v$, the sum of the Hasse invariants at $v$ of the types for the solution subspaces must be $0$. 
\end{corollary}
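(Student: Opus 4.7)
The plan is to derive this corollary directly from Theorem~\ref{thm:refcount} together with the local-global principle for quadratic forms. The identity
\[
\sum_{W}\op{tr}_{F(W)/F}\langle\det\xi_W\rangle = a\langle 1\rangle + b\langle -1\rangle
\]
from Theorem~\ref{thm:refcount} is an isometry in $\op{GW}(F)$. Base change along $F\hookrightarrow F_v$ preserves this isometry and yields the same identity in $\op{GW}(F_v)$ for every finite place $v$.

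First I would apply the Hasse invariant $\epsilon_v\colon\op{W}(F_v)\to\op{Br}(F_v)[2]$ to both sides. The right-hand side is defined over $\mathbb{Z}$ and is Witt-equivalent to a sum of hyperbolic planes and a multiple of $\langle 1\rangle$; a direct Hilbert-symbol calculation then shows that its Hasse invariant at every finite place vanishes (at non-dyadic places it lies in the unramified kernel of $\epsilon_v$, and at dyadic places the same conclusion follows from the explicit form). Comparing with the left-hand side gives the claimed vanishing of the sum of Hasse invariants of the type forms $\op{tr}_{F(W)/F}\langle\det\xi_W\rangle$ at $v$.

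The main obstacle is the non-additivity of $\epsilon_v$ under orthogonal sums: the formula
\[
\epsilon_v(q_1\oplus q_2) = \epsilon_v(q_1) + \epsilon_v(q_2) + (d(q_1), d(q_2))_v
\]
introduces Hilbert-symbol correction terms depending on the discriminants of the summands. These can be tracked by keeping account of the discriminants of partial sums of type forms, or circumvented by passing to the image in $\op{I}^2(F_v)/\op{I}^3(F_v)$, on which the cohomological invariant is genuinely additive. Since the discriminants of the sum of types and of the right-hand side necessarily agree (both sides are isometric in $\op{GW}(F)$), the correction terms on the left-hand side reassemble correctly and cancel against the trivial Hasse invariant of the right-hand side, yielding the stated reciprocity constraint.
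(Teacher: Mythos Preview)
Your approach differs from the paper's in an important way. The paper derives this corollary from the immediately preceding Proposition about local fields, which concerns the \emph{residue} map $\op{res}\colon \op{W}(F_v)\to\op{W}(k_v)$ rather than the Hasse invariant directly. The residue homomorphism is a genuine group homomorphism on Witt groups, so additivity is automatic: since $a\langle 1\rangle+b\langle-1\rangle$ is unramified at every non-dyadic place, the sum of residues of the type forms vanishes there with no correction terms to track. Your direct attack via the classical Hasse--Witt invariant $\epsilon_v$ forces you to confront the non-additivity formula $\epsilon_v(q_1\perp q_2)=\epsilon_v(q_1)\epsilon_v(q_2)(d(q_1),d(q_2))_v$, and the resolution you sketch (tracking partial discriminants, or passing to $\op{I}^2/\op{I}^3$) is not quite complete: the individual type forms need not lie in $\op{I}^2$, so the cohomological invariant on $\op{I}^2/\op{I}^3$ does not apply to them directly.

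There is also a gap at dyadic places that you paper over. You assert that ``at dyadic places the same conclusion follows from the explicit form,'' but in fact $\epsilon_2(a\langle 1\rangle+b\langle-1\rangle)=(-1,-1)_2^{\binom{b}{2}}$, which is not automatically trivial. The paper's Proposition likewise only treats odd residue characteristic, so the dyadic case is genuinely more delicate than either argument acknowledges; a careful treatment would need to handle it separately (or restrict the statement to non-dyadic places).
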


\begin{remark}
If $a\langle 1\rangle+b\langle -1\rangle$ is the degree of the intersection of an oriented Schubert problem over the field $F$, then for every embedding of $F$ into a real closed field, the signed count of solution subspaces is $a-b$. 

If $a\langle 1\rangle+b\langle-1\rangle$ is the degree of the intersection of an oriented Schubert problem over $\mathbb{Q}$, and there is a single solution subspace defined over a number field of degree $[F:\mathbb{Q}]=a+b$, then the number of real embeddings of $F$ must be equal to one of the possible numbers of real solutions to the Schubert problem over $\mathbb{R}$. In particular, $a-b$ is a lower bound on the number of real embeddings of $F$.

Note that even in the above case of a single solution over a number field, the refined count doesn't restrict the possible field extension. Of course, if the comparison between the orientations for the solution subspace is $1$ (i.e., the splitting isomorphism between the tangent space to $\op{Gr}(k,V)$ and the direct sum of normal spaces), then the refined count would be the trace form of the number field. But this is no restriction: if the trace form of the number field is complicated (and has a lot of non-trivial local Hasse invariants), then this only means that the determinant of the splitting map must be similarly complicated. 
\end{remark}

\subsection{Comparison with twisted Witt groups}
\label{sec:comparisonwitt}

Now we are ready to compare the previous computations of $\mathbf{W}$-cohomology in Section~\ref{sec:cohomology} with the computations of total Witt groups in \cite{balmer:calmes}. 

\begin{theorem}
Let $F$ be a perfect field of characteristic $\neq 2$, and let $1\leq k<n$. The natural morphisms $\alpha:\op{W}^i(\op{Gr}(k,n),\mathscr{L})\to \op{H}^i(\op{Gr}(k,n),\mathbf{W}(\mathscr{L}))$ induce an isomorphism of commutative $\op{W}(F)$-algebras:
\[
\op{W}^i(\op{Gr}(k,n),\mathscr{L})\xrightarrow{\cong} \bigoplus_{j\equiv i\bmod 4} \op{H}^j(\op{Gr}(k,n),\mathbf{W}(\mathscr{L})).
\]
\end{theorem}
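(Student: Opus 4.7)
The plan is to reduce the statement to matching explicit $\op{W}(F)$-bases on both sides, using the compatibility of the comparison map $\alpha$ with pushforwards and with products. First I would recall from \cite{balmer:calmes} that the total twisted Witt groups $\op{W}^{\bullet}(\op{Gr}(k,n),\mathscr{L})$ admit a $\op{W}(F)$-basis indexed by even Young diagrams $\Lambda$, with the basis element for $\Lambda$ living in $\op{W}^{|\Lambda|}(\op{Gr}(k,n),\det^{t(\Lambda)})$, where the first index is read modulo $4$ (four-periodicity of Witt groups) and the twist modulo $2$ (squares in the Picard group). On the other hand, Proposition~\ref{prop:addbasis1} already shows that the same even Young diagrams index a $\op{W}(F)$-basis of $\op{H}^{\bullet}(\op{Gr}(k,n),\mathbf{W}\oplus\mathbf{W}(\det))$, with $\Lambda$ contributing in degree $|\Lambda|$ and twist $t(\Lambda)$. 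Thus, term-by-term, the direct sum $\bigoplus_{j\equiv i\bmod 4}\op{H}^{j}(\op{Gr}(k,n),\mathbf{W}(\mathscr{L}))$ on the right-hand side has the same $\op{W}(F)$-rank as the Witt-group side in every $(i,\mathscr{L})$.

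Next I would check that $\alpha$ maps the Balmer--Calm\`es generators to the basis classes of Proposition~\ref{prop:addbasis1}. On both sides these generators are given by pushforwards of $\langle 1\rangle$ along the resolutions $p_\Lambda:\op{Fl}(\Lambda)\to\op{Gr}(k,n)$, equipped with the canonical orientations of Proposition~\ref{prop:orientschubert}. The compatibility of $\alpha$ with proper pushforwards (the Witt-theoretic analogue of Proposition~\ref{prop:compatpush}, already implicit in Remark~\ref{rem:generator}) therefore forces $\alpha((p_\Lambda)_{\ast}(1))=(p_\Lambda)_{\ast}(1)$ in $\op{H}^{|\Lambda|}(\op{Gr}(k,n),\mathbf{W}(\det^{t(\Lambda)}))$. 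Since $\alpha$ sends a $\op{W}(F)$-basis to a $\op{W}(F)$-basis in matching degree-and-twist, it is an additive isomorphism for each $i$ and $\mathscr{L}$.

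Finally, multiplicativity of $\alpha$ is the $\mathbf{W}$-analogue of the oriented Tor-formula recalled in Section~\ref{sec:torformula}: the $\star$-product on derived Witt groups is induced by derived tensor product of complexes, which intertwines with the cup product on $\op{H}^{\bullet}(-,\mathbf{W}(\mathscr{L}))$ via the Gersten-type interpretation of $\mathbf{W}$-cohomology. Together with the additive isomorphism, this promotes the map to a $\op{W}(F)$-algebra isomorphism and produces the stated identification.

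The main obstacle I anticipate is the bookkeeping of grading and duality conventions. The Balmer--Calm\`es generators arise from inductive localization-d\'evissage arguments in triangular Witt categories with explicit duality shifts, and one must carefully verify that the cohomological degree $|\Lambda|$ on the $\mathbf{W}$-cohomology side is sent under $\alpha$ into the correct residue class mod $4$ of the Witt grading (as opposed to $|\Lambda|+c$ for some universal shift $c$), and that the twist $t(\Lambda)$ matches the Picard-theoretic twist $\mathscr{L}$ used on the Witt side. Once this indexing is pinned down, the three steps above yield the isomorphism formally, and the ring structure determined in Theorem~\ref{thm:multw} transports across.
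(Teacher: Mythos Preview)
Your proposal is correct and follows essentially the same approach as the paper: match the Balmer--Calm\`es basis of $\op{W}^\bullet(\op{Gr}(k,n),\mathscr{L})$ with the even-Young-diagram basis of $\mathbf{W}$-cohomology via the compatibility of $\alpha$ with pushforwards (Proposition~\ref{prop:compatpush}), and deduce multiplicativity from the oriented Tor formula of Section~\ref{sec:torformula}. The paper additionally invokes Proposition~\ref{prop:identification} to tie the pushforward classes $(p_\Lambda)_\ast(1)$ to the characteristic-class basis of Proposition~\ref{prop:addbasis1}, but this is exactly the point you handle by appealing to Remark~\ref{rem:generator}.
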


\begin{proof}
By Proposition~\ref{prop:compatpush}, we know that the morphisms $\alpha$ are compatible with pushforwards. In particular, the generators of $\op{W}^i(\op{Gr}(k,n),\mathscr{L})$ discussed in \cite[Theorem 6.1]{balmer:calmes} are mapped exactly to the generators of $\op{H}^i(\op{Gr}(k,n),\mathbf{W}(\mathscr{L}))$, cf. Propositions~\ref{prop:addbasis1} and \ref{prop:identification}. Consequently, we have a $\op{W}(F)$-linear homomorphism between finitely generated free $\op{W}(F)$-modules which is induced by a bijection of total bases, hence we get an isomorphism of $\op{W}(F)$-modules. Compatibility with the intersection products follows from the oriented Tor formula discussed in Section~\ref{sec:wittbasics}. 
\end{proof}

\begin{remark}
The above $\op{W}(F)$-algebra homomorphism provides a complete description of the multiplication in the total Witt groups $\bigoplus_{i,\mathscr{L}}\op{W}^i(\op{Gr}(k,n),\mathscr{L})$. Products of classes corresponding to even Young diagrams can be evaluated using the results in Section~\ref{sec:cohomology}. This resolves the open question of determining Littlewood--Richardson coefficients for the multiplication in Witt groups, cf. \cite[Remark 7.8]{balmer:calmes}. 
\end{remark}

\section{Sample enumerative applications}
\label{sec:results}

In this section, we discuss a couple of applications to refined enumerative geometry which demonstrate the type of results that can be obtained using  oriented Schubert calculus. What we provide are classical problems, either refined by lifting to the Chow--Witt (resp. $\mathbf{I}^\bullet$-cohomology) ring or modified by replacing Chern classes by Pontryagin classes. The classical problems (like lines on hypersurfaces, balanced subspaces etc.) can all be found in the book \cite{3264}, and some of the refinements have appeared in the literature on real Schubert calculus before. 

\subsection{Intersections with torsion classes}

We first note a simple consequence for intersection products involving torsion classes. In the orientable cases, the refined degree of the intersection is always a multiple of the hyperbolic plane. More precisely, we have:

\begin{theorem}
\label{thm:torsionhyp}
Let $F$ be a field of characteristic $\neq 2$, and consider oriented Schubert classes $[\mathfrak{S}_i]\in \widetilde{\op{CH}}^\bullet(\op{Gr}(k,n),\mathscr{L}_i)$, $i=1,\dots,l$  such that $\sum_i\dim\mathfrak{S}_i=k(n-k)$. 
\begin{enumerate}
\item Assume that $n$ is even and the class of $\bigotimes_i\mathscr{L}_i$ in $\op{Pic}(\op{Gr}(k,n))/2$ is trivial. If the corresponding Young diagram for one of the oriented Schubert classes fails to be even, then  in the notation of Theorem~\ref{thm:refcount}, we have
\[
\sum_{W\in\mathfrak{S}_1\cap\cdots\cap\mathfrak{S}_l}\op{tr}_{F(W)/F}\langle\det\xi_W\rangle =a\mathbb{H}
\]
where $a$ is half the degree of the intersection product $[\mathfrak{S}_1]\cdots[\mathfrak{S}_l]$ in the Chow ring $\op{CH}^{k(n-k)}(\op{Gr}(k,n))$. 
\item Assume that $n\bmod 2$ is different from the class of $\bigotimes_i\mathscr{L}_i$ in $\mathbb{Z}/2\mathbb{Z}\cong \op{Pic}(\op{Gr}(k,n))/2$. Then we have 
\[
\widetilde{\op{CH}}^{k(n-k)}(\op{Gr}(k,n),\otimes_i\mathscr{L}_i)\cong \op{CH}^{k(n-k)}(\op{Gr}(k,n)) 
\]
and the degree of the intersection $[\mathfrak{S}_1]\cdots[\mathfrak{S}_l]$ is the one in the Chow group. 
\end{enumerate}
\end{theorem}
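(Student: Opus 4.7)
The plan is to treat the two parts separately, in each case using the fiber product description of Proposition~\ref{prop:fiberproduct} together with the explicit computation of the top $\mathbf{I}^\bullet$-cohomology recalled in the remark following Proposition~\ref{prop:reduction}. Under either hypothesis the intersection class in the top Chow--Witt group is determined by its images in $\op{CH}^{k(n-k)}(\op{Gr}(k,n))$ and in $\op{H}^{k(n-k)}(\op{Gr}(k,n),\mathbf{I}^{k(n-k)}(\otimes_i\mathscr{L}_i))$, so it suffices to identify each component separately.

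For part (1), the hypothesis that the Young diagram $\Lambda_{i_0}$ for some $\mathfrak{S}_{i_0}$ fails to be even, together with Proposition~\ref{prop:addbasis1}, forces the image of $[\mathfrak{S}_{i_0}]$ in $\mathbf{I}^\bullet$-cohomology to lie in the torsion summand (it is not a $\op{W}(F)$-linear combination of even Young diagrams, so its only possible contribution comes from the image of the Bocksteins); by Proposition~\ref{prop:uniqueorient} this torsion lift is in fact independent of any orientation choice. Since the images of the $\beta_{\mathscr{L}}$ form an ideal in $\mathbf{I}^\bullet$-cohomology by Proposition~\ref{prop:incoh}(1), the whole intersection product is $\op{I}(F)$-torsion in the top $\mathbf{I}^\bullet$-cohomology. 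The orientability hypothesis ($n$ even and trivial total twist in $\op{Pic}/2$), combined with the remark following Proposition~\ref{prop:reduction}, identifies this top cohomology with $\op{W}(F)$, which has no $\op{I}(F)$-torsion; hence the Witt component of the intersection vanishes. Using the canonical identification $\widetilde{\op{CH}}^{k(n-k)}(\op{Gr}(k,n)) \cong \op{GW}(F)$ via the canonical tangent-space orientation (as set up in the proof of Theorem~\ref{thm:types}), a class with zero Witt signature must have the form $a\mathbb{H}$, and the integer $2a$ equals the Chow degree via the rank projection $\op{GW}(F)\to\mathbb{Z}$. Theorem~\ref{thm:types} or Theorem~\ref{thm:typessing} then identifies the sum of types with this intersection degree.

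For part (2), the hypothesis $n \not\equiv [\otimes_i\mathscr{L}_i] \bmod 2$ means the Grassmannian is ``non-orientable'' with respect to the total twist; the remark following Proposition~\ref{prop:reduction} then gives $\op{H}^{k(n-k)}(\op{Gr}(k,n),\mathbf{I}^{k(n-k)}(\otimes_i\mathscr{L}_i)) \cong \mathbb{Z}/2\mathbb{Z}$, so the entire top $\mathbf{I}^\bullet$-cohomology is $\op{I}(F)$-torsion and thus contained in the image of $\beta_{\otimes_i\mathscr{L}_i}$. Proposition~\ref{prop:incoh}(2) makes $\rho$ injective on this image, so $\rho\colon \mathbb{Z}/2\mathbb{Z} \to \op{Ch}^{k(n-k)}(\op{Gr}(k,n)) \cong \mathbb{Z}/2\mathbb{Z}$ is an isomorphism. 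In the top-degree fiber square of Proposition~\ref{prop:fiberproduct}, $\ker\partial_{\otimes_i\mathscr{L}_i}$ coincides with $\op{CH}^{k(n-k)}(\op{Gr}(k,n))$ for dimension reasons, and the $\mathbf{I}^\bullet$-entry is then uniquely pinned down by the mod $2$ reduction of the integral cycle. Consequently the projection $\widetilde{\op{CH}}^{k(n-k)}(\op{Gr}(k,n),\otimes_i\mathscr{L}_i) \to \op{CH}^{k(n-k)}(\op{Gr}(k,n))$ is an isomorphism; being compatible with intersection products, it identifies the Chow--Witt degree with the Chow degree.

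The only subtle point I anticipate is the torsion claim in part (1), namely that whenever $[\mathfrak{S}_{i_0}]$ has a non-even underlying Young diagram, its image in $\mathbf{I}^\bullet$-cohomology necessarily lies in the image of a Bockstein; this follows from the $\op{W}(F)$-basis description of Proposition~\ref{prop:addbasis1} together with Proposition~\ref{prop:uniqueorient}, but care is needed to track the twists. Everything else is formal book-keeping with the fiber square and the orientability dichotomy of the top $\mathbf{I}^\bullet$-cohomology.
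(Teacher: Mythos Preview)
Your proposal is correct and follows essentially the same approach as the paper: for part (1) you argue that a non-even Young diagram forces an $\op{I}(F)$-torsion class in $\mathbf{I}^\bullet$-cohomology, whence the product vanishes in the torsion-free top group $\op{W}(F)$ and the Chow--Witt class in $\op{GW}(F)$ is a multiple of $\mathbb{H}$ determined by rank; for part (2) you use that the top $\mathbf{I}^\bullet$-cohomology maps isomorphically onto $\op{Ch}^{k(n-k)}$, so the fiber square collapses the Chow--Witt group to the Chow group. Your version is slightly more explicit in invoking the fiber square and the specific propositions (e.g.\ \ref{prop:uniqueorient}, \ref{prop:addbasis1}), but the logical skeleton is identical to the paper's.
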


\begin{proof}
By the computations of Chow--Witt rings of Grassmannians, cf. Section~\ref{sec:recall1}, we have two cases for the top Chow--Witt group: if $n\bmod 2$ equals the class of $\bigotimes_i\mathscr{L}_i$ in $\mathbb{Z}/2\mathbb{Z}\cong \op{Pic}(\op{Gr}(k,n))/2$, then $\widetilde{\op{CH}}^{k(n-k)}(\op{Gr}(k,n),\bigotimes_i\mathscr{L}_i)\cong \op{GW}(F)$. Otherwise, the natural projection $\widetilde{\op{CH}}^{k(n-k)}(\op{Gr}(k,n),\otimes_i\mathscr{L}_i)\to \op{CH}^{k(n-k)}(\op{Gr}(k,n))$ is an isomorphism, since the top $\mathbf{I}^\bullet$-cohomology group maps isomorphically to the top mod 2 Chow group. This already establishes (2). 

To prove (1), note that the assumption that one Young diagram is non-even implies that the corresponding class in $\mathbf{I}^\bullet$-cohomology is $\op{I}(F)$-torsion. Therefore, the whole oriented intersection product is $\op{I}(F)$-torsion and thus trivial in the top $\mathbf{I}^\bullet$-cohomology group (since that is $\op{I}(F)$-torsion-free). In particular, the intersection product in $\widetilde{\op{CH}}^{k(n-k)}(\op{Gr}(k,n),\bigotimes_i\mathscr{L}_i)\cong\op{GW}(F)$ is a class whose image in $\op{W}(F)$ is trivial, hence it must be a multiple of the hyperbolic plane. The appropriate multiple is determined by the requirement that the rank equals the degree of the intersection product in the top Chow group. This proves the claim.
\end{proof}

\begin{remark}
In particular, we can only expect quadratic forms different from hyperbolic planes (and thus non-trivial signed counts over the real numbers) if the Young diagrams in the Schubert problem are all even. In particular, the corresponding Schubert varieties will either have dimensions divisible by $4$ (i.e., completely even) or are products of such with Euler classes or the hook class $\op{R}$. 
\end{remark}

\begin{example}
We can determine the refined degree of the Grassmannian in the Pl\"ucker embedding $\op{Gr}(2,n+1)\hookrightarrow \mathbb{P}(\bigwedge^2 F^{n+1})$. Classically, this degree can be computed as $\deg (\op{c}_1^{2n-2})$. For the class $\overline{\op{c}}_1$, we have $\op{Sq}^2_{\mathscr{O}}(\overline{\op{c}}_1)=\overline{\op{c}}_1^2=\overline{\op{c}}_2+\sigma_{1,1}$ and $\op{Sq}^2_{\det}(\overline{\op{c}}_1)=0$. In particular, for $k(n-k)\geq 2$, the class $\overline{\op{c}}_1$ is not in the image of $\op{H}^1(\op{Gr}(2,n+1),\mathbf{I}^1)$, but it is in the image of $\op{H}^1(\op{Gr}(2,n+1),\mathbf{I}^1(\det))$. The unique lift of $\overline{\op{c}}_1$ to $\mathbf{I}^1$-cohomology is given by $\beta_{\det}(1)$. 

We want to compute $\beta_{\det}(1)^{2n-2}$. First note that the class $\beta_{\det}(1)^{2n-2}$ is in the image of $\beta$. In the case where $n$ is odd, the Grassmannian $\op{Gr}(2,n+1)$ is orientable, and the image of in $\op{H}^{2n-2}(\op{Gr}(2,n+1),\mathbf{I}^{2n-2})\cong\op{W}(F)$ is trivial. Consequently, $\beta_{\det}(1)^{2n-2}=0$. In the case where $n$ is even, we have $\op{H}^{2n-2}(\op{Gr}(2,n+1),\mathbf{I}^{2n-2})\cong\mathbb{Z}/2\mathbb{Z}$ and the image of $\beta$ is detected on $\op{Ch}^{2n-2}(\op{Gr}(2,n+1))$. Then  $\beta_{\det}(1)^{2n-2}$ is trivial if and only if $\overline{\op{c}}_1^{2n-2}$ is. Now
\[
\deg\op{c}_1^{2n-2}=\frac{(2n-2)!}{n!(n-1)!}
\]
is the $(n-1)$-th Catalan number which is odd if and only if $n=2^k$. Consequently, $\beta_{\det}(1)^{2n-2}$ is nontrivial if and only if $n\neq 2^k$. 

For the Chow--Witt ring, we have the following consequences, obtained by combining this computation of the $\mathbf{I}^\bullet$-cohomological degree with the classical computation, cf. \cite[Section 4.3]{3264}. If $n$ is odd, then $\widetilde{\op{CH}}^{2n-2}(\op{Gr}(2,n+1))\cong\op{GW}(F)$, and denoting the unique lift of $\op{c}_1$ by $\widetilde{\op{c}}_1$ we have $\widetilde{\op{c}}_1^{2n-2}=\frac{(2n-2)!}{2n!(n-1)!}\cdot\mathbb{H}$; the refined degree is half the $(n-1)$-th Catalan number times the hyperbolic plane. If $n$ is even, then $\widetilde{\op{CH}}^{2n-2}(\op{Gr}(2,n+1))\cong \op{CH}^{2n-2}(\op{Gr}(2,n+1))\cong\mathbb{Z}$; in this case, there is no refinement and the degree simply agrees with $\deg(\op{c}_1^{2n-2})$ in the Chow ring.

In the case where $n$ is odd, this provides a refinement of the enumerative problem of how many lines in $\mathbb{P}^n$ meet each of $2n-2$ general $(n-2)$-planes. Even though the quadratic form providing the refined count is only a copy of hyperbolic planes whose rank is the classical degree, this provides more information than the simple number. It is possible to associate to each of the lines meeting the $(n-2)$-planes a quadratic form (or better a class in $\op{GW}(F)$). The refined enumerative information is that the sum of classes in $\op{GW}(F)$ must be $\op{C}_{n-1}/2$ copies of the hyperbolic plane $\mathbb{H}$. For more information, cf. \cite{srinivasan:wickelgren}. 
\end{example}

\begin{example}
As another example, we can ask how many $(n-1)$-planes are contained in the intersection of two general quadrics in $\mathbb{P}^{2n}$. By \cite[Proposition 4.15]{3264}, the corresponding cycle is trivial in $\op{Ch}^\bullet(\op{Gr}(n-1,2n+1))$. Consequently, the cycle $\Phi$ from loc.cit. lifts to the Chow--Witt ring. Its self-intersection in $\mathbf{I}^\bullet$-cohomology will be zero. The self-intersection of the class in the Chow--Witt ring will be $4^n/2\cdot\mathbb{H}$, a sum of copies of the hyperbolic plane whose rank equals the classical count $4^n$. 
\end{example}

\subsection{Counting balanced subspaces}

Next we discuss an enumerative problem which can be refined to a problem involving non-torsion classes, the arithmetic count of balanced subspaces. This generalizes the signed count of balanced subspaces over $\mathbb{R}$ which was established by Feh{\'e}r and Matszangosz in \cite{feher:matszangosz}. 

Let $F$ be a field of characteristic $\neq 2$. We consider the following enumeration problem which is a higher-dimensional analogue of the classical problem of finding the number of lines in $\mathbb{P}^3$ intersecting 4 given lines in general position.

\begin{quote}
Given four $b$-dimensional subspaces in general position in a $2b$-dimensional space, what is the number of $2a$-dimensional subspaces having $a$-dimensional intersection with all the given subspaces? 
\end{quote}

\begin{definition}
\label{def:balanced}
Let $F$ be a field of characteristic $\neq 2$, and let $a<b$ be two natural numbers. Let $U$ be an $F$-vector space of dimension $2b$, and let $V_1,\dots,V_4\subset U$ be four $F$-subspaces of dimension $b$. A $2a$-dimensional subspace $W\subset U$ is \emph{balanced} (relative to the subspaces $V_i$) if the intersection $W\cap V_i$ has dimension $a$ for all $i$. 
\end{definition}

In the case where $F$ is algebraically closed, the number of balanced subspaces is determined by a theorem of Vakil \cite{vakil}:

\begin{proposition}
\label{prop:vakil}
Let $F$ be an algebraically closed field, and let $a<b$ be two natural numbers. The number of $2a$-dimensional balanced subspaces of a $2b$-dimensional space is $\binom{b}{a}$. 
\end{proposition}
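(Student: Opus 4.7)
The plan is to identify balanced subspaces with a Schubert problem in $\op{Gr}(2a,2b)$ and then reduce the count to the elementary count of invariant subspaces of a generic linear endomorphism. First I would translate the balanced subspace condition into Schubert calculus: the locus $\{W : \dim(W\cap V_i)\geq a\}$ is the Schubert variety for the rectangular partition $(b-a)^a$ in the $2a\times(2b-2a)$ frame, and since $a<b$ the generic dimension on this locus is exactly $a$. Hence, for $V_1,\dots,V_4$ in general position, the number of balanced subspaces equals $\deg\sigma_{(b-a)^a}^4$ in $\op{CH}^{4a(b-a)}(\op{Gr}(2a,2b))$; Kleiman's transversality (Theorem~\ref{thm:kleiman}) in characteristic zero, or Vakil's generic smoothness \cite{vakil} in general, gives a reduced zero-dimensional intersection.

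Next I would exploit the direct-sum splitting. For generic $V_1,V_2$ we have $V_1\cap V_2=0$, hence $U=V_1\oplus V_2$. For any balanced $W$, the subspaces $W_i:=W\cap V_i$ have dimension $a$, the sum $W_1\oplus W_2\subseteq W$ is direct, and $\dim W=2a$ forces $W=W_1\oplus W_2$. For generic $V_3,V_4$ meeting neither $V_1$ nor $V_2$, each is the graph of a unique linear isomorphism $\phi_j\colon V_1\to V_2$ via the decomposition $U=V_1\oplus V_2$. A direct computation gives
\[
W\cap V_3 \;\cong\; \{v\in W_1 : \phi_3(v)\in W_2\} \;=\; W_1\cap\phi_3^{-1}(W_2),
\]
so $\dim(W\cap V_3)\geq a$ forces $\phi_3(W_1)=W_2$, and similarly $\phi_4(W_1)=W_2$. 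Combining, $T(W_1)=W_1$ with $T:=\phi_4^{-1}\phi_3\in\op{GL}(V_1)$, and conversely any $T$-invariant $a$-dimensional $W_1\subset V_1$ paired with $W_2:=\phi_3(W_1)$ yields a balanced subspace.

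Finally, for a generic configuration the endomorphism $T$ is a generic element of $\op{GL}(V_1)$, which over the algebraically closed field $F$ has $b$ distinct eigenvalues. Its $a$-dimensional invariant subspaces are then precisely the spans of $a$-element subsets of an eigenbasis, yielding $\binom{b}{a}$ such subspaces in total.

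The main obstacle is not the eigenvalue count itself but the compatibility of two different notions of genericity: the Schubert-theoretic general position that makes the four Schubert varieties intersect transversely, and the linear-algebraic genericity that guarantees simple spectrum of $T$. This obstacle is however mild, since the discriminant of $T$ is a nonzero polynomial in the entries of $\phi_3,\phi_4$ (and hence polynomial in the configuration parameters), so the locus of simple spectrum is a nonempty Zariski-open subset of configuration space, forcing nontrivial intersection with the Schubert transversality open set.
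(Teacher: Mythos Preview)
Your proof is correct. The paper does not actually prove this proposition; it simply attributes the result to Vakil \cite{vakil} and uses it as a black box in the proof of Theorem~\ref{thm:fmgw}. Your argument supplies a complete, elementary proof via the classical reduction to invariant subspaces of a generic endomorphism: after splitting $U=V_1\oplus V_2$ and writing $V_3,V_4$ as graphs, balanced subspaces biject with $a$-dimensional $T$-invariant subspaces of $V_1$ for $T=\phi_4^{-1}\phi_3$, and generic simple spectrum gives $\binom{b}{a}$ of these.

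Two remarks. First, the Schubert-calculus framing in your opening paragraph, while correct, is not logically needed for the proposition itself: once you show that for a nonempty Zariski-open set of configurations the balanced subspaces are enumerated by $T$-invariant subspaces, you have the count directly from Definition~\ref{def:balanced}; the identification with $\deg\sigma_{(b-a)^a}^4$ is what matters for the paper's later use in Theorem~\ref{thm:fmgw}. Second, your final paragraph on compatibility of the two genericity notions is the right thing to check, and the argument is fine; you could strengthen it slightly by noting that since the eigenvalue open set already yields exactly $\binom{b}{a}$ reduced solutions, and the intersection number is constant across all transversal configurations, the two counts must agree without needing the open sets to overlap.
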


The classical case is $a=1$, $b=2$: there are 2 lines meeting four lines in $\mathbb{P}^3$ in general position. The following result is now a generalization of the signed count of balanced subspaces in \cite{feher:matszangosz}. 

\begin{theorem}
\label{thm:fmgw}
Let $F$ be a field of characteristic $\neq 2$. Let $a=2i$ and $b=2j$ be even numbers with $a<b$. Then the following equality holds in $\op{GW}(F)\cong\widetilde{\op{CH}}^{4a(b-a)}(\op{Gr}(2a,2b))$:
\[
\sum_{\op{balanced} W}\op{tr}_{F(W)/F}\langle\det\xi_W\rangle=\frac{\binom{2j}{2i}+\binom{j}{i}}{2}\langle 1\rangle + \frac{\binom{2j}{2i}-\binom{j}{i}}{2}\langle-1\rangle.
\]
\end{theorem}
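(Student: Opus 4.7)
The plan is to recast the balanced-subspace count as a four-fold Schubert intersection on $\op{Gr}(2a,2b)$ and evaluate it by combining Theorem~\ref{thm:refcount} with the folklore isomorphism of Proposition~\ref{prop:folklore}. The balanced condition $\dim(W\cap V)\geq a$ on a $2a$-plane $W\subseteq F^{2b}$ cuts out the Schubert variety $\Sigma_\Lambda$ corresponding to the partition $\Lambda=((b-a)^a,0^a)$, the $a\times(b-a)$-rectangle inside the $2a\times(2b-2a)$-frame. Since $a=2i$ and $b-a=2(j-i)$ are both even, the associated tuples $(d_1,d_2)=(a,2a)$ and $(e_1,e_2)=(b-a,2b-2a)$ satisfy all the conditions of Definition~\ref{def:evenyoung}, making $\Lambda$ completely even. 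Theorem~\ref{thm:lift} thus provides a canonical Chow--Witt lift of $\sigma_\Lambda$, and the problem reduces to computing the four-fold self-intersection of this lift for four generic choices of the defining $b$-plane.

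By Theorem~\ref{thm:refcount}, the oriented intersection degree has the shape $A\langle 1\rangle+B\langle -1\rangle$ in $\op{GW}(F)$, with $A+B$ equal to the classical Chow degree and $A-B$ equal to the image in $\op{W}(F)$. For the Chow degree, Vakil's Proposition~\ref{prop:vakil} gives $A+B=\binom{b}{a}=\binom{2j}{2i}$. For the Witt image, I would invoke the folklore isomorphism $\omega$ together with Proposition~\ref{prop:pontryaginlift} and Proposition~\ref{prop:identification} to match the canonical lift of $\sigma_\Lambda$ with $\mathfrak{S}_{(j-i)^i,0^i}=\omega(\sigma_{(j-i)^i,0^i})$, since $\Lambda$ is precisely the doubled partition $\gamma((j-i)^i,0^i)$. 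Because $\omega$ is a ring isomorphism, the fourth power in top $\mathbf{W}$-cohomology equals $\omega(\sigma_{(j-i)^i,0^i}^{\,4})$ tensored with $\langle 1\rangle$. The Schubert class $\sigma_{(j-i)^i,0^i}\in\op{CH}^\bullet(\op{Gr}(2i,2j))$ encodes the condition $\dim(W\cap V)\geq i$ for $2i$-planes $W$ in $2j$-space and $j$-planes $V$, so its fourth power is precisely the balanced-subspace Schubert count on $\op{Gr}(2i,2j)$, which by Proposition~\ref{prop:vakil} once more equals $\binom{j}{i}$. Hence $A-B=\binom{j}{i}$, and solving yields the claimed coefficients, with integrality of $A$ and $B$ automatic since the expression lives in $\op{GW}(F)$.

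It remains to interpret the right-hand side as the type sum on the left. Rectangular Schubert varieties are smooth, so Sierra's transversality theorem (Theorem~\ref{thm:sierra}) combined with Proposition~\ref{prop:ratequiv} lets me replace the four varieties by generic translates whose intersection is a reduced zero-dimensional subscheme lying in the smooth loci. The smooth form of Theorem~\ref{thm:types} then identifies the oriented local multiplicity at each balanced subspace $[W]$ as $\op{tr}_{F(W)/F}\langle\det\xi_W\rangle$, giving the left-hand side of the displayed equation.

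The main obstacle lies in merging the two pictures: the geometric type sum arising from the canonical oriented cycles $(p_\Lambda)_\ast(1)$ and the ring-theoretic computation in $\mathbf{W}$-cohomology through the folklore isomorphism. Proposition~\ref{prop:identification} is the bridge, equating $(p_\Lambda)_\ast(1)$ with $\omega(\sigma_{(j-i)^i,0^i})$ inside $\mathbf{I}^\bullet$-cohomology, so that the Schubert calculus carried out via $\omega$ actually computes the geometric type sum. Over a finite base field Sierra's theorem only yields transversality after a finite base change, but Theorem~\ref{thm:refcount} still produces the identity in $\op{GW}(F)$ directly, with the type interpretation recovered after passage to an infinite extension.
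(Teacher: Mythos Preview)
Your proof is correct and follows essentially the same approach as the paper: identify the balanced-subspace condition with the four-fold self-intersection of the canonical lift of the rectangular Schubert class $\sigma_{(b-a)^a}$, invoke Theorem~\ref{thm:refcount}, and determine the coefficients $A\pm B$ via Vakil's count in $\op{Gr}(2a,2b)$ for the rank and, through the folklore isomorphism of Proposition~\ref{prop:folklore}, via Vakil's count in $\op{Gr}(a,b)=\op{Gr}(2i,2j)$ for the Witt image. Your write-up is in fact more explicit than the paper's at several points (the identification via Proposition~\ref{prop:identification}, the smoothness of rectangular Schubert varieties, and the transversality discussion), but the argument is the same.
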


\begin{proof}
For a fixed $b$-dimensional subspace $V\subset U$, the set of $2a$-dimensional subspaces $W\subset U$ with $\dim W\cap V\geq a$ is the Schubert variety $\Sigma_\Lambda$ whose corresponding Young diagram is a rectangle with side lengths $a$ and $b-a$. Note that this is a completely even Young diagram by the assumptions on $a$ and $b$. In particular, the relevant oriented Schubert problem is given by the 4-fold self-intersection of the lifts of the Schubert class $\sigma_\Lambda$ given by the canonical orientation of the normal bundle of $\Sigma_\Lambda$, cf. Proposition~\ref{prop:orientschubert}. Then we can apply Theorem~\ref{thm:refcount}. 

The multiplicity can be computed separately: the dimension of the class in $\op{GW}(F)$ is the degree of the 4-fold self-intersection of $[Z]$ in the Chow ring, which by Proposition~\ref{prop:vakil} equals $\binom{b}{a}$. The image of the class in $\op{W}(F)$ is the degree of the 4-fold self-intersection in $\op{H}^n({\op{Gr}}(2a,2b),\mathbf{I}^n)$. By Proposition~\ref{prop:folklore}, this multiplicity is the same as the degree of the 4-fold self-intersection of the rectangular Young diagram with side lengths $i$ and $j$ in $\op{CH}^\bullet(\op{Gr}(a,b))$. Again, by Proposition~\ref{prop:vakil}, this degree is $\binom{j}{i}$. The degree in the Chow--Witt ring is then the class of the quadratic form in $\op{GW}(F)$ of dimension $\binom{2j}{2i}$ whose image in $\op{W}(F)$ is $\binom{j}{i}$. This class must necessarily be of the form $m\langle 1\rangle+n\langle -1\rangle$, and satisfy
\[
m+n=\binom{2j}{2i}, \qquad m-n=\binom{j}{i}.
\]
This provides the degree on the right-hand side of the equation.
\end{proof}

\begin{example}
The simplest case is $a=2$, $b=4$, in which the degree of the intersection problem in $\op{Gr}(4,8)$ is $4\langle 1\rangle+2\langle-1\rangle$. For $a=4$, $b=8$, the degree of the intersection problem in $\op{Gr}(8,16)$ is $38\langle 1\rangle+32\langle-1\rangle$, recovering the 70 solution subspaces from classical Schubert calculus and the signed count of $6$ over the reals.
\end{example}

\subsection{3-planes and highest power of $\op{p}_1$}

We discuss another enumerative problem involving non-torsion classes, the arithmetic count of 3-planes whose intersection with  $2n$ subspaces in $\mathbb{P}^{2n+3}$ of dimension $(2n-1)$ in general position is a line. This generalizes the results of Section 5.2 of \cite{finashin:kharlamov:3planes}.

For $\mathbb{P}^{2n-1}\subseteq\mathbb{P}^{2n+3}$, the set of 3-planes intersecting $\mathbb{P}^{2n-1}$ in a line is the Schubert variety $\Sigma_{2,2}$ in $\op{Gr}(4,2n+4)$. By Proposition~\ref{prop:orientschubert}, there is a unique lift of the class $\sigma_{2,2}$ to the Chow--Witt ring, given by the canonical orientation of the normal bundle for the Schubert variety $\Sigma_{2,2}$. Note also that, as discussed in Proposition~\ref{prop:pontryaginlift}, the lift of the Schubert class $\sigma_{2,2}$ to the Chow--Witt ring is -- up to the image of the Bockstein -- the first Pontryagin class $\op{p}_1$. 

For every 3-plane $W$ in the $2n$-fold self-intersection of $\sigma_{2,2}$, we have a quadratic form given by the type of the corresponding subspace, cf. Definition~\ref{def:type}, which compares the orientations of the Grassmannian with the canonical orientations of the normal spaces to the translates of $\Sigma_{2,2}$.  The following consequence of Theorem~\ref{thm:refcount} describes the refined count of 3-planes in the intersection: 

\begin{theorem}
\label{thm:fk}
Let $F$ be a field of characteristic $\neq 2$. Then the following equality holds in $\op{GW}(F)\cong \widetilde{\op{CH}}^{8n}(\op{Gr}(4,2n+4))$: 
\[
\sum_{3\op{-planes } [W]\in \sigma_{2,2}^{2n}}\op{tr}_{F(W)/F}\langle\det\xi_W\rangle=\frac{\op{D}_n+\op{C}_n}{2}\langle 1\rangle+ \frac{\op{D}_n-\op{C}_n}{2}\langle-1\rangle,
\]
where $\op{D}_n$ denotes the degree of $\sigma_{2,2}^{2n}$ in $\op{CH}^{8n}(\op{Gr}(4,2n+4))$ and $\op{C}_n=\frac{(2n)!}{n!(n+1)!}$ is the $n$-th Catalan number.
\end{theorem}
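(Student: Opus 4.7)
The plan is to apply Theorem~\prettyref{thm:refcount} to the $2n$-fold self-intersection of the canonical oriented class $[\Sigma_{2,2}]\in\widetilde{\op{CH}}^4(\op{Gr}(4,2n+4))$. First I would check the hypotheses: the Young diagram $(2,2)$ is the $2\times 2$ rectangle, so $\Sigma_{2,2}$ is smooth; it is completely even with trivial twist, so by Proposition~\prettyref{prop:orientschubert} the normal bundle carries a canonical orientation; the ambient Grassmannian $\op{Gr}(4,2n+4)$ is orientable because $2n+4$ is even; the dimensions balance, $2n\cdot 4=8n=\dim\op{Gr}(4,2n+4)$; and every twisting line bundle is trivial. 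Theorem~\prettyref{thm:refcount} then produces natural numbers $a,b$ with
\[
\sum_{W}\op{tr}_{F(W)/F}\langle\det\xi_W\rangle=a\langle 1\rangle+b\langle-1\rangle,
\]
so the task reduces to identifying $a+b$ and $a-b$.

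Part (1) of Theorem~\prettyref{thm:refcount} gives $a+b=\op{D}_n$ by definition. To compute $a-b$, I would identify the image of the intersection in top $\mathbf{W}$-cohomology via the folklore ring isomorphism
\[
\omega\colon\op{CH}^\bullet(\op{Gr}(2,n+2))\otimes_{\mathbb{Z}}\op{W}(F)\xrightarrow{\cong}\op{H}^{4\bullet}(\op{Gr}(4,2n+4),\mathbf{W})
\]
of Proposition~\prettyref{prop:folklore}, which sends $\op{c}_1=\sigma_1$ to $\op{p}_2$. By Proposition~\prettyref{prop:identification}, the canonical geometric lift $(\iota_{(2,2)})_\ast(1)$ coincides with the class attached to $(2,2)$ by Definition~\prettyref{def:evenmap}, which by construction equals $\omega(\sigma_1)=\op{p}_2$ in the $\mathbf{W}$-cohomology quotient. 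Because $2n+4$ is even, the top $\mathbf{I}^\bullet$-cohomology is $\op{W}(F)$-torsion free, so the reduction $\op{H}^{8n}(\op{Gr}(4,2n+4),\mathbf{I}^{8n})\to\op{H}^{8n}(\op{Gr}(4,2n+4),\mathbf{W})$ is an isomorphism and no information is lost in passing to $\mathbf{W}$-cohomology.

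Transporting through $\omega$, the top class $\op{p}_2^{2n}$ is identified with $\sigma_1^{2n}$ in the top Chow group of $\op{Gr}(2,n+2)$. That degree counts standard Young tableaux of shape $2\times n$, which by the hook-length formula equals the $n$-th Catalan number $\op{C}_n=(2n)!/(n!\,(n+1)!)$. Hence $a-b=\op{C}_n$, and solving the linear system $a+b=\op{D}_n$, $a-b=\op{C}_n$ yields $a=(\op{D}_n+\op{C}_n)/2$ and $b=(\op{D}_n-\op{C}_n)/2$, as asserted.

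The main technical point is the identification in the second paragraph of the Chow--Witt-theoretic canonical lift of $\sigma_{2,2}$ with the Pontryagin class $\op{p}_2$, which is the content of Propositions~\prettyref{prop:identification} and~\prettyref{prop:pontryaginlift}; once that identification is in hand, the computation of $a-b$ reduces to the classical degree of a power of $\op{c}_1$ on a Plücker-embedded Grassmannian, and everything else is bookkeeping.
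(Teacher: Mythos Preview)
Your proof is correct and follows essentially the same route as the paper: verify the hypotheses of Theorem~\ref{thm:refcount}, read off $a+b=\op{D}_n$ from the Chow-ring degree, and compute $a-b$ by transporting $[\Sigma_{2,2}]^{2n}$ through the folklore isomorphism of Proposition~\ref{prop:folklore} to $\sigma_1^{2n}$ in $\op{CH}^{2n}(\op{Gr}(2,n+2))$, whose degree is $\op{C}_n$. The only cosmetic differences are that the paper writes $\op{p}_1$ for what you (consistently with Proposition~\ref{prop:folklore}) call $\op{p}_2$, and that you justify $\deg\sigma_1^{2n}=\op{C}_n$ via the hook-length count of standard Young tableaux of shape $(n,n)$ whereas the paper cites \cite[Proposition~4.12]{3264}.
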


\begin{proof}
The dimension of $\op{Gr}(4,2n+4)$ is $8n$, and we want to compute the degree of $\op{p}_1^{2n}\in \widetilde{\op{CH}}^{8n}(\op{Gr}(4,2n+4))\cong\op{GW}(F)$. We apply Theorem~\ref{thm:refcount}, then the claim follows if $\op{D}_n$ is the degree of the intersection in the Chow ring and $\op{C}_n$ is the degree of the intersection in the $\mathbf{I}^\bullet$-cohomology ring. So it suffices to show that the degree of the intersection in $\mathbf{I}^\bullet$-cohomology equals the $n$-th Catalan number. The easiest way to go is to use Proposition~\ref{prop:folklore}. By \cite[Proposition 4.12]{3264},  $\deg\op{c}_1^{2n}\in\op{CH}^{2n}(\op{Gr}(2,n+2))$ is the $n$-th Catalan number $\op{C}_n$. Consequently, $\op{p}_1^{2n}$ is the quadratic form $\op{C}_n\langle 1\rangle$. 
\end{proof}

\begin{remark}
It seems that no closed-form expression for the numbers $\op{D}_n$ is known, cf. MathOverflow-question 302535 ``Closed form for integer series from enumerative problem?'', and the series also doesn't appear in the OEIS database. Using the \verb!SchurRings! package of \verb!Macaulay2! provides the following first terms of the series $\op{D}_n$, starting from $n=2$: 
\[
6, 145, 8806, 830622, 100317140, 14342519633, 2325250316950, 415755865974454, \dots
\]
Consequently, the first instances of the quadratic form in Theorem~\ref{thm:fk} are
\begin{center}
\begin{tabular}{|c||c|}
\hline
$n$ & $\frac{\op{D}_n+\op{C}_n}{2}\langle 1\rangle+\frac{\op{D}_n-\op{C}_n}{2}\langle-1\rangle$ \\
\hline\hline
2 & $4\langle 1\rangle+2\langle -1\rangle$\\\hline
3 & $75\langle 1\rangle+70\langle -1\rangle$\\\hline
4 & $4410\langle 1\rangle+4396\langle -1\rangle$\\\hline
5 & $415332\langle 1\rangle+415290\langle -1\rangle$\\\hline 
6 & $50158636\langle 1\rangle+50158504\langle -1\rangle$\\\hline 
7 & $7171260031\langle 1\rangle+7171259602\langle -1\rangle$\\\hline 
8 & $1162625159190\langle 1\rangle+1162625157760\langle -1\rangle$\\\hline 
\end{tabular}
\end{center}
\end{remark}

\end{document}